\documentclass[a4paper,12pt]{article}

\usepackage{amsthm,amsmath,stmaryrd,bbm,hyperref,geometry,color,authblk}
\usepackage[utf8]{inputenc}
\usepackage{amssymb}
\usepackage[english]{babel}
\usepackage{graphicx}
\usepackage{amsfonts,amssymb}
\usepackage{verbatim}
\usepackage{enumerate}
\usepackage{subcaption}

\setcounter{tocdepth}{2}
\geometry{hscale=0.8,vscale=0.85,centering}

\newcommand{\po}{\left(}
\newcommand{\pf}{\right)}
\newcommand{\co}{\left[}
\newcommand{\cf}{\right]}
\newcommand{\cco}{\llbracket}
\newcommand{\ccf}{\rrbracket}
\newcommand{\R}{\mathbb R}

\newcommand{\N}{\mathbb N} 
 
\newcommand{\tE}{\widetilde{\mathcal E}} 
\newcommand{\tF}{\widetilde{\mathcal F}} 
\newcommand{\dd}{\mathrm{d}}
\newcommand{\bX}{\mathbf{X}}
\newcommand{\bY}{\mathbf{Y}}
\newcommand{\bx}{\mathbf{x}}
\newcommand{\na}{\nabla}
\newcommand{\1}{\mathbbm{1}}

\newtheorem{thm}{Theorem}
\newtheorem{assu}{Assumption}
\newtheorem*{assu*}{Assumption}
\newtheorem{lem}[thm]{Lemma}

\newtheorem{cor}[thm]{Corollary}
\newtheorem{prop}[thm]{Proposition}
\newtheorem{rem}{Remark}
\newtheorem{ex}{Example}

\title{Free energy Wasserstein gradient flow and their particle counterparts: toy model, (degenerate) PL inequalities and exit times}
\author[1,2]{Pierre Monmarché\footnote{pierre.monmarchet@univ-eiffel.fr}}
\affil[1]{LAMA, Université Gustave Eiffel, 77420 Champs-sur-Marne, France.}
\affil[2]{Institut Universitaire de France}

\makeindex

\begin{document}
\maketitle

\begin{abstract}
In finite dimension, the long-time and metastable behavior of a gradient flow perturbated by a small Brownian noise is well understood. A similar situation arises  when a Wasserstein gradient flow over a space of probability measure is approximated by a system of mean-field interacting particles, but classical results do not apply in these infinite-dimensional settings. This work is concerned with the situation where the objective function of the optimization problem contains an entropic penalization, so that the particle system is a Langevin diffusion process.  We consider a very simple class of models, for which the infinite-dimensional behavior is fully characterized by a finite-dimensional process. The goal is to have a flexible class of benchmarks to fix some objectives, conjectures and (counter-)examples for the general situation. Inspired by the systematic study of these toy models, one application is presented on the continuous Curie-Weiss model in a symmetric double-well potential. We show that, at the critical temperature, although  the $N$-particle Gibbs measure does not satisfy a uniform-in-$N$ standard log-Sobolev inequality (the optimal constant growing like $\sqrt{N}$), it does satisfy a more general Łojasiewicz inequality uniformly in $N$, inducing uniform polynomial long-time convergence rates, propagation of chaos at stationarity and uniformly in time, and creation of chaos. 
\end{abstract}

\tableofcontents

\section{Introduction}\label{sec:intro}

\subsection{Overview}

Denoting by $\mathcal P_2(\R^d)$ the set of probability measures over $\R^d$ with finite second moment, consider some energy functional $\mathcal E : \mathcal P_2(\R^d)\rightarrow\R $. The associated free energy at temperature $\sigma^2>0$ is defined by
\begin{equation}
\label{eq:freeenergyF}
\mathcal F(\mu) = \mathcal E(\mu) + \sigma^2 \int_{\R^d} \mu \ln\mu\,,
\end{equation}
for $\mu\in\mathcal P_2(\R^d)$ with a  density (still denoted $\mu$) and finite entropy, and $\mathcal F(\mu)=+\infty$ otherwise. The question of minimizing free energies arises in several situations : first examples came from statistical physics, where some models of interacting particles turned out to solve such an optimization problem \cite{McKean,dawson1986large}. More recently, the development of high-dimensional applications led to the mean-field analysis of finite-dimensional algorithms with exchangeable parameters \cite{Szpruch,chizat2018global,geshkovski2025mathematical}, and also to optimization problems directly set in spaces of probability measures e.g. for variational inference or sampling \cite{lambert2022variational,LelievreLinMonmarche,lambert2022variational}. In these latter situations, the entropic term in~\eqref{eq:freeenergyF} may be either a penalization term to ensure smooth solutions, or a toy model to describes the randomness due to the use of stochastic gradient methods in practice (this amounts to modelling the error of the stochastic gradient estimator as an additive Gaussian variable with constant variance, uniform in space). See \cite{MonmarcheReygner} for further references of applications.

Endowing $\mathcal P_2(\R^d)$ with the associated $L^2$-Wasserstein distance
\[\mathcal W_2(\nu,\mu) = \inf_{\pi\in\Pi} \po \int_{\R^d} |x-y|^2 \pi(\dd x,\dd y)\pf^{1/2} \]
with $\Pi$ the set of couplings of $\nu$ and $\mu$ (i.e. probability measures over $\R^d\times\R ^d$ with marginals $\nu$ and $\mu$), the theory of gradient flows in metric spaces allows to define the gradient flow for the free energy, which turns out to be the parabolic equation
\begin{equation}
\label{eq:EDP}
\partial_t \rho_t = \na\cdot \po \rho_t D \mathcal E(\cdot,\rho_t) \pf + \sigma^2 \Delta \rho_t\,,
\end{equation}
with $D\mathcal E (x,\mu)=\na_x \frac{\delta  \mathcal E}{\delta \mu}(x,\mu)$ the so-called intrinsic derivative of $\mathcal E$, where $\frac{\delta   \mathcal E}{\delta \mu}$ is its Lions derivative (or first variation), defined such that 
\[\forall \nu,\mu\in\mathcal P_2(\R^d),\qquad \mathcal E(\mu) - \mathcal E(\nu) = \int_0^1 \int_{\R^d}\frac{\delta \mathcal E}{\delta \mu} (x,t\mu+(1-t)\nu) (\mu-\nu)(\dd x) \dd t \]
provided these exist (see \cite{ambrosio2005gradient,carmona2018probabilistic} for details).

When the dimension $d$ is large, \eqref{eq:EDP} cannot be solved by deterministic PDE methods such as finite elements or finite difference schemes. In practice, the solution is rather approximated by the empirical distribution of a system of $N$ interacting particles. Alternatively, in statistical physics for instance, the particle system might be the microscopic model of initial interest, and~\eqref{eq:EDP} is its macroscopic limit in the large-population regime. The positions at time $t$ of the particles $\bX_t = (X_t^1,\dots,X_t^N)$ solves 
\begin{equation}
\label{eq:particulesEDS1}
\forall i\in\cco 1,N\ccf,\qquad \dd X_t^i = - D\mathcal E(X_t^i ,\pi_{\bX_t}) \dd t + \sqrt{2}\sigma \dd B_t^i\,,
\end{equation}
where $\mathbf{B}=(B^1,\dots,B^N)$ is a $dN$-dimensional Brownian motion and 
\[\pi_{\bx} = \frac1N\sum_{i=1}^N \delta_{x_i}\]
stands for the empirical distribution of a vector $\bx\in(\R^d)^N$. Alternatively, by introducing the mean-field potential
\[U_N(\bx) = N \mathcal E \po \pi_{\bx} \pf  \]
for $\bx\in(\R^d)^N$, \eqref{eq:particulesEDS1} can equivalently be written as
\begin{equation}
\label{eq:particulesEDS2}
 \dd \bX_t = -\na U_N(\bX_t) \dd t + \sqrt{2}\sigma \dd \mathbf{B}_t\,.
\end{equation}
Under suitable conditions on $\mathcal E$, it is known that the mean field limit of the particle system to the non-linear limit holds, meaning that $\pi_{\bX_t}$ almost surely weakly converges to $\rho_t$ when $\bX$ and $\rho$ respectively solve~\eqref{eq:particulesEDS2} and~\eqref{eq:EDP}. This is equivalent to propagation of chaos, namely to the fact the joint law of $(X_t^1,\dots,X_t^k)$ converges to $\rho_t^{\otimes k}$ for any fixed $k\geqslant 1$ as $N\rightarrow \infty$ for all times $t\geqslant 0$ provided this holds at time $t=0$.

For all $t\geqslant 0$, $\pi_{\bX_t}$ is a random probability measure. Its evolution, ruled by the so-called Dean-Kawasaki equation (see \cite{illien2025dean} for a recent review, and references within), can be seen as a stochastic perturbation of the deterministic gradient flow~\eqref{eq:EDP}, with the random  fluctuations vanishing as $N\rightarrow \infty$ (at rate $N^{-1/2}$, see \cite{FERNANDEZ199733,tanaka1981central}). This is thus similar to the situation of a finite-dimensional gradient flow perturbated by a Gaussian noise
\begin{equation}
\label{gradientepsilon}
\dd Y_t = -\na f (Y_t) \dd t + \sqrt{2\varepsilon} \dd B_t\,,
\end{equation}
in the low temperature regime where $\varepsilon \rightarrow 0$. This classical model has been extensively studied for decades, and is  well understood, in particular thanks to Large Deviation and the Freidlin-Wentzell theory \cite{freidlin1998random}, potential theory \cite{bovier2016metastability} or spectral analysis \cite{helffer2004quantitative}. In the Wasserstein space, at least for regular interactions,  the convergence from the  particle system to the mean-field limit is well understood (law of large number, fluctuations, large deviations\dots) \emph{over finite time intervals}, see e.g. the reviews \cite{Chaintron1,Chaintron2}. However, for optimization purpose, when running a gradient descent, one is interested in sufficiently long trajectories to achieve convergence to (local) minimizers. Yet, when it comes to the interplay between large $t$ and large $N$,    classical results on noisy perturbation of gradient flows do not apply to the Wasserstein case, and many things remain to be understood. On this topic, the situation which have received much of the attention for now is when the invariant measure of~\eqref{eq:particulesEDS2} satisfies a log-Sobolev inequality (LSI) with a constant uniform in $N$ (see~\eqref{eq:LSI} below). This is still an active question, see \cite{Dagallier,SongboLSI,MonmarcheLSI} for recent progresses and references within for earlier works (see also references in Section~\ref{sec:motivInegalites}). Nevertheless, this  uniform LSI implies that all critical points of the free energy are global minimizers and that $\mathcal F$ satisfies a global  Polyak-Łojasiewicz (PL) inequality \cite{Pavliotis,GuillinWuZhang}. Motivated by recent results~\cite{MonmarcheReygner,MonmarcheMetastable}, the present work is interested in the perspective to go beyond this situation and to tackle free energies with several local minimizers. 

Instead of addressing this issue in general, we will mostly focus on a very simple situation where the possibly intricate long-time behavior is encoded by a $d$-dimensional diffusion process. This is similar to the Curie-Weiss model on $\{-1,1\}^N$, where the process is described by the mean magnetization $\frac1N\sum_{i=1}^N x_i$, which for each $N$ is a Markov chain in $[-1,1]$ \cite{JournelLeBris}. Our purpose it thus to translate the finite-dimensional results into the infinite-dimensional settings, to obtain statements which make sense in more general situations. This gives some insights and examples of the kind of results one may expect to hold.

\bigskip

In the remaining of this introduction, we will briefly overview some of the contributions of this work. After that, the rest of this work is organized as follows.
\begin{itemize}
\item Section~\ref{sec:settings} introduces in details the framework of the study. More precisely, the class of toy models is introduced in Section~\ref{subsec:toy}, followed by a running example in Section~\ref{subsec:runningex}.  More general settings,  notions and notations are given in Section~\ref{subsec:general}. Sections~\ref{subsec:preliminary} and \ref{subsec:hessian} gather specific explicit computations in the case of the toy models.
\item Section~\ref{sec:ineq} is devoted to functional inequalities such as PL and LSI, which quantify the long-time convergence rates of the processes (particle system or mean-field PDE). After some overview in Section~\ref{sec:motivInegalites}, we will study in Section~\ref{subsec:non-degen-contract} these contraction inequalities in the situation where they have been most studied for now, namely when the free energy has a unique critical point, which is a non-degenerate global minimizer. We will also discuss in a similar non-degenerate situation some coercivity inequalities (with the terminology of \cite{SongboToAppear}, corresponding to transport inequalities in the field of geometric measure theory) in Section~\ref{subsec:coerci-non-degen}. We will then consider these two sorts of inequalities in the situation where the minimizer is degenerate, respectively in Section~\ref{sec:degenPL} (for contractivity) and Section~\ref{sec:degenCoer} (for coercivity). Finally, Section~\ref{subsec:local_ineq} discusses local versions of the previous inequalities when the free energy has several critical points and the global inequalities fail.
\item Section~\ref{sec:exittimes} focuses on the study of exit times of the empirical distribution of particles in metastable situations, namely when the free energy has several critical points. We consider transition times between minimizers in Section~\ref{subsec:transitiontimes} and escape time from unstable saddle points in Section~\ref{subsec:exittimes}.
\item In Section~\ref{sec:practical_conclusion}, we show how combining local inequalities as in Section~\ref{subsec:local_ineq} and control of metastable exit times as in Section~\ref{sec:exittimes} yields non-asymptotic fast local convergence estimates for the particle system towards local minimizers of the free energy, upon extremely long time intervals (typically covering all the simulation time in most situations).
\item In contrast to the rest of the work, Section~\ref{sec:CurieWeiss} is not concerned with the toy models introduced in Section~\ref{subsec:toy}, but with the classical continuous Curie-Weiss model in a symmetric double well, at critical temperature. The study builds upon the ideas developed in Section~\ref{sec:degenPL} for degenerate minimizers, thus  illustrating the interest of the insights obtained on the toy model in more general situations, relevant for applications.
\end{itemize}

\subsection{Highlights and main contributions}

\begin{itemize}
\item Proposition~\ref{prop:equivalenceInegal} shows that the conjecture from \cite{Pavliotis} is true for our class of toy models, namely that a PL inequality for the Wasserstein gradient flow is equivalent to a uniform-in-$N$ LSI for the $N$-particle Gibbs measure. Moreover, the constants are sharp. Besides,  in previous works such as \cite{MonmarcheLSI,MonmarcheReygner}, the main step to establish mean-field PL inequalities have been to establish a so-called free-energy condition (cf. \eqref{eq:ineqEntropy}). Proposition~\ref{prop:equivalenceInegal} also shows that this condition is in fact equivalent to a PL inequality (again, with sharp constants). This indicates that a direction to address  the conjecture from \cite{Pavliotis} in more general situations would be to deduce the uniform LSI from the free energy condition (see \cite{SongboToAppear} on this topic).  
\item For the toy models, we show in Proposition~\ref{prop:equivLocale} that, at a  minimizer of the free energy, a local PL inequality  is equivalent to the Hessian of the free energy in the sense of Otto being positive with a spectral gap. However, even in the vicinity of the minimizer, this doesn't require the free energy to be convex along flat interpolations (as discussed in Section~\ref{subsec:misc}, with its notations, this would correspond to $\na^2 V$ being positive, while Proposition~\ref{prop:equivLocale} shows that positiveness of the Hessian is equivalent to $\na^2V + \kappa I$ being positive). This also shows the sharpness of the ``semi-flat-convexity" lower bound required in \cite{MonmarcheLSI}.
\item A function $f\in\mathcal C^1(\R^d)$ is said to satisfy a (local) Łojasiewicz inequality in the vicinity of a minimizer $x_*$ if 
\[ f(x)-f(x_*) \leqslant C \po |\na f|^2\pf^\theta\]
for all $x$ in a neighborhood of $x_*$ for some constants $C>0$ and $\theta>0$ \cite{law1965ensembles} (the inequality is global if it holds for all $x\in\R^d$). The PL inequality corresponds to $\theta=1$ \cite{polyak1964gradient}, and we shall sometimes call \emph{degenerate} the cases where $\theta<1$, or more generally when $|\na f|^2$ is replaced in the PL inequality by $\Theta(|\na f|^2)$ for some non-linear function $\Theta$. Although PL inequalities for Wasserstein gradient flow have been intensively studied, this is less the case of the degenerate case: \cite{BLANCHET20181650} provides a general theoretical study, and  \cite[Proposition 15]{MonmarcheReygner} establishes such an inequality with $\Theta(r) = C(r+r^{1/3})$ for the continuous Curie-Weiss at critical temperature (while PL fails). This degenerate situation is typical at critical parameters for systems undergoing phase transitions  (see Example~\ref{ex:degen}). We  discuss in much more details this situation in Section~\ref{sec:degenPL}, in particular we introduce the variant of the log-Sobolev inequality for the $N$-particle Gibbs measure corresponding to a degenerate Łojasiewicz  inequality and discuss the consequences (in terms of transport inequalities generalizing the T2 Talagrand inequality). Beyond the toy models, we show in Theorem~\ref{thm:CurieWeiss}, which is our main contribution on an application of interest per se, that the Curie-Weiss model at critical temperature enters this framework, leading to uniform-in-$N$ polynomial convergence rates and uniform in time propagation of chaos (see Corollary~\ref{Cor:CW}) and, moreover, improving the result of \cite{MonmarcheReygner} to $\Theta(r) = C(r+r^{2/3})$ (where the power $2/3$ is now sharp).  
\item Since the study of the $N$-particle Gibbs measure in $\R^{dN}$ involves the study of a  Gibbs measure in dimension $d$ at temperature $1/N$ (this is true for the toy model~\eqref{eq:F}, but also in more general case using a two-scale approach, as in Section~\ref{sec:CurieWeiss}), we clarify this topic when the potential is convex with a degenerate minimizer (in the spirit of \cite{ChewiStromme} which covers the non-degenerate situation),  providing an upper and lower bound on the optimal LSI constant respectively in Propositions~\ref{prop:LSIdegen} and \ref{prop:lowerboundCLSIdegenerate}. When the potential scales like $|x-x_*|^\beta$ with $\beta>2$ around the minimizer $x_*$, the order in $N$ of the two bounds matches, showing that the constant scales like $N^{-\frac{2}{\beta}}$. Moreover, we show in Lemma~\ref{lem:LSIdegenerate} a degenerate Łojasiewicz inequality with a better scaling in terms of the temperature (in the sense that, in the mean-field case, it induces an inequality for the Gibbs measure which gives a non-trivial inequality when $N\rightarrow\infty$).
\item Concerning the transition time between local minimizers, for the toy models, we get in Theorem~\ref{thm:Arrhenius} the Arrhenius law (yet to be proven for the continuous Curie-Weiss model since its statement in \cite{dawson1986large}), and even get the more precise Eyring-Kramers formula~\ref{thm:EK}. It turns out that the latter is consistent with formula involved in other infinite-dimensional situations \cite{barret2015sharp,BerglundGentz,Berglund2} involving functional determinants \cite{gel1960integration,forman1987functional,mckane1995regularization}. Concerning exit times from saddle points, Theorem~\ref{thm:exit} gives the first result we are aware of in mean-field settings. The objective would be next  to establish a similar result for the continuous Curie-Weiss model at low temperature.
\end{itemize}

\subsection*{General notations}

We write respectively  $|u|$ and $u\cdot v$ for the Euclidean norm and scalar product of  $u,v\in \R^d$. The identity matrix of dimension $d$ is denoted by $I_d$. For $\delta \geqslant 0 $, we write $\mathcal B(x,\delta)$ and $\mathcal B(\mu,\delta)$ the closed Euclidean and $\mathcal W_2$ balls of radius $\delta$  with respective center $x\in\R^d$ and $\mu \in\mathcal P_2(\R^d)$. For a measurable $f:\R^d\rightarrow\R_+$, the notation $\mu\propto f$ means that $\mu$ is the probability distribution with Lebesgue density proportional to $f$. The Gaussian distribution on $\R^d$ with mean $m\in\R^d$ and covariance matrix $\Sigma^2$ is written $\mathcal N(m,\Sigma^2)$.

\section{Settings}\label{sec:settings}

\subsection{The general toy model}\label{subsec:toy}

Consider a problem of minimizing  a function $V\in\mathcal C^2(\R^d,\R)$ going to infinity at infinity. We are interested in a mean-field variation where the goal is to minimize  $V(\int_{\R^d} x \mu(x)\dd x)$ over the space of probability densities $\mu$ over $\R^d$, with an additional entropy penalization and, to ensure stability, a quadratic confinement penalization. Writing $m_\mu=\int_{\R^d} x \mu(x)\dd x$, we end up with an objective function
\begin{equation}
\label{eq:F}
\mathcal F(\mu) = V(m_\mu) +  \frac{\kappa}2 \int_{\R^d} |x|^2 \mu(x)\dd x + \sigma^2 \int_{\R^d} \mu(x)\ln(\mu(x))\dd x\,,
\end{equation}
for some parameters $\kappa,\sigma^2>0$. Notice that, without  the quadratic confinement (i.e. taking $\kappa=0$), then this function is not bounded below as Gaussian distributions with a fixed mean and an arbitrarily large variance keeps $V(m_\mu)$ constant while the entropy goes to $-\infty$. This quadratic confinement also appears in more realistic applications, see e.g.   \cite[Equation (21)]{chizat}.

By comparison, a classical model in statistical physics is given by the free energy
\begin{equation}
\label{eq:F_CurieWeiss}
\mathcal F_0(\mu) = -\frac{\kappa}2 |m_\mu|^2 +  \int_{\R^d} V(x) \mu(x)\dd x + \sigma^2 \int_{\R^d} \mu(x)\ln(\mu(x))\dd x\,,
\end{equation}
with a general $V\in\mathcal C^2(\R^d,\R)$. When $\sigma$ is small enough and $V$ has several local minima, $\mathcal F_0$ admits several critical points. In this situation, the long-time  behavior of the corresponding gradient flow and particle system has been investigated  in \cite{MonmarcheReygner,MonmarcheMetastable}. We see that, in \eqref{eq:F}, the non-linear part of the energy may be intricate, but the linear part is given by a simple quadratic confinement, and this is exactly the converse in~\eqref{eq:F_CurieWeiss}.

Introducing $\mathcal E (\mu) =  V(m_\mu) +  \frac{\kappa}2 \int_{\R^d} |x|^2 \mu(x)\dd x$ the energy associated to~\eqref{eq:F}, we get
\begin{equation}
\label{eq:deltaEbmu}
\frac{\delta \mathcal E}{\delta\mu}(x,\mu) = \na V(m_\mu) \cdot x + \frac{\kappa}2 |x|^2 \,,\qquad b_{\mu}(x):= D\mathcal E(x,\mu) = \na V(m_\mu) + \kappa x\,,
\end{equation}
and
\[U_N(\bx) := N \mathcal E\po \pi_{\bx}\pf = N V\po \bar x\pf + \frac{\kappa}{2}|\bx|^2\,. \]
Here and throughout the rest of this article for $\bx\in(\R^d)^N$ we write $\bar x= \frac1N\sum_{i=1}^N x_i$.

This work is mostly devoted to the study of the long-time behavior of the mean-field non-linear PDE~\eqref{eq:EDP} and the particle system~\eqref{eq:particulesEDS2} associated to the free energy~\eqref{eq:F}. As we will soon see, by contrast to the case of~\eqref{eq:F_CurieWeiss}, the temperature $\sigma$ plays no role (there is no phase transition as $\sigma$ varies, see Remark~\ref{rem:phasetransition}) and thus from now on we take $\sigma=1$.

\subsection{A running specific example}\label{subsec:runningex}

Let us describe a mean-field model of variational auto-encoder, of practical interest. We will then simplify it by considering the linear case, in which case the algorithm boils down to Principal Component Analysis (PCA). In this situation the mean-field case is not relevant in practice but it enters our framework. The results obtained in the linear case then provide insights and benchmarks for the more interesting non-linear situation.

Given some data in $\R^n$ distributed according to $\nu_{data} \in \mathcal P(\R^n)$, a variational auto-encoder is given by two functions, $f:\R^n \rightarrow \R^p$ (the encoder) and $g:\R^p\rightarrow \R^n$ (the decoder) with $p<n$, with the goal to minimise
\[\frac12 \int_{\R^n} | \theta - g\circ f(\theta)|^2 \nu_{data}(\dd \theta)\,.\]
That way, $f(\theta)$ is a low-dimensional representation of the data point $\theta$, from which $\theta$ can be approximately reconstructed thanks to $g$. For simplicity in the following we take $p=1$. We consider functions $f$ and $g$ parametrized by one layer of indistinguishable perceptrons, namely
\[f(\theta) = \frac1N \sum_{i=1}^N \psi\po w_i \cdot \theta + b_i \pf\,,\]
where $w_i \in \R^n$ and $b_i \in\R$ are the weights and bias parameters of the $i^{th}$ neuron and $\psi:\R\rightarrow\R$ is a given activation function, and similarly for each component of $g=(g_1,\dots,g_n)$. We take the same number $N$ of neurons for each of these functions.

As mentioned above, we now consider the linear case, namely $\psi(z)=z$. For simplicity we also disregard bias parameters and assume that the data is centered, i.e. $\int_{\R^n} \theta\nu_{data}(\dd \theta)=0$. Then
\[f(\theta) = m_0 \cdot \theta \,,\qquad g(\xi) = \xi m_1\,,\]
where $m_0 \in\R^n$ (resp. the $j^{th}$ component of $m_1\in\R^n$)  is the average of the weight parameter over the $N$ neuron parametrizing $f$ (resp. $g_j$, for $j\in\cco 1,n\ccf$).

As a conclusion, denoting by $\mu$ the probability distribution of $(w_0,w_1) \in\R^n\times\R^n$ (with $w_0$ the weight of a neuron from $f$, and $w_1=(w_{1,1},\dots,w_{1,n})$ the weights for $g_1,\dots,g_n$), the question becomes to minimize $V(m_\mu)$ where
\begin{equation}
\label{eq:VmuEncoder}
V(m) = \frac12 \int_{\R^n} | \theta - (\theta\cdot m_0) m_1 |^2 \nu_{data}(\dd \theta) = c -  m_1^T M m_0 + \frac12  |m_1|^2 m_0^T  M m_0\,,
\end{equation}
where we decomposed $m = (m_0,m_1) \in\R^n\times \R^n$ and  introduced the covariance matrix of the data $M = \int_{\R^n} \theta \theta^T \nu_{data}(\dd \theta)\in\R^{n^2}$ and the constant $c=\frac12  \int_{\R^n} | \theta |^2 \nu_{data}(\dd \theta)$.

\subsection{General considerations and definitions}\label{subsec:general}

Before addressing in details the specific model~\eqref{eq:F}, let us first recall some known facts and useful notations in the general case of a free energy~\eqref{eq:freeenergyF} (with $\sigma=1$). Here we keep the discussion at an informal level, without any precise assumptions on the energy $\mathcal E$ and the probability measures considered, as we won't state any precise statement in this general case, and the well-posedness of the elements introduced below is classical in the specific case of~\eqref{eq:F}.

A critical point of $\mathcal F$ is a fixed-point for the associated gradient flow, i.e. a stationary solution of the PDE~\eqref{eq:EDP}. For a fixed $\mu\in\mathcal P_2(\R^d)$, denote by $\Gamma(\mu)$ the invariant probability measure of the linear Fokker-Planck equation
\begin{equation}
\label{eq:EDP-linear}
\partial_t \rho_t = \na\cdot \po \rho_t D \mathcal E(\cdot,\mu) \pf +  \Delta \rho_t\,.
\end{equation}
We refer to $\Gamma(\mu)$ as the local equilibrium associated to $\mu$. Then,
\[\Gamma(\mu) \propto \exp \po - \frac{\delta \mathcal E}{\delta \mu}(x,\mu)\pf \dd x\,.\]
Moreover, a probability measure $\mu$ is a stationary solution of~\eqref{eq:EDP} if and only if $\mu = \Gamma(\mu)$. In the following we write
\[\mathcal K = \left\{ \mu\in\mathcal P_2(\R^d),\ \mu = \Gamma(\mu)\right\}\]
the set of critical points of $\mathcal F$.

For suitable probability densities $\nu,\mu$ over $\R^d$, write
\[\mathcal H(\nu|\mu) = \int_{\R^d} \ln \po \frac{\nu}{\mu}\pf \dd \nu\,,\qquad \mathcal I(\nu|\mu) = \int_{\R^d} \left|\na \ln \po \frac{\nu}{\mu}\pf\right|^2 \dd \nu\]
the relative entropy and Fisher information of $\nu$ with respect to $\mu$. A measure $\mu$ is said to satisfy a log-Sobolev inequality (LSI) with constant $C>0$ if for all suitable $\nu$,
\begin{equation}
\label{eq:LSI}
\mathcal H(\nu|\mu) \leqslant \frac{C}2\mathcal I(\nu|\mu)\,.
\end{equation}
In that case we write $C_{LS}(\mu) = \sup\{2\mathcal H(\nu|\mu)/\mathcal I(\nu|\mu),\ \nu\ll\mu,\ \nu\neq \mu\}$ the optimal LSI constant.

Along the flow~\eqref{eq:EDP},
\begin{equation}
\label{eq:dissipation}
\partial_t \mathcal F(\rho_t) = - \mathcal I\po \rho_t|\Gamma(\rho_t)\pf\,.
\end{equation}
The free energy $\mathcal F$ is said to satisfy a Polyak-Łojasiewicz (PL) inequality with constant $C>0$ if for all suitable $\mu$,
\begin{equation}
\label{eq:PLF}
\overline{\mathcal F}(\mu) := \mathcal F(\mu) - \inf_{\mathcal P_2(\R^d)} \mathcal F \leqslant \frac{C}2\mathcal I\po\mu|\Gamma(\mu)\pf\,,
\end{equation}
and we write $C_{PL}(\mathcal F) = \sup\{2\overline{\mathcal F}(\mu)/\mathcal I\po \mu| \Gamma(\mu)\pf,\ \mu \neq \Gamma(\mu)\}$ the optimal constant.  
When this is the case, thanks to~\eqref{eq:dissipation}, along~\eqref{eq:EDP},  for all $\rho_0$ and $t\geqslant 0$, 
\[\overline{\mathcal F}(\rho_t) \leqslant e^{- 2t/C} \overline{\mathcal F}(\rho_0)\,.\]
We say that $\mathcal F$ satisfies a uniform local LSI if there exists $C>0$ such that $\Gamma(\mu)$ satisfies a LSI with constant $C$ for all $\mu$; in other words, if for all suitable $\nu,\mu$,
\begin{equation}
\label{eq:locUnifLSI}
\mathcal H\po \nu|\Gamma(\mu)\pf \leqslant \frac{C}{2} \mathcal I\po \nu|\Gamma(\mu)\pf \,.
\end{equation}

For $N \geqslant 1$, the invariant measure of the particle system~\eqref{eq:particulesEDS2} (with $\sigma=1$) is the Gibbs measure
\begin{equation}
\label{eq:GIbbsdef}
\mu_\infty^N(\bx)  \propto \exp\po - U_N(\bx)\pf  \,.
\end{equation}
Denoting by $\rho_t^N$ the law of $\bX_t$, along~\eqref{eq:particulesEDS2},
\[\partial_t \mathcal H\po \rho_t^N |\mu_\infty^N \pf = - \mathcal I\po \rho_t^N |\mu_\infty^N \pf\,.\]
As a consequence, if $\mu_\infty^N$ satisfies a LSI with constant $C>0$ independent from $N$ (in which case we say that $\mu_\infty^N$ satisfies a uniform LSI), then for all $N\geqslant 1$, $t\geqslant 0$ and $\rho_0^N\in\mathcal P_2(\R^{dN})$,
\[\mathcal H\po \rho_t^N |\mu_\infty^N \pf \leqslant e^{-2 t/C} \mathcal H\po \rho_0^N |\mu_\infty^N \pf\,.\]

\subsection{Simple preliminary computations}\label{subsec:preliminary}

We work under the following settings:

\begin{assu}\label{assu:general}
The free energy  $\mathcal F$ is given by~\eqref{eq:F} with $\sigma=1$, for some $\kappa>0$ and $V\in\mathcal C^2(\R^d,\R)$. Denoting $V_{\kappa}(x) = V(x) + \frac{\kappa}2|x|^2$, we assume that $V_\kappa$ is strongly convex outside a compact set.
\end{assu}

This ensures in particular that, for all $N\geqslant 1$, the Gibbs measure~\eqref{eq:GIbbsdef} is well-defined and admits finite moments of all order (and even satisfies a log-Sobolev inequality, cf. Section~\ref{subsubsec:inegalfonct} below).

\subsubsection{Critical points}\label{subsec:criticalpoints}

First,  denoting by $\gamma_m$ the  Gaussian distribution on $\R^d$ with mean $m\in\R^d$ and covariance matrix $\kappa^{-1}I_d $, we see that
\begin{align}
 \mathcal F(\mu) &= V_\kappa(m_\mu) +  \frac{\kappa}2 \mathrm{Var}(\mu) +  \int_{\R^d} \mu(x)\ln(\mu(x))\dd x\nonumber\\
 &= V_\kappa(m_{\mu}) +   \mathcal H\po \mu | \gamma_{m_\mu}\pf - \frac{d}{2} \ln\po \frac{2\pi }{\kappa}\pf \,.\label{eq:FVkappa}
\end{align}
In particular, for any $\mu \in\mathcal P_2(\R^d)$,
\[\mathcal F(\mu) \geqslant \mathcal F(\gamma_{m_\mu})\,,\]
and the global minimizers of $\mathcal F$ are exactly the measures $\gamma_{m_*}$ with $m_* \in\R^d$ the global minimizers of $V_\kappa$.

For $\mu \in\mathcal P_2(\R^d)$, using~\eqref{eq:deltaEbmu}, 
\[\Gamma(\mu) = \gamma_{f(m_\mu)}\]
with 
\[f(m) = -\kappa^{-1} \na V(m) \,. \] 
As a consequence, the critical points of $\mathcal F$ are exactly
\[\mathcal K = \{\gamma_m,\ m\in\R^d,\ f(m) = m\} = \{\gamma_m,\ m\in\R^d,\ \na V_{\kappa}(m)=0\}\,.\]
In other words, there is a one-to-one correspondence between critical points of $\mathcal F$ and of $V_\kappa$. Moreover, the critical points are of the same nature: thanks to~\eqref{eq:FVkappa}, $m_*\in\R^d$ is a local minimizer of $V_\kappa$ if and only if $\gamma_{m_*}$ is a  minimizer of $\mathcal F$ over some Wasserstein ball.

\begin{rem}\label{rem:phasetransition}
If  the parameter $\sigma$ had not been fixed to $1$, we would have ended up with the same conclusion, except that the variance of $\gamma_m$ would be $\sigma^2/\kappa$ for $m\in\R^d$. In particular, there is thus no phase transition with the temperature $\sigma^2$ in terms of the number of stationary solutions, which is the number of critical points of $V_\kappa$  independently from $\sigma^2$. This is by contrast with the situation of~\eqref{eq:F_CurieWeiss} and many other classical models. At high temperature $\sigma$, the entropy part of $\mathcal F$ is preponderant and thus minimizers tend to look like Gaussian distributions, but in the toy model~\eqref{eq:F} anyway the critical points are all  Gaussian distributions at all temperatures.
\end{rem}

\begin{ex}
\label{ex:criticalpoints}
In the running example~\eqref{eq:VmuEncoder}, the equations for critical points are 
\begin{align}
0 = \na_{m_0} V_{\kappa}(m_0,m_1) &= \kappa m_0  -   M m_1 +  |m_1|^2  M m_0\label{loc:m0=0}\\
0 = \na_{m_1} V_{\kappa}(m_0,m_1) &= \kappa m_1  -   M m_0 +  (m_0^T M m_0) m_1\,.\label{loc:m1=0}
\end{align}
We see that $(0,0)$ is a critical point. Let $(m_0,m_1) $ be a non-zero critical point. If $Mm_0=0$, then~\eqref{loc:m1=0} would give $m_1=0$ and  then~\eqref{loc:m0=0} would give $m_0=0$, so we deduce that $Mm_0 \neq 0$. Moreover,~\eqref{loc:m1=0} shows that $M m_0 \in \mathrm{span}(m_0,m_1)$, and then~\eqref{loc:m0=0} shows that $ \mathrm{span}(m_0,m_1)$ is stable by $M$. If $m_1$ were not colinear to $m_0$, using~\eqref{loc:m0=0} and \eqref{loc:m1=0}, the matrix of $M$ in the basis $(m_0,m_1)$ would be
\[\begin{pmatrix}
0 & \kappa   \\
\kappa + m_0^T Mm_0 & |m_1|^2 \po \kappa + m_0^T Mm_0\pf 
\end{pmatrix}\,,\]
whose determinant is negative (since $M$ is a non-negative symmetric matrix), leading to a contradiction with the fact that $M$ is non-negative. Hence, we deduce that $m_1 = \alpha m_0$ for some $\alpha\in\R$, and then from~\eqref{loc:m1=0} that $m_0$ is an eigenvector of $M$. Denote by $\lambda>0$ the associated eigenvalue. Now, \eqref{loc:m0=0} and \eqref{loc:m1=0} with $m_0\neq 0$ reads
\[\kappa - \alpha \lambda + \alpha^2 \lambda |m_0|^2 = 0 \,,\qquad \kappa \alpha - \lambda + \alpha \lambda |m_0|^2 = 0\,.  \]
Multiplying the second equation by $\alpha$ and substracting the equations show that $\alpha^2=1$. Moreover, the second equation shows that $\alpha>0$, hence $m_0=m_1$, and then $|m_0|^2 = (\lambda-\kappa)/\lambda$, which is possible only if $\lambda>\kappa$. 

Conversely, let $(v,\lambda)$ be an eigenpair of $M$ such that $\lambda>\kappa$ and with $|v|=1$. Then it is straightforward to check that $m^{(\lambda)} := \sqrt{1-\kappa/\lambda}(v,v)$ solves the critical point equations~\eqref{loc:m0=0} and \eqref{loc:m1=0}. We have  thus  identified all critical points of $V_\kappa$. Moreover, for the critical point $m^{(\lambda)}$ associated to an eigenvalue $\lambda>0$, recalling~\eqref{eq:VmuEncoder},
\[V_\kappa(m^{(\lambda)}) = c- \frac{\lambda-\kappa}{2}\po 1+ \frac{\lambda}{\kappa}\pf + \frac{\kappa}\lambda (\lambda-\kappa) = c -  \frac{\lambda-\kappa}{2}\po 1+ \frac{\lambda}{\kappa}- 2\frac{\kappa}{\lambda}\pf \,,\]
while $V_\kappa(0)=c$.  Over $(\kappa,+\infty)$, $\lambda \mapsto V_\kappa(m^{(\lambda)})$ is decreasing and smaller than $V_\kappa(0)$. As a consequence, the global minimum of $V_\kappa$ is attained at  $m^{(\lambda_n)}$ where $\lambda_n$ is the largest eigenvalue of $M$ if $\lambda_n>\kappa$, otherwise it is attained at $0$. See Figure~\ref{fig:Vkappa} for an illustration.
\end{ex}

Let us conclude with some computations that will prove useful later on. Thanks to~\eqref{eq:FVkappa}, for any $\mu\in\mathcal P_2(\R^d)$ and $m\in\R^d$,
 \begin{equation}
\label{locgh}
\mathcal F(\mu) - \mathcal F(\gamma_m) =  V_\kappa(m_\mu) - V_\kappa(m) +  \mathcal H\po \mu|\gamma_{m_\mu} \pf  \,. 
 \end{equation}
Besides, for $m'\in\R^d$,
\begin{align}
 \mathcal H\po \mu|\gamma_{m_\mu} \pf &= \mathcal H\po \mu|\gamma_{m'} \pf + \frac{\kappa}2 \int_{\R^d} \po |x-m_\mu|^2 -|x-m'|^2 \pf \mu(x)\dd x  \nonumber \\
&= \mathcal H\po \mu|\gamma_{m'} \pf - \frac\kappa2 |m_\mu - m'|^2  \,.\label{loc1bis}
\end{align}

\begin{figure}[t]
\begin{subfigure}[t]{0.48\textwidth}
  %\centering
    \includegraphics[scale=0.42]{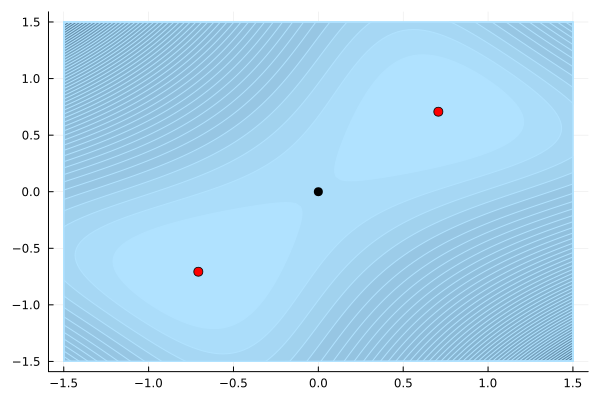}
    \caption{$\kappa=0.5$}
\end{subfigure}
\hfill
\begin{subfigure}[t]{0.48\textwidth}
  %\centering
    \includegraphics[scale=0.42]{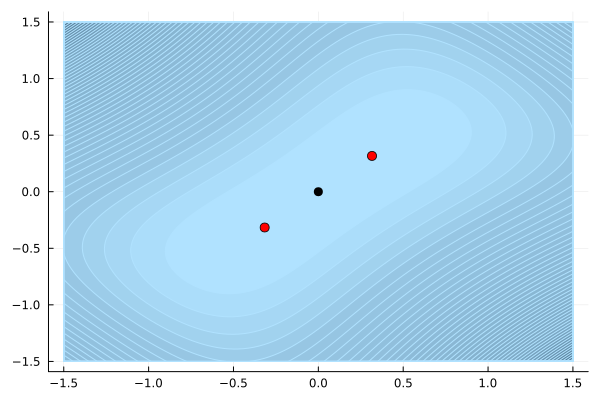}
    \caption{$\kappa=0.9$}
\end{subfigure}
\begin{subfigure}[t]{0.48\textwidth}
  %\centering
    \includegraphics[scale=0.42]{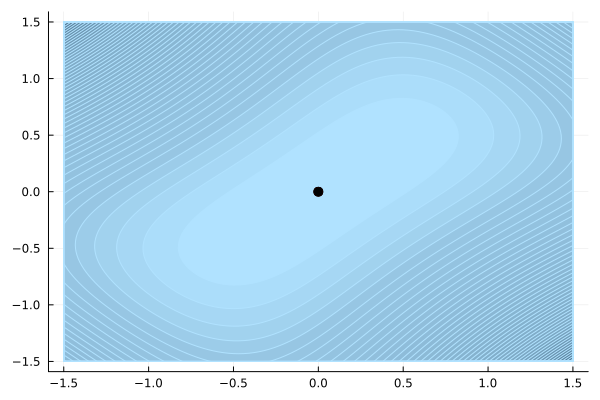}
   \caption{$\kappa=1$}
\end{subfigure}
\hfill
\begin{subfigure}[t]{0.48\textwidth}
  %\centering
    \includegraphics[scale=0.42]{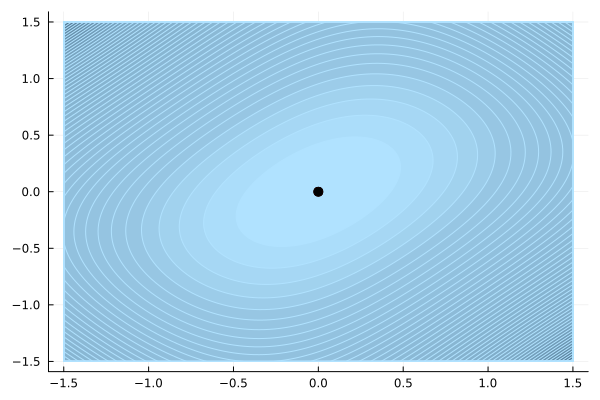}
\caption{$\kappa=2$}
\end{subfigure}
\caption{Potential $V_\kappa$ with $V$ from the running example~\eqref{eq:VmuEncoder} plotted for $(m_0,m_1) \in \{(x v,y v),\ x,y\in\R\}$ where $v$ is a unit eigenvector of $M$ associated to the eigenvalue $\lambda=1$, for $\kappa \in \{0.5,0.9,1,2\}$. The red dots are the minimizers $\pm \sqrt{1-\kappa/\lambda}(v,v)$ (for $\kappa<\lambda$), the black dot is $0$. }
\label{fig:Vkappa}
\end{figure}

\subsubsection{barycentre dynamics}\label{sec:barycentre}

The Wasserstein gradient descent for the free energy~\eqref{eq:F} (with $\sigma=1$) reads
\begin{equation}
\label{eq:gradientEDP}
\partial_t \rho_t = \na\cdot \po b_{\rho_t} \rho_t \pf + \Delta \rho_t 
\end{equation}
with $b_\mu$ given by~\eqref{eq:deltaEbmu}.  The associated particle system is 
\begin{equation}
\label{eq:particles}
\forall i\in\cco 1,N\ccf,\qquad \dd X_t^i = - \na V(\bar X_t ) \dd t - \kappa X_t^i \dd t  + \sqrt{2} \dd B_t^i
\end{equation}
with the barycentre $\bar X_t = \frac1N\sum_{i=1}^N X_t^i$. Multiplying~\eqref{eq:gradientEDP} by $x$ and integrating over $\R^d$ shows that
\begin{equation}
\label{eq:flot_mt}
\partial_t m_{\rho_t } = - \na V_\kappa(m_{\rho_t})\,.
\end{equation} 
Similarly, summing~\eqref{eq:particles} over $i\in\cco 1,N\ccf$ and dividing by $N$ gives  
\begin{equation}
\label{eq:EDSbarycentre}
 \dd \bar X_t = - \na V_\kappa (\bar X_t ) \dd t  +\frac{\sqrt{2}}{\sqrt N}\dd \tilde B_t\,,
\end{equation}
with $\tilde B= \frac{1}{\sqrt{N}} \sum_{i=1}^N B_t^i$ a brownian motion over $\R^d$. The fact that the barycentre solves a closed SDE, and is thus an autonomous Markov process (of the form~\eqref{gradientepsilon}), is reminiscent to the situation in the discrete Curie-Weiss model \cite{JournelLeBris}. It is the   reason why the analysis is much simpler in this case than in other situations, for instance~\eqref{eq:F_CurieWeiss} considered in \cite{MonmarcheMetastable}.

\subsubsection{Orthogonal dynamics}\label{subsec:orthogonDyn}

Substracting~\eqref{eq:EDSbarycentre} to \eqref{eq:particles} gives  
\begin{equation}
\label{eq:particles-centered}
\dd (X_t^i - \bar X_t) =  - \kappa \po X_t^i - \bar X_t\pf \dd t   + \sqrt{2}  \po \dd B_t^i - \frac{1}{\sqrt N}\dd \tilde B_t\pf \,.
\end{equation}
In other words, considering $P$ a $(d-1)N\times dN$ matrix whose kernel is exactly the orthogonal of  $E_0:=\{x\in(\R^d)^N,\ \frac1N\sum_{i=1}^N x_i = 0\}$ and such that $P P^T$ is the identity of $\R^{(d-1)N}$ and $P^T P$  is the orthogonal projection on $E_0$ (this amounts to say that $P\bx$ is the coordinates of the orthogonal projection of $\bx$ on $E_0$ in an orthonormal basis of $E_0$) and setting $Y_t = P \bX_t$, then
\begin{equation}
\label{eq:OU}
\dd Y_t = - \kappa Y_t \dd t + \sqrt{2}  \dd W_t\,,
\end{equation}
with $W = P \mathbf{B}$ a $(d-1)N$-dimensional brownian motion independent from $\tilde B$. 

Similarly to~\eqref{eq:particles-centered}, considering the centered density $\rho_t^c(x) = \rho_t(x+ m_{\rho_t})$ with $\rho_t$ solving~\eqref{eq:gradientEDP}, thanks to~\eqref{eq:flot_mt}, we see that
\begin{align*}
\partial_t \rho_t^c(x) %&= \partial_t \rho_t (x+ m_{\rho_t}) + (\partial_t m_{\rho_t} ) \na \rho_t(x+m_{\rho_t})\\
%&= \kappa \na\cdot \po (\cdot  - m_{\rho_t})\rho_t\pf (x+ m_{\rho_t})  + \sigma^2 \Delta \rho_t(x+m_{\rho_t}) \\
%&= \kappa \po (x+ m_{\rho_t} - m_{\rho_t})\na \rho_t^c(x) + \rho_t^c(x)\pf +  \sigma^2 \Delta \rho_t^c(x)
&= \kappa \na\cdot \po x \rho_t^c(x) \pf +    \Delta \rho_t^c(x)\,,
\end{align*}
which is the Fokker-Planck equation associated to the Ornstein-Uhlenbeck process~\eqref{eq:OU} (here in dimension $d$). In other words, $\rho_t^c$ is the law of $e^{-\kappa t} X_0 + \sqrt{1-e^{-2\kappa t}} G_0$ where $X_0 \sim \rho_0^c$ and $G_0 \sim \gamma_0$.

\subsubsection{Equilibria and log-Sobolev inequalities}\label{subsubsec:inegalfonct}

For any $\mu\in\mathcal P_2(\R^d)$,  as a Gaussian distribution with variance $\kappa^{-1} I_d$, $\Gamma(\mu)$ satisfies a LSI with constant $\kappa$ (see \cite[Proposition 5.5.1]{BakryGentilLedoux}), so that the uniform local LSI~\eqref{eq:locUnifLSI} holds.

 We write $Q$ the $dN\times dN$ orthogonal matrix such that
\[(\sqrt{N} \bar X_t,Y_t) = Q \bX_t\,, \]
with $Y_t$ as in Section~\ref{subsec:orthogonDyn}. The invariant measure of $(\bar X_t,Y_t)$ is 
\[   \nu_\infty^N \otimes \mathcal N \po 0, \kappa^{-1} I_{(d-1)N}\pf \]
with $\nu_\infty^N$ the invariant measure of~\eqref{eq:EDSbarycentre}, i.e. the Gibbs measure
\begin{equation}
\label{eq:nuinfinityN}
\nu_\infty^N  \propto \exp\po - N V_\kappa(z)\pf \dd z\,.
\end{equation}
By classical properties on LSI, the following holds: 

%Denote by $\eta_N'$ the optimal LSI constant of $\nu_\infty^N$ (i.e. the largest constant so that LSI holds). Then, by tensorization and homogeneity of the LSI, the law of $(\sqrt{N} X,Y)$ with $(X,Y) \sim  \nu_\infty^N \otimes \mathcal N \po 0, \kappa^{-1} I_{(d-1)N}\pf$ satisfies a LSI with constant $\min(\eta_N'/N,\kappa)$, and thus so does $\mu_\infty^N$. Denoting by $\eta_N$ the optimal constant of $\mu_\infty^N$, we have thus $\eta_N \geqslant \min(\eta_N'/N,\kappa)$. Conversely, as the image of $\mu_\infty^N$ by the map $\bx \mapsto \bar x$ which is $N^{-1/2}$ Lipschitz, $\nu_\infty^N $ satisfies a LSI with constant $N \eta_N$. To sum up:

%\pierre{En fait, comme image de $\mu_N^\infty$ par une fonction $1$ lipschitz, la gaussienne (qui ne satisfait pas une LSI avec une meilleure constante que $\kappa$) satisfait une LSI de constant $\eta_N$, donc $\kappa \geqslant \eta_N$, et donc au final il y a égalité dans le lemme.}

\begin{lem}
\label{lem:LSI}
Under Assumption~\ref{assu:general}, for all $N\geqslant 1$, the Gibbs measure $\mu_\infty^N$ defined in~\eqref{eq:GIbbsdef} satisfies a LSI if and only if $\nu_\infty^N$ satisfies a LSI, and in that case
\[C_{LS}(\mu_\infty^N) \  = \  \max\po N C_{LS}(\nu_\infty^N),\kappa^{-1}\pf\,.\]
\end{lem}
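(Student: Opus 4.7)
The plan is to use the orthogonal decomposition already introduced in Section~\ref{subsec:orthogonDyn} to reduce the LSI for $\mu_\infty^N$ on $\R^{dN}$ to the tensor product of the LSI for $\nu_\infty^N$ (after rescaling) and for a Gaussian on $\R^{(d-1)N}$. Three well-known facts about the log-Sobolev constant will do the job: invariance under orthogonal change of variables, tensorization, and the quadratic scaling under dilation.

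First I would record the push-forward of $\mu_\infty^N$ by $Q$: since $Q$ is orthogonal, one checks directly from \eqref{eq:GIbbsdef}, the definition of $U_N$ and the identity $Q\bx=(\sqrt N\bar x,y)$ that
\[
Q_\sharp \mu_\infty^N \;=\; \tilde\nu_\infty^N\,\otimes\,\mathcal N\po 0,\kappa^{-1}I_{(d-1)N}\pf\,,
\]
where $\tilde\nu_\infty^N$ denotes the image of $\nu_\infty^N$ by the dilation $z\mapsto\sqrt N z$. This is exactly the invariant measure of $(\sqrt N \bar X_t,Y_t)$ mentioned before \eqref{eq:nuinfinityN}; only the $\sqrt N$ scaling of the first coordinate needs a separate verification by a change of variables in the density.

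Second, since relative entropy and Fisher information are both invariant under isometries of $\R^{dN}$, one has $C_{LS}(\mu_\infty^N)=C_{LS}(Q_\sharp\mu_\infty^N)$. Then tensorization of LSI (see e.g. \cite[Proposition~5.2.7]{BakryGentilLedoux}) asserts that a product of two probability measures satisfies LSI if and only if each factor does, and in that case the optimal constant equals the maximum of the two factor constants. Applied here,
\[
C_{LS}(\mu_\infty^N)\;=\;\max\Bigl(C_{LS}(\tilde\nu_\infty^N),\,C_{LS}\bigl(\mathcal N(0,\kappa^{-1}I_{(d-1)N})\bigr)\Bigr)\,.
\]
The Gaussian factor has LSI constant $\kappa^{-1}$ by the Bakry–Émery criterion applied to the quadratic potential $\tfrac{\kappa}{2}|y|^2$.

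Finally, a quick scaling computation shows $C_{LS}(\tilde\nu_\infty^N)=N\,C_{LS}(\nu_\infty^N)$: if $Z\sim\nu_\infty^N$ and $\tilde Z=\sqrt N Z$, then for any $\tilde\nu\ll\tilde\nu_\infty^N$ pulled back to $\nu\ll\nu_\infty^N$, the entropy is invariant while gradients pick up a factor $1/\sqrt N$, so the Fisher information is divided by $N$, which multiplies the optimal constant by $N$. Combining these three steps yields exactly the announced formula, and the equivalence ``$\mu_\infty^N$ satisfies LSI $\iff$ $\nu_\infty^N$ satisfies LSI'' is an immediate consequence since the Gaussian factor always does. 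There is no real obstacle; the only point that requires a little care is keeping track of the $\sqrt N$ dilation in identifying $Q_\sharp\mu_\infty^N$ and propagating it through the LSI scaling.
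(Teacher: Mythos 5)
Your proposal is correct and follows essentially the same route as the paper: identify $Q_\sharp\mu_\infty^N$ as the product of the $\sqrt N$-dilated $\nu_\infty^N$ with the Gaussian $\mathcal N(0,\kappa^{-1}I_{(d-1)N})$, then combine invariance under the orthogonal map, tensorization with the max of the constants, the dilation scaling (which the paper obtains via the Lipschitz-image property applied in both directions), and the Gaussian constant $\kappa^{-1}$. No gap to report.
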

In particular, a uniform LSI for $\mu_\infty^N$ is equivalent to the fact $\nu_\infty^N$ satisfies a LSI with constant of order $1/N$.

\begin{proof}
By \cite[Proposition 5.2.7]{BakryGentilLedoux}, if $\mu$ and $\nu$ are two probability measures, then $\mu\otimes \nu$ satisfies a LSI if and only if $\mu$ and $\nu$ both satisfies a LSI, and moreover $C_{LS}(\mu\otimes \nu)=\max(C_{LS}(\mu),C_{LS}(\nu))$ (strictly speaking, \cite[Proposition 5.2.7]{BakryGentilLedoux} only states the ``if" part, but the ``only if" is trivial by applying the LSI of $\nu\otimes \mu$ to measures of the form $\nu'\otimes \mu$ and $\nu\otimes \mu'$).

By \cite[Proposition 5.4.3]{BakryGentilLedoux}, if $\mu$ satisfies a LSI with constant $C$ then its image by a $L$-Lipschitz map satisfies a LSI with constant $CL^2$.

By \cite[Proposition 5.5.1]{BakryGentilLedoux} (and the discussion after for the optimality of the constant), the standard Gaussian distribution $\mathcal N(0,1)$ satisfies a LSI with optimal constant $C=1$.

Conclusion follows by combining these properties with the fact that $\mu_\infty^N$ is the image by an orthogonal transformation of the invariant measure of $(\sqrt{N} \bar X_t,Y_t)$.
\end{proof}

\begin{rem}
In the study of the LSI for $\mu_\infty^N$, the fact that, up to an orthogonal transformation, the Gibbs measure can be exactly decomposed as the tensor product of a strongly log-concave measure and a possibly more complicated but $d$-dimensional measure is quite specific to the particular model~\eqref{eq:F}. However, in more realistic situations such as~\eqref{eq:F_CurieWeiss}, this situation is somewhat generalized thanks to the two-scale method used e.g. in \cite{Dagallier,MonmarcheMetastable}, references within, and Section~\ref{sec:CurieWeiss}.
\end{rem}

\subsubsection{Miscellaneous remarks}\label{subsec:misc}

\noindent\emph{(Stable families)} There are two interesting families of probability measures which are preserved by the non-linear flow~\eqref{eq:gradientEDP}:
\begin{itemize}
\item \emph{Gaussian distributions.} If $\rho_0 = \mathcal N(m_0,\Sigma_0^2)$ for some postivie symmetric $d\times d$ matrix $\Sigma_0^2$  and $m_0\in\R^d$, then $\rho_t = \mathcal N(m_t,\Sigma_t^2)$ where $m_t = m_{\rho_t}$ solves~\eqref{eq:flot_mt} and $\Sigma_t^2$ solves
\[\partial_t \Sigma_t^2 = -2\kappa \Sigma_t^2 + 2 I_d\,, \]
so that $\Sigma_t^2 = e^{-2\kappa t} \Sigma_0^2 + \kappa^{-1}(1-e^{-2\kappa t}) I_d$. 
\item \emph{Tensor products.} If $\rho_0 = \bigotimes_{i=1}^d \rho_{0,i}$ where $\rho_{0,i}$ is a probability distribution on $\R$ (more generally $\rho_0$ could be a tensor product of multi-dimensional probability distributions) then $\rho_t = \bigotimes_{i=1}^d \rho_{t,i}$ for all $t\geqslant 0$ where, for $i\in\cco 1,d\ccf$, $\rho_{t,i}$ solves
\[\partial_t \rho_{t,i}(x_i)  = \na_{x_i}\cdot \co \po \na_{x_i} V(m_{\rho_t}) + \kappa x_i\pf \rho_{t,i}(x_i)\cf  + \Delta_{x_i} \rho_{i,t}(x_i)\,.\] 
For instance, in the running example from Section~\ref{subsec:runningex}, if the parameters of the encoder and the decoder are initially independent, they remain independent along the flow~\eqref{eq:gradientEDP}.
\end{itemize}

However, in general, this is no longer true for the particle system~\eqref{eq:particles}. When $V$ is not a quadratic function, even if $\bX_0$ is a Gaussian vector, $(\bX_t)_{t\geqslant 0}$ is not a Gaussian process.

\medskip

\noindent\emph{(Flat convexity)} The energy is said to be convex along linear interpolations, or flat-convex, if for all $\nu,\mu\in\mathcal P_2(\R^d)$ and $t\in[0,1]$,
\[\mathcal E\po t\nu + (1-t)\mu) \pf \leqslant t \mathcal E(\nu) + (1-t) \mathcal E(\mu)\,.\]
It is straightforward to check that $\mathcal E$ is flat-convex if and only if $V$ is convex. More generally,  provided $\|(\na^2 V)_{-}\|_\infty := \sup\{ - u\cdot \na^2 V(x) u, u,x\in\R^d,|u|=1\}$ is finite, using that $V(x) + \|(\na^2 V)_{-}\|_\infty |x|^2/2$ is convex, we get that  for all  $\nu,\mu\in\mathcal P_2(\R^d)$ and $t\in[0,1]$,
\begin{equation}\label{loc:convexitt}
\mathcal E\po t\nu + (1-t)\mu) \pf \leqslant  t \mathcal E(\nu) + (1-t) \mathcal E(\mu) + \frac12t (1-t) \|(\na^2 V)_{-}\|_\infty |m_\nu-m_\mu|^2\,.
\end{equation}
This kind of conditions have been used in \cite{MonmarcheReygner,MonmarcheLSI} to prove LSI beyond the flat-convex case.

\begin{ex}
\label{ex:hessienne}
In the running example~\eqref{eq:VmuEncoder}, for $m=(m_0,m_1)$, we compute
\[\na^2 V(m) = \begin{pmatrix}
|m_1|^2 M & -M+2 (M m_0)m_1^T \\
 -M+2 m_1(M m_0)^T & (m_0^T M m_0) I_d
\end{pmatrix}\,.\]
In particular, except for the trivial case where $M=0$, $V$ is never convex since, considering an eigenpair $(v,\lambda)$ of $M$ with  $\lambda>0$, then
\begin{equation}
\label{loc:vMV}
\begin{pmatrix}
v \\ v 
\end{pmatrix}^T \na^2 V(0) \begin{pmatrix}
v \\ v 
\end{pmatrix} = - 2 \lambda|v|^2<0\,.
\end{equation}
\end{ex}

\subsection{Hessian of the free energy}\label{subsec:hessian}

As it will give some insights and interpretations of the results in Sections~\ref{sec:ineq} and \ref{sec:exittimes}, let us determined  the Hessian of the free energy~\eqref{eq:F}, in the sense of Otto calculus \cite{otto2001geometry,OttoVillani} (we will  not use it in rigorous proofs, and thus we keep computations formal and functional settings vague). In this formalism, the tangent space of $\mathcal P_2(\R^d)$ at some point $\mu$ is understood ad the set of functions $\Phi:\R^d\rightarrow \R$ defined up to an additive constant, endowed with the scalar product $\langle \Phi_1,\Phi_2\rangle_{\mu} = \int_{\R^d} \na \Phi_1 \cdot \na \Phi_2 \dd \mu$. The Hessian of $\mathcal F$ at $\mu$ is given as the quadratic form on the tangent space
\[\langle \mathrm{Hess}\mathcal F_{|\mu}  \Phi,\Phi\rangle_{\mu} = \po \frac{\dd}{\dd t}\pf^2 \mathcal F(\mu_t) \,,\]
when
\begin{equation}
\label{loc:mutOtoo}
\partial_t \mu_t + \na\cdot \po \mu_t \na \Phi_t \pf =0\,,\qquad \partial_t \Phi_t + \frac12|\na \Phi_t|^2 =0\,,
\end{equation}
with $\mu_0=\mu$ and $\Phi_0=\Phi$. 
\begin{lem}\label{lem:hessian}
For $\mathcal F$ given by~\eqref{eq:F} with $\sigma^2=1$, for suitable $\mu \in\mathcal P_2(\R^d)$ and $\Phi:\R^d\rightarrow\R$,
 \begin{equation}
 \label{eq:HessianF}
\langle \mathrm{Hess}\mathcal F_{|\mu}  \Phi,\Phi\rangle_{\mu} = \int_{\R^d} \co \| \na^2 \Phi\|_{F}^2 + \kappa |\na \Phi|^2\cf \dd  \mu + \mu(\na\Phi)^T \na^2 V(m_\mu) \mu(\na \Phi)\,,
\end{equation}  
with $\|\cdot\|_F$ the Frobenius norm and $\mu(\na \Phi) = \int_{\R^d} \na \Phi \dd \mu$.
\end{lem}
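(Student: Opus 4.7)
Write $\mathcal F = \mathcal F_V + \mathcal F_Q + \mathcal F_{\mathrm{ent}}$ where $\mathcal F_V(\mu) = V(m_\mu)$, $\mathcal F_Q(\mu)=\frac{\kappa}{2}\int|x|^2 \mu$, and $\mathcal F_{\mathrm{ent}}(\mu)=\int\mu\ln\mu$. Along the curve $(\mu_t,\Phi_t)$ solving \eqref{loc:mutOtoo}, the plan is to compute $(\dd/\dd t)^2 \mathcal F_i(\mu_t)|_{t=0}$ for each piece separately, then sum them up. The last two pieces reduce to well-known Otto calculus formulas, so the main content is the new term coming from $\mathcal F_V$.

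For $\mathcal F_V$: the key identity is obtained by multiplying the continuity equation by $x$ and integrating by parts, which yields
\[\frac{\dd}{\dd t} m_{\mu_t} = -\int_{\R^d} x \, \na\cdot(\mu_t \na \Phi_t)\dd x = \int_{\R^d}\na\Phi_t\dd \mu_t = \mu_t(\na \Phi_t).\]
Thus $(\dd/\dd t)\mathcal F_V(\mu_t) = \na V(m_{\mu_t})\cdot \mu_t(\na \Phi_t)$. Differentiating once more and applying the chain rule gives two terms; the first is $\mu_t(\na\Phi_t)^T\na^2 V(m_{\mu_t})\mu_t(\na\Phi_t)$. For the second I need $(\dd/\dd t)\mu_t(\na \Phi_t)$: combining the continuity equation (integrated against $\na\Phi_t$ by parts, producing $\int \na^2\Phi_t \na\Phi_t \dd\mu_t$) with the Hamilton--Jacobi equation (giving $\partial_t \na\Phi_t=-\na^2\Phi_t\na\Phi_t$), the two contributions cancel and $(\dd/\dd t)\mu_t(\na\Phi_t)=0$. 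Only the chain-rule term survives, yielding exactly $\mu(\na\Phi)^T \na^2 V(m_\mu)\mu(\na\Phi)$ at $t=0$.

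For $\mathcal F_Q$: a direct two-integration-by-parts calculation, using $\na^2(\tfrac{\kappa}{2}|x|^2)=\kappa I_d$, yields $(\dd/\dd t)^2 \mathcal F_Q(\mu_t)|_{t=0} = \kappa\int |\na\Phi|^2 \dd\mu$ (the standard Otto-Hessian of a potential energy $\int W\dd\mu$, which is $\int \na^2 W(\na\Phi,\na\Phi)\dd\mu$).

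For $\mathcal F_{\mathrm{ent}}$: this is the classical computation underlying Otto--Villani. A first integration by parts gives $(\dd/\dd t)\mathcal F_{\mathrm{ent}}(\mu_t) = -\int \mu_t\Delta\Phi_t\dd x$. Differentiating once more and using both PDEs in \eqref{loc:mutOtoo}, one gets two terms involving $\na\Phi_t\cdot\na\Delta\Phi_t$; applying the flat Bochner identity $\tfrac12 \Delta(|\na\Phi|^2) = \na\Phi\cdot\na\Delta\Phi + \|\na^2\Phi\|_F^2$ makes them combine into $\int\|\na^2\Phi\|_F^2\dd\mu$ at $t=0$. Summing the three contributions produces \eqref{eq:HessianF}. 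The only mildly delicate step is the cancellation $(\dd/\dd t)\mu_t(\na\Phi_t)=0$ in the $\mathcal F_V$ part; everything else is routine integration by parts (and, as announced, carried out formally without worrying about regularity).
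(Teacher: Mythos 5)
Your proof is correct, and the non-routine part — showing $\partial_t m_{\mu_t}=\mu_t(\nabla\Phi_t)$ and that $\partial_t^2 m_{\mu_t}=0$ by cancellation between the continuity and Hamilton--Jacobi equations — is exactly the paper's computation. The only organizational difference is in how the remaining terms are packaged: you split the free energy three ways ($\mathcal F_V+\mathcal F_Q+\mathcal F_{\mathrm{ent}}$) and invoke the two classical Otto formulas for the Hessian of a potential energy $\int W\,\mu$ (giving $\kappa\int|\nabla\Phi|^2\,\mu$) and of the entropy (giving $\int\|\nabla^2\Phi\|_F^2\,\mu$) separately, whereas the paper uses the decomposition~\eqref{locgh}--\eqref{loc1bis} to write $\mathcal F(\mu)-\mathcal F(\rho_*)=W(m_\mu)+\mathcal H(\mu|\rho_*)$ with $\rho_*=\gamma_{m_*}$ a fixed Gaussian, then cites a single Otto--Villani formula for $\mathrm{Hess}\,\mathcal H(\cdot|\rho_*)$ which delivers $\|\nabla^2\Phi\|_F^2+\kappa|\nabla\Phi|^2$ in one stroke. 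The two routes are algebraically equivalent (the relative-entropy Hessian is precisely the sum of the entropy and potential-energy Hessians), so this is a matter of taste; the paper's grouping is slightly more economical in citations, yours is a bit more transparent about where each term in~\eqref{eq:HessianF} originates.
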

\begin{proof}
As computed in \cite{OttoVillani}, for any   $\mu\in\mathcal P_2(\R^d)$ and $\rho_* \propto e^{-U}$,
\[\langle \mathrm{Hess}\mathcal H(\cdot|\rho_*)_{|\mu}  \Phi,\Phi\rangle_{\mu} = \int_{\R^d} \co \| \na^2 \Phi\|_{F}^2 + \na \Phi\cdot \na^2 U \na \Phi \cf \dd  \mu\,. \]
When $\rho_* = \gamma_{m_*}$ for some $m_*\in\R^d$, this gives
\[\langle \mathrm{Hess}\mathcal H(\cdot|\rho_*)_{|\mu}  \Phi,\Phi\rangle_{\mu} = \int_{\R^d} \co \| \na^2 \Phi\|_{F}^2 + \kappa |\na \Phi|^2\cf \dd \mu\,. \]
 From \eqref{locgh} and \eqref{loc1bis},
\[\mathcal F(\mu) - \mathcal F(\rho_*) = W(m_\mu)  + \mathcal H(\mu|\rho_*)\]
with $W(m)=V(m)-V(m_*)+ \kappa (m-m_*)\cdot m_*$, and thus  we have to compute $\partial_t^2 W(m_{\mu_t})$ at $t=0$ along~\eqref{loc:mutOtoo}. We find
\[\partial_t m_{\mu_t} =   \mu_t(\na \Phi_t)  \]
and, for any $j\in\cco 1,d\ccf$,
\[\partial_t \int_{\R^d} \partial_j \Phi_t \dd \mu_t =  \int_{\R^d} \na\Phi_\cdot   \na \partial_j \Phi_t \dd  \mu_t - \frac12 
 \int_{\R^d}  \partial_{j} \po |\na \Phi_t|^2\pf  \mu_t = 0\,,\]
 hence $\partial_t^2 m_{\mu_t}=0$. As a consequence, we end up with~\eqref{eq:HessianF}.

\end{proof}

The orthogonal projection of $\Phi$ on the set $E_\ell$ of linear forms in the tangent space of $\mathcal P_2(\R^d)$ at $\mu$ is given by
\[\Pi_{\ell}\Phi (x) = \mu(\na \Phi)\cdot x\,,\]
since this ensure that for any linear $\Psi_u(x) = u\cdot x$ with $u\in\R^d$,
\[\langle \Psi_u,\Phi-\Pi_\ell \Phi \rangle_{\mu} = u \cdot \co \mu(\na \Phi) - \mu(\na \Phi)\cf =0\,. \]
We see that $E_\ell^{\perp}$ is the set of functions $\Phi$ with $\mu(\na \Phi)=0$. In particular, for $\Phi\in E_\ell^\perp$,
\begin{equation}
\label{eq:HessCOnvexEortho}
\langle \mathrm{Hess}\mathcal F_{|\mu}  \Phi,\Phi\rangle_{\mu} = \int_{\R^d} \co \| \na^2 \Phi\|_{F}^2 + \kappa |\na \Phi|^2\cf \mu  \geqslant \kappa\|\Phi\|_\mu^2 \,,
\end{equation}
so that $\mathrm{Hess}\mathcal F_{|\mu} $ is positive with a spectral gap larger than $\kappa$ on $E_\ell^\perp$. On the other hand, for $u\in\R^d$,
\begin{equation}
\label{eq:HessEl}
\langle \mathrm{Hess}\mathcal F_{|\mu}  \Psi_u,\Psi_u\rangle_{\mu} = u\cdot \na^2 V_\kappa(m_\mu) u \,.
\end{equation}
Moreover, $E_{\ell}$ and its orthogonal are stable by $\mathrm{Hess}\mathcal F$, since
\begin{equation}
\label{eq:Hessorthodecompos}
\langle \mathrm{Hess}\mathcal F_{|\mu}  \Phi,\Phi\rangle_{\mu} = \langle \mathrm{Hess}\mathcal F_{|\mu}  (\Phi-\Pi_\ell\Phi),\Phi-\Pi_\ell\Phi\rangle_{\mu} + \langle \mathrm{Hess}\mathcal F_{|\mu} \Pi_\ell \Phi,\Pi_\ell \Phi\rangle_{\mu} \,.
\end{equation}

\begin{rem}
\label{rem:convexité}
As a consequence, the Hessian of $\mathcal F$ is positive everywhere if and only if $\na^2 V_\kappa = \na^2 V + \kappa I$ is positive everywhere. This should be compared to flat-convexity of the energy $\mathcal E$ which, as discussed in Section~\ref{subsec:misc}, is equivalent to $\na^2 V$ being positive everywhere.

More locally, at a critical point $\rho_*=\gamma_{m_*}$ of the free energy,  $\mathrm{Hess}\mathcal F$ is definite positive iff $\na^2 V_\kappa(m_*)$ is  definite positive, i.e. if $m_*$ is a non-degenerate local minimizer of $V_\kappa$.
\end{rem}

For $m \in \R^d$, let $L_m = \Delta + \na \ln \gamma_m \cdot \na = \Delta - \kappa (x-m) \cdot\na   $, which is such that, for all nice functions $f:\R^d\rightarrow\R$,  $\int_{\R^d} L_m f \gamma_m = 0$. Then, for nice $\Phi$, 
\begin{eqnarray*}
\langle L_m \Phi,\Phi \rangle_{\gamma_m} &= & \sum_{i=1}^d \int_{\R^d} \partial_{x_i} L_m \Phi \partial_{x_i} \Phi \gamma_m\\
&=& \sum_{i=1}^d \int_{\R^d} \po  \po L_m \partial_{x_i} + [\partial_{x_i},L_m]\pf   \Phi \partial_{x_i} \Phi - \frac12 L_m\po |\partial_{x_i} \Phi|^2\pf^2 \pf \gamma_m\,,
\end{eqnarray*}
with the commutator
\[[\partial_{x_i},L_m]   = \partial_{x_i} L_m -  L_m\partial_{x_i}  = \na \partial_{x_i} \ln \gamma_m \cdot \na  = - \kappa \partial_{x_i}\,. \]
Hence, using that $\frac12 L_m(f^2) - f L_m f = |\na f|^2$ for nice $f$,
\[
\langle L_m \Phi,\Phi \rangle_{\gamma_m} = - \sum_{i=1}^d \int_{\R^d}  \po  \kappa ( \partial_{x_i} \Phi)^2 +  |\na \partial_{x_i} \Phi|^2 \pf \gamma_m= - \int_{\R^d}  \po  \kappa |\na \Phi|^2 +  \|\Phi\|_F^2 \pf \gamma_m\,.
\]
This is a very classical computation, see e.g. \cite[Section 1.16.1]{BakryGentilLedoux}. As a consequence, we get the following form for $\mathrm{Hess}\mathcal F$ at points $\mu=\gamma_m$ (and thus in particular in all critical points of $\mathcal F$):

\begin{lem}\label{lem:hessian_gammam}
For any $m\in\R^d$,
\begin{equation}
\label{eq:HessFm}
\mathrm{Hess}\mathcal F_{|\gamma_m}\Phi(x) = - L_m  \Phi(x) + x\cdot \na^2 V(m)  \mu(\na \Phi)\,.
\end{equation}
\end{lem}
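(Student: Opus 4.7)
The plan is to verify directly that the right-hand side of~\eqref{eq:HessFm}, viewed as a linear operator $H\Phi(x) := -L_m \Phi(x) + x \cdot \na^2 V(m)\,\gamma_m(\na \Phi)$ on the Otto tangent space at $\gamma_m$, reproduces via the pairing $\langle H\Phi,\Phi\rangle_{\gamma_m} = \int_{\R^d}\na H\Phi\cdot\na\Phi\,\dd\gamma_m$ the quadratic form of Lemma~\ref{lem:hessian} specialized to $\mu = \gamma_m$. Since the Otto Hessian is by construction a symmetric operator on this tangent space (whose vectors are equivalence classes modulo additive constants, so that only gradients matter), agreement of the two quadratic forms determines $H$ uniquely and thus identifies it with $\mathrm{Hess}\mathcal F_{|\gamma_m}$.

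First I would treat the linear-in-$x$ contribution. Writing $u_\Phi := \gamma_m(\na \Phi)$ for the constant vector and $A := \na^2 V(m)$, one has $\na\po x\cdot A u_\Phi\pf = A u_\Phi$, which is constant in $x$. By symmetry of $A$,
\[
\int_{\R^d}\na\po x\cdot A u_\Phi\pf\cdot \na\Phi\,\dd\gamma_m \ =\ (A u_\Phi)\cdot u_\Phi \ =\ \gamma_m(\na\Phi)^T \na^2 V(m)\,\gamma_m(\na\Phi),
\]
which matches the third summand of~\eqref{eq:HessianF} at $\mu = \gamma_m$ (recall $m_{\gamma_m} = m$). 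The $-L_m$ part is handled by directly invoking the computation displayed in the excerpt immediately before the statement, which gives $\langle L_m \Phi,\Phi\rangle_{\gamma_m} = -\int_{\R^d}\co \kappa|\na\Phi|^2 + \|\na^2\Phi\|_F^2\cf\dd\gamma_m$; taking its negative yields exactly the first two summands of~\eqref{eq:HessianF}. Adding the two contributions reconstructs the full Hessian quadratic form and concludes.

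The only subtle point is conceptual rather than computational: one must accept that matching the quadratic form on all suitable $\Phi$ pins down the Otto Hessian. This is justified because the bilinear form induced by $H$ is manifestly symmetric — symmetry of $-L_m$ in $\langle\cdot,\cdot\rangle_{\gamma_m}$ follows from the commutator identity $[\partial_{x_i},L_m] = -\kappa\partial_{x_i}$ together with reversibility of $L_m$ as an $L^2(\gamma_m)$-generator (exactly the integration by parts already carried out in the excerpt, applied with $\partial_{x_i}\Psi$ in place of $\partial_{x_i}\Phi$), while the rank-one correction $u_\Phi^T A u_\Psi$ is symmetric by symmetry of $A = \na^2 V(m)$. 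Thus the polarization identity is automatic, and no further verification is needed beyond the two computations above.
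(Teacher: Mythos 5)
Your proposal is correct and takes essentially the same route as the paper: both match the quadratic form $\langle H\Phi,\Phi\rangle_{\gamma_m}$ against~\eqref{eq:HessianF} at $\mu=\gamma_m$, using the preceding computation $\langle L_m\Phi,\Phi\rangle_{\gamma_m}=-\int(\kappa|\na\Phi|^2+\|\na^2\Phi\|_F^2)\dd\gamma_m$ for the $-L_m$ part and the observation $\na(x\cdot A u_\Phi)=Au_\Phi$ for the rank-one correction. The paper leaves the step ``agreement of quadratic forms identifies the operator'' implicit (stated as ``As a consequence''), whereas you explicitly check symmetry of $H$ so that polarization applies; that is a welcome clarification but not a different argument.
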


\begin{ex}
\label{ex:hessienne2}
Let us proceed with the computations started in Example~\ref{ex:hessienne}. More generally than~\eqref{loc:vMV}, for any $u,v\in\R^n$,
\begin{equation}
\label{locuvvu}
\begin{pmatrix}
u \\ v 
\end{pmatrix}^T \na^2 V(0) \begin{pmatrix}
u \\ v 
\end{pmatrix} = - 2 u^T M v \geqslant -\lambda_n \po |u|^2 + |v|^2\pf \,,
\end{equation}
with $\lambda_n$ the largest eigenvalue of $M$.  This shows that $\na^2 V_\kappa(0)$ is positive definite when $\kappa >\lambda_n$ (which is the case where $0$ is the unique critical point of $V_\kappa$, as determined in Example~\ref{ex:criticalpoints}). Conversely, if $\kappa<\lambda_n$, applying~\eqref{loc:vMV} with $\lambda=\lambda_n$ shows that $\na^2 V_\kappa(0)$ has a negative eigenvalue, so that $0$ is a saddle point of $V_\kappa$.

Alternatively, if $(v,\lambda)$ is an eigenpair of $M$ with $\lambda>0$, $|v|=1$ and $m^{(\lambda)}=(1-\kappa/\lambda)(v,v)$,
\[\na^2 V(m^{(\lambda)}) = \begin{pmatrix}
(1-\kappa/\lambda) M & -M+2(\lambda-\kappa) vv^T \\
-M+2(\lambda-\kappa) vv^T & (\lambda-\kappa) I_d
\end{pmatrix}\,.\]
For any eigenpair $(u,\eta )$ of $M$ with $|u|=1$, $\mathrm{span}\{(u,0),(0,u)\}$ is stable by $\na^2 V(m^{(\lambda)})$, and in this basis $\na^2 V(m^{(\lambda)})$ reads
\begin{equation}
\label{locvlambdakappa0}
\begin{pmatrix}
(1-\kappa/\lambda)\eta  & -\eta \\
-\eta & \lambda-\kappa
\end{pmatrix}\,.
\end{equation}
if $u\notin \{-v,v\}$ (in which case $u\cdot v=0$) and
\begin{equation}
\label{locvlambdakappa1}
\begin{pmatrix}
\lambda-\kappa  & -\lambda + 2(\lambda-\kappa) \\
-\lambda + 2(\lambda-\kappa) & \lambda-\kappa
\end{pmatrix}\,.
\end{equation}
if $u \in \{-v,v\}$. In particular, $\na^2 V_\kappa(m^{(\lambda)}) = \na^2 V(m^{(\lambda)}) + \kappa I_{2n} $ is definite positive iff its restriction on all of these $2d$ planes is definite positive. In the case where $u\in\{-v,v\}$, adding $\kappa$ on the diagonal of~\eqref{locvlambdakappa1}, we get a symmetric matrix with positive trace and with determinant $\lambda^2 - (\lambda-2\kappa)^2$, which is positive when $\kappa < \lambda$. In the other case, adding $\kappa$ on the diagonal of~\eqref{locvlambdakappa0} gives again a positive trace (when $\kappa<\lambda$) and a determinant $(\lambda-\kappa) \eta + \kappa \lambda - \eta^2= (\lambda-\eta)(\eta + \kappa)$, which has the same sign as $\lambda-\eta$. As a conclusion, among the critical points, the only local minimizers are those associated to the largest eigenvalue $\lambda_n$ (which are the global minimizers of $V_\kappa$, as seen in Example~\ref{ex:criticalpoints}), and moreover they are non-degenerate iff $\lambda_n$ has multiplicity $1$.

\end{ex}

\section{Functional inequalities}\label{sec:ineq}

\subsection{Motivations and overview}\label{sec:motivInegalites}

Since they describe the convergence rates of  Wasserstein gradient flows and their particle counterpart, the PL inequality~\eqref{eq:PLF} for $\mathcal F$ and  uniform LSI~\eqref{eq:LSI} for $\mu_\infty^N$ have been studied in many works in various situations, see \cite{CMCV,chizat,Pavliotis,GuillinWuZhang,Dagallier,SongboLSI,MonmarcheLSI,MonmarcheReygner,SongboToAppear} and references within. In general, a uniform LSI implies a PL inequality for $\mathcal F$, cf. \cite{Pavliotis,MonmarcheMetastable}. An open question, raised in particular in \cite{Pavliotis}, is whether the converse implication is true  (as we will see in Proposition~\ref{prop:equivalenceInegal}, we will answer positively to this in the case~\eqref{eq:F}). Establishing uniform LSI is in general a difficult question. Recently, motivated in particular by applications in optimization and machine learning, several works have addressed the case where the energy $\mathcal E$ is flat-convex (as discussed in Section~\ref{subsec:misc}, this corresponds in our situation to the case where $V$ is convex) \cite{chizat,SongboLSI,CheNitZha24LSI,Songbo}. In this situation, $\mathcal F$ admits a unique global minimizer $\rho_*$, and the following ``entropy sandwich inequality" have been established in \cite{chizat}: for all $\mu \in \mathcal P_2(\R^d)$,
\begin{equation}
\label{eq:sandwich}
\mathcal H(\mu|\rho_*) \leqslant \mathcal F(\mu) - \mathcal F(\rho_*) = \overline{\mathcal F}(\mu) \leqslant \mathcal H\po \mu|\Gamma(\mu)\pf\,.
\end{equation}
The last inequality is particularly interesting in order to establish the PL inequality~\eqref{eq:PLF}, since it is then sufficient to prove that $\Gamma(\mu)$ satisfies a LSI (in dimension $d$ instead of $dN$ for $\mu_\infty^N$) with constant independent from $\mu$, and there are many known criteria to establish an LSI. This is still true if, more generally (i.e. even in  non-flat convex cases), an  inequality of the form
\begin{equation}
\label{eq:ineqEntropy}
\forall \mu \in\mathcal P_2(\R^d),\qquad \overline{\mathcal F}(\mu) \leqslant C \mathcal H \po \mu|\Gamma(\mu)\pf\,, 
\end{equation}
holds for some constant $C>0$ (as we will see in Proposition~\ref{prop:equivalenceInegal}, in this case, necessarily, $C\geqslant 1$). Following~\cite{SongboToAppear}, we will refer to such an inequality as a \emph{free energy condition}. This has been used, with $C>1$, in \cite{MonmarcheLSI,MonmarcheReygner} to establish PL inequalities for free energies. A natural question is then the relation between a PL inequality and a free energy condition~\eqref{eq:ineqEntropy}.  We shall see that they are equivalent in the case of our model~\eqref{eq:F}.

Alternatively, we may also consider  the first inequality of~\eqref{eq:sandwich}, or rather the weaker form
\begin{equation}
\label{eq:coercivityF}
 \forall \mu \in\mathcal P_2(\R^d),\qquad \overline{\mathcal F}(\mu) \geqslant c \mathcal H(\mu|\rho_*)\,,
\end{equation}
for some $c>0$. Following again \cite{SongboToAppear}, we refer to this as an entropic coercivity inequality. It is clear that such an inequality implies that $\rho_*$ is the unique global minimizer of $\mathcal F$. However, contrary to the free energy condition~\eqref{eq:ineqEntropy} and the PL inequality~\eqref{eq:PLF}, it may hold in situations where $\mathcal F$ admits other critical points. Moreover, it can be used to deduce interesting properties, such as concentration for the Gibbs measure and uniform propagation of chaos when the initial distribution is in the bassin of attraction of $\rho_*$, see \cite{SongboToAppear}. More generally, from a gradient flow perspective, such an inequality allows to deduce the convergence of the parameter $\rho_t$ (along the flow) to the optimal parameter $\rho_*$ from the convergence of the objective function $\mathcal F(\rho_t)$ to the minimum $\mathcal F(\rho_*)$.

Since $\rho_*=\gamma_{m_*}$ satisfies a T2 Talagrand transport inequality \cite{OttoVillani}, \eqref{eq:coercivityF} implies the Wasserstein coercivity inequality
\begin{equation}
\label{eq:coercivityFW2}
 \forall \mu \in\mathcal P_2(\R^d),\qquad \overline{\mathcal F}(\mu) \geqslant \frac{c\kappa}2 \mathcal W_2^2(\mu,\rho_*)\,,
\end{equation}
with the same $c>0$.  Such an inequality, called a non-linear transport inequality in \cite{MonmarcheReygner} or a functional Łojasiewicz inequality in \cite{BLANCHET20181650} (where~\eqref{eq:PLF} is called by contrast a Łojasiewicz gradient property; both inequalities appear in \cite[Section 18]{law1965ensembles}), is implied by the PL inequality~\eqref{eq:PLF} when $\rho_*$ is the unique minimizer of $\mathcal F$, see \cite[Proposition 6]{ChewiStromme} or more generally \cite[Theorem 1]{BLANCHET20181650}. However, being a consequence of~\eqref{eq:coercivityF}, it can hold in cases where the PL inequality fails. In \cite{MonmarcheReygner}, a local version of \eqref{eq:coercivityFW2} is used in some intermediate steps to show that a local PL inequality implies the stability of some Wasserstein ball (relying on the idea that, as before, the inequality~\eqref{eq:coercivityFW2} provides the convergence of the parameter $\rho_t$ from the convergence of the objective function $\mathcal F(\rho_t)$).

\medskip

In finite dimension, it is classical that, at degenerate local minimizers (i.e. at which the Hessian is singular), it is still possible for some degenerate Łojasiewicz inequalities to hold, see \cite[Proposition 1]{law1965ensembles}, which can be used to deduce polynomial convergence rates for the corresponding gradient flows. In infinite dimension, for free energies in the Wasserstein space, a general study is given in \cite{BLANCHET20181650} and an application is given \cite{MonmarcheReygner} for the double-well continuous Curie-Weiss at critical temperature. We can expect this situation to be representative, in the sense that in general it would not be surprising that models exhibiting transition phases with respect to some parameters (e.g. temperature) satisfy only a degenerate Łojasiewicz inequalities at the critical parameters (see e.g. Example~\ref{ex:degen}). Motivated by this, and by the relative scarcity of other examples in the literature apart from~\cite{MonmarcheReygner} (indeed, in \cite{BLANCHET20181650}, the only degenerate example, in Section 3.3, is for a potential energy $\mu \mapsto \int_{\R^d} V \mu$, which is linear with respect to $\mu$), we shall discuss under which condition this situation arise in the case of the toy models~\eqref{eq:F}.

Finally, \cite{MonmarcheReygner,MonmarcheMetastable} have developed a research direction concerning local convergence rates for Wasserstein gradient flow, and thus we will also discuss local inequalities in our situation.

\subsection{Non-degenerate minimizer: contraction}\label{subsec:non-degen-contract}

Our main result here is that, in the ``nicest" situation (i.e. unique globally attractive minimizer), in the case of the model~\eqref{eq:F}, all the inequalities discussed in Section~\ref{sec:motivInegalites} related to long-time convergence of the processes are equivalent. A remarkable fact concerning these   equivalences between functional inequalities is that the constant are sharp, as stated in~\eqref{eq:egaliteConstantes}.

\begin{prop}\label{prop:equivalenceInegal}
Under Assumption~\ref{assu:general}, assume furthemore that $V_\kappa$ admits a unique global minimizer $m_*$ and that there exists $L>0$ such that $\Delta V \leqslant L(1+|\na V_\kappa|^2)$. Then, the following are equivalent :
\begin{enumerate}[(i)]
\item The potential $V_\kappa$ satisfies a PL inequality, namely there exists $C>0$ such that
\begin{equation}
\label{eq:PLfinitedim}
\forall x\in\R^d,\qquad V_\kappa(x) - \inf V_\kappa  \leqslant \frac{C}{2}|\na V_\kappa(x)|^2\,, 
\end{equation}
\item  The free energy $\mathcal F$ satisfies a PL inequality~\eqref{eq:PLF}.
\item For all $N\geqslant 1$ the Gibbs measure $\mu_\infty^N$ satisfies a  LSI~\eqref{eq:LSI}, and $\liminf_N C_{LS}(\mu_\infty^N)<\infty$.
\item The free energy $\mathcal F$ satisfies a free energy condition~\eqref{eq:ineqEntropy}.
\end{enumerate}
Moreover, in this situation,
\begin{equation}
\label{eq:egaliteConstantes}
C_{LS}(\mu_\infty^N) \underset{N\rightarrow \infty} \longrightarrow \max(\kappa^{-1}, C_{PL}(V_\kappa)) = C_{PL}(\mathcal F) = \kappa^{-1} C_{\mathcal F\mathcal H} \,,
\end{equation}
where $C_{PL}(V_\kappa)$ and $C_{\mathcal F\mathcal H}$ stands for the smallest constant such that, respectively, the PL inequality~\eqref{eq:PLfinitedim} and the free energy condition~\eqref{eq:ineqEntropy} hold.

\end{prop}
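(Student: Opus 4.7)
The plan is to reduce each of the four conditions to a statement about the finite-dimensional potential $V_\kappa$ alone, exploiting that the local equilibrium $\Gamma(\mu) = \gamma_{f(m_\mu)}$ is Gaussian with variance $\kappa^{-1}I_d$ (hence carries a sharp LSI with constant $\kappa^{-1}$), and that the displacement $m_\mu - f(m_\mu) = \kappa^{-1}\nabla V_\kappa(m_\mu)$ is precisely the finite-dimensional gradient.

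First I will establish three decomposition identities valid for any $\mu \in \mathcal P_2(\R^d)$. From~\eqref{locgh} taken with $m = m_*$,
\[\overline{\mathcal F}(\mu) \;=\; V_\kappa(m_\mu) - V_\kappa(m_*) + \mathcal H(\mu|\gamma_{m_\mu}).\]
From~\eqref{loc1bis} with $m' = f(m_\mu)$ and the identity $|m_\mu - f(m_\mu)|^2 = \kappa^{-2}|\nabla V_\kappa(m_\mu)|^2$,
\[\mathcal H(\mu|\Gamma(\mu)) \;=\; \mathcal H(\mu|\gamma_{m_\mu}) + \frac{1}{2\kappa}|\nabla V_\kappa(m_\mu)|^2.\]
Finally, writing $\nabla\ln(\mu/\Gamma(\mu)) = \nabla\ln(\mu/\gamma_{m_\mu}) + \nabla V_\kappa(m_\mu)$ and noting that $\int \nabla\ln(\mu/\gamma_{m_\mu})\,\dd\mu = 0$ (since $m_\mu$ is the $\mu$-mean), the cross term vanishes and
\[\mathcal I(\mu|\Gamma(\mu)) \;=\; \mathcal I(\mu|\gamma_{m_\mu}) + |\nabla V_\kappa(m_\mu)|^2.\]

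With these identities in hand, the equivalence of (i), (ii) and (iv), and the corresponding sharp constants, reduce to matching coefficients. Testing~\eqref{eq:PLF} against $\mu = \gamma_m$ for arbitrary $m$ (which kills the Gaussian entropy and Fisher terms) yields PL for $V_\kappa$ with constant $C_{PL}(\mathcal F)$, hence $C_{PL}(V_\kappa) \leq C_{PL}(\mathcal F)$; similarly, testing~\eqref{eq:ineqEntropy} gives $\kappa C_{PL}(V_\kappa) \leq C_{\mathcal F\mathcal H}$. Testing next against the centered two-peak mixture $\mu_n = \frac12(\gamma_{m_*+v_n}+\gamma_{m_*-v_n})$ with $|v_n|\to\infty$ kills the $V_\kappa$-terms (since $m_{\mu_n}=m_*$); a direct calculation in the well-separated regime gives $\mathcal H(\mu_n|\gamma_{m_*}) = \frac{\kappa|v_n|^2}{2} + O(1)$ and $\mathcal I(\mu_n|\gamma_{m_*}) = \kappa^2|v_n|^2 + o(|v_n|^2)$, showing that the Gaussian LSI constant is saturated even on centered measures, whence $\kappa^{-1} \leq C_{PL}(\mathcal F)$ and $1 \leq C_{\mathcal F\mathcal H}$. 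For the reverse implications, combining PL for $V_\kappa$ with the Gaussian LSI $\mathcal H(\mu|\gamma_{m_\mu}) \leq \frac{\kappa^{-1}}{2}\mathcal I(\mu|\gamma_{m_\mu})$ inside the decomposition of $\overline{\mathcal F}$ yields PL for $\mathcal F$ with constant $\max(C_{PL}(V_\kappa),\kappa^{-1})$, and using only the trivial bound on $\mathcal H(\mu|\gamma_{m_\mu})$ gives~\eqref{eq:ineqEntropy} with constant $\max(1,\kappa C_{PL}(V_\kappa))$. This proves the equivalences and gives $C_{PL}(\mathcal F) = \max(\kappa^{-1},C_{PL}(V_\kappa))$ and $\kappa C_{PL}(\mathcal F) = C_{\mathcal F\mathcal H}$.

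Finally, Lemma~\ref{lem:LSI} reduces (iii) to $\liminf_N NC_{LS}(\nu_\infty^N) < \infty$ for $\nu_\infty^N \propto e^{-NV_\kappa}$, so the equivalence (iii)$\,\Leftrightarrow\,$(i) and the limit $\lim_N NC_{LS}(\nu_\infty^N) = C_{PL}(V_\kappa)$ amount to the classical low-temperature PL--LSI correspondence for Gibbs measures. The lower bound $\liminf_N NC_{LS}(\nu_\infty^N) \geq C_{PL}(V_\kappa)$ follows by testing the LSI for $\nu_\infty^N$ against narrow Gaussians concentrated at points $x_0$ nearly saturating the PL for $V_\kappa$, for which a leading-order computation yields $\mathcal H(\mu|\nu_\infty^N) \sim N(V_\kappa(x_0)-V_\kappa(m_*))$ and $\mathcal I(\mu|\nu_\infty^N) \sim N^2|\nabla V_\kappa(x_0)|^2$. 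The matching upper bound is the main technical obstacle: it requires transferring the PL on $V_\kappa$ into an LSI for $e^{-NV_\kappa}$ with the sharp constant, without any global strong convexity, and this is where the growth hypothesis $\Delta V \leq L(1+|\nabla V_\kappa|^2)$ enters as a Lyapunov-type ingredient, typically through a Bakry--Émery estimate localized near $m_*$ combined with a Holley--Stroock perturbation argument outside. Combined with the constants obtained in the previous step, this gives~\eqref{eq:egaliteConstantes}.
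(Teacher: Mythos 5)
Your treatment of the equivalence $(i)\Leftrightarrow(ii)\Leftrightarrow(iv)$ and of the constants $C_{PL}(\mathcal F)$ and $C_{\mathcal F\mathcal H}$ is correct, and in fact takes a cleaner route than the paper. The key difference is your exact identity
\[
\mathcal I\po \mu|\Gamma(\mu)\pf \;=\; \mathcal I\po \mu|\gamma_{m_\mu}\pf + |\na V_\kappa(m_\mu)|^2\,,
\]
obtained by expanding the square and observing that the cross term vanishes because $\int \na\ln(\mu/\gamma_{m_\mu})\,\dd\mu = 0$. The paper never uses this identity; it only records the one-sided inequality \eqref{loc3} via the Talagrand inequality, which is what forces it to prove $(i)\Rightarrow(ii)$ indirectly by first establishing $(iii)$ (uniform LSI for $\mu_\infty^N$) and then passing to the limit $N\to\infty$ in the LSI applied to tensor products $\rho^{\otimes N}$, with a Laplace-method computation of $Z_N$. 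Your orthogonal decomposition shortcuts this: once you know the PL for $V_\kappa$ and the Gaussian LSI for $\gamma_{m_\mu}$, you simply compare the decompositions term by term to get $C_{PL}(\mathcal F)\leqslant\max(\kappa^{-1},C_{PL}(V_\kappa))$ directly, with no detour through the $N$-particle system. Your two-peak test measure $\mu_n=\tfrac12(\gamma_{m_*+v_n}+\gamma_{m_*-v_n})$ gives the lower bound $\kappa^{-1}\leqslant C_{PL}(\mathcal F)$ correctly (I checked the 1D computation: $\na\ln(\mu_n/\gamma_{m_*})(x)=\kappa v_n\tanh(\kappa v_n x)$, so $\mathcal I(\mu_n|\gamma_{m_*})\to\kappa^2|v_n|^2$); the paper instead tests with $\mathcal N(m_*,\sigma^2 I_d)$ and $\sigma\to\infty$, which is perhaps simpler because of the closed formulas \eqref{eq:locgaussHI}, but both work.

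For point $(iii)$ there is a real gap. You correctly reduce, via Lemma~\ref{lem:LSI}, to the claim that PL for $V_\kappa$ is equivalent to $\liminf_N N C_{LS}(\nu_\infty^N)<\infty$ with the sharp limit $N C_{LS}(\nu_\infty^N)\to C_{PL}(V_\kappa)$; and your lower bound via narrow Gaussians at near-extremizing points $x_0$ is a valid computation. But the matching upper bound is \emph{not} "the classical low-temperature PL–LSI correspondence", and the route you sketch would fail: a Bakry–Émery estimate near $m_*$ combined with a Holley–Stroock perturbation outside loses a multiplicative factor $e^{N\cdot\mathrm{osc}}$ in the LSI constant, which is catastrophic at low temperature unless the oscillation is forced to vanish with $N$ (as in the degenerate Proposition~\ref{prop:LSIdegen}, where the strong-convexity radius is shrunk with $N$). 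Here no such localization is available because the PL hypothesis is global and fixed. The correct ingredient is the recent sharp result of Chewi and Stromme (\cite[Theorems 1 and 11]{ChewiStromme}), which the paper cites for precisely this step, and which also requires the hypothesis $\Delta V\leqslant L(1+|\na V_\kappa|^2)$ in exactly the form stated. You have identified the right reduction but left the crux unproved and slightly misdescribed.
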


The inequalities discussed in Proposition~\ref{prop:equivalenceInegal} classically have many  useful consequences for the gradient flow~\eqref{eq:EDP} and the particle system~\eqref{eq:particulesEDS2}, see e.g. \cite[Section 4]{MonmarcheMetastable} (or \cite{Katharina} for practical numerical schemes).

\begin{rem}
The assumption that $\Delta V \leqslant L(1+|\na V_\kappa|^2)$ is only used to prove that $(i)\Rightarrow (iii)$. It is not required for the implications $(iii)\Rightarrow (i),(ii)$, nor $(ii)\Rightarrow(i)$ and $(ii)\Leftrightarrow (iv)$. 
\end{rem}

\begin{ex}
\label{ex:PLnondegeneree}
A consequence of the discussion in Examples~\ref{ex:criticalpoints} and \ref{ex:hessienne2} is that, for the running example~\eqref{eq:VmuEncoder}, the situation described by Proposition~\ref{prop:equivalenceInegal} holds if and only if $\kappa$ is larger than $\lambda_n$ the largest eigenvalue of $M$. Indeed, if $\kappa < \lambda_n$, $0$ is a critical point of $V_\kappa$ which is not a global minimizer, so that a PL inequality cannot hold, and if $\kappa=\lambda_n$, $\na^2 V_\kappa(0)$ is singular, which again prevents a PL inequality (see Figure~\ref{fig:Vkappa}\emph{(a)(b)(c)}). For $\kappa>\lambda_n$, $0$ is the unique critical point, $\na^2 V_\kappa(0)$ is definite positive and more generally, for any $m\in\R^d$, decomposing $\na^2 V(m)$ as in Example~\ref{ex:hessienne2} onto orthogonal planes by diagonalizing $M$,  with similar computations we see that for all $u,v\in\R^d$,
\[\begin{pmatrix}
u \\ v 
\end{pmatrix}^T \na^2 V(m) \begin{pmatrix}
u \\ v 
\end{pmatrix} \geqslant - 2 u^T  M v  \geqslant - \lambda_n \po |u|^2 + |v|^2\pf\,,  \]
so that $V_\kappa$ is strongly convex, and thus satisfies a PL (see Figure~\ref{fig:Vkappa}\emph{(d)}).
\end{ex}

\begin{proof}
\textbf{[(i)$\Rightarrow$(iii)]} Chewi and Stomme showed in \cite[Theorem 1]{ChewiStromme} that $(i)$ implies that the measure $\nu_\infty^\infty$ defined in~\eqref{eq:nuinfinityN} satisfies a LSI for all $N\geqslant 1$ with $N C_{LS}(\nu_\infty^N) \rightarrow C_{PL}(V_\kappa) $ as $N\rightarrow \infty$. From Lemma~\ref{lem:LSI}, we deduce that $(i)$ implies $(iii)$, with moreover $\lim_N C_{LS}(\mu_\infty^N) =\max(C_{PL}(V_\kappa),\kappa^{-1}) $.

\textbf{[(iii)$\Rightarrow$(i)]} Conversely, assuming $(iii)$, Lemma~\ref{lem:LSI} shows that $\liminf N C_{LS}(\nu_\infty^N) <\infty$, from which $(i)$ follows from  \cite[Theorem 11]{ChewiStromme}.

\textbf{[(i) and (iii)$\Rightarrow$(ii)]} Now, assume  $(iii)$ and  $(i)$ in order to prove $(ii)$. From \cite[Proposition 7]{ChewiStromme}, thanks to the PL inequality for $V_\kappa$, $\na^2 V(m_*)$ is positive definite. For  $\rho\in\mathcal P_2(\R^d)$ with finite entropy,
\[\frac1N \mathcal H\po \rho^{\otimes N}|\mu_\infty^N \pf = \int_{\R^{dN}} V(\bar x) \rho^{\otimes N}(\bx) + \int_{\R^d} |x_1|^2 \rho(\dd x_1) + \int_{\R^d} \rho \ln\rho  + \frac1N \ln Z_N \]
with the normalizing constant (known as the partition function in this context)
\[Z_N = \int_{\R^{dN}} e^{-N V(\bar x) + \frac{\kappa}{2}  |\bx |^2 } \dd \bx = (2\pi/\kappa)^{d(N-1)/2} N^{d/2}\int_{\R^d} e^{-N V_\kappa(y) } \dd y \,. \]
%car
%\[(2\pi/\kappa)^{-dN/2} Z_N = \mathbb E \po e^{-N V(\bar X)}\pf = (2\pi/(\kappa N))^{-d/2}\int_{\R^d} e^{-N V(y) - \frac\kappa2 N |y|^2} \dd y\]
By Laplace method,
\[\int_{\R^d} e^{-N V_\kappa(y) } \dd y \underset{N\rightarrow\infty}\simeq \int_{\R^d} e^{-N \co V_\kappa(m_*) + \frac12 \na^2 V_\kappa(m_*)(x-m_*)^2 \cf  } \dd y = (2\pi / [N \mathrm{det}(\na^2 V_\kappa(m_*))])^{d/2} e^{-N V_\kappa(m_*)} \] 
and thus
\[\frac1N \ln Z_N \underset{N\rightarrow \infty}\longrightarrow  \frac{d}{2}\ln(2\pi/\kappa) - V_\kappa(m_*)  = - \mathcal F(\gamma_{m_*})\,. \]
Now, take $\rho\in\mathcal P_2(\R^d)$ with a smooth compactly supported density. By the law of large numbers, $\bar x$ converges a.s. to $m_\rho$ under $\rho^{\otimes N}$, and thus by dominated convergence (since $V$ is bounded on the support of $\rho$)
\begin{equation}
\label{loc:dominated}
 \int_{\R^{dN}} V(\bar x) \rho^{\otimes N}(\bx) \underset{N\rightarrow \infty}\longrightarrow  V(m_\rho)\,. 
\end{equation}
Second,
\begin{align*}
\frac1N \mathcal I\po \rho^{\otimes N}|\mu_\infty^N \pf &= \frac1N \sum_{i=1}^N \int_{\R^{dN}} \left| \na_{x_i} \ln\rho(x_i) - \na_{x_i} U_N(\bx) \right|^2  \rho^{\otimes N}(\bx)\\
&=  \int_{\R^{dN}} \left| \na_{x_1} \ln\rho(x_1) - \kappa x_1 - \na V(\bar x) \right|^2  \rho^{\otimes N}(\bx)\,.
\end{align*}
Again, by dominated convergence,
\[\frac1N \mathcal I\po \rho^{\otimes N}|\mu_\infty^N \pf \underset{N\rightarrow \infty}\longrightarrow   \int_{\R^{d}} \left| \na_{x_1} \ln\rho(x_1) - \kappa x_1 - \na V(m_\rho) \right|^2  \rho(\dd x_1) = \mathcal I(\rho|\Gamma(\rho))\,. \]
Hence, dividing by $N$ in the uniform LSI satisfies by $\mu_\infty^N$ applied to $\rho^{\otimes N}$ and letting $N\rightarrow 0$ gives the PL inequality for $\mathcal F$ (for all $\rho$ with smooth compactly supported densities, and thus all $\rho\in\mathcal P_2(\R^d)$ by density), with
\begin{equation}
\label{loclm}
C_{PL}(\mathcal F) \leqslant  \underset{N\rightarrow \infty}\lim   C_{LS}(\mu_\infty^N) = \max( C_{PL}(V_\kappa),\kappa^{-1})\,.
\end{equation}
This concludes the proof that $(iii)$ and $(i)$ imply $(ii)$.

\textbf{[(ii)$\Rightarrow$(i)]} Now, assume $(ii)$ and write $\bar C = C_{PL}(\mathcal F)$.  Thanks to \eqref{eq:FVkappa}, applying the PL inequality for $\mathcal F$ with $\mu=\gamma_m$ for any $m\in\R^d$ reads
\begin{align}
\label{eq:PLVkappa2}
V_\kappa(m) - V_\kappa(m_*) = \mathcal F(\gamma_m) - \mathcal F(\gamma_{m_*}) & \leqslant \frac{\bar C}{2} \int_{\R^d} \left|\na \ln \frac{\gamma_m}{\gamma_{f(m)}} \right|^2 \gamma_m \nonumber\\
& = \frac{\bar C \kappa^2 }{2} |m-f(m)|^2 = \frac{\bar C}{2} |\na V_\kappa(m)|^2\,.
\end{align}
This proves the PL inequality for $V_\kappa$ with $\bar C \geqslant C_{PL}(V_\kappa)$. Alternatively, applying the PL inequality for $\mathcal F$ to $\mu$ with $m_\mu=m_*$ gives
\begin{equation}
\label{loc45}
\mathcal F(\mu) - \mathcal F(\rho_*) =   \mathcal H\po \mu|\rho_* \pf \leqslant \frac{\bar C}{2} \mathcal I \po \mu| \Gamma(\mu)\pf = \frac{\bar C}{2} \mathcal I(\mu|\rho_*)\,. 
\end{equation}
Chosing  $\mu = \mathcal N(m_*,\sigma^2 I_d)$ for some $\sigma^2>0$, we see that
\begin{equation}
\label{eq:locgaussHI}
\mathcal H(\mu|\rho_*) = \frac{d}{2}  \co \sigma^2 \kappa - 1   - \ln (\kappa \sigma^2)\cf    \,,\qquad \mathcal I(\mu|\rho_*) = d \sigma^2 \po \kappa -\sigma^{-2}\pf^2\,.
\end{equation}
Plugging these values in~\eqref{loc45} and taking $\sigma^2$ arbitrarily large shows that $\bar C \geqslant \kappa^{-1}$. As a conclusion, we have proven that $(ii)$ implies $(i)$ with moreover
\begin{equation}
\label{locjk}
\max\po\kappa^{-1},C_{PL}(V_\kappa)\pf \leqslant C_{PL}(\mathcal F)\,.
\end{equation}
This concludes the proof of the equivalence between $(i)$, $(ii)$ and $(iii)$, and combining \eqref{loclm} with \eqref{locjk} gives the corresponding equalities in~\eqref{eq:egaliteConstantes}.

\textbf{[(ii)$\Rightarrow$(iv)]} Assume $(ii)$.  
 Combining \eqref{locgh}  and \eqref{loc1bis} applied  with $m'=f(m_\mu)$ reads
\begin{equation}
\label{loc2bishop}
\mathcal F(\mu) - \mathcal F(\rho_*) =  V_\kappa(m) - V_\kappa(m_*) -\frac1{2\kappa} |\na V_\kappa(m_\mu) |^2 + \mathcal H \po \mu|\Gamma(\mu)\pf\,,
\end{equation}
where we used that $\kappa|m_\mu - f(m_\mu)|^2 = |\na V_\kappa(m_\mu) |^2/\kappa$. Using the PL inequality gives
\[\mathcal F(\mu) - \mathcal F(\rho_*) \leqslant \po \frac{C_{PL}(V_\kappa)}{2} - \frac{1}{2\kappa}\pf_+  |\na V_\kappa(m_\mu) |^2 + \mathcal H \po \mu|\Gamma(\mu)\pf\,.\]
Moreover, since $f(m_\mu)=m_{\Gamma(\mu)}$,
\begin{equation}
\label{loc3}
|\na V_\kappa(m_\mu) |^2 = \kappa^2 |m_\mu - f(m_\mu)|^2 \leqslant  \kappa^2 \mathcal W_2^2\po \mu,\Gamma(\mu)\pf \leqslant  2 \kappa \mathcal H\po \mu|\Gamma(\mu)\pf\,,
\end{equation}
where we used the T2 Talagrand inequality (here with constant $\kappa^{-1}$) satisfied by Gaussian measures \cite{OttoVillani}. We have thus obtained $(iv)$ with
\[C_{\mathcal F\mathcal H} \leqslant 1+ \po \kappa C_{PL}(V_\kappa) - 1\pf_+ = \max \po 1,\kappa C_{PL}(V_\kappa)\pf\,.  \]

\textbf{[(iv)$\Rightarrow$(ii)]} Conversely, assume $(iv)$. First, applying the entropy inequality~\eqref{eq:ineqEntropy} with $\mu \neq \rho_*$ such that $m_\mu = \rho_*$, we get that
\[\mathcal H(\mu|\rho_*)  = \mathcal F(\mu) - \mathcal F(\rho_*) \leqslant C_{\mathcal F\mathcal H} \mathcal H(\mu|\rho_*) \,,\]
which shows that $C_{\mathcal F\mathcal H} \geqslant 1$. Second,  applying the entropy inequality~\eqref{eq:ineqEntropy} to $\mu=\gamma_m$ and using that $\mathcal H\po \gamma_m|\Gamma(\gamma_m)\pf=\frac{1}{2\kappa}|\na V_\kappa(m)|^2$ thanks to~\eqref{loc1bis} applied with $m'=f(m)$ 
 reads
\[V_\kappa(m) - V_\kappa(m_*) = \mathcal F(\gamma_m) - \mathcal F(\rho_*) \leqslant \frac{C_{\mathcal F\mathcal H}}{2\kappa}|\na V_\kappa(m)|^2\,. \]
which shows the PL inequality for $V_\kappa$ with $C_{PL}(V_\kappa) \leqslant C_{\mathcal F\mathcal H}/\kappa$, concluding the proof of the proposition.

\end{proof}

Notice that, under the settings of Proposition~\ref{prop:equivalenceInegal}, the PL inequality~\eqref{eq:PLfinitedim} for $V_\kappa$ (which cannot holds if $V_\kappa$ admits two isolated critical points: if there are two  isolated global minimizers, then the path with minimal elevation from one to the other goes through a saddle point, so that there is a critical point which is not a global minimizer) is easily characterized:

\begin{prop}
Under Assumption~\ref{assu:general}, assume that $V_\kappa$ admits a unique critical point $m_* \in\R^d$. Then the PL inequality~\eqref{eq:PLfinitedim} holds for some $C>0$ if and only if $\na^2 V_\kappa(m_*)$ is non-singular.
\end{prop}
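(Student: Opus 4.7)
The plan is to prove the two implications separately. Since $m_*$ is the unique critical point of the coercive $V_\kappa$ (coercivity follows from Assumption~\ref{assu:general}), it is the global minimizer, so $\na^2 V_\kappa(m_*)$ is automatically positive semi-definite; ``non-singular'' therefore means positive definite.

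For the ``only if'' direction, I would invoke the same ingredient used in the proof of Proposition~\ref{prop:equivalenceInegal}: by \cite[Proposition 7]{ChewiStromme}, a PL inequality for $V_\kappa$ forces $\na^2 V_\kappa(m_*)$ to be positive definite. A self-contained alternative is to derive the quadratic growth bound $|x-m_*|^2 \leq 2C\bigl(V_\kappa(x)-V_\kappa(m_*)\bigr)$ from the PL inequality. Along the gradient flow $\dot x_t = -\na V_\kappa(x_t)$ (which converges to $m_*$ thanks to coercivity and the uniqueness of the critical point), PL gives
\[ |\dot x_t| \;=\; |\na V_\kappa(x_t)| \;\leq\; -\sqrt{2C}\,\frac{\dd}{\dd t}\sqrt{V_\kappa(x_t)-V_\kappa(m_*)}\,,\]
and integrating from $0$ to $\infty$ yields the claim. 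Then, if $\na^2 V_\kappa(m_*)$ had a unit vector $v$ in its kernel, Taylor expansion along $x=m_*+tv$ would give $V_\kappa(m_*+tv)-V_\kappa(m_*) = o(t^2)$, contradicting the quadratic lower bound.

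For the ``if'' direction, I assume $\na^2 V_\kappa(m_*) \succ 0$ and cover $\R^d$ by three regions, establishing a PL inequality on each before taking the maximum of the three constants. First, by continuity of $\na^2 V_\kappa$, there exist $\delta>0$ and $\alpha>0$ such that $\na^2 V_\kappa \succeq \alpha I_d$ on $\mathcal B(m_*,\delta)$; this gives $V_\kappa(x)-V_\kappa(m_*) \leq \frac{1}{2\alpha}|\na V_\kappa(x)|^2$ there by standard strong convexity arguments. Second, by Assumption~\ref{assu:general}, there exist $R>0$ and $\alpha'>0$ such that $V_\kappa$ is $\alpha'$-strongly convex on $\R^d\setminus \mathcal B(0,R)$, hence satisfies PL with constant $1/\alpha'$ on that region. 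Third, on the compact annulus $K=\overline{\mathcal B(0,R)}\setminus \mathcal B(m_*,\delta)$, the continuous function $|\na V_\kappa|^2$ is bounded below by some $c_0>0$ (by uniqueness of the critical point) and $V_\kappa-V_\kappa(m_*)$ is bounded above by some $M$, giving PL on $K$ with constant $2M/c_0$. The maximum of the three constants yields a global PL inequality.

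The argument is essentially routine; the main delicate point is securing the uniform positive lower bound on $|\na V_\kappa|$ on the compact intermediate region, which relies crucially on both the uniqueness of the critical point and the continuity of $\na V_\kappa$.
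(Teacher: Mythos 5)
The ``only if'' direction is fine: citing \cite[Proposition~7]{ChewiStromme} is exactly what the paper does, and your self-contained alternative (quadratic growth from PL via the gradient flow, then Taylor expansion along a kernel direction of $\na^2 V_\kappa(m_*)$) is also correct.

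For the ``if'' direction, the decomposition of $\R^d$ into a small ball, a compact annulus, and an outer region matches the paper's, but you then handle each region statically rather than dynamically, and the outer-region step has a gap. You claim that $\alpha'$-strong convexity of $V_\kappa$ on $\R^d\setminus\mathcal B(0,R)$ ``hence satisfies PL with constant $1/\alpha'$ on that region.'' The standard ``strong convexity $\Rightarrow$ PL'' argument plugs the global minimizer $m_*$ into $V_\kappa(m_*)\geq V_\kappa(x)+\na V_\kappa(x)\cdot(m_*-x)+\frac{\alpha'}{2}|m_*-x|^2$, which requires the Hessian lower bound along the entire segment $[x,m_*]$; but that segment crosses $\mathcal B(0,R)$, where nothing is assumed, so the step (and the stated constant) do not follow. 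The conclusion is salvageable --- take $y$ the intersection of $\partial\mathcal B(m_*,R')$ with $[m_*,x]$ for $R'$ large enough that $\{|z-m_*|\geq R'\}$ sits in the strong-convexity region, apply strong convexity along $[y,x]$ to get $V_\kappa(x)-V_\kappa(y)\leq\frac{1}{2\alpha'}|\na V_\kappa(x)|^2$, then absorb the bounded term $V_\kappa(y)-V_\kappa(m_*)$ into $|\na V_\kappa(x)|^2$ on a yet-larger outer region where $|\na V_\kappa|$ is bounded below --- but the resulting constant is not $1/\alpha'$, and the intermediate annulus must be enlarged accordingly. The paper avoids this gluing altogether by integrating along the gradient flow: it bounds the time spent in the annulus $\mathcal D$ by $T$, shows $|\na V_\kappa(x_t)|^2$ decays at rate $2\lambda$ outside $\mathcal D$ (and at worst grows at rate $2\Lambda$ inside), and concludes $V_\kappa(x_0)-\inf V_\kappa=\int_0^\infty|\na V_\kappa(x_t)|^2\,\dd t\leq \frac{e^{2(\lambda+\Lambda)T}}{2\lambda}|\na V_\kappa(x_0)|^2$ directly for all $x_0$.
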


\begin{proof}
If the PL inequality holds, then $\na^2 V(m_*)$ is non-singular thanks to \cite[Proposition 7]{ChewiStromme}.

Conversely, assume $\na^2 V(m_*)$ is non-singular. Since $V_\kappa$ goes to infinity at infinity, as the unique critical points, $m_*$ is necessarily the global minimizer of $V_\kappa$, and thus $\na^2 V(m_*)$ is positive definite. Under Assumption~\ref{assu:general}, we can thus find $R>r>0$ and $\lambda>0$ such that $\na^2 V_\kappa(m) \geqslant \lambda I_d$ for all $m\in\R^d$ with either $|m-m_*|\geqslant R$ or $|m-m_*|\leqslant r$. Denote $\mathcal D= \{m\in\R^d,\ r<|m-m_*|<R\}$.  Let $\xi = \inf_{\mathcal D}|\na V_\kappa|$, which is positive by compactness since $m_*$ is the unique critical point of $V_\kappa$. Then, as long as the gradient flow  $\dot x_t = -\na V_\kappa(x_t)$ stays in $\mathcal D$,
\[\partial_t f(x_t) - f(m_*) = - |\na f(x_t)|^2 \leqslant - \xi^2\,.\]
This shows that the time that a trajectory can spend in $\mathcal D$ is at most $T = [\sup_{\mathcal D}V_\kappa - \inf_{\mathcal D}V_\kappa]/\xi^2$. Let $\Lambda = \sup_{\mathcal D}\|\na^2 V_\kappa\|_\infty$, so that
\[\partial_t |f(x_t)|^2 = -2 \na f(x_t) \cdot \na^2 V_\kappa(x_t) \na f(x_t)  \leqslant - 2 \lambda |\na f(x_t)|^2 + 2 (\lambda + \Lambda) |\na f(x_t)|^2 \1_{x_t \in \mathcal D}\,.\]
We end up with
\[|\na f(x_t) |^2 \leqslant e^{2(\lambda+\Lambda) T} e^{-2\lambda t} |\na f(x_0)|^2\]
for all  $t\geqslant 0$ and initial condition $x_0\in\R^d$. We conclude by
\[f(x_0) - f(m_*) = \lim_{t\rightarrow \infty} f(x_0) - f(x_t) =  \int_0^\infty |\na f(x_t)|^2 \dd t \leqslant \frac{e^{2(\lambda+\Lambda) T}}{2\lambda } |\na f(x_0)|^2\,, \]
for any $x_0\in\R^d$, which is exactly~\eqref{eq:PLfinitedim}.
\end{proof}

\subsection{Non-degenerate minimizer: coercivity}\label{subsec:coerci-non-degen}

We turn to the study of the coercivity properties of the free energy, as in \cite{SongboToAppear}. Before addressing the $N$ particle system, let us first characterize the (infinite dimensional) coercivity inequalities~\eqref{eq:coercivityF} and \eqref{eq:coercivityFW2} in terms of the corresponding (finite dimensional)  condition on $V_\kappa$.

\begin{prop}\label{prop:coercivity}
Under Assumption~\ref{assu:general}, let $m_*\in\R^d$ be a global minimizer of $V_\kappa$ and $\rho_*=\gamma_{m_*}$. Then the following are equivalent:
\begin{enumerate}[(i)]
\item The entropic coercivity  inequality~\eqref{eq:coercivityF} holds for some $c>0$.
\item The $\mathcal W_2$ coercivity  inequality~\eqref{eq:coercivityFW2} holds for some $c>0$.
\item $V_\kappa$ satisfies a coercivity inequality
\begin{equation}
\label{eq:coercivityV}
\forall m\in \R^d,\qquad V_\kappa(m) - V_\kappa(m_*) \geqslant \frac{c\kappa}2 |m-m_*|^2\,,
\end{equation}
for some $c>0$.
\end{enumerate}
  Moreover, in that case, writing $c_{coer}^{\mathcal H}(\mathcal F)$, $c_{coer}^{\mathcal W_2}(\mathcal F)$ and $c_{coer}(V_\kappa)$ the largest constants such that, respectively, \eqref{eq:coercivityF}, \eqref{eq:coercivityFW2} and \eqref{eq:coercivityV} hold, then 
  \begin{equation}
  \label{loc:eq:egalccoer}
 c_{coer}^{\mathcal H}(\mathcal F) = c_{coer}^{\mathcal W_2}(\mathcal F)  = \min\po 1,c_{coer}(V_\kappa)\pf\,.
  \end{equation}
\end{prop}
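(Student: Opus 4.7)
The plan is to prove (i)$\Leftrightarrow$(ii)$\Leftrightarrow$(iii) by a cycle, tracking sharp constants throughout, and then to close the gap in \eqref{loc:eq:egalccoer} using explicit Gaussian test measures. The main bookkeeping tool is the identity
\[ \overline{\mathcal F}(\mu) = \po V_\kappa(m_\mu) - V_\kappa(m_*)\pf - \frac{\kappa}{2}|m_\mu - m_*|^2 + \mathcal H(\mu|\rho_*), \]
obtained by combining \eqref{eq:FVkappa} with \eqref{loc1bis} applied to $m' = m_*$ and subtracting $\mathcal F(\rho_*) = V_\kappa(m_*) - \frac{d}{2}\ln(2\pi/\kappa)$.

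For (iii)$\Rightarrow$(i), denote $c_* = c_{coer}(V_\kappa)$. Plugging \eqref{eq:coercivityV} into the identity gives $\overline{\mathcal F}(\mu) \geqslant (c_*-1)\frac{\kappa}{2}|m_\mu - m_*|^2 + \mathcal H(\mu|\rho_*)$. If $c_* \geqslant 1$ this is already $\geqslant \mathcal H(\mu|\rho_*)$; if $c_* < 1$ I absorb the negative term using the Gaussian T2 Talagrand inequality $\mathcal H(\mu|\rho_*) \geqslant \frac{\kappa}{2}\mathcal W_2^2(\mu,\rho_*)$ together with the Jensen bound $\mathcal W_2^2(\mu,\rho_*) \geqslant |m_\mu - m_*|^2$, obtaining $\overline{\mathcal F}(\mu) \geqslant c_* \mathcal H(\mu|\rho_*)$. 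Either way $c_{coer}^{\mathcal H}(\mathcal F) \geqslant \min(1, c_*)$. The implication (i)$\Rightarrow$(ii) is immediate from the same T2 inequality, with $c_{coer}^{\mathcal W_2}(\mathcal F) \geqslant c_{coer}^{\mathcal H}(\mathcal F)$. For (ii)$\Rightarrow$(iii), I test against $\mu = \gamma_m$: the identity reduces to $\overline{\mathcal F}(\gamma_m) = V_\kappa(m) - V_\kappa(m_*)$, and two Gaussians with identical covariance satisfy $\mathcal W_2^2(\gamma_m, \gamma_{m_*}) = |m - m_*|^2$, producing (iii) with $c_{coer}(V_\kappa) \geqslant c_{coer}^{\mathcal W_2}(\mathcal F)$.

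These three steps already establish the equivalences and yield the chain $\min(1, c_{coer}(V_\kappa)) \leqslant c_{coer}^{\mathcal H}(\mathcal F) \leqslant c_{coer}^{\mathcal W_2}(\mathcal F) \leqslant c_{coer}(V_\kappa)$; it remains to cap both coercivity constants by $1$. This is the main (mild) technical point, relying on explicit Gaussian tests. Taking $\mu = \mathcal N(m_*, \sigma^2 I_d)$ makes the identity collapse to $\overline{\mathcal F}(\mu) = \mathcal H(\mu|\rho_*)$ since $m_\mu = m_*$, which immediately yields $c_{coer}^{\mathcal H}(\mathcal F) \leqslant 1$ (pick any $\sigma^2 \neq \kappa^{-1}$). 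For the Wasserstein constant, plugging the Gaussian formulas from \eqref{eq:locgaussHI} together with $\mathcal W_2^2(\mu,\rho_*) = d(\sigma - \kappa^{-1/2})^2$ and setting $t = \sigma\sqrt\kappa$, the ratio $\overline{\mathcal F}(\mu)/[\frac{\kappa}{2}\mathcal W_2^2(\mu,\rho_*)]$ simplifies to $(t^2 - 1 - 2\ln t)/(t-1)^2$, which tends to $1$ as $t \to \infty$; hence $c_{coer}^{\mathcal W_2}(\mathcal F) \leqslant 1$. Combined with the chain this gives \eqref{loc:eq:egalccoer}.
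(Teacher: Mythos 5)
Your proposal is correct and follows essentially the same route as the paper: the same cycle (iii)$\Rightarrow$(i)$\Rightarrow$(ii)$\Rightarrow$(iii) via~\eqref{locgh}–\eqref{loc1bis}, Talagrand, and testing $\mu=\gamma_m$, and the same Gaussian test $\mu=\mathcal N(m_*,\sigma^2 I_d)$ with $\sigma\to\infty$ to cap the constant at $1$. The only cosmetic differences are that you present (iii)$\Rightarrow$(i) via an explicit identity with a two-case argument rather than the paper's interpolation parameter $\delta\in(0,1]$ (equivalent algebra), you invoke Talagrand plus Jensen where $\mathcal H(\mu|\gamma_{m_\mu})\geqslant 0$ from~\eqref{loc1bis} would do directly, and you supply a (redundant) separate cap $c_{coer}^{\mathcal H}(\mathcal F)\leqslant 1$ in addition to the one on $c_{coer}^{\mathcal W_2}(\mathcal F)$.
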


\begin{rem}
Under Assumption~\ref{assu:general}, $V_\kappa$ is strongly convex outside a compact. In this situation, the coercivity inequality~\eqref{eq:coercivityV} holds for some $c>0$ if and only if $m_*$ is the unique global minimizer of $V_\kappa$ and is non-degenerate (but, contrary to the case of Proposition~\ref{prop:equivalenceInegal}, $V_\kappa$ may have other critical points).
\end{rem}

\begin{proof}
\textbf{[(i)$\Rightarrow$(ii)]}. This implication is a direct consequence of the Talagrand inequality (with constant $\kappa$) satisfied by $\rho_*=\gamma_{m_*}$, which shows $c_{coer}^{\mathcal W_2}(\mathcal F) \geqslant c_{coer}^{\mathcal H}(\mathcal F)$.

\textbf{[(ii)$\Rightarrow$(iii)]} applying~\eqref{eq:coercivityFW2} with $\mu=\gamma_m$ for some $m\in\R^d$ gives 
\[V_\kappa(m) - V_\kappa(m_*) = \overline{\mathcal F}(\gamma_m) \geqslant \frac{c\kappa}2 \mathcal W_2^2(\gamma_m,\rho_*) = \frac{c \kappa}{2}|m-m_*|^2 \,, \]
which shows that~\eqref{eq:coercivityV} holds with $c_{coer}(V_\kappa) \geqslant c_{coer}^{\mathcal W_2}(\mathcal F)$.

\textbf{[(iii)$\Rightarrow$(i)]}  Assume~\eqref{eq:coercivityV}. Using~\eqref{locgh}, the positivity of the relative entropy and then~\eqref{loc1bis}, for any $\delta \in(0,1]$,
\begin{align*}
\mathcal F(\mu) - \mathcal F(\rho_*) 
& =  V_\kappa(m_\mu) - V_\kappa(m_*) +  \mathcal H\po \mu|\gamma_{m_\mu} \pf  \\
& \geqslant  V_\kappa(m_\mu) - V_\kappa(m_*) +  \delta \mathcal H\po \mu|\gamma_{m_\mu} \pf  \\
& \geqslant \frac{c\kappa}{2} |m_\mu-m_*|^2 + \delta \po   \mathcal H\po \mu|\rho_* \pf  - \frac\kappa2 |m_\mu - m_*|^2 \pf  \,.
\end{align*}
Taking $\delta = \min(1,c)$ shows \eqref{eq:coercivityV} with $ c_{coer}^{\mathcal H}(\mathcal F)  \geqslant \min\po 1,c_{coer}(V_\kappa)\pf$.

This concludes the proof of the  equivalences. To get~\eqref{loc:eq:egalccoer},  %applying~\eqref{eq:coercivityF}  to a measure $\mu \neq \rho_*$ with $m_\mu = m_*$ gives
%\[\mathcal H(\mu|\rho_*) = \overline{\mathcal F}(\mu) \geqslant c \mathcal H(\mu|\rho_*)\,, \]
%hence $c_{coer}^{\mathcal H}(\mathcal F) \leqslant 1$. Similarly, 
it remains to apply~\eqref{eq:coercivityFW2} with $\mu = \mathcal N(m_*,\sigma^2)$ for an arbitrary $\sigma^2>0$, which, recalling~\eqref{eq:locgaussHI}, reads
\begin{equation}
\label{loc:d2}
\frac{d}{2}  \co \sigma^2 \kappa - 1   - \ln (\kappa \sigma^2)\cf  = \mathcal H(\mu|\rho_*) = \overline{\mathcal F}(\mu) \geqslant \frac{c\kappa}{2}\mathcal W_2^2(\mu,\rho_*) = \frac{c\kappa}{2} d \po \sigma - \kappa^{-1/2} \pf^2   \,.
\end{equation}
Taking $\sigma$ arbitrarily large shows that $c_{coer}^{\mathcal W_2}(\mathcal F) \leqslant 1$. Combining this with the previous inequalities obtained yields~\eqref{loc:eq:egalccoer}.
\end{proof}

To discuss the consequences of the coercivity inequalities from Proposition~\ref{prop:coercivity} to the $N$ particles system as in \cite{SongboToAppear}, we introduce, for a density $\nu^N \in \mathcal P_2(\R^{dN})$, the modulated $N$-particle free energy
\begin{equation}
\label{eq:FNnuN}
\mathcal F^N(\nu^N|\rho_*) = \int_{\R^{dN}} \nu^N \ln \nu^N  + N \int_{\R^{dN}}  \mathcal E(\pi_{\bx}) \nu^{N}(  \dd \bx) - N \mathcal F(\rho_*)\,.
\end{equation}

\begin{prop}
\label{prop:coercivityN}
Under Assumption~\ref{assu:general}, let $m_*\in\R^d$ be a global minimizer of $V_\kappa$ and $\rho_*=\gamma_{m_*}$. Assuming furthermore that $\na^2 V$ is bounded, then the three statements of Proposition~\ref{prop:coercivity} are also equivalent to the following:
\begin{enumerate}[(i)]
\setcounter{enumi}{3}
\item There exists $c>0$ such that for all $\varepsilon>0$, $N\geqslant 1$ and all density $\nu^N \in \mathcal P_2(\R^{dN})$, 
\begin{equation}
\label{eq:coercivityN}
\mathcal F^N(\nu^N|\rho_*) \geqslant \po c -  2 \frac{\|\na^2 V\|_\infty}{\kappa}  \po  \varepsilon + \frac{1+\varepsilon^{-1}}{N}\pf \pf \mathcal H \po \nu^N |\rho_*^{\otimes N}\pf -   \frac{\|\na^2 V\|_\infty}{\kappa}d (1+\varepsilon   + \varepsilon^{-1})   \,.
\end{equation}
\end{enumerate}
Moreover, in this case, the biggest $c>0$ such that $(iv)$ is satisfied is equal to $c_{coer}^{\mathcal H}(\mathcal F)$.
\end{prop}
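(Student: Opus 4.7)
The strategy rests on the following identity, which I would derive first:
\begin{equation}\label{plan:id}
\mathcal F^N(\nu^N|\rho_*) = \mathcal H(\nu^N|\rho_*^{\otimes N}) + N\int_{(\R^d)^N} W(\bar x)\,\nu^N(\dd \bx),
\end{equation}
where $W(y) := V_\kappa(y) - V_\kappa(m_*) - \frac{\kappa}{2}|y-m_*|^2$. To get \eqref{plan:id} I plug $N\mathcal E(\pi_\bx) = NV_\kappa(\bar x) + \frac{\kappa}{2}\sum_i|x_i-\bar x|^2$ into \eqref{eq:FNnuN}, use the Gaussian form of $\rho_*$ (so that $-\ln\rho_*(x) = \frac{d}{2}\ln(2\pi/\kappa) + \frac{\kappa}{2}|x-m_*|^2$) together with $\mathcal F(\rho_*) = V_\kappa(m_*) - \frac{d}{2}\ln(2\pi/\kappa)$, and then apply the elementary identity $\sum_i(|x_i-\bar x|^2 - |x_i-m_*|^2) = -N|\bar x-m_*|^2$ (consequence of $\sum_i(x_i-\bar x)=0$). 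Since $\na V_\kappa(m_*)=0$ and $\na^2 W = \na^2 V$, Taylor's theorem gives the uniform two-sided bound $|W(y)| \leqslant \frac{\|\na^2V\|_\infty}{2}|y-m_*|^2$; moreover, under (iii) of Proposition~\ref{prop:coercivity} one has the one-sided refinement $W(y) \geqslant \frac{(c-1)\kappa}{2}|y-m_*|^2$.

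For the implication (iv)$\Rightarrow$(i), I would apply (iv) to the tensorized measure $\nu^N=\mu^{\otimes N}$ for an arbitrary $\mu \in\mathcal P_2(\R^d)$ with finite entropy. Under $\mu^{\otimes N}$ the empirical mean $\bar X$ has expectation $m_\mu$ and variance $\mathrm{tr}(\mathrm{Cov}(\mu))/N$, so by the Taylor bound on $W$, $|N\E[W(\bar X)] - NW(m_\mu)| \leqslant \frac{\|\na^2V\|_\infty}{2}\mathrm{tr}(\mathrm{Cov}(\mu))$, which is $O(1)$ uniformly in $N$. Combining with the algebraic identity $W(m_\mu) = \overline{\mathcal F}(\mu) - \mathcal H(\mu|\rho_*)$ (an immediate consequence of \eqref{locgh} and \eqref{loc1bis}) and $\mathcal H(\mu^{\otimes N}|\rho_*^{\otimes N}) = N\mathcal H(\mu|\rho_*)$, the identity \eqref{plan:id} yields $\mathcal F^N(\mu^{\otimes N}|\rho_*) = N\overline{\mathcal F}(\mu) + O(1)$. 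Dividing (iv) by $N$, letting $N\to\infty$ and then $\varepsilon\to 0$ recovers $\overline{\mathcal F}(\mu) \geqslant c\mathcal H(\mu|\rho_*)$, i.e. (i) with the same constant, showing in particular that the best $c$ in (iv) is at most $c_{coer}^{\mathcal H}(\mathcal F)$.

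For the converse implication, starting from (iii) (equivalent to (i) by Proposition~\ref{prop:coercivity}), I would lower-bound the defect term $N\E[W(\bar X)]$ in \eqref{plan:id} using the available bounds on $W$, so that the problem reduces to controlling $N\E_{\nu^N}[|\bar X - m_*|^2]$ in terms of $\mathcal H(\nu^N|\rho_*^{\otimes N})$. Writing $\bar\nu^N$ for the law of $\bar X$ under $\nu^N$ and $\bar\rho := \mathcal N(m_*, (\kappa N)^{-1}I_d)$ for its target under $\rho_*^{\otimes N}$, the data-processing inequality gives $\mathcal H(\bar\nu^N|\bar\rho) \leqslant \mathcal H(\nu^N|\rho_*^{\otimes N})$, while the Gaussian T2 Talagrand inequality for $\bar\rho$ (with constant $2/(\kappa N)$) yields $\mathcal W_2^2(\bar\nu^N,\bar\rho) \leqslant \frac{2}{\kappa N}\mathcal H(\nu^N|\rho_*^{\otimes N})$. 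A Young-type splitting $|a-c|^2 \leqslant (1+\varepsilon)|a-b|^2 + (1+\varepsilon^{-1})|b-c|^2$ along an optimal coupling then produces
\[N\E_{\nu^N}[|\bar X-m_*|^2] \leqslant \frac{2(1+\varepsilon)}{\kappa}\mathcal H(\nu^N|\rho_*^{\otimes N}) + \frac{(1+\varepsilon^{-1})d}{\kappa}.\]
Combining with the $\|\na^2V\|_\infty$-controlled bound on $W$ and reorganizing in $\varepsilon$ delivers \eqref{eq:coercivityN} with $c = c_{coer}^{\mathcal H}(\mathcal F)$, matching the upper bound from the first direction. The main obstacle is the careful bookkeeping of the several Young-type splittings and the interplay between (iii) and the Taylor bound on $W$, so as to produce exactly the $\varepsilon$- and $N$-dependent form of (iv); the hypothesis $\|\na^2V\|_\infty<\infty$ (which is the only new assumption relative to Assumption~\ref{assu:general}) is what renders the Taylor control of $W$ uniform.
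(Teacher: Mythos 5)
Your proof is correct and takes a genuinely different route from the paper's. The central difference is your exact algebraic identity $\mathcal F^N(\nu^N|\rho_*) = \mathcal H(\nu^N|\rho_*^{\otimes N}) + N\int W(\bar x)\,\nu^N(\dd\bx)$ with $W(y)=V_\kappa(y)-V_\kappa(m_*)-\frac{\kappa}{2}|y-m_*|^2$, which exploits the special toy-model structure (interaction only through the barycentre, additive quadratic confinement). This collapses the problem to controlling $N\E_{\nu^N}[|\bar X-m_*|^2]$, which you handle cleanly via data-processing for the map $\bx\mapsto\bar x$ followed by T2 for the $N$-scaled Gaussian $\bar\rho$. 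The paper instead follows the conditional-barycentre strategy of Wang \cite{SongboToAppear}: it introduces the averaged conditional distribution $\bar\nu$, uses the chain rule plus convexity of relative entropy (\eqref{eq:chainruleH}), applies the reinterpreted coercivity \eqref{loc:reinterpret} to $\bar\nu$, and then needs both a Taylor/Talagrand estimate for $\E[|\bar X-m_*|^2]$ and a separate martingale argument to control $\E[|\bar X-m_{\bar\nu}|^2]$. Your identity eliminates $m_{\bar\nu}$ entirely, so the martingale step disappears and the whole thing shrinks to one projected T2. The trade-off: the paper's approach carries over to genuinely pairwise mean-field interactions, whereas yours is tied to the fact that the energy depends on $\mu$ only through $m_\mu$ and a linear term. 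One small bookkeeping point you should make explicit: your $(iii)\Rightarrow(iv)$ path, when run with the one-sided bound $W(y)\geqslant\frac{(c-1)\kappa}{2}|y-m_*|^2$, yields $\mathcal F^N \geqslant (c-(1-c)\varepsilon)\mathcal H - \frac{(1-c)(1+\varepsilon^{-1})d}{2}$, and to turn this into the stated \eqref{eq:coercivityN} you need the elementary consequence of Taylor's theorem that $1-c_{coer}^{\mathcal H}(\mathcal F)\leqslant \|\na^2V\|_\infty/\kappa$ (since $W\geqslant -\frac{\|\na^2V\|_\infty}{2}|\cdot-m_*|^2$ forces $c_{coer}(V_\kappa)\geqslant 1-\|\na^2V\|_\infty/\kappa$); with that in hand the implication is immediate.
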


For useful consequences of~\eqref{eq:coercivityN}, see Corollary 3  (for concentration inequalities) or Section~3.2 (for uniform-in-time propagation of chaos) of \cite{SongboToAppear}.

\begin{proof}
\textbf{[(i)$\Rightarrow$(iv)]} The argument is inspired by the proof of \cite[Theorem 1]{SongboToAppear},  our situation being by some aspects simpler since the interaction involves only the barycentre of the system (however, this previous result doesn't apply directly to our case since it only covers pairwise interactions, which in our case corresponds to a quadratic $V$).  First, decomposing
\[\mathcal F(\mu) = \mathcal H(\mu|\rho_*) + \int \ln \rho_* \mu + \mathcal E(\mu)\,, \] 
we re-interpret the  coercivity  inequality~\eqref{eq:coercivityF} as the fact that for all $\mu\in\mathcal P_2(\R^d)$,
\begin{equation}
\label{loc:reinterpret}
(1-c) \mathcal H(\mu|\rho_*) + \int \ln \rho_* \mu + \mathcal E(\mu) - \mathcal F(\rho_*) \geqslant 0\,.
\end{equation}
Similarly, we decompose
\begin{equation}
\label{loc:FN}
\mathcal F^N(\nu^N|\rho_*) = \mathcal H \po \nu^N |\rho_*^{\otimes N}\pf + \int_{\R^{dN}} \nu^N(\bx) \co \ln \rho_*^{\otimes N}(\bx) + N \mathcal E(\pi_{\bx}) \cf \dd \bx - N \mathcal F(\rho_*)\,. 
\end{equation}
Denote by $\nu_k$ the conditional distribution of $X_k$ given $(X_j)_{j\in\cco 1,k-1\ccf}$ when $\bX \sim \nu^N$, and $\bar \nu = \frac1N\sum_{k=1}^N \nu_k$. In particular,  by interchangeability, $\mathbb E(\bar\nu)$ is the first $d$-dimensional marginal of $\nu^N$, and
\[ \int_{\R^{dN}} \nu^N(\bx) \co \ln \rho_*^{\otimes N}(\bx) + \frac{\kappa}{2}|\bx|^2 \cf \dd \bx = N \mathbb E \po  \int_{\R^d}  \co \ln \rho_*(x) + \frac{\kappa}{2}|x|^2\cf \bar\nu(\dd x)\pf \,.\]
By the chain rule of the entropy and then its convexity,
\begin{equation}
\label{eq:chainruleH}
\frac1N \mathcal H \po \nu^N |\rho_*^{\otimes N}\pf = \frac1N \sum_{k=1}^N \mathbb E \po \mathcal H \po \nu_k |\rho_*\pf\pf  \geqslant \mathbb E \po \mathcal H \po \bar\nu |\rho_*\pf\pf\,.
\end{equation}
Using this in \eqref{loc:FN} gives
\begin{align}
\mathcal F^N(\nu^N|\rho_*) &\geqslant  c \mathcal H \po \nu^N |\rho_*^{\otimes N}\pf + N \int_{\R^{dN}} V(\bar x) \nu^N (\dd \bx) \nonumber \\
&\qquad  +  N \mathbb E \po  (1-c) \mathcal H \po \bar\nu |\rho_*\pf +   \int_{\R^d}  \co \ln \rho_*(x) + \frac{\kappa}{2}|x|^2\cf \bar\nu(\dd x) - \mathcal F(\rho_*)\pf \nonumber\\
&\geqslant  c \mathcal H \po \nu^N |\rho_*^{\otimes N}\pf + N  \int_{\R^{dN}} V(\bar x) \nu^N (\dd \bx) - N \mathbb E \po V(m_{\bar\nu})\pf \,,\label{loc:654}
\end{align}
where we used~\eqref{loc:reinterpret} with $\mu=\bar \nu$. It remains to control the difference of the two last terms.

 Considering $\bX\sim \nu^N$, we bound 
\begin{align*}
 V(m_{\bar\nu}) - V(\bar X) & \leqslant \na V(\bar X) \cdot (\bar X-m_{\bar\nu}) + \frac{\|\na^2 V\|_\infty}{2}|\bar X - m_{\bar\nu}|^2\\
 & \leqslant \na V(m_*) \cdot (\bar X-m_{\bar\nu}) +   \frac{\|\na^2 V\|_\infty}{2}\co \varepsilon|\bar X-m_*|^2 + (1 + \varepsilon^{-1}) |\bar X-m_{\bar\nu}|^2\cf\,,
\end{align*}
for any $\varepsilon>0$.  Taking the expectation gives
\begin{equation}
\label{loc:123}
\mathbb E \po V(m_{\bar\nu}) - V(\bar X)\pf \leqslant  \frac{\|\na^2 V\|_\infty}{2}\co \varepsilon \mathbb E\po |\bar X-m_*|^2\pf  + (1 + \varepsilon^{-1}) \mathbb E \po |\bar X-m_{\bar\nu}|^2\pf \cf\,.
\end{equation}
To bound the first term, considering $(\bX,\mathbf{Y})$ a coupling of $\nu^N$ and $\rho_*^{\otimes N}$, we bound
\[
\mathbb E\po |\bar X-m_*|^2\pf   \leqslant 2 \mathbb E\po |\bar X- \bar Y |^2\pf + 2 \mathbb E\po |\bar Y-m_*|^2\pf  \leqslant \frac{2}{N} \mathbb E \po |\bX-\mathbf{Y}|^2 \pf + \frac{2}{N}\mathrm{Var}(\rho_*)\,.
\]
Taking the infimum over all couplings and using Talagrand inequality for $\rho_*^{\otimes N}$ gives
\begin{equation}
\label{loc:567}
\mathbb E\po |\bar X-m_*|^2\pf    \leqslant \frac{4}{N\kappa } \mathcal H \po \nu^N|\rho_*^{\otimes N}\pf  + \frac{2d}{N\kappa}\,.
\end{equation}
It remains to control the last term of~\eqref{loc:123}.  For $k\in\cco 0,N\ccf$, write $\mathcal F_k = \sigma(\{X_j\}_{j\in\cco 1,k\ccf})$ and consider the  martingale
\[M_k = \sum_{j=1}^k \po X_j - \mathbb E \po X_j|\mathcal F_{j-1}\pf\pf\,.\]
Then, by the martingale property,
\[ \mathbb E \po |\bar X-m_{\bar\nu}|^2\pf  = \frac{1}{N^2 }\mathbb E \po M_N^2\pf = \frac{1}{N^2} \sum_{k=1}^N \mathbb E \po |X_k -\mathbb E(X_k|\mathcal F_{k-1})|^2\pf  \,.\]
By the variational characterisation of the expectation,
\[ \mathbb E \po |X_k -\mathbb E(X_k|\mathcal F_{k-1})|^2 |\mathcal F_{k-1}\pf \leqslant  \mathbb E \po |X_k - m_*|^2\pf\,,\]
and thus, reasoning as previously,
\[ \mathbb E \po |\bar X-m_{\bar\nu}|^2\pf \leqslant  \frac{1}{N^2} \sum_{k=1}^N \mathbb E \po |X_k -m_*|^2\pf \leqslant \frac{2}{N^2} \mathcal W_2^2\po \nu^N,\rho_*^{\otimes N}\pf  + \frac{2d}{N \kappa } \,.\]
Using Talagrand inequality, plugging the resulting inequality together with~\eqref{loc:567} in \eqref{loc:123} and the latter in~\eqref{loc:654} finally leads to 
\[
\mathcal F^N(\nu^N|\rho_*) \geqslant c \mathcal H \po \nu^N |\rho_*^{\otimes N}\pf -   \frac{\|\na^2 V\|_\infty}{\kappa}\co 2 \po  \varepsilon + \frac{1+\varepsilon^{-1}}{N}\pf  \mathcal H \po \nu^N|\rho_*^{\otimes N}\pf  + d (1+\varepsilon   + \varepsilon^{-1}) \cf \,,
\]
concluding the proof of the implication (with $c \geqslant c_{coer}^{\mathcal H}(\mathcal F)$).

\medskip

\textbf{[(iv)$\Rightarrow$(i)]} Applying~\eqref{eq:coercivityN} to $\nu^N = \nu^{\otimes N}$ for a density $\nu$ with compact support, dividing by $N$ and letting $N\rightarrow \infty$ (using~\eqref{loc:dominated}) gives
\[\overline{\mathcal F}(\nu) \geqslant \co c -   2 \varepsilon \frac{\|\na^2 V\|_\infty}{\kappa} \cf \mathcal H \po \nu |\rho_*\pf\,. \]
Since $\varepsilon>0$ is arbitrary, this proves the coercivity inequality~\eqref{eq:coercivityF} with $c_{coer}^{\mathcal H}(\mathcal F) \geqslant c$ (for all $\nu$ with compact support and then all $\nu\in\mathcal P_2(\R^d)$ by density).

\end{proof}

\subsection{Degenerate minimizer: contraction}\label{sec:degenPL}

This section aims at adapting the discussion in Section~\ref{subsec:non-degen-contract} but in the situation where $\na^2 V_\kappa(m_*)$ is singular.  As it is quite long, it is split in three parts: Section~\ref{subsubsec:PLdegenMF} focuses on Łojasiewicz inequalities for the mean-field free energy. In Section~\ref{subsubsec:StandardLSIdegenerate}, we will see how, in these situations, the log-Sobolev constant of the Gibbs measure grows polynomially with $N$. In Section~\ref{subsubsec:degenerateLSIdegenerate}, we will see however that the Gibbs measure satisfies a somewhat degenerate LSI (which is simply a Łojasiewicz inequality when interpreting the Fokker-Planck equation associated with the particles as the gradient flow of the relative entropy with respect to the Gibbs measure) with the correct scaling in $N$ (in the sense that letting $N \rightarrow \infty$ in this inequality recovers the mean-field Łojasiewicz inequality for $\mathcal F$).

\subsubsection{General Łojasiewicz inequalities for the mean-field free energy}\label{subsubsec:PLdegenMF}

For a nondecreasing function $\Theta:\R_+\rightarrow\R_+$, we say that $\mathcal F$ satisfies a $\Theta$-ŁI (for \emph{Łojasiewicz Inequality}) if
\begin{equation}
\label{eq:degenPL}
\forall \mu \in \mathcal P_2(\R^d),\qquad \overline{\mathcal F}(\mu) \leqslant \Theta\po \mathcal I(\mu|\Gamma(\mu))\pf\,.
\end{equation}
Notice that, necessarily, $\Theta(r) > 0$ for $r>0$, since a global minimizer $\mu$ of $\mathcal F$ is also a critical point, i.e. $\mathcal I(\mu|\Gamma(\mu))=0$. We say that the inequality is tight if $\Theta$ is continuous at $0$ with $\Theta(0)=0$, defective otherwise.

For instance,  in \cite[Proposition 15]{MonmarcheReygner}, such an inequality is  proven to hold at the critical temperature with $\Theta(r) = C(r+r^{1/3})$ for some $C>0$ for~\eqref{eq:F_CurieWeiss} with quadratic attractive interaction in a double well potential. As in the non-degenerate case, in \cite{MonmarcheReygner}, the inequality~\eqref{eq:degenPL} is in fact proven by establishing a (degenerate) $\Theta$-free energy condition
\begin{equation}
\label{eq:degenFreeEnergCOnd}
\forall \mu \in \mathcal P_2(\R^d),\qquad \overline{\mathcal F}(\mu) \leqslant \Theta\po 2\kappa  \mathcal H(\mu|\Gamma(\mu))\pf\,,
\end{equation}
and then simply using that $\Gamma(\mu)$ satisfies a LSI with constant $\kappa^{-1}$. Combined with~\eqref{eq:dissipation},  \eqref{eq:degenPL} implies that, along the flow~\eqref{eq:EDP},
\begin{equation}
\label{loc:Fdecrdegen}
\partial_t \overline{\mathcal F} (\rho_t) \leqslant -  \Theta^{-1}\po \overline{\mathcal F}(\rho_t)\pf  \,, 
\end{equation}
with $\Theta^{-1}$ the generalized inverse of $\Theta$. If $\Theta(r)$ is of order $r^\alpha$ for some $\alpha\in(0,1)$ for small $r$, this gives a long-time convergence rate of order $t^{-\alpha/(1-\alpha)} $. If the inequality is defective then $\Theta^{-1}(r)=0$ for $r$ small enough and it doesn't imply that $\overline{\mathcal F}(\rho_t)$ goes to zero.

The next result provides many example where~\eqref{eq:degenPL} holds, since it relates this functional inequality with the finite-dimensional $\Theta$-ŁI inequality
\begin{equation}
\label{eq:PLVkappa-weak}
\overline{V}_\kappa(m):= V_\kappa(m) -  \inf V_\kappa \leqslant \Theta \po    |\na V_\kappa(m)|^2 \pf \,.
\end{equation}
For example, if $V_\kappa(m) =|m-m_*|^4$ for some $m_*\in\R^d$, then $|\na V_\kappa(m)| = 4|m-m_*|^3$ and then \eqref{eq:PLVkappa-weak} holds with $\Theta(r) =r^{2/3}/16 $, and this is optimal (in particular, according to Proposition~\ref{prop:equivalenceInegal}, the corresponding free energy $\mathcal F$ does not satisfy a standard PL inequality). More generally,~\eqref{eq:PLVkappa-weak} always holds with $\Theta = \Theta_1$ where
\begin{equation}
\label{loc:Theta1}
\Theta_1(r) = \sup\{V_\kappa(m) - \inf V_\kappa\ :\  |\na V_\kappa(m)|^2 \leqslant r\}\,,
\end{equation}
which is nondecreasing. Moreover  $\Theta_1(0)=0$ if all critical points of $V_\kappa$ are global minimizers, and then $\Theta_1$ is continuous at $0$ if  $\liminf_{|m|\mapsto \infty} |\na V_\kappa(m)|>0$ (this  condition being implied by Assumption~\ref{assu:general}, where we also get that $\limsup_{r\rightarrow \infty}\Theta_1(r)/r<\infty$). Moreover, in this situation, if $V_\kappa$ is analytical, then $\Theta_1(r)$ scales like $r^{\alpha}$ for some $\alpha>0$ from \cite[Proposition 1]{law1965ensembles}. 

\begin{prop}\label{prop:PLdegenere}
Under Assumption~\ref{assu:general}, for a nondecreasing function $\Theta:\R_+\rightarrow\R_+$, the following holds:
\begin{enumerate}
\item If $V_\kappa$ satisfies the $\Theta$-ŁI inequality~\eqref{eq:PLVkappa-weak} and $r \mapsto (\Theta(r) - \frac{r}{2\kappa})_+$ is non-decreasing, then $\mathcal F$ satisfies the  $\widetilde{\Theta}$-free energy condition~\eqref{eq:degenFreeEnergCOnd} with $\widetilde{\Theta}(r) = \max \po \Theta(r),\frac{r}{2\kappa}\pf$.
\item The $\Theta$-free energy condition~\eqref{eq:degenFreeEnergCOnd} implies the $\Theta$-ŁI inequality~\eqref{eq:degenPL} and that, necessarily, $\Theta (r) \geqslant r/(2\kappa)$ for all $r\geqslant 0$.
\item If $\mathcal F$ satisfies the $\Theta$-ŁI inequality~\eqref{eq:degenPL} then $V_\kappa$ satisfies the $\Theta$-ŁI inequality~\eqref{eq:PLVkappa-weak}. Moreover, necessarily, 
\begin{equation}
\label{loc:rdkappa}
\forall r \geqslant d\kappa\,,\qquad \Theta(r) \geqslant \frac{r}{2\kappa} - \frac{d}{2}\po 1 + \ln\po \frac{r}{d\kappa}\pf\pf\,,
\end{equation}
and in particular $\liminf_{r\rightarrow\infty} \Theta(r)/r \geqslant (2\kappa)^{-1}$. 
\end{enumerate}
In particular, if $r \mapsto \Theta(r) - \frac{r}{2\kappa}$ is non-decreasing, then the three inequalities are equivalent. As a consequence, the three inequalities holds with $\Theta$ given by
\[\Theta(r) = \Theta_1(0) + \int_0^r \max \po \Theta_1'(s), \frac{1}{2\kappa}\pf \dd s\,,\]
where $\Theta_1$ is given in~\eqref{loc:Theta1} (and is almost everywhere differentiable as a nondecreasing function).
\end{prop}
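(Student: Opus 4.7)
The proof rests on identity \eqref{loc2bishop},
\[
\overline{\mathcal F}(\mu) \;=\; \overline V_\kappa(m_\mu) - \frac{1}{2\kappa}|\na V_\kappa(m_\mu)|^2 + \mathcal H(\mu|\Gamma(\mu)),
\]
combined with the bound $|\na V_\kappa(m_\mu)|^2 \le 2\kappa\,\mathcal H(\mu|\Gamma(\mu))$ from \eqref{loc3} and the LSI $2\kappa\,\mathcal H(\mu|\Gamma(\mu))\le \mathcal I(\mu|\Gamma(\mu))$ satisfied by the Gaussian $\Gamma(\mu)$. The plan is to repeatedly test inequalities on two classes of measures: translated Gaussians $\mu=\gamma_m$, for which $\Gamma(\mu)=\gamma_{f(m)}$, $\overline{\mathcal F}(\gamma_m)=\overline V_\kappa(m)$, and $\mathcal I(\gamma_m|\Gamma(\gamma_m)) = \kappa^2|m-f(m)|^2 = |\na V_\kappa(m)|^2$; and isotropic Gaussians $\mu=\mathcal N(m_*,\sigma^2 I_d)$, for which $m_\mu=m_*$, $\Gamma(\mu)=\rho_*$, so that $\overline{\mathcal F}(\mu) = \mathcal H(\mu|\rho_*)$ by \eqref{locgh}, and both $\mathcal H(\mu|\rho_*)$, $\mathcal I(\mu|\rho_*)$ are explicit via \eqref{eq:locgaussHI}.

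For item 1, set $H = \mathcal H(\mu|\Gamma(\mu))$ and $G = |\na V_\kappa(m_\mu)|^2$, and inject the $\Theta$-ŁI for $V_\kappa$ into the above identity to get $\overline{\mathcal F}(\mu) \le \Theta(G) - G/(2\kappa) + H$. If $\Theta(G) \le G/(2\kappa)$, then $\overline{\mathcal F}(\mu) \le H \le \widetilde\Theta(2\kappa H)$ since $\widetilde\Theta(r)\ge r/(2\kappa)$ by definition. Otherwise, the assumed monotonicity of $r\mapsto (\Theta(r)-r/(2\kappa))_+$ applied to $G \le 2\kappa H$ gives $\Theta(G)-G/(2\kappa) \le \Theta(2\kappa H)-H$, so $\overline{\mathcal F}(\mu) \le \Theta(2\kappa H) \le \widetilde\Theta(2\kappa H)$. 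For item 2, the LSI of $\Gamma(\mu)$ together with the monotonicity of $\Theta$ converts the $\Theta$-FEC directly into the $\Theta$-ŁI. The lower bound $\Theta(r)\ge r/(2\kappa)$ then follows by applying the FEC to $\mu=\mathcal N(m_*,\sigma^2 I_d)$: the inequality reduces to $\mathcal H(\mu|\rho_*) \le \Theta(2\kappa\,\mathcal H(\mu|\rho_*))$, and $\mathcal H(\mu|\rho_*)$ sweeps $[0,\infty)$ as $\sigma^2$ varies over $(0,\infty)$.

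For item 3, plugging $\mu=\gamma_m$ into the $\Theta$-ŁI for $\mathcal F$ immediately gives the $\Theta$-ŁI for $V_\kappa$. For the quantitative bound \eqref{loc:rdkappa}, I test on $\mu=\mathcal N(m_*,\sigma^2 I_d)$ with parameter $s=\sigma^2\kappa\ge 1$: there $\overline{\mathcal F}(\mu) = \tfrac{d}{2}(s-1-\ln s)$ and $\mathcal I(\mu|\rho_*) = d\kappa(s+1/s-2)$. Given $r\ge d\kappa$, I solve $\mathcal I(\mu|\rho_*)=r$ for the larger root $s_+ = 1 + r/(2d\kappa) + \sqrt{(1+r/(2d\kappa))^2-1}$, and verify the elementary inequality $s_+\ge r/(d\kappa)$. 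Since $s\mapsto s-1-\ln s$ is non-decreasing on $[1,\infty)$ and $r/(d\kappa)\ge 1$, this yields $\Theta(r) \ge \tfrac{d}{2}(r/(d\kappa)-1-\ln(r/(d\kappa)))$, which rearranges to~\eqref{loc:rdkappa}. The equivalence of the three inequalities under the stronger assumption that $r\mapsto \Theta(r)-r/(2\kappa)$ is non-decreasing is then automatic, since this hypothesis (combined with $\Theta(0)\ge 0$) forces $\Theta(r)\ge r/(2\kappa)$, i.e.\ $\widetilde\Theta=\Theta$ in item 1. Finally, the explicit $\Theta(r)=\Theta_1(0)+\int_0^r\max(\Theta_1'(s),1/(2\kappa))\,\dd s$ has non-decreasing $\Theta-r/(2\kappa)$ by construction and dominates $\Theta_1$ pointwise, so $V_\kappa$ inherits the $\Theta$-ŁI from the $\Theta_1$-ŁI.

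The main technical delicacy is the quantitative bound~\eqref{loc:rdkappa}: extracting from the implicit variational problem a closed-form lower bound whose leading coefficient is exactly $1/(2\kappa)$ rather than some strictly smaller fraction. The crucial simplification is the observation that the explicit larger root $s_+$ of the quadratic $\mathcal I(s)=r$ already dominates the cleaner point $r/(d\kappa)$ once $r\ge d\kappa$, which permits one to evaluate the concave gauge $s-1-\ln s$ at the latter and obtain the clean logarithmic remainder.
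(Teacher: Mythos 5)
Your proof is correct and follows essentially the same route as the paper's: item 1 via the identity \eqref{loc2bishop} combined with \eqref{loc3} (exactly the argument of $(ii)\Rightarrow(iv)$ in Proposition~\ref{prop:equivalenceInegal}), item 2 via the LSI of $\Gamma(\mu)$ and the test measures $\mathcal N(m_*,\sigma^2 I_d)$, and item 3 via the test measures $\gamma_m$ and $\mathcal N(m_*,\sigma^2 I_d)$ with \eqref{eq:locgaussHI}. The only cosmetic difference is in \eqref{loc:rdkappa}: instead of solving $\mathcal I(\mu|\rho_*)=r$ for the root $s_+$ and comparing it to $r/(d\kappa)$, the paper simply bounds $\mathcal I(\mu|\rho_*)=d\sigma^2(\kappa-\sigma^{-2})^2\leqslant d\sigma^2\kappa^2$ for $\sigma^2>\kappa^{-1}$ and sets $r=d\sigma^2\kappa^2$, which by monotonicity of $\Theta$ yields the same bound.
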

\begin{proof}
The first point is proven similarly to $(ii)\Rightarrow(iv)$ in Proposition~\ref{prop:equivalenceInegal}, in particular using~\eqref{loc3}.

The second point follows from the LSI satisfied by $\Gamma(\mu)$. The fact that $\Theta(r) \geqslant r/(2\kappa)$ is obtained by applying~\eqref{eq:degenFreeEnergCOnd}  with $\mu  = \mathcal N(m_*,\sigma^2I)$ with any $\sigma^2>0$, so that $\overline{\mathcal F}(\mu) = \mathcal H(\mu|\rho_*)$ and $\mathcal H(\mu|\Gamma(\mu)) = \mathcal H(\mu|\rho_*)$ (and, moreover, $\mathcal H(\mu|\rho_*)$ can take any positive value when $\sigma^2$ varies in $(0,\infty)$).

For the third point, as in the proof of $(ii)\Rightarrow (i)$ in  Proposition~\ref{prop:equivalenceInegal}, if we assume~\eqref{eq:degenPL}, applying it with $\mu=\gamma_m$ for $m\in\R^d$ gives~\eqref{eq:PLVkappa-weak} while applying it with $\mu = \mathcal N(m_*,\sigma^2I)$ for any $\sigma^2>\kappa^{-1}$ and using~\eqref{eq:locgaussHI} shows that 
\[\frac{d}{2}  \co \sigma^2 \kappa - 1   - \ln (\kappa \sigma^2)\cf   \leqslant \Theta\po    d \sigma^2 \po \kappa -\sigma^{-2}\pf^2\pf \leqslant \Theta\po    d \sigma^2  \kappa^2\pf \,. \]
Setting $r=d \sigma^2  \kappa^2$ gives~\eqref{loc:rdkappa}.
\end{proof}

Now, the question of the $N$ particle system in the degenerate case leads to a less standard situation than the uniform-in-$N$ LSI from Proposition~\ref{prop:equivalenceInegal}. Indeed, reasoning as in the proof of the latter, it is clear that \eqref{eq:degenPL} could be deduced from an inequality of the form
\begin{equation} \label{eq:weakLSIchelou}
\forall \nu^N \in \mathcal P_2(\R^{dN}),\qquad \frac1N \mathcal H\po \nu^N |\mu_\infty^N\pf  \leqslant   \Theta\po \frac1N  \mathcal I(\nu^N|\mu_\infty^N) + \delta_N\pf + \delta_N\,,
\end{equation}
where $\delta_N \rightarrow 0$ as $N\rightarrow 0$ (possibly $\delta_N=0$ in case of a tight inequality), with a function $\Theta$ independent from $N$. This is different from weak LSI such as considered for instance in \cite{cattiaux2007weak,toscani2000trend,erdogdu2021convergence} and references within, which involve other quantities depending on $\nu^N$ (such as moments or $L^p$ norms). These inequalities have been introduced and used to study cases where the Gibbs measure has fat tails (compared to Gaussian distributions) and does not satisfies a LSI. Our situation here is different: for each $N$ the Gibbs measure does satisfy a standard LSI, however the constant is not uniform in $N$ (see Corollary~\ref{cor:GibbsLSIdegenerate} below) and thus dividing the LSI by $N$ and letting $N\rightarrow \infty$ does not give anything at the limit. We are not aware of previous results on inequalities of the form~\eqref{eq:weakLSIchelou} with a non-linear $\Theta$ even with $N=1$ (however, as mentioned to us by Max Fathi, such an inequality with sub-linear $\Theta$ can be deduced for light-tail log-concave measures by applying a change of variable and Jensen inequality in \cite[Proposition 3.2]{bobkov2000brunn}, see also Remark~\ref{rem:lighttail} below on this topic).

One approach to establish~\eqref{eq:weakLSIchelou} could be to deduce it from~\eqref{eq:degenPL} by  adapting to this degenerate case the conditional approximation method of \cite{SongboToAppear}. This might be an interesting direction to address this question, but we leave this study to future work.  Instead, in the following, we will focus on the objective to establish the tight version of~\eqref{eq:weakLSIchelou} in the typical degenerate situation where $V_\kappa$ is strictly convex everywhere except at its minimizer $m_*$ in the vicinity of which the smallest eigenvalue of  $\na^2 V_\kappa(m)$ behaves like some power of $|m-m_*|$. First, in Section~\ref{subsubsec:StandardLSIdegenerate}, we will discuss what can be said concerning the standard LSI in this situation, relating the growth of the log-Sobolev constant to the level of degeneracy of $\na^2 V_\kappa$ at $m_*$ (see Corollary~\ref{cor:GibbsLSIdegenerate}). Second, in Section~\ref{subsubsec:degenerateLSIdegenerate}, we will show that, however, a degenerate LSI of the form~\eqref{eq:weakLSIchelou} with -- crucially -- the correct scaling in $N$ can be established for a non-linear $\Theta$ (see Corollary~\ref{cor:degenLSImuGibbs}).

\subsubsection{Standard LSI for the Gibbs measure in some degenerate cases}\label{subsubsec:StandardLSIdegenerate}

For a degenerate situation as discussed above, relying on Lemma~\ref{lem:LSI}, we can get (sharp) polynomial bounds (in $N$) for the LSI constant of $\mu_\infty^N$, applying for instance the following:

\begin{prop}\label{prop:LSIdegen}
 Let $u \in \mathcal C^2(\R^d,\R)$ be such that it admits a unique global minimizer $x_*\in\R^d$ and there exists $c_1 \geqslant c_2>0$ and $\beta\geqslant 2$ such that  $\na^2 u(x)  \geqslant \min(c_1,c_2|x-x_*|^{\beta-2})$ for all $x\in \R^d$.  Then, for all $N\geqslant1/c_2$,  the probability measure  $\nu_N \propto e^{-Nu}$ satisfies a LSI with constant $2e  (Nc_2)^{-2/\beta}$.
 \end{prop}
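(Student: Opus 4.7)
The plan is two-fold: first a rescaling that reduces the claim to a scale-free universal LSI; second, the proof of that universal LSI.

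For the rescaling, I introduce $T(x) = (Nc_2)^{1/\beta}(x - x_*)$ and set $\bar\nu := T_\# \nu_N$. A change of variables shows $\bar\nu \propto e^{-v}$ with $v(y) = Nu(x_* + (Nc_2)^{-1/\beta}y)$, and a direct computation yields
\[
\nabla^2 v(y) \geq \min\bigl(A_N,\, |y|^{\beta-2}\bigr), \qquad A_N := \frac{Nc_1}{(Nc_2)^{2/\beta}}.
\]
Using $N \geq 1/c_2$, $c_1 \geq c_2$ and $\beta \geq 2$, we check $A_N \geq (Nc_2)^{1-2/\beta} \geq 1$. Since the inverse map $T^{-1}$ is $(Nc_2)^{-1/\beta}$-Lipschitz, the pushforward formula for LSI constants (\cite[Proposition 5.4.3]{BakryGentilLedoux}) reduces the task to the following universal statement: any log-concave $\bar\nu \propto e^{-v}$ on $\R^d$ with $\nabla^2 v(y) \geq \min(1, |y|^{\beta-2})$ satisfies a LSI with constant $2e$.

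For this universal LSI, my plan is to combine Bakry-Émery with a Holley-Stroock perturbation. The natural candidate is $\psi(y) = \tfrac12|y|^2 - \tfrac{1}{\beta(\beta-1)}|y|^\beta$ on $B(0,1)$, whose radial Hessian has smallest eigenvalue $1 - |y|^{\beta-2}$ and thus exactly fills the curvature deficit of $v$ with respect to $I$ inside the unit ball. Outside $B(0,1)$, $\nabla^2 v \geq I$ already holds, so $\psi$ is to be extended (after smooth mollification across the sphere $|y|=1$) into a globally bounded function such that $\tilde v := v + \psi$ is $1$-strongly convex on $\R^d$. Then $e^{-\tilde v}/\tilde Z$ enjoys an LSI with constant $1$ by Bakry-Émery, and Holley-Stroock transfers this to $\bar\nu$ with an extra factor $e^{\mathrm{osc}(\psi)}$. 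With $\mathrm{osc}(\psi) \leq \tfrac12$ on $B(0,1)$ and controlled extension outside, the target constant $2e$ emerges after a short optimization.

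The main obstacle is precisely the gluing of $\psi$ across $|y|=1$: a flat extension makes $\tilde v$ non-convex (because $\nabla\psi$ is nonzero on the sphere from the interior side), while any smooth monotone extension risks losing boundedness of $\psi$ or requires an upper bound on $v$ on $B(0,1)$ that the hypotheses do not provide. I expect the resolution to involve either a mollified bump whose Hessian overshoots the deficit by a controlled amount on a slightly smaller ball (paying for the overshoot with a bounded Holley-Stroock factor), or a bypass of Holley-Stroock via a direct $\Gamma_2$-argument: using $\Gamma_2(f) \geq \min(1, |y|^{\beta-2}) \, \Gamma(f)$ pointwise, one obtains a defective LSI, which combined with a Brascamp-Lieb-type Poincaré inequality yields the tight LSI by a Miclo bootstrap. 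The dimension-free nature of the constant $2e$ is noteworthy and specifically reflects the scalar (not trace-based) form of the Hessian lower bound assumption.
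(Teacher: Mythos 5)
Your reduction by rescaling is correct and is just a repackaging of what the paper does (the paper keeps $N$ and optimizes the radius $r$ of the ball where the curvature bound degenerates, choosing $r=(Nc_2)^{-1/\beta}$ so that $Nr^2\lambda_r=1$; your change of variables $T(x)=(Nc_2)^{1/\beta}(x-x_*)$ is an equivalent normalization, and your bookkeeping $A_N\geqslant 1$ and the Lipschitz-image property of the LSI constant are fine). The overall strategy — Bakry--Émery plus a Holley--Stroock perturbation by a bounded function compensating the curvature deficit near the minimizer — is also exactly the paper's. But the proof is not complete: the one step that carries the whole argument, namely producing a \emph{bounded} perturbation $g$ such that $v+g$ (in your notation $\tilde v=v+\psi$) is globally strongly convex, is precisely what you flag as ``the main obstacle'' and leave unresolved, offering only two speculative routes. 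As you yourself observe, your candidate $\psi(y)=\tfrac12|y|^2-\tfrac1{\beta(\beta-1)}|y|^\beta$ cannot be cut off: its gradient is nonzero on the unit sphere, a flat extension destroys convexity of $\tilde v$, and no hypothesis controls $v$ well enough to absorb an unbounded extension. Since the claimed constant $2e(Nc_2)^{-2/\beta}$ depends on how this gluing is done, the argument as written does not establish the proposition.

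The paper resolves this by \emph{not} asking the corrected potential to have the full curvature: it invokes the decomposition of \cite[Section 3.3]{MonmarcheBruit}, which for any $r\geqslant 0$ writes $u=v+g$ with $v$ only $\lambda_r/2$-strongly convex ($\lambda_r=\min(c_1,c_2r^{\beta-2})$) and $\max g-\min g\leqslant r^2\lambda_r$; Bakry--Émery then gives the constant $2/(N\lambda_r)$ and Holley--Stroock the factor $e^{Nr^2\lambda_r}$, and the choice $r=(Nc_2)^{-1/\beta}$ yields $2e(Nc_2)^{-2/\beta}$. This is essentially the first of your two suggested fixes (``overshoot the deficit by a controlled amount''), made quantitative: aiming for curvature $1/2$ instead of $1$ in your rescaled picture leaves slack outside the unit ball (where $\na^2 v\geqslant 1$) that lets the gradient of the correction relax back to zero across an annulus. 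Concretely, a radial $g(y)=h(|y|)$ with $h(s)=-\tfrac14 s^2$ for $s\leqslant 1$, $h''=\tfrac12$ on $1\leqslant s\leqslant 2$ and $h$ constant for $s\geqslant 2$ is bounded with oscillation $\tfrac12$, has all Hessian eigenvalues $\leqslant -\tfrac12$ inside the ball and $\leqslant \tfrac12$ outside, so $v+g$ is $\tfrac12$-strongly convex and the resulting constant $2e^{1/2}\leqslant 2e$ closes your argument. Your second route (defective LSI via $\Gamma_2$ plus a Poincaré inequality and tightening) might also be made to work, but it is not carried out and would not obviously produce the stated explicit constant.
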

 
\begin{rem}
In the case $\beta=2$, we retrieve the Bakry-Emery criterion up to a factor $2e$. In the general case, the scaling in $N$ is sharp. Indeed, $u(x)=|x|^\alpha$ with $\alpha\geqslant 2$ satisfies the assumptions of the proposition with $\beta= \alpha$. In that case, $\nu_N$ is the image of $\nu_1$ by a homothety of ratio $N^{-1/\alpha}$, so that the optimal LSI constant of $\nu_N$ is $N^{-2/\alpha} C_{LS}(\nu_1)$. See also Proposition~\ref{prop:lowerboundCLSIdegenerate} below for a more general result which shows the sharpness of the rate.
\end{rem} 

\begin{ex}\label{ex:degen}
In the running example~\eqref{eq:VmuEncoder}, as computed in Example~\ref{ex:hessienne}, when $\kappa=\lambda_n$, then $0$ is the global minimizer but $\na^2 V_\kappa(0)$ is singular (see Figure~\ref{fig:Vkappa}\emph{(c)}). In this case, if $\lambda_n$ is a simple eigenvalue, the assumptions of Proposition~\ref{prop:LSIdegen} holds with $\beta=4$. Moreover, more generally, a similar situation arises locally (in the sense discussed in Section~\ref{subsec:local_ineq}) whenever $\kappa$ equals a simple eigenvalue of $M$.
\end{ex}
 
 \begin{proof}
 Without loss of generality we assume that $x_*=0$. For any $r\geqslant0$, $\na^2 u(x) \geqslant  \lambda_r \1_{|x|\geqslant r} $ with $\lambda_r=\min(c_1,c_2r^{\beta-2})$. As computed in \cite[Section 3.3]{MonmarcheBruit}, for any choice of $r\geqslant0$, we can decompose
 \[u(x) = v(x) + g(x)\]
where $v$ is $ \lambda_r/2$-strongly convex  and $\max g - \min g \leqslant r^2\lambda_r$. By the Bakry-Emery and Holley-Stroock criteria, $\nu_N$  satisfies a LSI with constant
 \[ C_N = \frac2{N \lambda_r}   \exp\po N r^2 \lambda_r\pf  \,. \] 
% Applying this with $r= N^{-1/(2+\beta)}$, we get that $\lambda_r = c_2 N^{-\beta/(2+\beta)}$ for $N\geqslant (c_1/c_2)^{1+2\beta}$ and then
%  \[ C_N = \frac2{ c_2 } N^{-2/(2+\beta) }   \exp\po c_2    \pf  \,. \] 
The result follows by applying this with $r= (Nc_2)^{-1/\beta}$ (since for $N\geqslant 1/c_2$ we get that $r^{\beta-2} \leqslant 1 \leqslant c_1/c_2$).
  
 \end{proof}

The next statement shows that as soon as the potential $u$ admits a polynomially degenerate local minimizer, then the  log-Sobolev constant of $\nu_N\propto e^{-Nu}$ cannot go to zero at  rate  $1/N$.
 
 \begin{prop}\label{prop:lowerboundCLSIdegenerate}
 Let $u\in\mathcal C^2(\R^d,\R)$. Assume that $u$ admits an isolated local minimizer $x_*\in\R^d$ and that there exist $a,\delta,C,\alpha>0$, $\gamma>\beta>2$ and  $v\in\R^d $ with $|v|=1$  such that, for all $x\in\mathcal B(x_*,\delta)$,
 \begin{equation}
 \label{loc:ua1}
 |u(x) - a |v\cdot (x-x_*)|^\beta - u(x-(v\cdot (x-x_*))v)| \leqslant C|v\cdot x|^\gamma\quad\text{and}\quad u(x)-u(x_*) \leqslant C |x|^\alpha\,.
 \end{equation}
 Then, considering the probability density $\nu_N\propto e^{-Nu}$, there exists $c>0$ such that for all $N\geqslant 1$, $C_{LS}(\nu_N) \geqslant cN^{-\frac2{\beta}}$.
 
 \end{prop}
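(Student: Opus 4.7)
The plan is to combine two ingredients: first, that the log-Sobolev inequality implies the Poincaré inequality with the same constant (by linearizing around $f\equiv 1$), so that $C_{LS}(\nu_N)\geqslant C_P(\nu_N)\geqslant \mathrm{Var}_{\nu_N}(\phi)/\int|\na\phi|^2\,\dd\nu_N$ for any Lipschitz test function $\phi$; and second, an explicit choice of $\phi$ adapted to the direction $v$ of degeneracy. Translating so that $x_* = 0$, and choosing $\delta$ small enough that $0$ is the unique global minimum of $u$ on $\mathcal B(0,\delta)$ (possible because $0$ is an isolated critical point of a $\mathcal C^2$ function), I would take $\phi(x)=\chi(x)\,v\cdot x$ with $\chi$ a smooth cutoff equal to $1$ on $\mathcal B(0,\delta/2)$ and vanishing outside $\mathcal B(0,\delta)$, so that $|\na\phi|\leqslant K$ for some constant $K$. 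Writing $p_N=\nu_N(\mathcal B(0,\delta))$ and $\tilde\nu_N=\nu_N(\cdot\mid \mathcal B(0,\delta))$, the fact that $\phi$ is supported in $\mathcal B(0,\delta)$ gives $\mathrm{Var}_{\nu_N}(\phi)\geqslant p_N\mathrm{Var}_{\tilde\nu_N}(\phi)$ and $\int|\na\phi|^2\,\dd\nu_N\leqslant K^2 p_N$, so the (potentially very small) factor $p_N$ cancels in the Poincaré ratio. It then suffices to prove $\mathrm{Var}_{\tilde\nu_N}(\phi)\geqslant c N^{-2/\beta}$ uniformly for large $N$.

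For this, I would use a two-sets argument. Fix $0<A_1<A_2$ and set
\[S_\pm=\left\{x\in\mathcal B(0,\delta/2)\ :\ \pm v\cdot x\in [A_1 N^{-1/\beta},A_2 N^{-1/\beta}]\right\}.\]
On $S_\pm$ one has $\chi\equiv 1$ and $\phi(x)-\phi(x')\geqslant 2A_1 N^{-1/\beta}$ for $x\in S_+, x'\in S_-$, so the variance identity $\mathrm{Var}(\phi)=\tfrac12\iint(\phi(x)-\phi(x'))^2\,\dd\tilde\nu_N\otimes\dd\tilde\nu_N$ yields $\mathrm{Var}_{\tilde\nu_N}(\phi)\geqslant 2A_1^2 N^{-2/\beta}\tilde\nu_N(S_+)\tilde\nu_N(S_-)$, and the task reduces to showing $\tilde\nu_N(S_\pm)\geqslant c>0$ uniformly. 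For this I would use a Laplace-type estimate. Decomposing $x=sv+y$ with $y\in v^\perp$, the assumption reads $u(sv+y)=u(y)+a|s|^\beta+\mathcal O(|s|^\gamma)$, and the condition $\gamma>\beta$ ensures $N|s|^\gamma\leqslant A_2^\gamma N^{1-\gamma/\beta}=o(1)$ uniformly on $|s|\leqslant A_2 N^{-1/\beta}$, so that $e^{-Nu(sv+y)}\asymp e^{-Nu(y)}$ on $S_\pm$ and
\[\int_{S_\pm}e^{-Nu}\,\dd x\geqslant c_1 N^{-1/\beta}\int_{|y|\leqslant\delta/4,\,y\in v^\perp}e^{-Nu(y)}\,\dd y.\]
Splitting $Z_N=\int_{\mathcal B(0,\delta)}e^{-Nu}\,\dd x$ at the fixed threshold $s_0=(a/(2C))^{1/(\gamma-\beta)}$ (where $C|s|^\gamma\leqslant a|s|^\beta/2$), and using that $u\geqslant u(0)+\varepsilon_0$ on the compact set $\mathcal B(0,\delta)\cap\{|v\cdot x|\geqslant s_0\}$ by uniqueness of the minimum, gives
\[Z_N\leqslant c_2 N^{-1/\beta}\int_{|y|\leqslant\delta,\,y\in v^\perp}e^{-Nu(y)}\,\dd y+c_3 e^{-N(u(0)+\varepsilon_0)}.\]
Taking the ratio, the unknown transverse factors nearly cancel and the exponentially small remainder is absorbed thanks to the polynomial lower bound $\int_{|y|\leqslant\delta/4,\,y\in v^\perp}e^{-Nu(y)}\,\dd y\geqslant c_4 e^{-Nu(0)}N^{-(d-1)/\alpha}$ following from the hypothesis $u(y)-u(0)\leqslant C|y|^\alpha$.

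The main difficulty is precisely this last Laplace-type step and in particular the cancellation of the unknown transverse factor $\int_{v^\perp\cap\mathcal B(0,\delta)}e^{-Nu(y)}\,\dd y$ between numerator and denominator: it requires the sharp condition $\gamma>\beta$ (so the expansion error remains uniform on the correct scale $|s|\sim N^{-1/\beta}$) and the polynomial upper bound $u(y)-u(0)\leqslant C|y|^\alpha$ (so the transverse factor cannot be super-exponentially small, which is what allows the exponentially small remainder in $Z_N$ to be absorbed). The finitely many small values $N\leqslant N_0$ where the asymptotic estimates are not yet in force are then handled by adjusting the constant $c$ (either $C_{LS}(\nu_N)=+\infty$, making the claim vacuous for such $N$, or $C_{LS}(\nu_N)$ is a positive real number and the finitely many cases are absorbed).
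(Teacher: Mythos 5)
Your proof is correct, and the overall strategy is the same as the paper's: lower-bound $C_{LS}(\nu_N)$ by the Poincaré quotient of a test function that is, near $x_*$, the linear form $v\cdot x$ smoothly cut off, and then estimate the quotient by a Laplace-type argument at the characteristic scale $|v\cdot(x-x_*)|\sim N^{-1/\beta}$, with $\gamma>\beta$ controlling the Taylor error on that scale and the bound $u-u(x_*)\leqslant C|\cdot|^\alpha$ preventing the transverse factor from being super-exponentially small. The genuine difference is the variance lower bound. The paper computes $\int f^2\,\dd\nu_N\sim N^{-2/\beta}b_2/b_0$ by a Laplace expansion restricted to the thin slab $D_N=\{|x_1|\leqslant N^{-\eta}\}$ with $\eta$ strictly between $\gamma^{-1}$ and $\beta^{-1}$ (so the transverse factor $c_N$ cancels identically in the Poincaré ratio), and then must separately show that $\int f\,\dd\nu_N=o(N^{-1/\beta})$ by comparing $e^{-Nu}$ with its $x_1$-symmetrization $e^{-N\tilde u}$, against which $f$ integrates to zero by oddness. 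Your two-sets device, lower-bounding $\mathrm{Var}_{\tilde\nu_N}(\phi)$ via $\mathrm{Var}(\phi)=\tfrac12\iint(\phi(x)-\phi(x'))^2$ restricted to $S_+\times S_-$, removes the need to estimate the mean at all and hence avoids the approximate-antisymmetry step; that is a tidy simplification. The price is that, working on the whole ball rather than the slab $D_N$, the transverse factor no longer cancels identically and you have to check that $\int_{\delta/4\leqslant|y|\leqslant\delta,\ y\in v^\perp}e^{-Nu(y)}\,\dd y$ is exponentially negligible compared with $\int_{|y|\leqslant\delta/4}$ (by the same isolated-minimizer compactness argument used for the remainder in $Z_N$), which you rightly flag as the delicate step. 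Both routes use every hypothesis in the same place and yield the same sharp order $N^{-2/\beta}$.
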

\begin{proof}
Without loss of generality, up to translation and rotation we assume that $x_*=0$, $u(0)=0$ and $v$ is the first vector of the canonical basis, in which case, writing $\tilde u(x) = a_1 |x_1|^\beta + u(0,x_2,\dots,x_d)$, the first part of~\eqref{loc:ua1} reads
 \begin{equation}
 \label{loc:ua1_decale}
 |u(x) - \tilde u(x)| \leqslant C|x_1|^\gamma \qquad \forall x\in\mathcal B(0,\delta)\,,
 \end{equation}
 and the second part implies that $ u(0,x_{\neq 1}) \leqslant |x_{\neq 1}|^\alpha$ when $|x_{\neq 1}|\leqslant \delta$ (where we write $x_{\neq 1} =(x_2,\dots,x_d)$). 
 Since the LSI implies a Poincaré inequality with the same constant (see \cite[Proposition 5.1.3]{BakryGentilLedoux}), for any $f\in H^1(\nu_N)$, 
 \[C_{LS}(\nu_N) \geqslant \frac{\int_{\R^d} f^2 \nu_N - \po \int_{\R^d} f \nu_N  \pf^2  }{\int_{\R^d}|\na f|^2 \nu_N }\,.\]
 It only remains to apply this with a suitably designed $f$ and estimate the right hand side with Laplace method. We  take $\delta'\in(0,\delta)$ such that $\inf\{u(x)\ :\ \varepsilon \leqslant |x| \leqslant \delta'\} >0$ for any $\varepsilon\in(0, \delta']$ (this exists since $0$ is an isolated minimizer) and set
 \[f(x) = x_1  \prod_{i=1}^d \chi( x_i)\]
 where $\chi$ is a smooth function with $\chi(s)=1$ when $|s|\leqslant \delta'/(2\sqrt{d})$  and $\chi(s)=0$ when $|s|\geqslant \delta'/\sqrt{d}$. In particular, $f(x)=0$ when $|x|\geqslant \delta'$. Let $\eta \in(\gamma^{-1},\beta^{-1})$ and, for $N\geqslant 1$, write
 \[D_N = \left\{x\in \R^d\ :\ |x_1|\leqslant N^{-\eta},\ |x_j| \leqslant \frac{\delta'}{2\sqrt{d}}\ \forall j\in\cco 2,d\ccf\right\}\,, \]
 which has the following properties:  for $N$ large enough, $D_N \subset \mathcal B(0,\delta')$ and $f(x) = x_1$ for all $x\in D_N$, and 
 \begin{equation}
 \label{loc:infsup}
 \inf_{\mathcal B(0,\delta')\setminus D_N} u \geqslant \frac{a}{2} N^{-\eta\beta}\,,\qquad \sup_{D_N}|u-\tilde u| = \underset{N\rightarrow \infty}{\mathcal O}(N^{-\gamma \eta})\,.  
 \end{equation}
 Then, for any function $g\in\mathcal C(\R^d,\R$) with support in $\mathcal B(0,\delta')$,
 \begin{equation}\label{loc:neglig}
  \int_{\R^d} g   e^{-Nu}     =   \int_{\mathcal B(0,\delta')} g e^{-Nu} =   \int_{D_N} g e^{-Nu} +  \underset{N\rightarrow \infty}{\mathcal O}\po e^{-(1-\eta\beta)N}\pf \,,
 \end{equation}
 with $1-\eta\beta>0$ by choice of $\eta$.  For either $g=1$,  $g=f^2$ or $g=|\na f|^2$ (which are nonnegative and, on $D_N$, depends only on $x_1$), using the second part of~\eqref{loc:infsup} with $\gamma\eta>1$,
 \[\int_{D_N} g e^{-Nu} = \int_{D_N} g e^{-N\tilde u}\po 1+ \underset{N\rightarrow\infty}o(1)\pf = c_N \int_{-N^{-\eta}}^{N^{-\eta}} g(x_1) e^{-N|x_1|^\beta} \dd x_1 \po 1+ \underset{N\rightarrow\infty}o(1)\pf \,,\]
 where, 
 \[c_N = \int_{[\pm \delta'/(2\sqrt{d})]^{d-1}} e^{-Nu(0,x_{\neq 1})} \dd x_{\neq 1} \geqslant \int_{[\pm \delta'/(2\sqrt{d})]^{d-1}} e^{-NC|x_{\neq 1}|^\alpha} \dd x_{\neq 1} \underset{N\rightarrow\infty}\simeq N^{-\frac{d}\alpha} \int_{\R^{d-1}} e^{-C|y|^\alpha} \dd y\,.  \]
Besides,
\begin{equation}\label{loc:laplace}
\int_{-N^{-\eta}}^{N^{-\eta}} e^{-N|x_1|^\beta} \dd x_1 \underset{N\rightarrow\infty}\simeq N^{-\frac1{\beta}} b_0\,,  \qquad \int_{-N^{-\eta}}^{N^{-\eta}} x_1^2  e^{-N|x_1|^\beta} \dd x_1 \underset{N\rightarrow\infty}\simeq N^{-\frac3{\beta}} b_2\,, 
\end{equation}
with $b_k = \int_{\R} x_1^k e^{-|x|^\beta}\dd x_1 >0$ for $k=0,2$. This shows that, for $g=1$, $g=f^2$ or $g=|\na f|^2$, the second term in the right hand side of~\eqref{loc:neglig} is negligible with respect to the first one. At this point, we have obtained that
\[\int_{\R^d} f^2 \nu_N \underset{N\rightarrow\infty}\simeq N^{-\frac2{\beta}} \frac{b_2}{b_0}\,,\qquad \int_{\R^d} |\na f|^2 \nu_N \underset{N\rightarrow\infty}\longrightarrow 1\,.\]
To control $\int_{\R^d} f \nu_N$, we use~\eqref{loc:infsup} to bound
\[\left|\int_{D_N} f e^{-Nu }\right| = \left|\int_{D_N} f \po e^{-Nu} - e^{-N\tilde u} \pf\right| \leqslant \int_{\R^d} |f| e^{-Nu} \times \underset{N\rightarrow \infty}{\mathcal O}(N^{-\gamma \eta})\,,  \]
hence
\[ \left| \int_{\R^d} f \nu_N \right| \leqslant \int_{\R^d} |f| \nu_N  \times \underset{N\rightarrow \infty}{\mathcal O}(N^{-\gamma \eta})  \leqslant \sqrt{\int_{\R^d} |f|^2 \nu_N}  \times \underset{N\rightarrow \infty}{\mathcal O}(N^{-\gamma \eta})\,. \]
 As a conclusion,
  \[C_{LS}(\nu_N) \geqslant \frac{\int_{\R^d} f^2 \nu_N - \po \int_{\R^d} f \nu_N  \pf^2  }{\int_{\R^d}|\na f|^2 \nu_N } \underset{N\rightarrow\infty}\simeq N^{-\frac2{\beta}} \frac{b_2}{b_0}\,,\]
  which concludes the proof of the proposition.
 
\end{proof}

Combining Propositions~\ref{prop:LSIdegen} and \ref{prop:lowerboundCLSIdegenerate} with Lemma~\ref{lem:LSI} yields the following:

\begin{cor}\label{cor:GibbsLSIdegenerate}
Under Assumption~\ref{assu:general}:
\begin{itemize}
\item If $u=V_\kappa$ satisfies the conditions in Proposition~\ref{prop:LSIdegen}, then there exists $C>0$ such that $C_{LS}(\mu_\infty^N) \leqslant C N^{1-\frac{2}{\beta}} $  for all $N\geqslant 1$.
\item If $u=V_\kappa$ satisfies the conditions in Proposition~\ref{prop:lowerboundCLSIdegenerate}, then there exists $c>0$ such that $C_{LS}(\mu_\infty^N) \geqslant c N^{1-\frac{2}{\beta}}  $  for all $N\geqslant 1$.
\end{itemize}
\end{cor}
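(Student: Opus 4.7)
The proof is a direct assembly of the three referenced results, so my plan is to make explicit that $C_{LS}(\mu_\infty^N)$ is governed by $NC_{LS}(\nu_\infty^N)$ via Lemma~\ref{lem:LSI} and then insert the two-sided estimates on $C_{LS}(\nu_\infty^N)$ provided by Propositions~\ref{prop:LSIdegen} and \ref{prop:lowerboundCLSIdegenerate} applied to $u=V_\kappa$. Recall that $\nu_\infty^N\propto e^{-NV_\kappa}$ by~\eqref{eq:nuinfinityN}, so this is precisely the setting to which those two propositions apply.

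For the upper bound, I would start from Proposition~\ref{prop:LSIdegen}: there exists $N_0\geqslant 1$ such that for all $N\geqslant N_0$, $C_{LS}(\nu_\infty^N)\leqslant 2e(Nc_2)^{-2/\beta}$. Multiplying by $N$ yields $NC_{LS}(\nu_\infty^N)\leqslant 2ec_2^{-2/\beta}N^{1-2/\beta}$. Since $\beta\geqslant 2$, the exponent $1-2/\beta\geqslant 0$, so this quantity stays bounded below by a positive constant for $N\geqslant N_0$, and one can enlarge the multiplicative constant to absorb the term $\kappa^{-1}$ and to handle the finitely many cases $N<N_0$ (which pose no issue since $\nu_\infty^N$ does satisfy a LSI for every fixed $N$ under Assumption~\ref{assu:general}). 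Invoking Lemma~\ref{lem:LSI} then gives
\[
C_{LS}(\mu_\infty^N)\;=\;\max\bigl(NC_{LS}(\nu_\infty^N),\kappa^{-1}\bigr)\;\leqslant\; C N^{1-2/\beta}
\]
for some $C>0$ and all $N\geqslant 1$, proving the first claim.

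For the lower bound, Proposition~\ref{prop:lowerboundCLSIdegenerate} directly supplies $c>0$ with $C_{LS}(\nu_\infty^N)\geqslant cN^{-2/\beta}$ for all $N\geqslant 1$. Then
\[
C_{LS}(\mu_\infty^N)\;\geqslant\; NC_{LS}(\nu_\infty^N)\;\geqslant\; cN^{1-2/\beta},
\]
again by Lemma~\ref{lem:LSI}, which concludes.

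The only verification needed beyond these substitutions is that the hypotheses of the two propositions are indeed hypotheses of the corollary (which is explicit in its statement) and that Assumption~\ref{assu:general} guarantees the well-posedness of $\nu_\infty^N$ and $\mu_\infty^N$, ensuring in particular that $V_\kappa$ is continuous with a unique global minimum and goes to infinity at infinity, so that the local assumptions near $x_*$ in Propositions~\ref{prop:LSIdegen} and \ref{prop:lowerboundCLSIdegenerate} combine with the strong convexity outside a compact to cover the whole of $\R^d$. I do not foresee any real obstacle; the argument is essentially a bookkeeping of constants, the main content having already been done in the three results being combined.
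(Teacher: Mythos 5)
Your argument is exactly the paper's: the corollary is obtained there by combining Lemma~\ref{lem:LSI} with Propositions~\ref{prop:LSIdegen} and~\ref{prop:lowerboundCLSIdegenerate}, which is precisely your assembly, and your extra bookkeeping (absorbing $\kappa^{-1}$ using $1-2/\beta\geqslant 0$ and handling the finitely many $N<N_0$ via the LSI guaranteed by Assumption~\ref{assu:general}) is correct. Nothing is missing.
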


\subsubsection{Degenerate LSI}\label{subsubsec:degenerateLSIdegenerate}

The goal of this section is to establish inequalities of the form~\eqref{eq:weakLSIchelou} (with $\delta_N=0$), with the correct scaling in $N$, in situations such as considered in Corollary~\ref{cor:GibbsLSIdegenerate}. Let us first adapt Lemma~\ref{lem:LSI} to this situation.

\begin{lem}
\label{lem:LSIdegenerate}
Under Assumption~\ref{assu:general}, let $N\geqslant 1$. Assume that the probability density $\nu_\infty^N  \propto \exp(- NV_\kappa)$ satisfies 
\begin{equation}
\label{loc:nuinfintydegenerate}
\forall \mu \ll \nu_\infty^N\,,\qquad  \frac1N \mathcal H\po \mu|\nu_\infty^N\pf \leqslant \Theta \po \frac1{N^2} \mathcal I \po \mu|\nu_\infty^N\pf\pf \,,
\end{equation}
for some non-decreasing $\Theta:\R_+\rightarrow\R_+$. Then the  Gibbs measure $\mu_\infty^N$ defined in~\eqref{eq:GIbbsdef} satisfies
\[\forall \mu^N \ll \mu_\infty^N\,,\qquad  \frac1N \mathcal H\po \mu^N|\mu_\infty^N\pf \leqslant \widetilde{\Theta} \po \frac1{N} \mathcal I \po \mu^N|\mu_\infty^N\pf\pf \,,\]
with $\widetilde \Theta(r) = \sup_{s\in[0,r]} \Theta(s) + \frac{r-s}{2\kappa}$.

\end{lem}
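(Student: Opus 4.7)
The natural approach is to mimic the proof of Lemma~\ref{lem:LSI}, replacing the LSI by its $\Theta$-degenerate variant. Using the orthogonal transformation $Q$ from Section~\ref{subsubsec:inegalfonct} with $Q\bX_t=(\sqrt N\bar X_t,Y_t)$, the Gibbs measure $\mu_\infty^N$ is the image by $Q^{-1}$ of the product measure $\bar\nu_\infty^N\otimes\mathcal N(0,\kappa^{-1}I_{(d-1)N})$, where $\bar\nu_\infty^N$ denotes the pushforward of $\nu_\infty^N$ by the dilation $T(z)=\sqrt N z$. Since $Q$ is orthogonal, both relative entropy and relative Fisher information are invariant under it, so it is enough to prove the claimed inequality with $\mu^N$ replaced by $\tilde\mu=Q_*\mu^N$ and $\mu_\infty^N$ replaced by $\bar\nu_\infty^N\otimes\gamma$, where $\gamma=\mathcal N(0,\kappa^{-1}I_{(d-1)N})$.

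Write $\tilde\mu(z,y)=\tilde\mu_1(z)\tilde\mu_2^z(y)$ using the marginal on $\R^d$ and the conditional on $\R^{(d-1)N}$. The key ingredients are then (a) the exact chain rule $\mathcal H(\tilde\mu|\bar\nu_\infty^N\otimes\gamma)=\mathcal H(\tilde\mu_1|\bar\nu_\infty^N)+\int\mathcal H(\tilde\mu_2^z|\gamma)\tilde\mu_1(\dd z)$, (b) the subadditivity $\mathcal I(\tilde\mu|\bar\nu_\infty^N\otimes\gamma)\geqslant\mathcal I(\tilde\mu_1|\bar\nu_\infty^N)+\int\mathcal I(\tilde\mu_2^z|\gamma)\tilde\mu_1(\dd z)$ (obtained by expanding $\nabla\ln(\tilde\mu/\tilde\mu_\infty^N)$ in the $z$ and $y$ directions and discarding the nonnegative cross term), and (c) the LSI with constant $\kappa^{-1}$ for the Gaussian $\gamma$, controlling each $\mathcal H(\tilde\mu_2^z|\gamma)$ by $\tfrac1{2\kappa}\mathcal I(\tilde\mu_2^z|\gamma)$.

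To handle the marginal term, I transfer the hypothesis~\eqref{loc:nuinfintydegenerate} through the dilation $T$: relative entropy is invariant under pushforward, while direct computation gives $\mathcal I(\tilde\mu_1|\bar\nu_\infty^N)=\tfrac1N\mathcal I(\bar\mu_1|\nu_\infty^N)$ with $\bar\mu_1=T^{-1}_*\tilde\mu_1$. Substituting this in~\eqref{loc:nuinfintydegenerate} turns the $\tfrac1{N^2}$ scaling of the Fisher information into the desired $\tfrac1N$ scaling, yielding
\[\frac1N\mathcal H(\tilde\mu_1|\bar\nu_\infty^N)\leqslant\Theta\po \frac1N\mathcal I(\tilde\mu_1|\bar\nu_\infty^N)\pf\,. \]
Setting $a=\tfrac1N\mathcal I(\tilde\mu_1|\bar\nu_\infty^N)$, $b=\tfrac1N\int\mathcal I(\tilde\mu_2^z|\gamma)\tilde\mu_1(\dd z)$ and $r=\tfrac1N\mathcal I(\mu^N|\mu_\infty^N)$, the three ingredients combine into
\[\frac1N\mathcal H(\mu^N|\mu_\infty^N)\leqslant\Theta(a)+\frac{b}{2\kappa}\,,\qquad a+b\leqslant r\,.\]
Since $b\leqslant r-a$ and $a\in[0,r]$, the right-hand side is bounded by $\sup_{s\in[0,r]}\Theta(s)+\tfrac{r-s}{2\kappa}=\widetilde\Theta(r)$, which is exactly the stated conclusion.

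The only subtle point is the bookkeeping of the scaling: choosing the particular normalization $\tfrac1{N^2}\mathcal I$ in the hypothesis~\eqref{loc:nuinfintydegenerate} is precisely what makes the pushforward by $T$ compatible with the tensorization, so one must verify carefully that $\mathcal I(T_*\bar\mu_1|T_*\nu_\infty^N)=\tfrac1N\mathcal I(\bar\mu_1|\nu_\infty^N)$; everything else is a routine application of the chain rule for entropy and the Gaussian LSI.
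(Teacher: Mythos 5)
Your proof is correct and follows essentially the same route as the paper: the orthogonal change of variables into barycentre and orthogonal components, the entropy/Fisher-information bookkeeping under the dilation by $\sqrt N$, and then a tensorization via the entropy chain rule, superadditivity of the Fisher information, and the Gaussian LSI, optimized over the split $s\in[0,r]$. The only difference is presentational: the paper delegates the last step to its general tensorization lemma (Lemma~\ref{lem:tensor}, which needs one of the functions concave), whereas you inline exactly that argument in the special case where the second factor's inequality is linear, so no genuinely new ingredient or gap is involved.
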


\begin{proof}
As in the proof of Lemma~\ref{lem:LSI}, this is a direct consequence of the fact that  $\mu_\infty^N$ is obtained by an orthogonal transformation from the tensor product of a Gaussian distribution with variance $\kappa I_{d(N-1)}$ and of the image by the dilatation by a factor $\sqrt{N}$ of $\nu_\infty^N$, together with the respective behavior of the relative entropy (unchanged) and the Fisher information (multiplied by $N$) along this dilatation, and the tensorization property of inequalities the form~\eqref{loc:nuinfintydegenerate}, that we state below in Lemma~\ref{lem:tensor}.
\end{proof}

\begin{lem}
\label{lem:tensor}
For $i=1,2$, let $d_i \geqslant 1$, $\nu_i$ be a probability measure over $\R^{d_i}$ and $\Theta_i:\R_+\rightarrow\R_+$ be such that
\[\forall \mu \ll \nu_i\,,\qquad \mathcal H(\mu|\nu_i) \leqslant \Theta_i\po \mathcal I(\mu|\nu_i)\pf \,.\]
Assume that $\Theta_1$ is nondecreasing and that $\Theta_2$ is concave. Then,
\[\forall \mu \ll \nu_1\otimes \nu_2\,,\qquad \mathcal H(\mu|\nu_1\otimes\nu_2) \leqslant \Theta\po \mathcal I(\mu|\nu_1\otimes\nu_2)\pf\,, \]
with $\Theta(r) = \sup\{\Theta_1(s) + \Theta_2(r-s),\ s\in[0,r]\}$.
\end{lem}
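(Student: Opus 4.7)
The plan is to follow the classical tensorization argument for log-Sobolev inequalities, suitably adapted to nonlinear $\Theta_1$ and $\Theta_2$, exploiting the asymmetric hypotheses (monotonicity for $\Theta_1$, concavity for $\Theta_2$). Let $f$ denote the density of $\mu$ with respect to $\nu_1\otimes\nu_2$, let $\mu_1$ be its first marginal, with density $f_1$ with respect to $\nu_1$, and let $\mu_2(\cdot|x_1)$ denote the conditional distribution of the second coordinate given $x_1$, whose density with respect to $\nu_2$ is $f_2(x_2|x_1)=f(x_1,x_2)/f_1(x_1)$. The first step is the standard chain rule for the relative entropy
\[
\mathcal H(\mu|\nu_1\otimes \nu_2) \ = \ \mathcal H(\mu_1|\nu_1) + \int_{\R^{d_1}} \mathcal H\po \mu_2(\cdot|x_1)|\nu_2\pf \mu_1(\dd x_1)\,,
\]
and the two terms will be treated separately.

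For the marginal term, I would apply the hypothesised $\Theta_1$-inequality for $\nu_1$ to $\mu_1$, then estimate $\mathcal I(\mu_1|\nu_1)\leq \mathcal I_1(\mu|\nu_1\otimes\nu_2):=\int |\na_1 \log f|^2\dd\mu$ via Cauchy--Schwarz, using the elementary identity $\na_1 \log f_1(x_1) = \int \na_1 \log f(x_1,x_2)\, f_2(x_2|x_1)\nu_2(\dd x_2)$. The monotonicity of $\Theta_1$ then yields $\mathcal H(\mu_1|\nu_1) \leq \Theta_1(\mathcal I_1(\mu|\nu_1\otimes\nu_2))$. For the conditional term, I would apply the $\Theta_2$-inequality for $\nu_2$ pointwise in $x_1$, and then use Jensen's inequality (this is where concavity of $\Theta_2$ enters) to interchange $\Theta_2$ with the $\mu_1$-average:
\[
\int \mathcal H(\mu_2(\cdot|x_1)|\nu_2) \mu_1(\dd x_1) \ \leq \ \int \Theta_2(\mathcal I(\mu_2(\cdot|x_1)|\nu_2)) \mu_1(\dd x_1) \ \leq \ \Theta_2 \po \int \mathcal I(\mu_2(\cdot|x_1)|\nu_2) \mu_1(\dd x_1)\pf\,.
\]
A direct computation, using that $\na_2 \log f_2(x_2|x_1) = \na_2 \log f(x_1,x_2)$, identifies the last integral with $\mathcal I_2(\mu|\nu_1\otimes\nu_2):=\int|\na_2 \log f|^2\dd\mu$.

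Summing the two bounds, and setting $r = \mathcal I(\mu|\nu_1\otimes\nu_2) = \mathcal I_1+\mathcal I_2$ and $s = \mathcal I_1\in[0,r]$, one obtains $\mathcal H(\mu|\nu_1\otimes\nu_2) \leq \Theta_1(s)+\Theta_2(r-s) \leq \Theta(r)$ by the very definition of $\Theta$ as a supremum. The one technical point, which dictates the asymmetric hypotheses in the statement, is the Cauchy--Schwarz control of the marginal Fisher information by the partial Fisher information in direction $1$: since $\Theta_1$ is only assumed nondecreasing (not concave), one cannot hope to push $\Theta_1$ inside an integral by Jensen, so one must instead first collapse the $x_2$-integral at the level of the Fisher information and then invoke monotonicity. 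The concavity of $\Theta_2$ plays the dual role on the other coordinate, and together the two pieces fit precisely into the sup-convolution appearing in the definition of $\Theta$.
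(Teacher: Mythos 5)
Your proof is correct and takes essentially the same route as the paper's: both use the chain rule for relative entropy, bound the marginal term via Cauchy--Schwarz on the Fisher information followed by monotonicity of $\Theta_1$, bound the conditional term by Jensen via concavity of $\Theta_2$, and conclude from the sup-convolution definition of $\Theta$. (One small remark: you integrate the conditional entropy against $\mu_1$, which is the correct measure for the chain rule; the paper writes $\dd\nu_1$, which is evidently a typographical slip.)
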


\begin{rem}
Since we do not assume that $\Theta_i(0)=0$, this covers the case of defective inequalities as~\eqref{eq:weakLSIchelou} with $\delta_N\neq 0$.
\end{rem}

\begin{proof}
The argument is the same as the in the classical case (e.g. \cite[Proposition 5.2.7]{BakryGentilLedoux}). Let $\mu \ll \nu_1\otimes \nu_2$. Write $\mu_1$ its first $d_1$-dimensional marginal, and $\mu_{2|1}$ the conditional density of the second variable given the first one. Then
\begin{align*}
\mathcal H(\mu|\nu_1\otimes\nu_2) &= \mathcal H(\mu_1|\nu_1) + \int_{\R^{d_1}} \mathcal H(\mu_{2|1}|\nu_2) \dd \nu_1 \\
& \leqslant \Theta_1\po \mathcal I(\mu_1|\nu_1) \pf +  \int_{\R^{d_1}} \Theta_2\po \mathcal I(\mu_{2|1}|\nu_2)\pf  \dd \nu_1 \\
&\leqslant \Theta_1\po \mathcal I_1(\mu|\nu_1 \otimes \nu_2) \pf +  \Theta_2\po \mathcal I_2(\mu|\nu_1 \otimes \nu_2) \pf\,,
\end{align*}
where we introduced the notations
\[\mathcal I_i(\mu|\nu_1 \otimes \nu_2)  = \int_{\R^{d_1+d_2}} \left|\na_{x_i} \ln \frac{\mu}{\nu_1\otimes \nu_2}\right|^2 \dd \mu\]
and used that $\Theta_2$ is concave,that $\int \mathcal I(\mu_{2|1}|\nu_2) \nu_1 = \mathcal I_2(\mu|\nu_1\otimes\nu_2)$,  that $\mathcal I(\mu_1|\nu_1)  \leqslant \mathcal I_1(\mu|\nu_1 \otimes \nu_2) $ by Cauchy-Schwarz and that $\Theta_1$ is nondecreasing. Conclusion then follows from the fact that $\mathcal I(\mu|\nu_1 \otimes \nu_2)  = \mathcal I_1(\mu|\nu_1 \otimes \nu_2)  + \mathcal I_2(\mu|\nu_1 \otimes \nu_2) $.
\end{proof}

To establish an inequality of the form~\eqref{loc:nuinfintydegenerate} in degenerate convex situations, we will rely on the following, which is a particular case of \cite[Corollary
12]{MonmarcheWang} (applied with $L=0$ and $\sigma^2=1$).

\begin{prop}
\label{prop:critereDefectLSI}
Let $u\in\mathcal C^2(\R^d,\R)$ be concave and such that there exist $x_*\in\R^d$, $R\geqslant 0$ and $\rho>0$ such that 
\begin{equation}
\label{loc:cond1}
\forall x,y\in\R^d,\qquad |x-x_*|\geqslant R\qquad \Rightarrow \qquad \po \na u(x) - \na u(y)\pf \cdot (x-y) \geqslant \rho |x-y|^2\,.
\end{equation}
Then the probability density $\nu \propto e^{-u}$ satisfies the defective LSI
\begin{equation}
\label{loc:defLSI}
\forall \mu \ll \nu\,,\qquad  \mathcal H(\mu|\nu) \leqslant A \mathcal I(\mu|\nu) + B
\end{equation}
with
\[A = \frac{12}{\rho} \,,\qquad B= 6 \ln\po 1+4d + 2\rho R^2\pf + \frac{3}{4} \max \po 1+4d , 2\rho R^2\pf\,.  \]
\end{prop}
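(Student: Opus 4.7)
The plan is to combine a Lyapunov-function estimate with a bootstrap argument based on Gaussian concentration, following the broad Cattiaux--Guillin--Wang strategy for log-concave measures that are strongly convex outside a compact set. The key geometric input I would exploit is obtained by applying the monotonicity-of-gradient condition with $y=x_*$:
\[(x-x_*)\cdot\po \na u(x)-\na u(x_*)\pf \geqslant \rho|x-x_*|^2\qquad \text{for }|x-x_*|\geqslant R,\]
so the drift $-\na u$ of the Langevin diffusion pushes back quadratically outside $\mathcal B(x_*,R)$, while log-concavity of $\nu$ controls the behaviour inside.

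First I would establish a Gaussian-type moment bound. Using the stationarity identity $\int Lf\,\dd\nu=0$ for $L=\Delta-\na u\cdot\na$, tested against $f(x)=e^{\alpha|x-x_*|^2/2}$, and splitting the integration region according to whether $|x-x_*|$ is larger or smaller than $R$, one extracts an explicit bound $\int e^{\alpha|x-x_*|^2/2}\dd\nu\leqslant M(\alpha,d,\rho,R)$ for any $\alpha<\rho$. The same computation also yields a Lyapunov inequality of the form $LW/W\leqslant -c_\alpha+b_\alpha\1_{\mathcal B(x_*,R)}$ with $W=e^{\alpha|x-x_*|^2/2}$ and fully explicit constants depending on $\alpha$, $d$, $\rho$, $R$.

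Next I would pass to the defective LSI by the entropy dissipation method along the Langevin semigroup $(P_t)$. On the region where $\na^2 u\geqslant \rho I_d$, a Bakry--\'Emery commutation gives the usual decay of $\mathcal H(P_t\mu|\nu)$ at rate $\rho$; on $\mathcal B(x_*,R)$ the same estimate fails, and the deficit is controlled by applying the variational inequality $\int h\ln h\,\dd\nu\leqslant \lambda^{-1}\mathcal I(h\nu|\nu)+\lambda^{-1}\ln\int e^{\lambda V}h\,\dd\nu$ with $V(x)=\tfrac{\alpha}2|x-x_*|^2$, using the Gaussian moment bound from the previous step to estimate the log-Laplace term. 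Integrating the resulting differential inequality in $t$ and letting $t\to\infty$ then yields a defective LSI of the form $\mathcal H(\mu|\nu)\leqslant A\mathcal I(\mu|\nu)+B$. As an alternative route one may first derive a Poincaré inequality from the Lyapunov condition combined with a local Poincaré on $\mathcal B(x_*,R)$ (available since $\nu$ restricted to a ball is log-concave with bounded support), and then boost Poincaré + Gaussian integrability to a defective LSI via the Bobkov--G\"otze/Herbst scheme.

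The hard part will be matching the explicit constants $A=12/\rho$ and the precise form $B=6\ln(1+4d+2\rho R^2)+\tfrac34\max(1+4d,2\rho R^2)$. Each step above is naturally defective with universal but non-sharp constants, and the appearance of both a $\ln(1+4d+2\rho R^2)$ piece and a $\max(1+4d,2\rho R^2)$ piece in $B$ indicates a careful balance between two sources of error: the trace term $\alpha d$ coming from $\Delta W/W$, and the bad contribution $\rho R^2$ from the inner ball where strong convexity fails. Optimising over the moment exponent $\alpha\in(0,\rho)$ and the entropy splitting parameter $\lambda$ to land on exactly those numerics, rather than loose multiples of them, is where most of the technical effort would go.
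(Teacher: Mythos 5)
The first thing to say is that the paper does not prove this proposition at all: it is introduced with the words ``which is a particular case of \cite[Corollary~12]{MonmarcheWang} (applied with $L=0$ and $\sigma^2=1$),'' and both the constant $A=12/\rho$ and the exact form of $B$ are simply inherited from that corollary. So your proposal is not competing with an argument that appears in this paper; it is, in effect, an attempt to reconstruct a proof whose details live entirely in the cited reference.

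As a standalone strategy, the route you sketch is sensible and standard for measures that are strongly log-concave outside a compact: take $y=x_*$ in the hypothesis to get the drift inequality $(x-x_*)\cdot\po\na u(x)-\na u(x_*)\pf \geqslant \rho|x-x_*|^2$ for $|x-x_*|\geqslant R$, build the Lyapunov function $W=e^{\alpha|x-x_*|^2/2}$ and extract Gaussian moments and a Lyapunov condition, combine with a local Poincaré on $\mathcal B(x_*,R)$ (where $\nu$ is log-concave and supported on a bounded set), and then upgrade Poincaré plus Gaussian integrability to a defective LSI by the Bobkov--G\"otze/Herbst mechanism or a Donsker--Varadhan entropy split. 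One small point of hygiene you should flag: the word ``concave'' in the statement is a typo for ``convex'' --- a concave function cannot have $\rho$-strongly monotone gradient outside a ball, as the two conditions give $\na^2 u\leqslant 0$ and $\na^2 u\geqslant\rho I_d$ simultaneously outside $\mathcal B(x_*,R)$ --- and your plan implicitly (and correctly) uses convexity via ``log-concavity of $\nu$.''

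The genuine gap is that this is a plan, not a proof. No step is actually executed: the Lyapunov inequality is asserted rather than derived; the entropy-dissipation argument is described at the level of ``a Bakry--\'Emery commutation gives the usual decay on the good region, the deficit on the bad region is controlled,'' which is not how the semigroup argument actually works (the commutation estimate is pointwise and cannot be localised to a region of space without an additional mechanism); and, as you concede yourself, the specific numerics $A=12/\rho$ and $B=6\ln\po 1+4d+2\rho R^2\pf+\tfrac34\max\po 1+4d,2\rho R^2\pf$ are not going to fall out of the generic versions of these lemmas. They encode a specific optimisation carried out in \cite{MonmarcheWang}, and to reproduce the proposition as stated you would either have to carry out that optimisation in full or, as the paper does, simply cite it. As written, the proposal identifies a plausible toolbox but does not establish the result.
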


For consistency between our different results, we deduce~\eqref{loc:cond1} as a consequence of the assumption of Proposition~\ref{prop:LSIdegen}:

\begin{lem}
\label{lem:conditions}
Let $u$ be as in Proposition~\ref{prop:LSIdegen}. Then, for all $R>0$,~\eqref{loc:cond1} holds with 
\begin{equation}
\label{loc:rhoR}
\rho = \rho_R := \frac13 \min(c_1,c_2(R/2)^{\beta-2})\,. 
\end{equation}
\end{lem}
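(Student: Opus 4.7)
The starting point is the standard identity
\[
\bigl(\nabla u(x) - \nabla u(y)\bigr)\cdot(x-y) \;=\; |x-y|^{2} \int_{0}^{1} (x-y)^{T}\nabla^{2} u(z_{t})(x-y)\,\frac{\dd t}{|x-y|^{2}},
\]
where $z_{t}=(1-t)y+tx$, together with the pointwise Hessian lower bound from Proposition~\ref{prop:LSIdegen}, $\nabla^{2}u(z)\geqslant \mu(|z-x_{*}|) I_{d}$ with $\mu(r):=\min(c_{1},c_{2}r^{\beta-2})$. Since $\mu$ is nondecreasing, on the ``good set'' $T_{R}:=\{t\in[0,1]:\ |z_{t}-x_{*}|\geqslant R/2\}$ one has $\mu(|z_{t}-x_{*}|)\geqslant \mu(R/2)=3\rho_{R}$. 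The whole problem therefore reduces to the purely geometric statement
\[
|x-x_{*}|\geqslant R \quad \Longrightarrow \quad |T_{R}|\geqslant \tfrac{1}{3}\qquad (\forall y\in\R^{d}).
\]

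To prove this, assume without loss of generality that $x_{*}=0$ and use the reverse triangle inequality
\[
|z_{t}| \;\geqslant\; \bigl|\,t|x|-(1-t)|y|\,\bigr|.
\]
Thus $|z_{t}|\geqslant R/2$ holds whenever either $t|x|-(1-t)|y|\geqslant R/2$ or $(1-t)|y|-t|x|\geqslant R/2$. These are two linear conditions in $t$, so the complement of $T_{R}$ is contained in an interval, and an elementary computation gives the bound $|T_{R}^{c}|\leqslant R/(|x|+|y|)$ when $|y|\geqslant R/2$, while in the case $|y|<R/2$ only the first inequality is active, and it holds as soon as $t\geqslant (R/2+|y|)/(|x|+|y|)$. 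In both regimes the ratios to control are monotone in $|y|$ and $|x|$, and the worst case is $|x|=R$, $|y|=R/2$, which yields exactly $|T_{R}|\geqslant 1/3$ (this is sharp: take $y=-x/2$ colinear with $x_{*}$, then $|z_{t}|=R|3t-1|/2$ and $|T_{R}|=1/3$ exactly).

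Combining the Hessian bound with $|T_{R}|\geqslant 1/3$ gives
\[
(\nabla u(x)-\nabla u(y))\cdot(x-y) \;\geqslant\; |x-y|^{2}\int_{T_{R}}\mu(|z_{t}-x_{*}|)\,\dd t \;\geqslant\; 3\rho_{R}\cdot\tfrac{1}{3}\cdot|x-y|^{2} \;=\; \rho_{R}|x-y|^{2},
\]
which is exactly~\eqref{loc:cond1} with $\rho=\rho_{R}$ as defined in~\eqref{loc:rhoR}. The only step that is not a one-line verification is the geometric lemma on $|T_{R}|$; the subtlety is that $y$ is completely arbitrary (in particular it may lie anywhere on the opposite side of $x_{*}$ from $x$, so the segment $[y,x]$ can cross through $x_{*}$ and make $|z_{t}-x_{*}|$ small on a non-trivial sub-interval), so one truly needs the two-sided application of the reverse triangle inequality and the case split at $|y-x_{*}|=R/2$ rather than a naive ``stay close to $x$'' argument.
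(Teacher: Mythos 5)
Your proof is correct and follows essentially the same route as the paper: both integrate the Hessian along the segment from $y$ to $x$, use $\nabla^2 u \geqslant 0$ inside $\mathcal{B}(x_*,R/2)$ and $\nabla^2 u \geqslant \min(c_1,c_2(R/2)^{\beta-2})I_d$ outside, and reduce everything to showing the segment spends at least a third of its parameter time outside the ball. The only difference is in how that $1/3$ fraction is verified: the paper invokes the geometric fact that the ball has diameter $R$ while $x$ lies at distance at least $R/2$ from it, whereas you reach the same conclusion via the reverse triangle inequality with a case split at $|y-x_*|=R/2$; both are correct and give the same sharp constant.
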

\begin{proof}
Let $R>0$ and $x,y\in\R^d$ with $|x-x*|\geqslant R$. Let $\rho_R' = \min(c_1,c_2(R/2)^{\beta-2})$. Since $\mathcal B(x_*,R/2)$ (over which $\na^2 u$ is bounded below by $\rho_R'$) has a diameter $R$ and is at distance at least $R/2$ from $x$, the proportion of $t\in[0,1]$ such that $z_t:=x+t(x-y) \notin \mathcal B(x_*,R/2)$ is at least one third (for $y_t\in\mathcal B(x_*,R/2$, we can simply use that $\na^2 u(z_t) \geqslant 0$), from which we bound
\[\po \na u(x) - \na u(y)\pf \cdot (x-y) =  \int_0^1 (x-y)\cdot \na^2 u(z_t) (x-y) \dd t  \geqslant \frac{\rho_R'}3 |x-y|^2\,, \]
as announced.
\end{proof}

\begin{prop}
\label{prop:LSIdegenNnu}
Let $u$ be as in Proposition~\ref{prop:LSIdegen}, with $\beta>2$. Then, for all $N\geqslant 3(1+4d)/(8c_2)$, the probability density $\nu_N \propto e^{-Nu}$ satisfies:
\[\forall \mu \ll \nu_N\,,\qquad \frac1N \mathcal H(\mu|\nu_N) \leqslant \Theta \po \frac1{N^2} \mathcal I(\mu|\nu_N)\pf\,,\]
with $\Theta(r) = C \max \po r,r^{\frac{\beta}{2\beta-2}}\pf$ where, writing $c_3=3(1+4d)/(2^{3-\beta}c_2)$,
\[C = \max\left\{ \frac{2^{\beta+4}}{c_2},2^{4-\beta}c_2, e c_2^{-\frac2\beta}   c_3^{\frac{\beta-2}\beta}, \frac{36}{c_2 } + \frac{2^{8-2\beta} }3c_2 \right\}\,.   \]
\end{prop}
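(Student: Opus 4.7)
The strategy is to apply Proposition~\ref{prop:critereDefectLSI} to the potential $Nu$ and then to optimize over the free radius $R$ in the monotonicity condition~\eqref{loc:cond1}. Lemma~\ref{lem:conditions} ensures that, for every $R>0$, $u$ verifies~\eqref{loc:cond1} with rate $\rho_R = \frac{1}{3}\min(c_1, c_2(R/2)^{\beta-2})$, and hence $Nu$ verifies it with rate $N\rho_R$. Proposition~\ref{prop:critereDefectLSI} therefore yields, for every $R>0$ and every $\mu \ll \nu_N$, a defective LSI of the form
\[
\mathcal H(\mu|\nu_N) \leqslant \frac{12}{N\rho_R}\mathcal I(\mu|\nu_N) + 6\ln\po 1+4d+2N\rho_R R^2\pf + \frac{3}{4}\max\po 1+4d,\, 2N\rho_R R^2\pf.
\]
Dividing by $N$ and writing $\mathcal I := N^{-2}\mathcal I(\mu|\nu_N)$, I obtain a family of bounds on $\frac{1}{N}\mathcal H(\mu|\nu_N)$ parametrized by $R$, whose right-hand side decomposes as a linear term $\frac{12}{\rho_R}\mathcal I$ plus a defect of order $\rho_R R^2$ together with a manageable logarithm.

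The next step is to pick $R$ as a function of $\mathcal I$, exploiting the two regimes of $\rho_R$: the polynomial regime $\rho_R = \frac{c_2}{3}(R/2)^{\beta-2}$, valid for $R \leqslant 2(c_1/c_2)^{1/(\beta-2)}$, and the saturated regime $\rho_R = c_1/3$ for larger $R$. In the polynomial regime, setting $r = R/2$, the bound takes the form
\[
\frac{1}{N}\mathcal H(\mu|\nu_N) \leqslant \frac{36}{c_2}\cdot\frac{\mathcal I}{r^{\beta-2}} + 2^{3-\beta} c_2\, r^\beta + \frac{6}{N}\ln\po 1+4d+\tfrac{c_2\cdot 2^{5-\beta} N}{3} r^\beta\pf,
\]
and I choose $r \propto \mathcal I^{1/(2\beta-2)}$ so as to balance the first two terms. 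Young's inequality then shows that both reach a common value of order $\mathcal I^{\beta/(2\beta-2)}$. The hypothesis $N \geqslant 3(1+4d)/(8c_2)$ is exactly what is needed to guarantee, for the selected $r$, that the $\max$ in the defect is realized by its $2N\rho_R R^2$ branch (the constant $c_3 = 3(1+4d)/(2^{3-\beta}c_2)$ appearing in the formula for $C$ is precisely the critical value of $Nc_2 r^\beta$ at this crossover), and that the remaining logarithmic term can be absorbed into the main $\mathcal I^{\beta/(2\beta-2)}$ contribution via $\ln x \leqslant x/e$.

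In the complementary saturated regime, corresponding to large $\mathcal I$ (when the balanced $r$ from above would exceed the validity threshold), I instead fix $R$ at a constant value such as $R = 2$, giving $\rho_R = c_2/3$ and a linear-in-$\mathcal I$ bound with leading coefficient $36/c_2$ plus a bounded $O(c_2)$ defect. This is the source of the last entry $\frac{36}{c_2}+\frac{2^{8-2\beta}}{3}c_2$ in the expression for $C$. Combining the two regimes yields the envelope $\Theta(r) = C\max(r, r^{\beta/(2\beta-2)})$, with $C$ the maximum of the constants produced in each regime. The main technical difficulty is not a conceptual one but lies entirely in the bookkeeping required to extract a single explicit $C$ that is valid for all $\mathcal I \geqslant 0$: one has to control the logarithmic defect uniformly, match the two regimes continuously at the crossover, and absorb the bounded residue from the saturated regime into the linear term. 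Beyond Proposition~\ref{prop:critereDefectLSI} and Young's inequality, no additional tool is needed; the whole work consists in the $R$-optimization and in checking that the chosen $R$ lies in the polynomial regime precisely when $\mathcal I$ is below the crossover value.
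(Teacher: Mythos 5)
Your proposal correctly reproduces the structure of two of the three cases in the paper's proof, but it is missing a case, and that missing case is not a small bookkeeping detail — it is where the heart of the tightness of the inequality lives.

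Concretely: the defective LSI from Proposition~\ref{prop:critereDefectLSI}, applied to $Nu$, always produces an additive defect of the form $B(R) = 6\ln(1+4d+2N\rho_R R^2) + \frac34\max(1+4d,\,2N\rho_R R^2)$, and this quantity is bounded below by $6\ln(1+4d)+\frac34(1+4d)$, a strictly positive constant, uniformly in $R$. After dividing by $N$, you therefore get an irreducible residue of order $1/N$ on the right-hand side. When $\mathcal I := N^{-2}\mathcal I(\mu|\nu_N)$ is small (say $\mathcal I \leqslant N^{2/\beta}c_3^{(2\beta-2)/\beta}/N^2 = N^{2/\beta - 2}c_3^{(2\beta-2)/\beta}$), this $O(1/N)$ residue dominates $\Theta(\mathcal I)$, which must vanish as $\mathcal I \to 0$. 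Equivalently, the requirement $NR^\beta \geqslant c_3$ (which your own argument needs so that the $\max$ and the logarithm are controlled) fails for the balanced choice $R \sim \mathcal I(\mu|\nu_N)^{1/(2\beta-2)}/N^{1/(\beta-1)}$ precisely when $\mathcal I(\mu|\nu_N) < N^{2/\beta}c_3^{(2\beta-2)/\beta}$. Your "polynomial regime" analysis is therefore valid only down to that threshold, and the proof has a gap below it: the inequality has to hold for \emph{all} $\mu$, including those very close to $\nu_N$ where the Fisher information is tiny, and for those the defective LSI gives nothing usable.

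The paper closes this gap with a third case. For $\mathcal I(\mu|\nu_N) \leqslant N^{2/\beta}c_3^{(2\beta-2)/\beta}$ it uses the \emph{tight} LSI from Proposition~\ref{prop:LSIdegen} (the Bakry--Émery plus Holley--Stroock bound $C_{LS}(\nu_N) \leqslant 2e(Nc_2)^{-2/\beta}$), giving a linear bound $\frac1N\mathcal H(\mu|\nu_N) \leqslant ec_2^{-2/\beta}N^{-1-2/\beta}\mathcal I(\mu|\nu_N)$, and then interpolates: writing $\mathcal I^{1} = \mathcal I^{1-\alpha}\mathcal I^{\alpha}$ with $\alpha=\beta/(2\beta-2)$ and using the ceiling on $\mathcal I$, the spare factor $N^{-1-2/\beta}\cdot N^{\frac{2}{\beta}(1-\alpha)} = N^{-2\alpha}$ precisely converts the bound into the announced $\Theta$-form. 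This third case is also the source of the third entry $ec_2^{-2/\beta}c_3^{(\beta-2)/\beta}$ in the constant $C$, which your argument cannot produce. (As an aside, the hypothesis $N\geqslant 3(1+4d)/(8c_2)$ is used to validate the \emph{large}-$\mathcal I$ case $R=2$, i.e.\ to ensure $N\cdot 2^\beta\geqslant c_3$, not the crossover as you state.) You should add a small-$\mathcal I$ case invoking Proposition~\ref{prop:LSIdegen} and the interpolation step; without it, the claimed envelope $\Theta(r) = C\max(r, r^{\beta/(2\beta-2)})$ does not follow.
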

\begin{proof}
Thanks to Lemma~\ref{lem:conditions}, for all $R>0$ and $N\geqslant 1$, $Nu$ satisfies \eqref{loc:cond1} with $\rho = N\rho_R$, $\rho_R$  being given in~\eqref{loc:rhoR}. Proposition~\ref{prop:critereDefectLSI} shows that for all $\mu \ll \nu_N$ and $R\leqslant 2$,
\begin{align*}
\mathcal H(\mu|\nu_N) &\leqslant 36\frac{2^{\beta-2}}{c_2 NR^{\beta-2}} \mathcal I(\mu|\nu_N) +  6 \ln\po 1+4d + \frac{2^{3-\beta}}3c_2 N R^\beta\pf + \frac{3}{4} \max \po 1+4d ,  \frac{2^{3-\beta}}3c_2 N R^\beta\pf \\
& \leqslant C_0\po \frac{1}{ NR^{\beta-2}} \mathcal I(\mu|\nu_N) +   N R^\beta \pf\,,
\end{align*}
when $N R^\beta \geqslant 3(1+4d)/(2^{3-\beta}c_2) =:c_3$, where we used that $6\ln(2x) + \frac34 x \leqslant 4x$ for $x\geqslant 5$ and introduced the constant $C_0= \max(36\times 2^{\beta-2}/c_2,2^{5-\beta}c_2/3) $. 

For a given $\mu \ll \nu_N$, we distinguish three cases. First, if 
\[ 2^{2\beta-2} N^2 \geqslant  \mathcal I(\mu|\nu_N)    \geqslant N^{\frac2\beta} c_3^{\frac{2\beta-2}\beta}     \,,  \] 
we can apply the  inequality with $R=(\frac{1}{ N^2} \mathcal I(\mu|\nu_N))^{\frac1{2\beta-2}}$, which satisfies both that $N R^{\beta}\geqslant c_3$ and $R\leqslant 2$,   leading to
\[
\frac1N \mathcal H(\mu|\nu_N)  \leqslant 2C_0 \po \frac{1}{ N^2} \mathcal I(\mu|\nu_N)\pf ^{\frac\beta{2\beta-2}}  \,.
\]
Second, if $\mathcal I(\mu|\nu_N) \geqslant 2^{2\beta-2} N^2 $, we apply Proposition~\ref{prop:critereDefectLSI} with $R=2$, obtaining
\begin{align*}
\mathcal H(\mu|\nu_N) &\leqslant \frac{36}{c_2 N} \mathcal I(\mu|\nu_N) +   \frac{32}3c_2 N  \leqslant  \frac1N \po  \frac{36}{c_2 } + \frac{2^{8-2\beta} }3c_2 \pf \mathcal I(\mu|\nu_N)  \,,
\end{align*}
when $N  \geqslant 3(1+4d)/(8c_2)$. Third, if $ \mathcal I(\mu|\nu_N)    \leqslant N^{\frac2\beta} c_3^{\frac{2\beta-2}\beta}$, applying the LSI for $\nu_N$ established in Proposition~\ref{prop:LSIdegen} gives, for any $\alpha\in(0,1)$,
\[\frac1N \mathcal H(\mu|\nu_N) \leqslant e c_2^{-\frac2\beta}  N^{-1-\frac2\beta} \mathcal I(\mu|\nu_N) \leqslant e c_2^{-\frac2\beta} N^{-1-\frac2\beta} \po N^{\frac2\beta} c_3^{\frac{2\beta-2}\beta}\pf^{1-\alpha}   \mathcal I^\alpha (\mu|\nu_N) \,. \]
We choose $\alpha$ so that
\[N^{\frac2\beta(1-\alpha)-1-\frac2\beta} = N^{-2\alpha}\,, \]
leading to $\alpha =\beta/(2\beta-2)$. As a conclusion, for such $\mu$, 
\[\frac1N \mathcal H(\mu|\nu_N) \leqslant e c_2^{-\frac2\beta}   c_3^{\frac{\beta-2}\beta}  \po \frac1{N^2}  \mathcal I (\mu|\nu_N) \pf^{\frac{\beta}{2\beta-2}} \,, \] 
which concludes the proof.
\end{proof}

\begin{rem}
We see that, in the proof, to tackle the lack of strong convexity,  high values of $\mathcal I(\mu|\nu_N)$ (of the order at least $N^{\frac{2}{\beta}}$) were treated thanks to the defective inequality~\eqref{loc:defLSI}, while  smaller values of $\mathcal I(\mu|\nu_N)$ were treated with the tight LSI established in Proposition~\ref{prop:LSIdegen} thanks to the Holley-Stroock perturbation approach. 
\end{rem}

\begin{rem}\label{rem:lighttail}
This is not the topic of the present work but let us notice that the argument used here also provides improved LSI for lighter-than-Gaussian distributions (even for $N=1$). Indeed, assuming that, outside a compact set, $\na^2 u(x) \geqslant c |x|^{\beta-2}$ with $\beta>2$ and $c>0$, Proposition~\ref{prop:critereDefectLSI} shows that $\nu \propto e^{-u}$ satisfies
\[\forall \mu \ll \nu\,,\qquad \mathcal H(\mu|\nu) \leqslant C \po  R^{2-\beta} \mathcal I(\mu|\nu) + R^{\beta}\pf \,,\]
for some $C>0$ for all $R$ large enough. Applying this with $R = \po\mathcal I(\mu|\nu)\pf^{\frac{1}{2\beta-2}}$ for large values of the Fisher information (and using the standard LSI for smaller values) leads to 
\[\forall \mu \ll \nu\,,\qquad \mathcal H(\mu|\nu) \leqslant \Theta\po \mathcal I(\mu|\nu) \pf \,,\]
with $\Theta(r)$ is linear for small values but behaves as $r^{\frac{\beta}{2\beta-2}} = o(r)$ as $r\rightarrow \infty$. Differentiating the relative entropy along the Fokker-Planck equation associated to $\nu$ (i.e. the Wasserstein flow of $\mathcal H(\cdot|\nu)$) gives a bound
\[\forall t>0\,, \rho_0 \in\mathcal P_2(\R ^d),\qquad  \mathcal H(\rho_t |\nu) \leqslant \frac{C}{\max\po 1,t^{\frac{\beta}{\beta-2}}\pf } e^{-ct}\]
where $C,c>0$ are constants independent from $\rho_0$ (in particular, $\mathcal H(\rho_t |\nu)$ becomes finite instantaneously when $t>0$, with a bound uniform over all probability measures $\rho_0$).
\end{rem}

As a conclusion, combining Propositions~\ref{lem:LSIdegenerate} and \ref{prop:LSIdegenNnu}, we have obtained the following:

\begin{cor}
\label{cor:degenLSImuGibbs}
Under Assumption~\ref{assu:general}, if $u=V_\kappa$ satisfies the conditions in Proposition~\ref{prop:LSIdegen}, then the  Gibbs measure $\mu_\infty^N$ defined in~\eqref{eq:GIbbsdef} satisfies
\[\forall \mu^N \ll \mu_\infty^N\,,\qquad  \frac1N \mathcal H\po \mu^N|\mu_\infty^N\pf \leqslant \Theta \po \frac1{N} \mathcal I \po \mu^N|\mu_\infty^N\pf\pf \,,\]
with $\Theta(r) = (C+1/(2\kappa)) \max\po r,r^{\frac{\beta}{2\beta-2}}\pf $ with $C$ defined in Proposition~\ref{prop:LSIdegenNnu}.
\end{cor}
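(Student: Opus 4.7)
The corollary follows by chaining the two previous results, so the plan is essentially bookkeeping. First, I would apply Proposition~\ref{prop:LSIdegenNnu} to $u=V_\kappa$, which under Assumption~\ref{assu:general} and the conditions of Proposition~\ref{prop:LSIdegen} produces the inequality
\[\forall \mu \ll \nu_\infty^N\,,\qquad \frac{1}{N}\mathcal H(\mu|\nu_\infty^N)\leqslant \Theta_0\po \frac{1}{N^2}\mathcal I(\mu|\nu_\infty^N)\pf\,,\]
with $\Theta_0(r)=C\max(r,r^{\beta/(2\beta-2)})$, for $N$ large enough (the constant absorbs the small-$N$ cases via the LSI of Proposition~\ref{prop:LSIdegen}, as in the proof of Proposition~\ref{prop:LSIdegenNnu}).

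Next, I would feed this into Lemma~\ref{lem:LSIdegenerate}, whose hypothesis is exactly of this form. The conclusion is that $\mu_\infty^N$ satisfies
\[\frac{1}{N}\mathcal H(\mu^N|\mu_\infty^N)\leqslant \widetilde{\Theta}\po \frac{1}{N}\mathcal I(\mu^N|\mu_\infty^N)\pf\,,\qquad \widetilde{\Theta}(r)=\sup_{s\in[0,r]} \po \Theta_0(s)+\frac{r-s}{2\kappa}\pf\,.\]

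It then remains to bound $\widetilde \Theta$ by the simpler function claimed in the statement. Since $\Theta_0$ is nondecreasing on $\R_+$ (both $r$ and $r^{\beta/(2\beta-2)}$ are nondecreasing, hence so is their maximum), for any $s\in[0,r]$, $\Theta_0(s)+(r-s)/(2\kappa)\leqslant \Theta_0(r)+r/(2\kappa)$, hence $\widetilde\Theta(r)\leqslant \Theta_0(r)+r/(2\kappa)$. Since $\beta>2$ implies $\beta/(2\beta-2)<1$, we have $r\leqslant \max(r,r^{\beta/(2\beta-2)})$ and $r^{\beta/(2\beta-2)}\leqslant \max(r,r^{\beta/(2\beta-2)})$, so
\[\widetilde\Theta(r)\leqslant \po C+\frac{1}{2\kappa}\pf\max\po r,r^{\beta/(2\beta-2)}\pf=\Theta(r)\,,\]
which is exactly the announced bound. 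The only substantive step is the appeal to Proposition~\ref{prop:LSIdegenNnu}, already proven; everything here is routine monotone bookkeeping, so I do not expect any real obstacle.
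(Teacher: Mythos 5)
Your proof is correct and follows exactly the route the paper itself takes: the paper simply states the corollary as a combination of Lemma~\ref{lem:LSIdegenerate} and Proposition~\ref{prop:LSIdegenNnu}, and your verification that $\widetilde\Theta(r)=\sup_{s\in[0,r]}\bigl(\Theta_0(s)+\frac{r-s}{2\kappa}\bigr)\leqslant\Theta_0(r)+\frac{r}{2\kappa}\leqslant\bigl(C+\frac1{2\kappa}\bigr)\max(r,r^{\beta/(2\beta-2)})$ is exactly the needed bookkeeping, using that $\Theta_0$ is nondecreasing. Your remark that the restriction $N\geqslant 3(1+4d)/(8c_2)$ from Proposition~\ref{prop:LSIdegenNnu} must be either carried along or absorbed into the constant is a fair observation the paper leaves implicit.
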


\subsection{Degenerate minimizer: coercivity}\label{sec:degenCoer}

Similarly to Section~\ref{sec:degenPL} which was the adaptation of Section~\ref{subsec:non-degen-contract} to the case of degenerate minimizers, the present section is the adaptation to this situation of the coercivity analysis from Section~\ref{subsec:coerci-non-degen}. In this section, $\rho_* = \gamma_{m_*}$ is a global minimizer of $\mathcal F$.

For a nondecreasing function $\Phi :\R_+ \rightarrow \R_+ $ with $\Phi(0)=0$ and $\Phi(r)>0$ for $r>0$, we say that $\mathcal F$ satisfies a $\Phi$-entropic coercivity inequality if
\begin{equation}
\label{eq:degenerateCoercivityF}
\forall \mu \in\mathcal P_2(\R^d),\qquad \overline{\mathcal F}(\mu) \geqslant \Phi\po \mathcal H(\mu|\rho_*)\pf\,,
\end{equation}
and  a $\Phi$-$\mathcal W_2$ coercivity inequality if
\begin{equation}
\label{eq:degenerateCoercivityFW2}
\forall \mu \in\mathcal P_2(\R^d),\qquad \overline{\mathcal F}(\mu) \geqslant \Phi\po \frac{\kappa}{2} \mathcal W_2^2(\mu,\rho_*)\pf\,,
\end{equation}
(which can also be interpreted as a generalized Talagrand inequality). Similarly, we say that $V_\kappa$ satisfies  a $\Phi$-coercivity inequality if
\begin{equation}
\label{eq:degenerateCoercivityV}
\forall m\in\R^d,\qquad V_\kappa(m) - V_\kappa(m_*) \geqslant \Phi\po \frac{\kappa}{2}|m-m_*|^2 \pf\,.
\end{equation}
This last inequality is always satisfied with 
\begin{equation}
\label{loc:Phi1}
\Phi_1(r) = \inf\{V_\kappa(m)-V_\kappa(m_*)\ :\ \kappa |m-m_*|^2 \geqslant 2r\}\,,
\end{equation}
which is nondecreasing, and is positive on $(0,\infty)$ provided $m_*$ is the unique global minimizer of $V_\kappa$ and $\liminf_{|m|\rightarrow\infty} V_\kappa(m) > V_\kappa(m_*)$ (this last condition being satisfied under Assumption~\ref{assu:general}, where we also get that $\liminf_{r\rightarrow\infty}\Phi_1(r)/r >0$).

\begin{prop}\label{prop:coercivity_degenerate}
Under Assumption~\ref{assu:general}, let $m_*\in\R^d$ be a global minimizer of $V_\kappa$ and $\rho_*=\gamma_{m_*}$. For a nondecreasing function $\Phi :\R_+ \rightarrow \R_+ $ with $\Phi(0)=0$ and $\Phi(r)>0$ for $r>0$, the following holds:
\begin{enumerate}
\item If~\eqref{eq:degenerateCoercivityF} holds, then so does~\eqref{eq:degenerateCoercivityFW2}, and necessarily $\Phi(r) \leqslant r$ for all $r\geqslant 0$. 
\item If \eqref{eq:degenerateCoercivityFW2} holds then so does~\eqref{eq:degenerateCoercivityV}, and necessarily
\begin{equation}
\label{loc:bjnj}
\Phi(r)  \leqslant  (\sqrt{d/2}+\sqrt{r})^2\,,
\end{equation}
and in particular $\limsup_{r\rightarrow\infty} \Phi(r)/r\leqslant 1$.
\item If~\eqref{eq:degenerateCoercivityV} holds then $\mathcal F$ satisfies a $\hat \Phi$-entropic coercivity inequality with $\hat \Phi(r) = \inf\{r -s + \Phi(s),\ s\in[0, r]\}$.
\end{enumerate}
In particular, if $r\mapsto \Phi(r) -r$ is nonincreasing, then the three inequalities \eqref{eq:degenerateCoercivityF}, \eqref{eq:degenerateCoercivityFW2} and \eqref{eq:degenerateCoercivityV} are equivalent. As a consequence, they all hold with $\Phi$ given by
\[\Phi(r) = \int_0^r \min(1,\Phi_1'(s))\dd s \,, \]
where $\Phi_1$ is given in~\eqref{loc:Phi1} (and is almost everywhere differentiable as a nondecreasing function).
\end{prop}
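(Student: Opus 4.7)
The plan is to follow exactly the cyclic strategy of Proposition~\ref{prop:coercivity}, passing through (i)$\Rightarrow$(ii)$\Rightarrow$(iii)$\Rightarrow$(i), and to extract the sharpness statements by testing against Gaussian measures $\mathcal N(m_*,\sigma^2 I_d)$, using systematically the decomposition \eqref{locgh} together with the invariance formula \eqref{loc1bis}.

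For (i)$\Rightarrow$(ii), I would invoke the T2 Talagrand inequality satisfied by $\rho_*=\gamma_{m_*}$ with constant $\kappa^{-1}$ (classical Gaussian fact, e.g. \cite{OttoVillani}), which yields $\frac{\kappa}{2}\mathcal W_2^2(\mu,\rho_*)\leqslant \mathcal H(\mu|\rho_*)$, and then apply monotonicity of $\Phi$. The necessity $\Phi(r)\leqslant r$ follows by restricting \eqref{eq:degenerateCoercivityF} to measures $\mu$ with $m_\mu=m_*$: for such $\mu$, combining \eqref{locgh} (with $m=m_*$) and \eqref{loc1bis} (with $m'=m_*$) gives $\overline{\mathcal F}(\mu)=\mathcal H(\mu|\rho_*)$, and this quantity takes all values in $[0,\infty)$ when $\mu$ varies.

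For (ii)$\Rightarrow$(iii), I plan to test \eqref{eq:degenerateCoercivityFW2} against $\mu=\gamma_m$ for arbitrary $m\in\R^d$: by \eqref{eq:FVkappa} we have $\overline{\mathcal F}(\gamma_m)=V_\kappa(m)-V_\kappa(m_*)$, while $\mathcal W_2^2(\gamma_m,\gamma_{m_*})=|m-m_*|^2$. The upper bound \eqref{loc:bjnj} is obtained by plugging $\mu=\mathcal N(m_*,\sigma^2 I_d)$ into \eqref{eq:degenerateCoercivityFW2}, using \eqref{eq:locgaussHI} for the left-hand side and the identity $\mathcal W_2^2(\mu,\rho_*)=d(\sigma-\kappa^{-1/2})^2$. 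Setting $t=\sigma\sqrt{\kappa}\geqslant 1$ and $r=\frac{d}{2}(t-1)^2$, the inequality $\Phi(r)\leqslant \frac{d}{2}[t^2-1-\ln t^2]$ together with $\ln t\geqslant 0$ gives $\Phi(r)\leqslant r+\sqrt{2rd}\leqslant (\sqrt{d/2}+\sqrt r)^2$, which contains the asymptotic statement $\limsup_r\Phi(r)/r\leqslant 1$.

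The main step is (iii)$\Rightarrow$(i). Combining \eqref{locgh} with $m=m_*$ and \eqref{loc1bis} with $m'=m_*$, I obtain the key decomposition
\[\overline{\mathcal F}(\mu) \;=\; \bigl[V_\kappa(m_\mu)-V_\kappa(m_*)\bigr] \;+\; \mathcal H(\mu|\rho_*) \;-\; \frac{\kappa}{2}|m_\mu-m_*|^2.\]
Writing $H:=\mathcal H(\mu|\rho_*)$ and $s:=\frac{\kappa}{2}|m_\mu-m_*|^2$, the $V_\kappa$-coercivity bounds the bracket by $\Phi(s)$, so $\overline{\mathcal F}(\mu)\geqslant \Phi(s)+H-s$. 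The constraint $s\in[0,H]$ is the non-trivial input: $|m_\mu-m_*|\leqslant \mathcal W_2(\mu,\rho_*)$ by Jensen applied to the optimal coupling, and then Talagrand for $\rho_*$ gives $\frac{\kappa}{2}\mathcal W_2^2(\mu,\rho_*)\leqslant H$. Minimizing over $s\in[0,H]$ yields $\overline{\mathcal F}(\mu)\geqslant \hat\Phi(H)$ with the prescribed $\hat\Phi$.

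Finally, under the monotonicity assumption on $r\mapsto\Phi(r)-r$, the minimization in $\hat\Phi(H)=H+\min_{s\in[0,H]}(\Phi(s)-s)$ is attained at $s=H$, so $\hat\Phi(H)=\Phi(H)$ and the three inequalities become equivalent. The closing claim is then immediate: the explicit $\Phi(r)=\int_0^r\min(1,\Phi_1'(s))\dd s$ satisfies $\Phi'\leqslant 1$ (hence $\Phi-\mathrm{id}$ is nonincreasing) and $\Phi\leqslant\Phi_1$, so $V_\kappa$ satisfies $\Phi$-coercivity since it always satisfies $\Phi_1$-coercivity by definition of $\Phi_1$ in \eqref{loc:Phi1}. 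I expect no serious obstacle in this proof, the delicate point being merely to use the Gaussian test family in sufficient generality to extract the sharp bound \eqref{loc:bjnj}; all other steps are direct manipulations of quantities already computed in Section~\ref{subsec:preliminary}.
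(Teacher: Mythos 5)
Your proof is correct and follows essentially the same route as the paper: the same test measures ($\gamma_m$, Gaussians $\mathcal N(m_*,\sigma^2 I_d)$, measures with $m_\mu=m_*$), the same decomposition of $\overline{\mathcal F}$ via \eqref{locgh} and \eqref{loc1bis}, and the same treatment of $\hat\Phi$ and of the explicit choice of $\Phi$. The only cosmetic difference is that you obtain $\frac{\kappa}{2}|m_\mu-m_*|^2\leqslant \mathcal H(\mu|\rho_*)$ via Jensen plus Talagrand, whereas the paper reads it off directly from the nonnegativity of $\mathcal H(\mu|\gamma_{m_\mu})$ in \eqref{loc1bis}; both are equally valid.
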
 

This result generalizes Proposition~\ref{prop:coercivity}, which corresponds to $\Phi(r)=cr$ for $c\in(0,1]$.

\begin{proof}
First point: from~\eqref{eq:degenerateCoercivityF}, \eqref{eq:degenerateCoercivityFW2}  follows from the T2 inequality satisfied by $\rho_*$. Considering $\mu \neq \rho_*$ with $m_\mu= m_*$ as in the proof of Proposition~\ref{prop:coercivity} shows that, in \eqref{eq:degenerateCoercivityF}, necessarily, $\Phi(r) \leqslant r$ for all $r\geqslant 0$. 

Second point: applying~\eqref{eq:degenerateCoercivityFW2} with $\mu=\gamma_m$ shows that
\[V_\kappa(m) - V_\kappa(m_*) = \overline{\mathcal F}(\mu) \geqslant \Phi\po \frac{\kappa}{2} \mathcal W_2^2(\mu,\rho_*)\pf= \Phi\po \frac{\kappa}{2}|m-m_*|^2\pf \,,\]
hence~\eqref{eq:degenerateCoercivityV}. Moreover, the bound~\eqref{loc:bjnj} is deduced by applying~\eqref{eq:degenerateCoercivityFW2} with $\mu = \mathcal N(m_*,\sigma^2)$ with $\sigma^2 = (1+\sqrt{2r/d})^2/\kappa$, so that, similarly to~\eqref{loc:d2},
\[\Phi(r) = \Phi\po \frac{\kappa}{2}\mathcal W_2^2(\mu,\rho_*)\pf \leqslant \overline{\mathcal F}(\mu)  \leqslant \frac{d}{2}\co \kappa \sigma^2 - 1 - \ln(\kappa \sigma^2)\cf  \leqslant  \frac{d\kappa \sigma^2}{2}\,. \]

Third point: assuming~\eqref{eq:degenerateCoercivityV}, recalling~\eqref{locgh}, \eqref{loc1bis} (applied with $m'=m_*$), we get 
\begin{align*}
\mathcal F(\mu) - \mathcal F(\rho_*) 
 &=  V_\kappa(m) - V_\kappa(m_*) +  \mathcal H\po \mu|\rho_* \pf - \frac{\kappa}{2}|m_\mu-m_*|^2 \\
 & \geqslant  \Phi\po \frac{\kappa}{2}|m_{\mu}-m_*|^2 \pf - \frac{\kappa}{2}|m_\mu-m_*|^2  +  \mathcal H\po \mu|\rho_* \pf \ \geqslant \   \hat \Phi\po \mathcal H(\mu|\rho_*)\pf \,,
\end{align*}
thanks to the definition of $\hat \Phi$ and the fact that  $\frac{\kappa}2|m_\mu-m_*|^2 \leqslant \mathcal H(\mu|\rho_*)$ due to \eqref{loc1bis}.

The last point of the proposition is that, if $r\mapsto \Phi(r)-r$ is non-increasing, then $\hat\Phi(r)=\Phi(r)$.

\end{proof}

The case of the $N$ particle system is similar to the non-degenerate situation described in Proposition~\ref{prop:coercivityN}. Recall the definition~\eqref{eq:FNnuN} of  the modulated $N$-particle free energy.

\begin{prop}
\label{prop:coercivityN_degenerate}
Under Assumption~\ref{assu:general}, let $m_*\in\R^d$ be a global minimizer of $V_\kappa$ and $\rho_*=\gamma_{m_*}$. Let $\Phi$ be as in Proposition~\ref{prop:coercivity_degenerate}. Assume furthermore that $\na^2 V$ is bounded, and that $r\mapsto r - \Phi(r)$ is nondecreasing and concave. Then, \eqref{eq:degenerateCoercivityF} is equivalent to the fact that for all  $\varepsilon>0$, $N\geqslant 1$ and all density $\nu^N \in \mathcal P_2(\R^{dN})$, 
\begin{multline}
%\label{eq:coercivityN_degenere}
\frac1N \mathcal F^N(\nu^N|\rho_*) \geqslant \Phi \po \frac1N  \mathcal H \po \nu^N |\rho_*^{\otimes N}\pf\pf  -  2 \frac{\|\na^2 V\|_\infty}{\kappa}  \po  \varepsilon + \frac{1+\varepsilon^{-1}}{N}\pf \frac1N  \mathcal H \po \nu^N |\rho_*^{\otimes N}\pf\\
    -   \frac{\|\na^2 V\|_\infty}{\kappa N}d (1+\varepsilon   + \varepsilon^{-1})   \,.\label{eq:coercivityN_degenere}
\end{multline}
\end{prop}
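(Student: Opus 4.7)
The plan is to follow the blueprint of the proof of Proposition~\ref{prop:coercivityN}, replacing every linear step by its $\Phi$-counterpart and exploiting the structural assumptions (monotonicity and concavity of $r \mapsto r - \Phi(r)$) to commute expectations with $\Phi$. Everything downstream of the coercivity application---the Taylor expansion of $V$ around $m_*$, the martingale control of $\mathbb{E}(|\bar X - m_{\bar\nu}|^2)$, and the Talagrand-based bound on $\mathbb{E}(|\bar X - m_*|^2)$---can be imported verbatim from that earlier proof, since none of these steps involves $\Phi$.

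For the implication \eqref{eq:degenerateCoercivityF}$\Rightarrow$\eqref{eq:coercivityN_degenere}, I would reuse the decomposition
\[
\mathcal F^N(\nu^N|\rho_*) = \mathcal H(\nu^N|\rho_*^{\otimes N}) - N\,\mathbb E\bigl(\mathcal H(\bar\nu|\rho_*)\bigr) + N\,\mathbb E\bigl(\overline{\mathcal F}(\bar\nu)\bigr) + N\bigl[\textstyle\int V(\bar x)\nu^N - \mathbb E(V(m_{\bar\nu}))\bigr]
\]
obtained via the chain rule, where $\bar\nu$ is the mean of the conditional distributions as in Proposition~\ref{prop:coercivityN}. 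Applying the $\Phi$-entropic coercivity to $\bar\nu$ pointwise turns the third summand into $N\,\mathbb E(\Phi(\mathcal H(\bar\nu|\rho_*)))$. To combine it with the negative second summand, I invoke Jensen's inequality for the concave function $r \mapsto r - \Phi(r)$, giving
\[
\mathbb E\bigl(\Phi(\mathcal H(\bar\nu|\rho_*)) - \mathcal H(\bar\nu|\rho_*)\bigr) \geqslant \Phi\bigl(\mathbb E(\mathcal H(\bar\nu|\rho_*))\bigr) - \mathbb E(\mathcal H(\bar\nu|\rho_*)),
\]
and then use that $\Phi - \mathrm{Id}$ is nonincreasing together with the convexity bound $\mathbb E(\mathcal H(\bar\nu|\rho_*)) \leqslant \frac{1}{N}\mathcal H(\nu^N|\rho_*^{\otimes N})$ (from the chain rule, exactly as in \eqref{eq:chainruleH}) to replace $\mathbb E(\mathcal H(\bar\nu|\rho_*))$ by $\frac{1}{N}\mathcal H(\nu^N|\rho_*^{\otimes N})$. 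After cancellation of $\mathcal H(\nu^N|\rho_*^{\otimes N})$ this produces the main term $N\,\Phi\bigl(\frac{1}{N}\mathcal H(\nu^N|\rho_*^{\otimes N})\bigr)$.

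The remainder $N\bigl[\int V(\bar x)\nu^N - \mathbb E(V(m_{\bar\nu}))\bigr]$ is then bounded by expanding $V$ at $m_*$ as in the proof of Proposition~\ref{prop:coercivityN}: the linear term vanishes since $\mathbb E(\bar X) = \mathbb E(m_{\bar\nu})$ by the tower property, and the quadratic remainder is controlled by the same two inequalities, namely $\mathbb E(|\bar X - m_*|^2) \leqslant \frac{4}{N\kappa}\mathcal H + \frac{2d}{N\kappa}$ via Talagrand on $\rho_*^{\otimes N}$, and $\mathbb E(|\bar X - m_{\bar\nu}|^2) \leqslant \frac{4}{N^2\kappa}\mathcal H + \frac{2d}{N\kappa}$ via the martingale argument. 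Dividing by $N$ produces exactly \eqref{eq:coercivityN_degenere}. For the converse, I apply \eqref{eq:coercivityN_degenere} to $\nu^N = \nu^{\otimes N}$ with $\nu$ of compact density, let $N \to \infty$ using dominated convergence in the spirit of \eqref{loc:dominated}, and conclude $\overline{\mathcal F}(\nu) \geqslant \Phi(\mathcal H(\nu|\rho_*)) - 2\kappa^{-1}\|\nabla^2 V\|_\infty \varepsilon\, \mathcal H(\nu|\rho_*)$, before letting $\varepsilon \to 0$ and extending by density to all $\nu \in \mathcal P_2(\R^d)$.

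The only genuinely new technical point compared to Proposition~\ref{prop:coercivityN} is the Jensen manoeuvre, and it is precisely here that the extra hypotheses on $\Phi$ are used: the concavity of $r - \Phi(r)$ is needed to push $\mathbb E$ through $\Phi$ in the correct direction, and its monotonicity is needed to compare $\Phi(\mathbb E(\mathcal H(\bar\nu|\rho_*)))$ with $\Phi(\frac{1}{N}\mathcal H(\nu^N|\rho_*^{\otimes N}))$. I do not expect any serious obstacle beyond carefully checking that these two applications line up with the correct direction of the inequalities.
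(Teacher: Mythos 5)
Your proposal is correct and follows essentially the same route as the paper's proof: both rely on the same three ingredients---the chain-rule/convexity bound $\mathbb E(\mathcal H(\bar\nu|\rho_*)) \leqslant \frac1N\mathcal H(\nu^N|\rho_*^{\otimes N})$, Jensen's inequality for the concave function $\mathrm{Id}-\Phi$, and its monotonicity---to produce the main term $N\Phi(\frac1N\mathcal H(\nu^N|\rho_*^{\otimes N}))$, and both import the Taylor/martingale/Talagrand bounds on the remainder and the converse implication unchanged from the non-degenerate case. The only (cosmetic) difference is bookkeeping: you add and subtract $N\,\mathbb E(\mathcal H(\bar\nu|\rho_*))$ in the decomposition of $\mathcal F^N$ and then apply $\Phi$-coercivity and Jensen, whereas the paper reformulates the coercivity inequality in terms of $\mathrm{Id}-\Phi$ before plugging into the decomposition \eqref{loc:FN}; these are algebraically identical.
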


\begin{rem}
It is always possible to ensure that~\eqref{eq:degenerateCoercivityF} is satisfied with $\Phi$ such that $r\mapsto r - \Phi(r)$ is nondecreasing and concave, by taking 
\begin{equation}
\label{locPhir}
\Phi(r) = \int_0^r \min(1,\inf_{u\geqslant s} \Phi_1'(u))\dd s \,. 
\end{equation}
\end{rem}

\begin{proof}
We follow the proof of Proposition~\ref{prop:coercivityN} (and use its definitions and notations, for instance $\bar\nu$). The fact that~\eqref{eq:coercivityN_degenere} implies~\eqref{eq:degenerateCoercivityF}  works exactly  as the proof of the implication $(iv)\Rightarrow (i)$ in Proposition~\ref{prop:coercivityN}, to which we refer. For the converse implication, as in the proof of Proposition~\ref{prop:coercivityN}, we can write~\eqref{eq:degenerateCoercivityF} equivalently as the fact that
\begin{equation}
\label{loc:reinterpret_bis}
(\mathrm{Id}-\Phi)\po  \mathcal H(\mu|\rho_*) \pf + \int \ln \rho_* \mu + \mathcal E(\mu) - \mathcal F(\rho_*) \geqslant 0\,.
\end{equation}
Using~\eqref{eq:chainruleH} and that $\mathrm{Id}-\Phi$ is nondecreasing and concave gives
\[\po \mathrm{Id}-\Phi\pf \po \frac1N \mathcal H \po \nu^N |\rho_*^{\otimes N}\pf \pf \geqslant \po \mathrm{Id}-\Phi\pf \po \mathbb E \po \mathcal H \po \bar\nu |\rho_*\pf\pf\pf \geqslant \mathbb E \co \po \mathrm{Id}-\Phi\pf \po  \mathcal H \po \bar\nu |\rho_*\pf\pf\cf \,.\]
Dividing~\eqref{loc:FN} by $N$ and reasoning as in \eqref{loc:654} thanks to~\eqref{loc:reinterpret_bis} we get
\[
\frac1N \mathcal F^N(\nu^N|\rho_*) \geqslant  \Phi \po \frac1N \mathcal H \po \nu^N |\rho_*^{\otimes N}\pf\pf  +   \int_{\R^{dN}} V(\bar x) \nu^N (\dd \bx) -  \mathbb E \po V(m_{\bar\nu})\pf \,.\]
The last two terms are then bounded exactly as in the proof of Proposition~\ref{prop:coercivityN}, 
\end{proof}

\begin{rem}
Under Assumption~\ref{assu:general} and assuming that $m_*$ is the unique global minimizer of $V_\kappa$, $\Phi$ given in~\eqref{locPhir} is such that for all $\delta>0$ there exists $c_\delta>0$ such that $\Phi(r) \geqslant c_\delta r$ for all $r\geqslant \delta$,  so that~\eqref{eq:coercivityN_degenere} differs from~\eqref{eq:coercivityN} only for small values of  $H_N:=\frac1N  \mathcal H \po \nu^N |\rho_*^{\otimes N}\pf $. Assuming that $\Phi(r) \simeq r^{1+\alpha}$ for some $\alpha >0$ when $r\rightarrow 0$, we may apply~\eqref{eq:coercivityN_degenere} with $\varepsilon$ given by a fixed fraction of $H_N^\alpha$ to get that
\[
\frac1N \mathcal F^N(\nu^N|\rho_*) \geqslant c H_N^{1+\alpha} -  \frac{C}{N H_N^\alpha} \]
as soon as $H_N \leqslant \delta$ for some $c,C,\delta>0$ independent from $N$. As a consequence,  by distinguishing cases, for $H_N\leqslant \delta$,
\[H_N \leqslant \max \po  \po \frac2{cN} \mathcal F^N(\nu^N|\rho_*)\pf^{\frac1{1+\alpha}} \, ,\, \po \frac{2C}{cN}\pf^{\frac{1}{1+2\alpha}}  \pf\,. \]
This can then be used as in \cite[Section 3.2]{SongboToAppear} to get uniform in time propagation of chaos for initial conditions distributed in a Wasserstein ball around the global minimizer, with a deteriorated rate with respect to the case $\alpha=0$. Indeed, the proof of \cite[Proposition 21]{MonmarcheReygner} is easily adapted to degenerate cases to get
\[\frac1N \mathcal F^N(\rho_t^N|\rho_*) \leqslant s(N) + w(t)  \] 
with $\rho_t^N$ the law of the $N$ particles and $s$ and $w$ vanish as $N\rightarrow \infty$ and $t\rightarrow 0$ (specifically, $w$ is the long-time convergence rate of the non-linear flow to $\rho_*$, obtained from~\eqref{loc:Fdecrdegen}). Applying this with $t$ given by a small fraction of $\ln N$ and combining this (for smaller values of $t$) with classical finite-time propagation of chaos, the long-time convergence of the non-linear flow $(\rho_t)_{t\geqslant 0}$ and the Talagrand inequality for $\rho_*^{\otimes N}$, we get
\[\sup_{t\geqslant 0} \mathcal W_2^2(\rho_t^N,\rho_t^{\otimes N}) \underset{N\rightarrow\infty}\longrightarrow 0\,.\]
We do not detail this further by lack of a specific application of interest in mind (indeed, the situation considered in Section~\ref{sec:CurieWeiss} is simpler since there is no other critical points than the global minimizer, and we are able to get tight coercivity inequalities; an academic example with a degenerate minimizer and another non-global local minimizer is easily designed).
\end{rem}

Finally, let us notice that, as in the non-degenerate case (and the finite-dimensional situation \cite{law1965ensembles}), a coercivity inequality is implied by a Łojasiewicz inequality (as stated in \cite[Theorem 1]{BLANCHET20181650}  for local inequalities; for completeness and consistency, we state it with our notations and for global inequalities):

\begin{prop}\label{prop:degenPL->coer}
Under Assumption~\ref{assu:general}, assume furthermore a tight $\Theta$-ŁI inequality~\eqref{eq:degenPL} for some non-decreasing $\Theta:\R_+ \rightarrow \R_+$ continuous at $0$ with $\Theta(0)=0$. Writing $\Theta^{-1}$ its generalized inverse, for $r\geqslant 0$, set
\begin{equation}
\label{loc:g}
g(r) = \int_0^r \po \Theta^{-1}(u)\pf^{-1/2}\dd u\,,
\end{equation}
assuming integrability at $0$. Then,  along the gradient flow~\eqref{eq:gradientEDP}, for all $t\geqslant 0$ and $\rho_0\in\mathcal P_2(\R^d)$,
\[\mathcal W_2(\rho_t,\rho_0) \leqslant  g\po  \overline{\mathcal F}(\rho_0)\pf - g\po  \overline{\mathcal F}(\rho_t)\pf \,.\]
In particular, if $\rho_t$ converges in $\mathcal W_2$ to some $\rho_*\in \mathcal P_2(\R^d)$ as $t\rightarrow\infty$ then
\begin{equation}
\label{loc:WW2}
\mathcal W_2(\rho_*,\rho_0) \leqslant  g\po  \overline{\mathcal F}(\rho_0)\pf \,.
\end{equation}
As a consequence, if  $\rho_*$ is an isolated  minimizer of $\mathcal F$ then the $\Phi$-$\mathcal W_2$ coercivity inequality~\eqref{eq:degenerateCoercivityFW2} holds with $\Phi$ given by $\Phi(\frac{\kappa}{2} r^2)= g^{-1}(r)$ for $r\geqslant 0$.
\end{prop}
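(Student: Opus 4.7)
The plan is to combine the Wasserstein metric-speed identity with a standard change-of-variables argument, in the spirit of \cite[Theorem~1]{BLANCHET20181650}. Along the gradient flow~\eqref{eq:gradientEDP}, the curve $(\rho_t)_{t\geqslant 0}$ is absolutely continuous in $(\mathcal P_2(\R^d),\mathcal W_2)$ and its metric derivative satisfies, for a.e. $t\geqslant 0$,
\[ |\dot\rho_t|_{\mathcal W_2}^2 \ = \ \int_{\R^d} |\na\ln(\rho_t/\Gamma(\rho_t))|^2 \rho_t \ = \ \mathcal I\po \rho_t|\Gamma(\rho_t)\pf, \]
since the driving velocity field of~\eqref{eq:gradientEDP}, after using $\Gamma(\mu)\propto \exp(-\frac{\delta \mathcal E}{\delta \mu}(\cdot,\mu))$, is precisely $-\na\ln(\rho_t/\Gamma(\rho_t))$. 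Combined with the dissipation identity~\eqref{eq:dissipation}, this yields $|\dot\rho_t|_{\mathcal W_2}\sqrt{\mathcal I(\rho_t|\Gamma(\rho_t))} = -\partial_t\overline{\mathcal F}(\rho_t)$ for a.e. $t$.

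Next, the $\Theta$-ŁI inequality~\eqref{eq:degenPL} is equivalent to $\mathcal I(\rho_t|\Gamma(\rho_t))\geqslant \Theta^{-1}(\overline{\mathcal F}(\rho_t))$, so the metric-space triangle inequality and the previous display give
\[ \mathcal W_2(\rho_0,\rho_t) \ \leqslant\ \int_0^t |\dot\rho_s|_{\mathcal W_2}\dd s \ \leqslant\ \int_0^t \frac{-\partial_s\overline{\mathcal F}(\rho_s)}{\sqrt{\Theta^{-1}(\overline{\mathcal F}(\rho_s))}}\dd s. \]
The substitution $u=\overline{\mathcal F}(\rho_s)$ (legitimate since $s\mapsto\overline{\mathcal F}(\rho_s)$ is absolutely continuous and non-increasing by~\eqref{eq:dissipation}) turns the right-hand side into $\int_{\overline{\mathcal F}(\rho_t)}^{\overline{\mathcal F}(\rho_0)} (\Theta^{-1}(u))^{-1/2}\dd u = g(\overline{\mathcal F}(\rho_0)) - g(\overline{\mathcal F}(\rho_t))$, giving the first displayed inequality of the proposition. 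For~\eqref{loc:WW2}, I would invoke~\eqref{loc:Fdecrdegen} (which follows from the tightness $\Theta(0)=0$ and continuity of $\Theta$ at $0$) to deduce $\overline{\mathcal F}(\rho_t)\rightarrow 0$, hence $g(\overline{\mathcal F}(\rho_t))\rightarrow 0$ by continuity of $g$ at $0$, and pass to the limit using the assumed $\mathcal W_2$-convergence $\rho_t\rightarrow \rho_*$.

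For the coercivity conclusion, the length estimate applied between times $s<t$ (giving $\mathcal W_2(\rho_s,\rho_t)\leqslant g(\overline{\mathcal F}(\rho_s))-g(\overline{\mathcal F}(\rho_t))$) combined with $g(\overline{\mathcal F}(\rho_t))\rightarrow 0$ shows that $(\rho_t)$ is Cauchy in $\mathcal W_2$, hence converges to some $\tilde\rho\in\mathcal P_2(\R^d)$; lower semicontinuity of $\overline{\mathcal F}$ then forces $\overline{\mathcal F}(\tilde\rho)=0$, so $\tilde\rho$ is a global minimizer of $\mathcal F$. Together with the bound $\mathcal W_2(\mu,\tilde\rho)\leqslant g(\overline{\mathcal F}(\mu))$ (applied with $\rho_0 = \mu$) and the isolation of $\rho_*$, this lets one identify $\tilde\rho$ with $\rho_*$, so that $\mathcal W_2(\mu,\rho_*)\leqslant g(\overline{\mathcal F}(\mu))$; inverting through the increasing function $g$ yields $\overline{\mathcal F}(\mu)\geqslant g^{-1}(\mathcal W_2(\mu,\rho_*)) = \Phi(\tfrac{\kappa}{2}\mathcal W_2^2(\mu,\rho_*))$, which is~\eqref{eq:degenerateCoercivityFW2}.

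The main subtle point I anticipate is precisely this identification step $\tilde\rho = \rho_*$: a priori $\tilde\rho$ is only known to be \emph{some} global minimizer (and under the tight $\Theta$-ŁI all critical points are global minimizers), so if $\mathcal F$ admits several of them one must either assume global uniqueness or restrict $\mu$ to an $\overline{\mathcal F}$-sublevel set small enough that the $\mathcal W_2$-ball $\mathcal B(\mu,g(\overline{\mathcal F}(\mu)))$ meets only $\rho_*$ among the minimizers; the remaining ingredients (absolute continuity of the gradient flow, chain rule for $s\mapsto\overline{\mathcal F}(\rho_s)$, lower semicontinuity of $\mathcal F$) are standard in this class of toy models under Assumption~\ref{assu:general}.
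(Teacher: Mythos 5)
Your proof is correct and takes essentially the same route as the paper's: the metric-speed identity you invoke is exactly what the paper uses via the Benamou-Brenier formula, and the $\Theta$-ŁI lower bound on $\mathcal I(\rho_s|\Gamma(\rho_s))$ together with the change of variables $u=\overline{\mathcal F}(\rho_s)$ reproduces the paper's computation of $\partial_t g(\overline{\mathcal F}(\rho_t))\leqslant -\sqrt{\mathcal I(\rho_t|\Gamma(\rho_t))}$ in a slightly different order. The identification $\tilde\rho=\rho_*$ that you flag as a potential gap is in fact closed by the structure of the toy model under Assumption~\ref{assu:general}: a tight $\Theta$-ŁI forces every critical point of $V_\kappa$ to be a global minimizer, and (as already remarked in Section~\ref{subsec:non-degen-contract}) if there were an isolated minimizer $m_*$ together with another, distinct critical component, the path of minimal elevation between them would cross a saddle and thus produce a critical point which is not a global minimizer, a contradiction; hence $\rho_*$ is the unique critical point and $\rho_t\to\rho_*$ for every $\rho_0$, with no restriction to a sublevel set needed.
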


\begin{rem}\label{rem:general}
The proof is in fact completely general,  doesn't rely on the particular form of the free energy~\eqref{eq:F} and, besides, works for any gradient flow in a metric space, see  \cite{BLANCHET20181650}. However, for a free energy of the form~\eqref{eq:F}, combining Propositions~\ref{prop:coercivity_degenerate} and~\ref{prop:degenPL->coer} also gives a $\Phi$-entropic coercivity inequality~\eqref{eq:degenerateCoercivityF}, which is less clear in general.  
\end{rem}

\begin{rem}\label{rem:orderCoer}
If $V_\kappa(m)-V_\kappa(m_*)$ and $|\na V_\kappa(m)|$ respectively behave as $|m-m_*|^\beta$ and $|m-m_*|^{\beta-1}$ for some $\beta\geqslant 2$ as $m\rightarrow m_*$, then $\Theta(r)$ behaves like $r^{\frac{\beta}{2(\beta-1)}}$ when $r\rightarrow 0$, so that  $\po\Theta^{-1}(u)\pf^{-1/2}$ is of order $u^{1/\beta-1}$ at zero, hence integrable. In this situation, at zero, $g(r)$ is of order  $r^{1/\beta}$ and then $\Phi(r)$ is of order $r^{\beta/2}$.
\end{rem}

\begin{rem}
If $V_\kappa$ has a set $\mathcal M$ of non-isolated global minimizers, under a tight $\Theta$-ŁI we still get that $\rho_t$ converges to some global minimizer as $t\rightarrow \infty$ for all initial conditions, and thus letting $t\rightarrow \infty$ in~\eqref{loc:WW2} we get that for all $\rho_0\in\mathcal P_2(\R^d)$,
\[\mathcal W_2(\mathcal M,\rho_0) \leqslant  g\po  \overline{\mathcal F}(\rho_0)\pf \,,\]
as in the classical finite-dimensional case~\cite{law1965ensembles}.
\end{rem}

\begin{proof}
Along the gradient flow~\eqref{eq:gradientEDP}, writing $f_t = \overline{\mathcal F}(\rho_t)$,
\begin{eqnarray*}
\partial_t g\po f_t\pf & = & - g'(f_t) \mathcal I\po \rho_t|\Gamma(\rho_t)\pf  \\
& \leqslant & -  g'(f_t)  \sqrt{ \Theta^{-1}(f_t) \mathcal I\po \rho_t|\Gamma(\rho_t)\pf}\\
& = & - \sqrt{ \mathcal I\po \rho_t|\Gamma(\rho_t)\pf}\,.
\end{eqnarray*}
As in the proof of \cite[Lemma 4]{MonmarcheReygner}, the Benamou-Brenier formula~\cite{BenamouBrenier} gives
\[\mathcal W_2(\rho_t,\rho_0) \leqslant \int_0^t \sqrt{ \mathcal I\po \rho_s|\Gamma(\rho_s)\pf} \dd s \leqslant g(f_0) - g(f_t)\,,\]
which proves the first claim. The second is obtained by using that $g(f_t) \geqslant 0$ and letting $t\rightarrow\infty$ in the previous inequality. The last claim  follows from the second one since a tight Łojasiewicz inequality implies that, first, critical points of $\mathcal F$ are necessarily global minimizers (from which $\rho_*$ is the unique critical point and is the global minimizer of $\mathcal F$) and, second, $f_t \rightarrow 0$ for all initial condition (which, by the fact $\rho_*$ is isolated, implies that $\rho_t\rightarrow\rho_*$ for all $\rho_0$).
\end{proof}

The particular case of Proposition~\ref{prop:degenPL->coer} with $\mathcal F(\mu) = \mathcal H(\mu|\nu)$ for some fixed $\nu$ generalizes the classical fact that a (standard) LSI implies a (standard) Talagrand inequality. Combining it with Corollary~\ref{cor:degenLSImuGibbs} immediately gives the following:

\begin{cor}
\label{cor:PL->degenT2}
Under the settings of Corollary~\ref{cor:degenLSImuGibbs}, $\mu_\infty^N$ satisfies:
\begin{equation*}
\forall \mu^N \ll \mu_\infty^N\,,\qquad  \frac1{N} \mathcal W_2^2\po \mu^N,\mu_\infty^N\pf \leqslant \xi \po \frac1{N} \mathcal H \po \mu^N|\mu_\infty^N\pf\pf \,,
\end{equation*}
with $\xi(r) = C \max(r,r^{\frac{2}{\beta}})$ for some $C>0$ depending only on $c_2,\beta$ and $\kappa$.
\end{cor}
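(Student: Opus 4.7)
The plan is to apply Proposition~\ref{prop:degenPL->coer}, which, as emphasized in Remark~\ref{rem:general}, is really a statement about gradient flows in an arbitrary metric space. I would apply it on $\mathcal P_2(\R^{dN})$ to the relative entropy $\mu\mapsto \mathcal H(\mu|\mu_\infty^N)$, whose Wasserstein gradient flow is the linear Fokker-Planck equation associated with the particle system~\eqref{eq:particulesEDS2}, having $\mu_\infty^N$ as its unique stationary measure (and convergence holds thanks to the standard LSI from Proposition~\ref{prop:LSIdegen}). In this instance, the quantities $\overline{\mathcal F}(\mu)$ and $\mathcal I(\mu|\Gamma(\mu))$ of Proposition~\ref{prop:degenPL->coer} are literally $\mathcal H(\mu|\mu_\infty^N)$ and $\mathcal I(\mu|\mu_\infty^N)$, so that the Łojasiewicz hypothesis of Proposition~\ref{prop:degenPL->coer} is nothing but Corollary~\ref{cor:degenLSImuGibbs} rewritten.

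More precisely, writing $H=\mathcal H(\mu^N|\mu_\infty^N)$ and $I=\mathcal I(\mu^N|\mu_\infty^N)$ and multiplying the inequality of Corollary~\ref{cor:degenLSImuGibbs} by $N$, we get
\[H\leqslant \widetilde \Theta(I)\,,\qquad \widetilde \Theta(I) := \widetilde C \max\po I,\, N^{\frac{\beta-2}{2\beta-2}}\, I^{\frac{\beta}{2\beta-2}}\pf\,,\]
for some $\widetilde C$ depending only on $\kappa, c_2,\beta$. This $\widetilde\Theta$ is continuous at $0$ with $\widetilde\Theta(0)=0$, so Proposition~\ref{prop:degenPL->coer} applies and gives $\mathcal W_2(\mu^N,\mu_\infty^N)\leqslant g(H)$, with $g$ as in~\eqref{loc:g}.

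The next step is to estimate $g$. The generalized inverse satisfies $\widetilde\Theta^{-1}(u)\asymp u$ for large $u$ and $\widetilde\Theta^{-1}(u)\asymp N^{-\frac{\beta-2}{\beta}} u^{\frac{2\beta-2}{\beta}}$ for small $u$ (integrability of $(\widetilde\Theta^{-1})^{-1/2}$ at $0$ being ensured since $\beta>2$ implies $(\beta-1)/\beta<1$). Integrating yields a bound of the form
\[g(r)\leqslant C'\po r^{1/2} + N^{\frac{\beta-2}{2\beta}}\, r^{1/\beta}\pf\,,\]
with a constant $C'$ depending only on $\kappa, c_2,\beta$. Squaring and using $(a+b)^2\leqslant 2(a^2+b^2)$, then dividing by $N$, gives
\[\frac1N \mathcal W_2^2\po \mu^N,\mu_\infty^N\pf \leqslant \frac{2(C')^2}{N}\po H + N^{\frac{\beta-2}{\beta}}\, H^{2/\beta}\pf = 2(C')^2\po \frac{H}{N} + \po \frac{H}{N}\pf^{2/\beta}\pf\,,\]
where the key cancellation is that $N^{-1}\cdot N^{(\beta-2)/\beta} = N^{-2/\beta}$ precisely absorbs the $H^{2/\beta}$ into $(H/N)^{2/\beta}$. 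This is exactly the claimed inequality with $\xi(r)=C\max(r,r^{2/\beta})$ for $C=4(C')^2$.

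The only genuinely non-routine step is checking that the integral $g$ produces the sharp $N$-scaling $N^{(\beta-2)/(2\beta)}$ in front of the $r^{1/\beta}$ term, which is exactly the factor needed so that after squaring and dividing by $N$ the dependence on $N$ disappears from the nonlinear part and we recover a bound purely in terms of $H/N$. The rest is a direct chaining of Corollary~\ref{cor:degenLSImuGibbs} with the general metric-space Łojasiewicz-to-coercivity argument of Proposition~\ref{prop:degenPL->coer}.
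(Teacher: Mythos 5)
Your proof is correct and follows essentially the same route as the paper: chain Corollary~\ref{cor:degenLSImuGibbs} with the general (metric-space) form of Proposition~\ref{prop:degenPL->coer} applied to the gradient flow of the $N$-particle relative entropy, then estimate the resulting function $g$. The only difference is presentational: the paper rescales upfront (working with $\frac1N\mathcal H$ and $N^{-1/2}\mathcal W_2$, so that $\Theta$ and hence $g$ are $N$-independent from the start), whereas you work at the unscaled level, track the $N$-dependent factor $N^{(\beta-2)/(2\beta)}$ through the computation of $g$, and verify by hand that it cancels exactly after squaring and dividing by $N$ — a correct and slightly more explicit account of the same scaling.
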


\begin{proof}
Thanks to  Corollary~\ref{cor:degenLSImuGibbs}, this is proven by applying Proposition~\ref{prop:degenPL->coer} (with Remark~\ref{rem:general}) to the free energy $\mathcal F(\mu^N) = \frac1N\mathcal H(\mu^N|\mu_\infty^N)$ (in dimension $dN$), whose gradient flow associated to the rescaled Wasserstein distance $N^{-1/2}\mathcal W_2$ is exactly the Fokker-Planck equation associated to the Langevin process~\eqref{eq:particulesEDS2}, with dissipation $\partial_t \mathcal F(\rho_t^N) = -\frac{1}{N}\mathcal I(\mu^N|\mu_\infty^N)$. Working with these suitably scaled quantities exactly gives
\[\forall \mu^N \ll \mu_\infty^N\,,\qquad  \frac1{\sqrt{N}} \mathcal W_2\po \mu^N,\mu_\infty^N\pf \leqslant g \po \frac1{N} \mathcal H \po \mu^N|\mu_\infty^N\pf\pf \,,
\]
 with the function $g$ defined in~\eqref{loc:g} from the function $\Theta$ introduced in Corollary~\ref{cor:degenLSImuGibbs}. Straightforward computations shows that $g(r) \leqslant C' \max(r^{1/2},r^{\frac{1}{\beta}})$ for some $C'$ depending only on $c_2,\kappa$ and $\beta$. We conclude by taking the square in the inequality.
\end{proof}

\subsection{Local inequalities}\label{subsec:local_ineq}

When $V_\kappa$ (hence $\mathcal F$) have several local minima, global inequalities as stated in the previous sections (and in particular Proposition~\ref{prop:equivalenceInegal}) cannot hold. However, it is still possible for local inequalities to hold, leading to local convergence rates. This has been first developed in~\cite{MonmarcheReygner} for the non-linear flow~\eqref{eq:EDP} and in~\cite{MonmarcheMetastable} for the particle system. More precisely, we say that $\mathcal F$ (resp. $V_\kappa$)  satisfies a PL inequality locally at $\rho_*\in\mathcal P_2(\R^d)$ (resp. at  $m_*\in\R^d$) if there exists $C,\delta>0$ such that
\begin{equation}
\label{eq:locPLF}
\forall \mu \in\mathcal B(\rho_*,\delta),\qquad \mathcal F(\mu) - \inf_{\mathcal B(\rho_*,\delta)} \mathcal F = \mathcal F(\mu)  - \mathcal F(\rho_*) \leqslant \frac{C}{2} \mathcal I \po \mu|\Gamma(\mu)\pf\,,
\end{equation} 
respectively
\begin{equation}
\label{eq:locPLV}
\forall m \in\mathcal B(m_*,\delta),\qquad V_{\kappa}(m) - \inf_{\mathcal B(m_*,\delta)} V_\kappa = V_\kappa(m)  - V_\kappa(m_*) \leqslant \frac{C}{2}|\na V_\kappa(m)|^2\,. 
\end{equation}
Similarly, the local version of the free energy and coercivity inequalities~\eqref{eq:ineqEntropy} and~\eqref{eq:coercivityF} is obtained by replacing in each case $\overline{\mathcal F}$ by $\mathcal F(\mu) - \inf_{\mathcal B(\rho_*,\delta)} \mathcal F$ and requiring the inequality to hold only for $\mu \in \mathcal B(\rho_*,\delta)$.

The same adaptation gives the local versions of the degenerate PL and coercivity inequalities discussed in Sections~\ref{sec:degenPL} and \ref{sec:degenCoer}, but we won't discuss in details these situations.

For the ``local" version of the uniform LSI~\eqref{eq:LSI} for the Gibbs measure $\mu_\infty^N$, however, we will follow~\cite{MonmarcheMetastable}, and thus it does not consist in requiring that the inequality only holds for measures $\mu$ in some strict subset of $\mathcal P_2(\R^{dN})$. Rather, the goal is to find a modified energy $\widetilde{\mathcal E} :\mathcal P_2(\R^d) \rightarrow \R$ such that $\mathcal E(\mu) = \widetilde{\mathcal E}(\mu)$ for all $\mu \in \mathcal B(\rho_*,\delta)$ and such that the associated Gibbs measure $\tilde \mu_\infty^N$ satisfies a uniform LSI. The point is then that the particle systems $\bX$ and $\tilde{\bX}$ solving \eqref{eq:particulesEDS1} respectively with $\mathcal E$ and $\widetilde{\mathcal E}$ exactly coincide as long as their empirical measures $\pi_{\bX}$ and $\pi_{\tilde{\bX}}$ remains in $\mathcal B(\rho_*,\delta)$, which holds for very long times (of order $e^{cN}$ for some $c>0$). As a consequence, useful large-time estimates can be obtained for the original process from the uniform LSI satisfied by the modified Gibbs measure.

Now, this definition of ``local" inequalities involving a modified energy $\tE$ allows to recover the localized version~\eqref{eq:locPLF}. Indeed, the associated free energy $\tF$ and local equilibrium $\widetilde{\Gamma}$ are such that $\tF(\mu) = \mathcal F(\mu)$ and $\widetilde{\Gamma}(\mu) = \Gamma(\mu)$ for all $\mu\in\mathcal B(\rho_*,\delta)$.  As a consequence, if $\rho_*$ is a global minimizer of $\tF$ and if $\tF$ satisfies a global PL inequality, then it implies the local PL for $\mathcal F$. 

The immediate ``local" corollary of Propositions~\ref{prop:equivalenceInegal} and~\ref{prop:coercivity} is the following.

\begin{prop}\label{prop:equivLocale}
Let $\rho_* = \gamma_{m_*}$ be an isolated local minimizer of $\mathcal F$. The following are equivalent:
\begin{enumerate}
\item $\na^2 V_\kappa(m_*)$ is positive definite.
\item $\mathcal F$ satisfies a  PL inequality locally at $\rho_*$.
\item $\mathcal F$ satisfies a free energy inequality locally at $\rho_*$.
\item $\mathcal F$ satisfies a  coercivity inequality  locally at $\rho_*$.
\item $\mathrm{Hess}\mathcal F_{|\rho_*}$ given in \eqref{eq:HessFm} is a positive symmetric operator over $H^1(\rho_*)$ with a positive spectral gap.
\end{enumerate}
Moreover, in this situation, there exists a modified energy $\tE$ coinciding with $\mathcal E$ on $\mathcal B(\rho_*,\delta)$ for some $\delta>0$   such that the associated Gibbs measure $\tilde\mu_\infty^N$  satisfies a uniform LSI.
\end{prop}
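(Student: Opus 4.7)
The plan is to build the proof around a localization device: construct a modified energy $\tE$ that agrees with $\mathcal E$ on a Wasserstein ball around $\rho_*$ but for which the associated free energy $\tF$ falls under the scope of Propositions~\ref{prop:equivalenceInegal} and~\ref{prop:coercivity}. All the local inequalities then become global inequalities for $\tF$, which are equivalent among themselves and to $(1)$, which is in turn equivalent to $(5)$ by a direct Hessian computation. The last claim of the statement is produced by the same construction.

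For $(1)\Leftrightarrow(5)$, I would use Lemma~\ref{lem:hessian_gammam} at $\mu=\gamma_{m_*}$ together with the orthogonal decomposition~\eqref{eq:Hessorthodecompos}. On $E_\ell^\perp$ the Hessian is always positive with spectral gap at least $\kappa$ by~\eqref{eq:HessCOnvexEortho}, while its restriction to the finite-dimensional subspace $E_\ell$ identifies with $\na^2 V_\kappa(m_*)$ through~\eqref{eq:HessEl}. Since both subspaces are stable, having a positive spectral gap on the whole tangent space is equivalent to $\na^2 V_\kappa(m_*)$ being positive definite.

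Assuming $(1)$, I would then construct the modified $\tE$. Since $m_*$ is isolated and $\na^2 V_\kappa(m_*) \succ 0$, pick $\eta,\lambda>0$ with $\na^2 V_\kappa(m) \succeq \lambda I_d$ on $\mathcal B(m_*,2\eta)$, and use a smooth cutoff to interpolate $V_\kappa$ inside $\mathcal B(m_*,\eta)$ with a quadratic $V_\kappa(m_*) + (\lambda/2)|m-m_*|^2$ outside a slightly larger ball, thereby obtaining $\tV_\kappa\in\mathcal C^2(\R^d,\R)$ that is globally strongly convex, coincides with $V_\kappa$ on $\mathcal B(m_*,\eta)$, admits $m_*$ as its unique non-degenerate global minimizer, and satisfies the growth condition $\Delta \tV \leqslant L(1+|\na \tV_\kappa|^2)$ of Proposition~\ref{prop:equivalenceInegal}. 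Setting $\tE(\mu)=\tV(m_\mu)+(\kappa/2)\int |x|^2 \mu$, Proposition~\ref{prop:equivalenceInegal} then delivers a global PL and free energy inequality for $\tF$ together with a uniform-in-$N$ LSI for the modified Gibbs measure $\tilde\mu_\infty^N$, while Proposition~\ref{prop:coercivity} gives a global coercivity inequality for $\tF$. I would then pick $\delta\in(0,\eta]$ so small that $\mathcal W_2(\mu,\rho_*)\leqslant \delta$ forces $m_\mu\in\mathcal B(m_*,\eta)$ (using $|m_\mu-m_*|\leqslant \mathcal W_2(\mu,\rho_*)$), so that $\mathcal E(\mu)=\tE(\mu)$, $\Gamma(\mu)=\widetilde\Gamma(\mu)$ and $\mathcal F(\mu)=\tF(\mu)$ on $\mathcal B(\rho_*,\delta)$. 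Shrinking $\delta$ further if needed so that $\rho_*$ is the unique minimizer of $\mathcal F$ on $\mathcal B(\rho_*,\delta)$ (which follows from~\eqref{eq:FVkappa} and the non-degeneracy of $V_\kappa$ at $m_*$), the global inequalities for $\tF$ restrict to the local inequalities $(2)$, $(3)$, $(4)$ for $\mathcal F$.

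For the converse implications $(2),(3),(4)\Rightarrow(1)$, I would plug Gaussian test measures $\mu=\gamma_m$ with $m\in\mathcal B(m_*,\delta)$ (so that $\mathcal W_2(\gamma_m,\rho_*)=|m-m_*|$) into each local inequality. Using the identities $\mathcal F(\gamma_m)-\mathcal F(\rho_*)=V_\kappa(m)-V_\kappa(m_*)$, $\mathcal H(\gamma_m|\Gamma(\gamma_m)) = |\na V_\kappa(m)|^2/(2\kappa)$ and $\mathcal I(\gamma_m|\Gamma(\gamma_m))=|\na V_\kappa(m)|^2$ (already appearing in the proof of Proposition~\ref{prop:equivalenceInegal}), one obtains, respectively, a local PL for $V_\kappa$ at $m_*$, a local free energy inequality that also reduces to a local PL for $V_\kappa$, and a local quadratic lower bound $V_\kappa(m)-V_\kappa(m_*)\geqslant (c\kappa/2)|m-m_*|^2$ for $m$ near $m_*$. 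Any of these, combined with the $\mathcal C^2$ regularity of $V_\kappa$, forces $\na^2 V_\kappa(m_*)$ to be positive definite by Taylor expansion, giving $(1)$.

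The main obstacle is the construction of $\tV_\kappa$: one has to match the local quadratic model of $V_\kappa$ at $m_*$ to a globally strongly convex potential in a $\mathcal C^2$ way while preserving the sub-quadratic growth condition $\Delta \tV\leqslant L(1+|\na \tV_\kappa|^2)$ required by Proposition~\ref{prop:equivalenceInegal}; a standard smooth-cutoff interpolation between $V_\kappa$ and a suitable quadratic works but requires some care near the interpolation region. Beyond that, the argument is essentially a localization bookkeeping: the linear-algebraic equivalence $(1)\Leftrightarrow(5)$, the application of the global results to $\tF$, and the Gaussian-test computations already carried out in the proof of Proposition~\ref{prop:equivalenceInegal} do all the work.
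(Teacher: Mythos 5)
Your proposal is correct and follows essentially the same strategy as the paper: the equivalence $(1)\Leftrightarrow(5)$ via the orthogonal Hessian decomposition~\eqref{eq:Hessorthodecompos} with \eqref{eq:HessCOnvexEortho} on $E_\ell^\perp$ and \eqref{eq:HessEl} on $E_\ell$; the equivalence of $(1)$ with a local PL and local coercivity for $V_\kappa$ via ChewiStromme and local strong convexity; the Gaussian test measures $\gamma_m$ to translate back and forth between $\mathcal F$ and $V_\kappa$; and a cutoff-plus-quadratic-penalty construction of $\tilde V_\kappa$ (the paper uses $\tilde V_\kappa(m) = \chi(m)V_\kappa(m) + L(|m-m_*|^2-\delta^2)_+^2$, you interpolate with a quadratic, both of which satisfy the needed growth condition). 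The one organizational difference is that you route the forward implications $(1)\Rightarrow(2),(3),(4)$ through the modified energy $\tE$ and Propositions~\ref{prop:equivalenceInegal} and~\ref{prop:coercivity} applied globally to $\tF$, then restrict to $\mathcal B(\rho_*,\delta)$, whereas the paper derives the local inequalities for $\mathcal F$ directly by mimicking the global proofs and introduces $\tE$ only for the final claim about the Gibbs measure. This is a harmless repackaging; if anything, your version makes the dependence on $\tE$ more explicit and avoids having to re-trace the earlier proof arguments in the local setting.
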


\begin{rem}\label{rem:tE}
The modified $\tE$ designed in the proof is still of the form $\tE(\mu) = \int_{\R^d} |x|^2 \mu(\dd x) + \tilde V(m_\mu)$ for some $\tilde V\in \mathcal C^2(\R^d)$. In particular, $\mathcal E(\mu) = \tE(\mu)$ as soon as $m_\mu \in \mathcal B(m_*,\delta)$. Moreover, in this proof, we take $\tilde V$ to be strongly convex.
\end{rem}

\begin{proof}
Since $V_\kappa$ is $\mathcal C^2$ and $m_*$ is a local minimizer, the fact that $\na^2 V_\kappa(m_*)$ is positive definite is equivalent to the local coercivity condition, and to the local PL inequality (the fact that the PL inequality implies that $\na^2 V_\kappa(m_*)$ is non-singular is \cite[Proposition 7]{ChewiStromme}, and the converse is simply that $V_\kappa$ is then locally strongly convex, which implies the PL inequality). Following the proofs of Proposition~\ref{prop:equivalenceInegal} and \ref{prop:coercivity}, the local PL and coercivity inequalities for $V_\kappa$ are equivalent to the same inequalities for $\mathcal F$ and to the free energy inequality (using indeed that $ |m_\mu - m_\nu| \leqslant \mathcal W_2(\mu,\nu) $ for all $\mu,\nu \in\mathcal P_2(\R^d)$ and that $\mathcal W_2(\gamma_m,\gamma_{m'}) = |m-m'|$ for all $m,m'\in\R^d$).

Concerning the Hessian of the free energy, using the notations from Section~\ref{subsec:hessian}, thanks to the orthogonal decomposition~\eqref{eq:Hessorthodecompos} and the coercivity inequality~ \eqref{eq:HessCOnvexEortho} over $E_\ell^\perp$, $\mathrm{Hess}\mathcal F_{|\rho_*}$ is positive with a positive spectral gap if and only if its restriction on the finite dimensional space $E_\ell$ is positive definite, which according to~\eqref{eq:HessEl} is exactly saying that $\na V_\kappa(\rho_*)$ is definite positive.

If $\na^2 V_\kappa$ is positive definite, we can find $\delta>0$, a compactly-supported $\mathcal C^2$ function $\chi$ with $\chi(m)=m$ for $m\in\mathcal B(m_*,\delta)$ and $L>0$ such that $\tilde V_\kappa$ given by $\tilde V_\kappa(m) = \chi(m)V_\kappa(m) + L (|m-m_*|^2 - \delta^2)_+^2 $ is strongly convex. Set $\tilde V(m) = \tilde V_\kappa(m) - \frac{\kappa}2|m|^2$ and consider the corresponding energy $\tE(\mu) = \tilde V(\mu) + \int_{\R^d} |x|^2 \mu(\dd x)$. Since $\tilde V_\kappa$ is strongly convex, it satisfies a (global) PL inequality. Proposition~\ref{prop:equivalenceInegal} then shows that the associated Gibbs measure $\tilde \mu_\infty^N$ satisfies a uniform LSI.
\end{proof}

\section{Exit and transition times}\label{sec:exittimes}

\subsection{Motivations and settings}

As discussed in the introduction,  the dynamics of the empirical distribution of an interacting particle system simulated in practice to approximate the gradient flow of a free energy is a small random perturbation of the deterministic flow. As such, when the free energy admits several critical points, the particle system is expected to exhibit a metastable behavior, as in the finite dimensional case~\eqref{gradientepsilon}.  One aspect of this behavior is described by the exit time from the vicinity of critical points, and transition times between local minimizers. These questions are well understood in finite dimension, as we will discuss below. Since we will focus on exit and transition times involving domains defined only in terms of the barycentre $\bar x = \frac1N\sum_{i=1}^N x_i$, our study will boil down to the finite dimensional case (as discussed in Section~\ref{sec:barycentre}). In other words, this section simply consists in stating classical results in finite dimension and  interpreting them in terms of the particle system associated to the free energy in the case of the toy model~\eqref{eq:F}.  The reason of the restriction to such domains is the following: in Section~\ref{subsec:local_ineq}, the domains of $\mathcal P_2(\R^d)$ on which  local inequalities hold are entirely determined by the expectation of the measures, and similarly whether $\mathcal E(\mu)=\tE(\mu)$ depends only on $m_\mu$. As we will see in Section~\ref{sec:practical_conclusion}, exit times from domains where local inequalities hold plays a key role in obtaining large-time local convergence of practical interest for the particle system.

An alternative question would be instead to focus on the fluctuation and large deviations of the empirical distribution of the centered Ornstein-Uhlenbeck~\eqref{eq:particles-centered}. Nevertheless, we won't discuss this.

The question of exit and transition times for mean field systems traces back at least to the finite-time Large Deviations results of Dawson and G{\"a}rtner \cite{dawson1986large}. However, this situation is still much less understood than the finite-dimensional case. In particular, the Arhenius law conjectured in \cite[Theorem 4]{dawson1986large} remains to be proven. 

In the remaining of this section, we will successively consider two questions: first, in Section~\ref{subsec:transitiontimes}, we will study the transition time from one minimizer to another, obtaining the Arrhenius law (Theorem~\ref{thm:Arrhenius}), the Eyring-Kramers formula (Theorem~\ref{thm:EK}) and the convergence to the exponential distribution (Theorem~\ref{thm:expo}).  Second,  Section~\ref{subsec:exittimes} will be devoted to the exit event from a saddle point (Theorem~\ref{thm:exit}).

For all these questions, in the finite-dimensional case, there has been a number of works which allow for instance for degenerate critical points or complex geometrical situations.  Such references will be provided in each case. Since our analysis boils down to the finite dimensional case, obviously all the results from these references can be applied in our situation. However, for our purpose (which is to understand what kind of result may be expected in other mean-field models), it is sufficient to focus only on the simplest situation, with only three critical points, all non-degenerate: two minizers, and one index-1 saddle point. Hence, for the sake of simplicity and clarity, we will only state results under these following conditions:

\begin{assu}
\label{Assu:exit-imes}
The potential $V_\kappa$ has exactly three critical points, $x_0,x_1,z\in\R^d$. For $i=0,1$, $\na^2 V_\kappa(x_i)$ is positive definite while $\na^2 V_\kappa(z)$ is non-singular with exactly one negative eigenvalue $-\bar{\lambda}_1<0$ associated to an eigenspace of dimension $1$, spanned by $v_1\in\R^d$. The infimum of $\sup_{[0,1]} V_\kappa\circ \gamma$ over all continuous path $\gamma:[0,1]\rightarrow \R^d$ such that $\gamma(0)=x_0$ and $\gamma(1)=x_1$ is $V_\kappa(z)$. For $y\in\{x_0,x_1,z\}$ we write $B_y = \{\bx \in \R^{dN},\ \bar x \in \mathcal B(y,\delta)\}$ where $\delta>0$ is small enough so that, first, the three balls $\mathcal B(y,\delta)$ for $y\in\{x_0,x_1,z\}$ do not intersect, second, a trajectory of the gradient flow $\dot y_t = -\na V_\kappa(y_t)$ initialised at $y_0 \in \mathcal B(x_i,\delta)$ with $i=0,1$ converges to $x_i$ and, third, the unstable manifold of the gradient flow at $z$ intersects the boundary of $\mathcal B(z,\delta)$ at exactly two points  (cf. Section~\ref{subsec:exittimes}). We consider a particle system $\bX$ solving~\eqref{eq:particles} with initial distribution $\nu_0^N$ and denote by $\bar\nu_0$ the initial distribution of the barycentre $\bar X_0$. 
\end{assu}

\begin{ex}
As a consequence of Example~\ref{ex:hessienne2}, in the running example, Assumption~\ref{Assu:exit-imes} is satisfied when $\lambda_n > \kappa > \lambda_{n-1}$, with $(\lambda_k)_{k\in\cco 1,n\ccf}$ the eigenvalues of $M$ sorted in increasing order (with multiplicities), if $\lambda_n$ is a simple eigenvalue. In that case, the two minimizers are $m^{(\lambda_n)}$ and $-m^{(\lambda_n)}$ and the saddle point is $0$ (see Figure~\ref{fig:Vkappa}\emph{(a)(b)}).
\end{ex}

For a measurable domain $D\subset \R^{dN}$, we  write
\[\tau_D = \inf\{t\geqslant 0,\ \bX_t \in D\}\,.\]
Recall that, as discussed in Section~\ref{sec:barycentre}, $(\bar X_t)_{t\geqslant 0}$ is then an autonomous diffusion process solution the small-noise equation~\eqref{eq:EDSbarycentre}. For a domain $D=\{\bx\in\R^{dN},\ \bar x \in D'\}$ for some $D'\subset \R^d$,
\begin{equation}
\label{eq:tauD}
\tau_D = \bar{\tau}_{D'} := \inf\{t\geqslant 0,\ \bar{X}_t \in D'\}\,.
\end{equation}

\subsection{Transition time between minimizers}\label{subsec:transitiontimes}

The Arrhenius law states that, at low temperature $\varepsilon$, the time for the noisy gradient descent~\eqref{gradientepsilon} to leave a local energy well is exponentially large with $\varepsilon^{-1}$, with a rate given by the depth of the well, i.e. the minimal energy gap to overcome to leave the well. In finite dimensional settings, this is one of the results of the Freidlin-Wentzell theory~\cite{freidlin1998random}, which applies in much more general settings than Assumption~\ref{Assu:exit-imes}.

\begin{thm}\label{thm:Arrhenius}
Under Assumption~\ref{Assu:exit-imes}, for all $\varepsilon>0$, uniformly over all initial distributions $\nu_0^N$ with $\nu_0^N(B_{x_0})=1 $,
\begin{equation}
\label{eq:Arrhenius}
\mathbb P_{\nu_0^N}\po e^{N \po \mathcal F(\gamma_z) - \mathcal F(\gamma_{x_0}) - \varepsilon\pf} \leqslant \tau_{B_{x_1}} \leqslant e^{N \po\mathcal F(\gamma_z) - \mathcal F(\gamma_{x_0}) + \varepsilon\pf} \pf  \underset{N\rightarrow\infty}\longrightarrow 1 \,
\end{equation}
and
\begin{equation}
\label{loc:EK}
\frac1N \ln \mathbb E_{ \nu_0^N} \po \tau_{B_{x_1}} \pf \underset{N\rightarrow\infty}\longrightarrow \mathcal F(\gamma_z) - \mathcal F(\gamma_{x_0})\,.
\end{equation}
\end{thm}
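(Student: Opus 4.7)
The plan is to exploit the observation from Section~\ref{sec:barycentre} that the barycentre $(\bar X_t)_{t\geqslant 0}$ is an autonomous Markov process solving~\eqref{eq:EDSbarycentre}. That SDE is precisely the small-noise perturbed gradient flow~\eqref{gradientepsilon} with $f = V_\kappa$ and noise parameter $\sqrt{2/N}$, so that $1/N$ plays the role of the low temperature. Moreover, by~\eqref{eq:tauD}, $\tau_{B_{x_1}} = \bar\tau_{\mathcal{B}(x_1,\delta)}$ depends only on $(\bar X_t)$, and the condition $\nu_0^N(B_{x_0})=1$ forces $\bar X_0 \in \mathcal{B}(x_0,\delta)$ almost surely. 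The problem is thus reduced to a classical finite-dimensional exit problem for a noisy gradient flow.

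Under Assumption~\ref{Assu:exit-imes}, $V_\kappa$ has the archetypical Freidlin-Wentzell geometry for the two-well problem: two non-degenerate wells $x_0, x_1$, a unique index-1 saddle $z$ between them whose level is the min-max height of any path from $x_0$ to $x_1$, and a choice of $\delta$ small enough that $\mathcal{B}(x_i,\delta)$ lies in the basin of attraction of $x_i$ for $i=0,1$. Applying the classical Arrhenius-type results \cite[Ch.~4]{freidlin1998random}, uniformly over starting points in $\mathcal{B}(x_0,\delta)$ and for every fixed $\varepsilon>0$, one has
\[
\mathbb{P}\left( e^{N(V_\kappa(z) - V_\kappa(x_0) - \varepsilon)} \leqslant \bar\tau_{\mathcal{B}(x_1,\delta)} \leqslant e^{N(V_\kappa(z) - V_\kappa(x_0) + \varepsilon)} \right) \underset{N\rightarrow\infty}\longrightarrow 1,
\]
together with $\frac{1}{N}\ln \mathbb{E}[\bar\tau_{\mathcal{B}(x_1,\delta)}] \to V_\kappa(z) - V_\kappa(x_0)$. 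The uniformity in the starting point allows one to average against $\bar\nu_0$ without loss.

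It only remains to rewrite the rate in terms of the free energy. Specialising~\eqref{eq:FVkappa} to $\mu=\gamma_m$ gives $\mathcal{H}(\gamma_m|\gamma_m)=0$, hence $\mathcal{F}(\gamma_m) = V_\kappa(m) - \frac{d}{2}\ln(2\pi/\kappa)$ for every $m\in\R^d$, so that the additive constant cancels in the difference and
\[
\mathcal{F}(\gamma_z) - \mathcal{F}(\gamma_{x_0}) \ =\  V_\kappa(z) - V_\kappa(x_0).
\]
Injecting this identity into the two previous displays yields exactly~\eqref{eq:Arrhenius} and~\eqref{loc:EK}.

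There is no serious obstacle: the whole mean-field nature of the problem collapses because $(\bar X_t)$ is a genuinely $d$-dimensional Markov process, and everything reduces to invoking the appropriate form of Freidlin-Wentzell together with the elementary identity $\mathcal F(\gamma_m) = V_\kappa(m) + \text{const}$. The only minor care is to check that the geometric conditions imposed on $\delta$ in Assumption~\ref{Assu:exit-imes} match those required by the cited exit theorems, which they do by construction. The genuinely delicate parts of the metastable picture -- the sharp Eyring-Kramers prefactor of Theorem~\ref{thm:EK} and the exponential limit law of Theorem~\ref{thm:expo} -- lie beyond the rough Freidlin-Wentzell bounds used here and are not part of the present statement.
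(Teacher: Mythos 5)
Your proposal is correct and follows essentially the same route as the paper: reduce to the autonomous barycentre SDE~\eqref{eq:EDSbarycentre} via~\eqref{eq:tauD}, identify $\mathcal F(\gamma_z)-\mathcal F(\gamma_{x_0})=V_\kappa(z)-V_\kappa(x_0)$ (the paper uses~\eqref{locgh}, you use~\eqref{eq:FVkappa}; both are immediate), and invoke Freidlin--Wentzell for the small-noise gradient diffusion with $\varepsilon=1/N$. The one point you gloss over is that the relevant Freidlin--Wentzell transition-time result (the paper cites \cite[Theorem 6.2]{freidlin1998random}) is stated for diffusions on compact manifolds, so a short localization/periodization argument exploiting that $V_\kappa\to\infty$ at infinity is needed to transfer it to $\R^d$, which the paper handles explicitly.
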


\begin{proof}
Since $\gamma_z$ and $\gamma_{x_0}$ are critical points of the free energy, \eqref{locgh} gives
\begin{equation}
\label{loc:FFVV}
\mathcal F(\gamma_z) - \mathcal F(\gamma_{x_0}) =  V_\kappa(z) - V_\kappa(x_0)\,. 
\end{equation}
Thanks to~\eqref{eq:tauD}, the result then follows from \cite[Theorem 6.2]{freidlin1998random} (which is stated for diffusion processes on compact manifolds but the deduction when $V_\kappa \rightarrow \infty$ at infinity easily follows by applying it to a periodized process which coincide with $\bar X_t$ until $V_\kappa(\bar X_t)$ hits some value larger than $V_\kappa(z)$, as in the proof of \cite[Lemma 6]{monmarche2021simulated}).
\end{proof}

The estimate~\eqref{loc:EK} on the expected transition times can be refined by identifying the sub-exponential prefactor, leading to the so-called the Eyring-Kramers formula~\eqref{eq:EyringKramers}. We will rely on the finite dimensional result of~\cite{bovier2004metastability}, which allows for more complex situations than Assumption~\ref{Assu:exit-imes}, and have been extended later on in \cite{lelievre2022eyring} (to allow for saddle point at the boundary of the domain of interest) or \cite{avelin2023geometric} (to allow for degenerate critical points). See also the monograph \cite{bovier2016metastability}.
 
 \begin{thm}\label{thm:EK} 
 Under Assumption~\ref{Assu:exit-imes}, for $m\in\{x_0,z\}$, the operator $\mathrm{Hess}\mathcal F_{|\gamma_m}$ given in \eqref{eq:HessFm} has a discrete spectrum $\lambda_{1,m} \leqslant \lambda_{2,m} \leqslant \dots$ in $L^2(\gamma_m)$, with $\lambda_{1,z} = - \bar{\lambda}_1 < \lambda_{2,z}$ and $\lambda_{1,x_0}>0$. Moreover, the product
 $\prod_{k\in \N} \frac{|\lambda_{k,z}|}{\lambda_{k,x_0}}$
 converges and, uniformly over all initial distributions  $\nu_0^N$ with $\nu_0^N(B_{x_0})=1 $,
\begin{equation}
\label{eq:EyringKramers}
\mathbb E_{\nu_0^N} \po \tau_{B_{x_1}} \pf = \frac{2\pi  }{\bar{\lambda}_1} \sqrt{\prod_{k\in \N} \frac{|\lambda_{k,z}|}{\lambda_{k,x_0}}} e^{N \co \mathcal F(\gamma_z) - \mathcal F(\gamma_{x_0}) \cf } \co 1 + \mathcal O \po \frac{\ln N}{\sqrt{N}} \pf  \cf \,.
\end{equation}
 \end{thm}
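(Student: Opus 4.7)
The strategy is to reduce the mean-field transition problem to the classical finite-dimensional Kramers problem via the autonomous barycentre dynamics of Section~\ref{sec:barycentre}, apply the Bovier-Eckhoff-Gayrard-Klein formula, and then identify the announced infinite-dimensional spectral product with the classical determinant ratio using the explicit computation of Lemma~\ref{lem:hessian_gammam}.

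By~\eqref{eq:EDSbarycentre}, the barycentre $\bar X_t$ solves the autonomous SDE $\dd\bar X_t = -\na V_\kappa(\bar X_t)\dd t + \sqrt{2/N}\dd\tilde B_t$, which is a standard overdamped Langevin diffusion for the potential $V_\kappa$ at effective temperature $\varepsilon = 1/N$. Combined with the identity $\tau_{B_{x_1}} = \bar\tau_{\mathcal B(x_1,\delta)}$ from~\eqref{eq:tauD}, the hypothesis $\nu_0^N(B_{x_0})=1$ forces $\bar X_0 \in \mathcal B(x_0,\delta)$ almost surely, and by the second condition in Assumption~\ref{Assu:exit-imes} this ball lies in the basin of attraction of $x_0$ for the deterministic flow $\dot y = -\na V_\kappa(y)$. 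The geometric conditions of Assumption~\ref{Assu:exit-imes} (non-degenerate minimizers, unique index-$1$ saddle $z$ on the minimal-energy path, negative Hessian eigenvalue $-\bar\lambda_1$) are exactly those required by the Bovier-Eckhoff-Gayrard-Klein result~\cite{bovier2004metastability}, which in its uniform-in-starting-measure form (as stated e.g. in~\cite{bovier2016metastability}) yields
\begin{equation*}
\mathbb E_{\nu_0^N}\bigl[\bar\tau_{\mathcal B(x_1,\delta)}\bigr] = \frac{2\pi}{\bar\lambda_1}\sqrt{\frac{|\det\na^2 V_\kappa(z)|}{\det\na^2 V_\kappa(x_0)}}\,e^{N[V_\kappa(z)-V_\kappa(x_0)]}\bigl[1+\mathcal O(\sqrt{\log N/N})\bigr].
\end{equation*}
Since $\gamma_z$ and $\gamma_{x_0}$ are critical points of $\mathcal F$, applying~\eqref{locgh} with $\mu=\gamma_z$, $m=x_0$ gives $V_\kappa(z)-V_\kappa(x_0) = \mathcal F(\gamma_z)-\mathcal F(\gamma_{x_0})$, matching the exponential factor of~\eqref{eq:EyringKramers}.

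It remains to identify the finite-dimensional Hessian determinant ratio with the claimed Otto-spectral product. For this, I would use the orthogonal decomposition~\eqref{eq:Hessorthodecompos}, noting that $\mathrm{Hess}\mathcal F_{|\gamma_m}$ stabilizes both $E_\ell$ and $E_\ell^\perp$. On the finite-dimensional piece $E_\ell$, spanned by linear functions $\Phi(x) = u\cdot x$, the computation~\eqref{eq:HessEl} (together with a constant adjustment $\Phi(x) = u\cdot x - \kappa (u\cdot m)/\lambda$ to obtain a genuine eigenfunction) shows that the $d$ eigenvalues of $\mathrm{Hess}\mathcal F_{|\gamma_m}$ are exactly those of $\na^2 V_\kappa(m)$. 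On $E_\ell^\perp$, the condition $\mu(\na\Phi)=0$ annihilates the linear-part contribution in Lemma~\ref{lem:hessian_gammam}, reducing $\mathrm{Hess}\mathcal F_{|\gamma_m}$ to $-L_m$, whose classical Hermite-Ornstein-Uhlenbeck spectrum on $L^2(\gamma_m)$ (modulo constants) consists of $\kappa|n|$ for $n\in\N^d$ with $|n|\geqslant 2$ -- the cases $|n|=1$ being already in $E_\ell$ and $|n|=0$ corresponding to constants. Crucially, these Hermite eigenvalues do not depend on $m$. The sign assertions $\lambda_{1,z}=-\bar\lambda_1<0<\lambda_{2,z}$ and $\lambda_{1,x_0}>0$ follow immediately since $\na^2 V_\kappa(z)$ has a single negative eigenvalue while all other eigenvalues (including $2\kappa$) are positive. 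Since the multisets of eigenvalues at $z$ and at $x_0$ differ in only the $d$ Hessian entries, the infinite product $\prod_k |\lambda_{k,z}|/\lambda_{k,x_0}$ has all but finitely many terms equal to $1$, hence converges absolutely, and a straightforward multiset bookkeeping (the Hermite parts cancel in numerator and denominator) shows it equals $|\det\na^2 V_\kappa(z)|/\det\na^2 V_\kappa(x_0)$, completing the proof.

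The main obstacle is essentially organizational rather than technical: it is a matter of invoking the classical Eyring-Kramers estimate in its sharpest uniform-in-initial-measure form and of carefully justifying the convergence of the infinite spectral product. No genuinely mean-field difficulty arises, reflecting the very specific simplicity of the toy model~\eqref{eq:F} (where, as already exploited in Section~\ref{subsubsec:inegalfonct}, the Gibbs measure factors via an orthogonal transformation into a $d$-dimensional part and a Gaussian $(d-1)N$-dimensional part).
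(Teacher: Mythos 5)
Your proposal is correct and follows essentially the same route as the paper: reduce to the autonomous barycentre SDE via~\eqref{eq:tauD}, invoke the finite-dimensional Eyring--Kramers estimate of Bovier--Eckhoff--Gayrard--Klein, and identify the determinant ratio with the infinite spectral product by using the orthogonal decomposition of $\mathrm{Hess}\mathcal F_{|\gamma_m}$ on $E_\ell\oplus E_\ell^\perp$ and the $m$-independence of the Hermite--Ornstein--Uhlenbeck spectrum on $E_\ell^\perp$. The only notable divergence is in the uniformity-over-initial-distributions step, where the paper appeals explicitly to \cite{brassesco1998couplings} and \cite{Locherbach} rather than to a uniform restatement in \cite{bovier2016metastability}, which does not alter the substance of the argument.
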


 \begin{rem}
 The form of~\eqref{eq:EyringKramers}, and in particular the presence of an infinite product of ratios of eigenvalues, is consistent with other Eyring-Kramers formula established in infinite dimension for small-white-noise perturbations of some PDE, see \cite{barret2015sharp,BerglundGentz,Berglund2} and references within, and with the notion of functional determinants \cite{gel1960integration,forman1987functional,mckane1995regularization}.
 \end{rem}
 
 \begin{proof}
 Thanks to~\eqref{eq:tauD}, applying \cite[Theorem 3.2]{bovier2004metastability} gives
 \begin{equation}
 \label{loc:EK2}
\mathbb E_{\nu_0^N} \po \tau_{B_{x_1}} \pf = \frac{2\pi \sqrt{|\mathrm{det}\na^2 V_\kappa(z)|}}{\bar{\lambda}_1\sqrt{\mathrm{det}\na^2 V_\kappa(x_0)}} e^{N \co V_\kappa(z)-V_\kappa(x_0) \cf } \co 1 + \mathcal O \po \frac{\ln N}{\sqrt{N}} \pf  \cf \,.
\end{equation}
More precisely, this result is stated with an initial condition $\bar X_0 = x_0$, so we combine it with   \cite[Theorem 1]{brassesco1998couplings}  or rather with  \cite[Theorem 5.3]{Locherbach} (which is a reformulation of the latter with a more explicit dependency on the initial distribution) to see that 
\[ \mathbb E_{\nu_0^N} \po \tau_{B_{x_1}} \pf = \mathbb E_{\tilde \nu_0^N} \po \tau_{B_{x_1}} \pf \po 1+ \mathcal O \po N^{-1} \pf\pf \]
uniformly over pairs of initial distributions  $\nu_0^N$, $\tilde \nu_0^N$ under which $\bar X_0 \in \mathcal B(x_0,\delta)$ almost surely.

It remains to see that~\eqref{loc:EK2} is exactly the formula~\eqref{eq:EyringKramers}. The exponential term has already been treated in~\eqref{loc:FFVV}. The spectral analysis of $H_m:= \mathrm{Hess}\mathcal F_{|\gamma_m}$ has been performed in Section~\ref{subsec:hessian}. In particular, $H_z$ and $H_{x_0}$ both admits the orthogonal decomposition~\eqref{eq:Hessorthodecompos} on the space of linear functions $E_\ell$ and $E_\ell^\perp$. On $E_\ell^\perp$, $H_{m}$ is conjugated (through the translation $x \mapsto x +m$)  to $-L_0$ with $L_0$ the Ornstein-Uhlenbeck operator, which has an unbounded discrete negative  spectrum with a positive spectral gap (with an eigenbasis given by Hermite polynomials, see \cite[Section 2.7.1]{BakryGentilLedoux}). In particular, the product $\prod_{k=1}^n \frac{|\lambda_{k,z}|}{\lambda_{k,x_0}}$ is eventually stationary for $n$ large enough, equal to the ratio of eigenvalues of $L_{z}$ and $L_{x_0}$ over $E_\ell$. From~\eqref{eq:HessEl}, these eigenvalues are exactly those of $\na^2 V(z)$ and $\na^2 V(x_0)$, respectively, which concludes.
 \end{proof}
 
 Finally, rescaled by its expectation, the transition times converges to the exponential distribution, which  describes the unpredictability of these metastable transitions.

 \begin{thm}\label{thm:expo}
 Under Assumption~\ref{Assu:exit-imes}, uniformly over all initial distributions  $\nu_0^N$ with $\nu_0^N(B_{x_0})=1 $,
\begin{equation}
\label{eq:exponentiality}
\sup_{t\geqslant 0} \left| \mathbb P_{\nu_0^N} \po \tau_{B_{x_1}} > t\mathbb E(\tau_{B_{x_1}})\pf - e^{-t}\right| \underset{N\rightarrow\infty}\longrightarrow 0 \,.
\end{equation}
 \end{thm}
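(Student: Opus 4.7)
The plan is to reduce the problem to a statement about the finite-dimensional barycentre process and then invoke classical results on the exponentiality of metastable transition times for low-noise gradient diffusions.

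First, exactly as in the proof of Theorem~\ref{thm:EK}, I would use \eqref{eq:tauD} to write $\tau_{B_{x_1}} = \bar\tau_{\mathcal B(x_1,\delta)}$, where $(\bar X_t)_{t\geqslant 0}$ is the autonomous $d$-dimensional diffusion solving the low-noise gradient SDE \eqref{eq:EDSbarycentre} with noise parameter $\varepsilon = 1/N$ and potential $V_\kappa$. Under the assumption that $\nu_0^N(B_{x_0})=1$, the initial law of $\bar X_0$ is a probability measure supported in $\mathcal B(x_0,\delta)$, and crucially the distribution of $\bar\tau_{\mathcal B(x_1,\delta)}$ under $\mathbb P_{\nu_0^N}$ depends on $\nu_0^N$ only through this one-marginal. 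The statement \eqref{eq:exponentiality} is therefore equivalent to the convergence, in Kolmogorov distance, of $\bar\tau_{\mathcal B(x_1,\delta)}/\mathbb E[\bar\tau_{\mathcal B(x_1,\delta)}]$ to the standard exponential law, uniformly over initial distributions of $\bar X_0$ supported in $\mathcal B(x_0,\delta)$.

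Second, this is a classical result in finite dimension. Under the non-degeneracy conditions of Assumption~\ref{Assu:exit-imes}, the convergence in distribution to $\mathrm{Exp}(1)$ of the rescaled transition time for the low-noise gradient diffusion \eqref{eq:EDSbarycentre} is the content of the coupling-based theorem of Brassesco \cite{brassesco1998couplings} (see also the quantitative version \cite[Theorem 5.3]{Locherbach}, which was already invoked for similar uniformity reasons in the proof of Theorem~\ref{thm:EK}). The uniformity over initial distributions supported in $\mathcal B(x_0,\delta)$ follows from these references: the process relaxes inside the well on a time scale $\mathcal O(1)$, which is negligible compared to the exit time of exponential order $e^{cN}$ obtained in Theorem~\ref{thm:Arrhenius}, so by the strong Markov property the exit time is essentially insensitive to the precise starting point inside $\mathcal B(x_0,\delta)$. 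The uniform-in-$t$ statement \eqref{eq:exponentiality} then follows because convergence in distribution to a continuous (exponential) law is equivalent to convergence in Kolmogorov distance.

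There is no real obstacle here; the proof is entirely in the spirit of the paper's philosophy of translating finite-dimensional metastability results to the infinite-dimensional particle system via the autonomous barycentre dynamics. The only care needed is to check that the references cited give uniformity over the initial law of $\bar X_0$ rather than only a fixed deterministic starting point, which was already implicit in their use in the proof of Theorem~\ref{thm:EK}.
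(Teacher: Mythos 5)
Your proposal is correct and follows essentially the same route as the paper: reduce via \eqref{eq:tauD} to the autonomous barycentre diffusion \eqref{eq:EDSbarycentre} and then invoke \cite[Theorem 1]{brassesco1998couplings} together with \cite[Theorem 5.3]{Locherbach} for the exponentiality and the uniformity over initial distributions. The only addition you make beyond the paper's terse proof is to spell out that convergence in law to a continuous limit upgrades to uniform (Kolmogorov) convergence, which is indeed the standard fact being used implicitly.
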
 
 
 \begin{proof}
 Thanks to~\eqref{eq:tauD}, this follows from  \cite[Theorem 1]{brassesco1998couplings}  (again, with  \cite[Theorem 5.3]{Locherbach} to get the uniformity over the initial distributions).
 \end{proof}

\subsection{Exit time from saddle points}\label{subsec:exittimes}

The small-noise behavior of the exit time of an elliptic diffusion process from an unstable equilibrium has first been considered by Kifer in \cite{Kifer} and since then refined in subsequent works, in particular by Bakhtin et al. \cite{bakhtin2008,bakhtin2011noisy,monter2011normal,10.1214/18-AAP1387,bakhtin2021atypical}. Although these references allow for more general situations,  here we state a simple statement where the barycentre of the initial condition is exactly at the saddle point.

We need to introduce some notations from~\cite{bakhtin2008}, to which we refer for details. Under Assumption~\ref{Assu:exit-imes}, there exists a curve $\gamma\in\mathcal C(\R, \R^d)$, invariant by the flow $\dot x_t = -\na V_\kappa(x_t)$, with $\gamma(0)=z$ and $\gamma'(0)=v$. Using that $\gamma(t) \simeq z + t v$ for small $t$, we can find $\delta$ small enough so that this curve intersects  the boundary of $\mathcal B(z,\delta)$ exactly at two points, as required in Assumption~\ref{Assu:exit-imes}. We call $\{z_{1},z_{-1}\}$ these two points. Up to reparametrizing $\gamma$, we assume that $z_s=\gamma(s)$ for $s\in\{-1,1\}$. The gradient ascent $\dot y_t = \na_\kappa V(y_t)$, initialized at $z_s$, stays in the curve and converges to $z$ as $t\rightarrow 0$. For $u\in(0,1)$, denote by $T_s(u)$ the time needed for the solution started at $z_s$ to reach $\gamma(su)$. Then, as stated in \cite[Lemma 1]{bakhtin2008}, 
\[T_s = \lim_{u\rightarrow 0} \po  T_s(u) + \frac{\ln u}{\bar{\lambda}_1}\pf\]
is well-defined and finite.

 We can now describe the exit time and location of the process starting from a saddle point.

\begin{thm}\label{thm:exit}
Under Assumption~\ref{Assu:exit-imes}, assume furthermore that $\bar X_0 =z$ a.s. under $\nu_0^N$ and that $(\frac1N|\bX_0|^2)_{N \geqslant 1}$ is almost surely bounded. Write $\tau = \tau_{B_z^c}$. Then, there exists two independent variables $Q$ and $Z$, with $Q$ uniformly distributed over $\{-1,1\}$ and $Z\sim\mathcal N(0,1)$ such that, almost surely,
\begin{equation}
\label{loc:W2exit}
\mathcal W_2 \po \pi (\bX_{\tau}),\gamma_{z_Q}\pf \underset{N \rightarrow \infty}\longrightarrow 0 
\end{equation}
and 
\begin{equation}
\label{loc:tauexit}
\tau - \frac{\ln (N/2)}{2\bar{\lambda}_1}  \underset{N \rightarrow \infty}\longrightarrow T_{Q} + \frac{\ln(|Z|/\sqrt{2\bar{\lambda}_1}) }{\bar{\lambda}_1} \,.
\end{equation}
\end{thm}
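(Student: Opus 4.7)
The first move is to use that under Assumption~\ref{Assu:exit-imes}, $B_z^c = \{\bx\in\R^{dN} : \bar x \notin \mathcal B(z,\delta)\}$ depends only on the barycentre, so by~\eqref{eq:tauD} $\tau = \bar\tau_{\mathcal B(z,\delta)^c}$, the exit time of the autonomous $d$-dimensional SDE~\eqref{eq:EDSbarycentre} from $\mathcal B(z,\delta)$, started at $\bar X_0 = z$, with noise level $\varepsilon_N := 1/\sqrt N$. Since $z$ is a non-degenerate index-$1$ saddle of $V_\kappa$ and $\delta$ has been chosen so that the unstable manifold of $z$ meets $\partial \mathcal B(z,\delta)$ exactly at $z_{\pm 1}$, this is exactly the small-noise exit-from-unstable-equilibrium framework of Kifer~\cite{Kifer} and Bakhtin~\cite{bakhtin2008,bakhtin2011noisy}.

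\textbf{Exit time and location of the barycentre.} Linearizing~\eqref{eq:EDSbarycentre} along $v_1$ yields $dr_t = \bar\lambda_1 r_t\,dt + \sqrt{2/N}\,(v_1\cdot d\tilde B_t)$, hence $r_t = \sqrt{2/N}\,e^{\bar\lambda_1 t}\int_0^t e^{-\bar\lambda_1 s}\,v_1\cdot d\tilde B_s$. The stochastic integral converges a.s.\ as $t\to\infty$ to a centred Gaussian of variance $1/(2\bar\lambda_1)$, which we write as $Z'/\sqrt{2\bar\lambda_1}$ with $Z'\sim \mathcal N(0,1)$. Setting $Q:=\mathrm{sign}(Z')$, the exit direction of the linearized process is along $z_Q$. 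Solving $|r_t|=u$ for a small fixed $u$, and then following the (asymptotically deterministic) descent from $\gamma(Qu)$ to $z_Q$ during a time $T_Q(u)$, taking $u\to 0$ via the very definition of $T_Q$ gives
\[
\bar X_\tau \underset{N\to\infty}{\longrightarrow} z_Q\qquad \text{and}\qquad \tau - \frac{\ln(N/2)}{2\bar\lambda_1} \underset{N\to\infty}{\longrightarrow} T_Q + \frac{\ln(|Z'|/\sqrt{2\bar\lambda_1})}{\bar\lambda_1}\quad\text{a.s.}
\]
To get the claimed independent pair $(Q,Z)$, enlarge the probability space by an independent Rademacher variable $\xi$ and set $Z := \xi|Z'|\sim \mathcal N(0,1)$, so that $Z$ is independent of $Q$ with $|Z|=|Z'|$, recovering~\eqref{loc:tauexit}. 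The a.s.\ form (rather than in-law) is obtained because $Z'$ is defined \emph{pathwise} from $\tilde B$, not merely from its limiting law; the uniform control of the nonlinear remainder upon leaving the linearity neighbourhood is the content of \cite{bakhtin2008,bakhtin2011noisy}.

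\textbf{Empirical measure.} Decompose $\mathbf X_t = \bar X_t\otimes \mathbf 1_N + \mathbf Y_t$ with $Y_t^i := X_t^i - \bar X_t$. By translation-invariance of $\mathcal W_2$,
\[
\mathcal W_2\po \pi(\mathbf X_\tau),\gamma_{z_Q}\pf \leqslant |\bar X_\tau - z_Q| + \mathcal W_2\po \pi(\mathbf Y_\tau),\gamma_0\pf\,.
\]
The first term tends to $0$ a.s.\ by the previous step. For the second, a direct covariation computation shows that $\tilde B$ and the driving noises $(B^i - N^{-1/2}\tilde B)_i$ of $\mathbf Y_\cdot$ in~\eqref{eq:particles-centered} have zero cross-quadratic variation; being jointly Gaussian they are independent, so since $\bar X_0 = z$ is deterministic, $\mathbf Y_\cdot$ is independent of $\bar X_\cdot$, hence of $\tau$. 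By Section~\ref{subsec:orthogonDyn}, $\mathbf Y_t \overset{d}{=} e^{-\kappa t}\mathbf Y_0 + \sqrt{1-e^{-2\kappa t}}\,\boldsymbol\Xi$ where $\boldsymbol\Xi$ has centred Gaussian marginals close to $\gamma_0^{\otimes N}$ up to $O(1/N)$ corrections from the constraint $\boldsymbol\Xi\in E_0$. Since $\tau\to\infty$ a.s.\ and $N^{-1}|\mathbf X_0|^2$ is a.s.\ bounded, $e^{-2\kappa\tau}\mathbf Y_0$ contributes $o(1)$ a.s.\ to $\mathcal W_2(\pi(\mathbf Y_\tau),\pi(\boldsymbol\Xi))$, while the standard $\mathcal W_2$-LLN for i.i.d.\ Gaussian samples gives $\mathcal W_2(\pi(\boldsymbol\Xi),\gamma_0)\to 0$ a.s. Combining yields $\mathcal W_2(\pi(\mathbf Y_\tau),\gamma_0)\to 0$ a.s., which completes the proof of~\eqref{loc:W2exit}.

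\textbf{Main obstacle.} The genuinely delicate point is the pathwise, as opposed to in-law, construction in the second step: one must couple the full nonlinear trajectory of $\bar X_\cdot$ to its linearization over the entire time interval $[0,\tau]$ (of order $\log N$) and to the deterministic descent from $\gamma(Qu)$ to $z_Q$, uniformly in $N$, so that the limits of $\tau - \ln(N/2)/(2\bar\lambda_1)$ and $\bar X_\tau$ are realized by explicit functionals of $\tilde B$. This is the pathwise content of the Bakhtin–Kifer theory \cite{bakhtin2008,bakhtin2011noisy}. The empirical-measure step is comparatively routine once the independence of $\mathbf Y_\cdot$ and $\tau$ is exploited: it reduces to a uniform-in-$N$ quantitative LLN for a Gaussian ensemble at a random time growing as $\log N$.
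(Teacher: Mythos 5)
Your first step (reduction to the barycentre and invocation of Bakhtin/Kifer) is essentially the same as the paper's: the paper simply cites \cite[Theorem 1]{bakhtin2008} to obtain \eqref{loc:tauexit} and $\bar X_\tau\to z_Q$ a.s., while you sketch the linearization argument and add the (correct, and needed) device of re-randomizing the sign to produce a pair $(Q,Z)$ that is genuinely independent while $|Z|=|Z'|$.

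For the empirical-measure step your route is genuinely different from the paper's and, I think, slicker in one respect. You observe that $\tau$ (measurable w.r.t. $\tilde B$) is independent of the centered process $\mathbf Y_\cdot$ (driven by the orthogonal noise $W$, with $\mathbf Y_0$ determined by the initial condition which is independent of $\mathbf B$), and you then want to evaluate the empirical distribution of $\mathbf Y$ at the random time $\tau$ by conditioning. The paper does not use this independence; instead it reduces to a deterministic supremum $\sup_{t\in[\ln N/C,\,C\ln N]}\mathcal W_2(\pi(\bX_t^c),\gamma_0)\to 0$ a.s., proved by synchronous coupling with a stationary Ornstein--Uhlenbeck process and a uniform-in-$t$ concentration argument (Lemma~\ref{lem:OU}: time discretization, Brownian supremum moments, Chernoff, Fournier--Guillin, Borel--Cantelli). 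Your independence observation allows one to fix the random time by Fubini and hence avoid the continuum supremum over $t$ entirely, which is a genuine simplification if spelled out correctly.

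However, the last step of your argument as written has a real gap. You say that ``the standard $\mathcal W_2$-LLN for i.i.d. Gaussian samples gives $\mathcal W_2(\pi(\boldsymbol\Xi),\gamma_0)\to 0$ a.s.''. This is not an application of the LLN: as $N$ varies, $\boldsymbol\Xi$ is a new Gaussian vector in $\R^{dN}$ (a triangular array, not a nested i.i.d. sequence), and moreover it is evaluated at a random time $\tau_N$ varying with $N$. Almost-sure convergence therefore does not follow from the LLN; one needs a quantitative concentration bound (e.g.\ \cite[Theorem 2]{FournierGuillin}, which is what the paper uses) giving $\mathbb P(\mathcal W_2(\pi(\boldsymbol\Xi),\gamma_0)>\varepsilon)\leqslant C e^{-cN^\alpha\varepsilon^\beta}$ uniformly in the (independent) conditioning on $\tau_N$, followed by Borel--Cantelli. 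You should also make the Fubini argument explicit: fix a full-measure event of the initial conditions and of $\tilde B$ on which $\sup_N N^{-1}|\bX_0|^2<\infty$ and $\tau_N\geqslant c\ln N$ eventually; on that event $e^{-\kappa\tau_N}\mathbf Y_0$ contributes $o(1)$ to the renormalized norm, and for the martingale part the concentration bound plus Borel--Cantelli over the independent $W$-randomness gives the conclusion, then apply Fubini on the product space. Finally, the ``$O(1/N)$ corrections from the constraint $\boldsymbol\Xi\in E_0$'' are handled more cleanly by the paper's inequality $\mathcal W_2(\pi_{\bx^c},\gamma_0)\leqslant 2\,\mathcal W_2(\pi_{\bx},\gamma_0)$, which lets you work with an honest i.i.d. Gaussian ensemble and project afterwards, rather than argue about approximately-i.i.d. marginals. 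With these repairs the proof is correct and shorter than the paper's, at the price of not producing the uniform-in-time estimate of Lemma~\ref{lem:OU}, which the paper's route yields as a self-contained byproduct.
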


\begin{proof}
According to~\cite[Theorem 1]{bakhtin2008}, \eqref{loc:tauexit} holds and, almost surely,
\begin{equation}
\label{loc:barXtau}
 \bar{X}_{\tau}  \underset{N \rightarrow \infty}\longrightarrow z_Q\,. 
\end{equation}
Hence, it only remains to study the centered dynamis of $X_t^{i,c}:= X_t^i-\bar X_t$ given by~\eqref{eq:particles-centered}. Indeed,  given $\mu_1,\mu_2\in\mathcal P_2(\R^d)$ and writing $\mu_i^c$ for $i=1,2$ their centered version (i.e. the law of $Z - m_{\mu_i}$ when $Z\sim \mu_i$), we can consider an optimal $\mathcal W_2$ coupling $(Y_1,Y_2)$ of $\mu_1^c$ and $\mu_2^c$ and then bound
\[\mathcal W_2^2 (\mu_1,\mu_2) \leqslant \mathbb E \po|m_{\mu_1} + Y_1 - m_{\mu_2} - Y_2|^2\pf \leqslant 2 |m_{\mu_1}-m_{\mu_1}|^2 + 2 \mathcal W_2^2(\mu_1^c,\mu_2^c)\,.\] 
As a consequence, due to~\eqref{loc:barXtau} and~\eqref{loc:tauexit}, \eqref{loc:W2exit} will be established if we show that for any $C>1$,
\[\sup_{t\in [\ln N/C, C\ln N]} \mathcal W_2 \po \pi (\bX_t^c),\gamma_{0}\pf \underset{N \rightarrow \infty}\longrightarrow 0 \,.\]
 Consider an Ornstein-Uhlenbeck process $ \mathbf{Z}=(Z^i,\dots,Z^N)$ on $\R^{dN}$ with $\mathbf{Z}_0 \sim \gamma_0^{\otimes N}$ and 
 \begin{equation}
 \label{loc:Zti}
\dd Z_t^i = - \kappa Z_t^i \dd t + \sqrt{2}\dd B_t^i\,.
 \end{equation}
Then $Z_t^{i,c}:= Z_t^i-\bar Z_t$ solves the same equation as $X_t^{i,c}$. Substracting the two equations gives
\[|Z_t^{i,c} - X_t^{i,c}| = e^{-\kappa t }|Z_0^{i,c} - X_0^{i,c}| \,, \]
and we bound
\begin{align*}
 \mathcal W_2 \po \pi (\bX_t^c),\gamma_{0}\pf  &\leqslant \mathcal W_2 \po \pi (\bX_t^c), \pi (\mathbf{Z}_t^c)\pf  + \mathcal W_2 \po \pi (\mathbf{Z}_t^c),\gamma_{0}\pf\\
 &\leqslant  \frac{e^{-\kappa t}}{\sqrt N} |\bX_0^c - \mathbf{Z}_0^c| + \mathcal W_2 \po \pi (\mathbf{Z}_t^c),\gamma_{0}\pf\\
  &\leqslant  \frac{e^{-\kappa t}}{\sqrt N} |\bX_0 - \mathbf{Z}_0| + \mathcal W_2 \po \pi (\mathbf{Z}_t^c),\gamma_{0}\pf\,,
\end{align*}
where we used that $\bx^c$ is the orthogonal projection of $\bx$ on the orthogonal of $(1,\dots,1)$. By assumption and by the law of large numbers, $M:= \sup_{N} |\bX_0 - \mathbf{Z}_0| /\sqrt{N}$ is almost surely finite, and thus we bound
\[\sup_{t\in [\ln N/C, C\ln N]} \mathcal W_2 \po \pi (\bX_t^c),\gamma_{0}\pf \leqslant M e^{-\frac{\kappa}C \ln N} + \sup_{t\in [\ln N/C, C\ln N]} \mathcal W_2 \po \pi (\mathbf{Z}_t^c),\gamma_{0}\pf \,.\]
It only remains to prove that the second term almost surely vanishes.

For $\bx\in\R^{dN}$,  by considering an optimal coupling between $\pi_{\bx}$ and $\gamma_0$, we can find a random variable $Y\sim \gamma_0$ such that
\[\mathcal W_2^2(\pi_{\bx},\gamma_0) = \mathbb E\po \frac1N \sum_{i=1}^N |x_i-Y|^2\pf\,.\]
Then, writing $x^{c}_i=x_i - \bar x$ and $\bx^c=(x^c_i)_{i\in\cco 1,N\ccf}$,
\[\mathcal W_2^2(\pi_{\mathbf{x}^c},\gamma_0) \leqslant  \mathbb E\po \frac1N \sum_{i=1}^N |x^{c}_i-Y|^2\pf \leqslant 2|\bar x|^2 + 2 \mathbb E\po \frac1N \sum_{i=1}^N |x_i-Y|^2\pf \leqslant 4 \mathcal W_2^2(\pi_{\bx},\gamma_0)\,. \]
Applying this with $\bx= \mathbf Z_t$, we see that we will have concluded the proof if we show that, for any $C>1$,
\begin{equation}
\label{loc:as}
\sup_{t\leqslant C \ln N} \mathcal W_2(\pi_{\mathbf{Z_t}},\gamma_0)\underset{N \rightarrow \infty}\longrightarrow 0 \quad \text{almost surely}\,. 
\end{equation}
This is the content of Lemma~\ref{lem:OU} below.

\end{proof}

\begin{lem}
\label{lem:OU}
Let $\mathbf{Z}=(Z^1,\dots,Z^N)$ be i.i.d. stationary Ornstein-Uhlenbeck processes, solving~\eqref{loc:Zti} with $\mathbf{Z} \sim \gamma_0^{\otimes N}$. Then, for all $C>1$, \eqref{loc:as} holds.
\end{lem}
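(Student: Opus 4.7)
The plan is to discretize time on a fine grid and combine a union bound at grid points with a uniform modulus-of-continuity estimate. Fix $C>1$, set $T_N := C\ln N$, choose a step $\delta_N$ (to be tuned below), and introduce the grid $t_k = k\delta_N$ for $k\in\cco 0,K_N\ccf$ with $K_N = \lceil T_N/\delta_N\rceil$. Splitting
\[
\sup_{t\leqslant T_N} \mathcal W_2(\pi_{\mathbf Z_t},\gamma_0) \ \leqslant \ \underbrace{\max_k \mathcal W_2(\pi_{\mathbf Z_{t_k}},\gamma_0)}_{=:A_N} \ +\ \underbrace{\max_k \sup_{t\in[t_k,t_{k+1}]}\mathcal W_2(\pi_{\mathbf Z_t},\pi_{\mathbf Z_{t_k}})}_{=:B_N}\,,
\]
it suffices, by Borel--Cantelli, to show that, for every $\varepsilon>0$, both $\mathbb P(A_N>\varepsilon)$ and $\mathbb P(B_N>\varepsilon)$ are summable in $N$.

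The term $A_N$ is handled by stationarity: for each $k$, $\pi_{\mathbf Z_{t_k}}$ is the empirical measure of $N$ i.i.d.\ samples from $\gamma_0$, and a quantitative concentration inequality for the empirical Wasserstein distance of light-tailed measures (e.g.\ Bolley--Guillin--Villani or Fournier--Guillin) yields $\mathbb P(\mathcal W_2(\pi_{\mathbf Z_{t_k}},\gamma_0) > \varepsilon) \leqslant C_\varepsilon e^{-c_\varepsilon N}$ for constants independent of $k$. A union bound gives $\mathbb P(A_N>\varepsilon)\leqslant K_N C_\varepsilon e^{-c_\varepsilon N}$, which is summable as soon as $K_N$ grows at most polynomially in $N$. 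For $B_N$, the synchronous coupling of the empirical measures yields $\mathcal W_2^2(\pi_{\mathbf Z_t},\pi_{\mathbf Z_{t_k}}) \leqslant \frac1N \sum_i |Z_t^i - Z_{t_k}^i|^2$, so $B_N^2\leqslant \frac1N\sum_{i=1}^N M_i^N$ with
\[
M_i^N := \sup\{|Z_t^i-Z_s^i|^2 : s,t\in[0,T_N],\ |s-t|\leqslant\delta_N\}\,.
\]
The $M_i^N$ are i.i.d.\ across $i$, and from the integral representation $Z_t^i - Z_s^i = -\kappa\int_s^t Z_u^i\dd u + \sqrt 2(B_t^i-B_s^i)$ we get
\[
\sqrt{M_i^N} \leqslant \kappa\delta_N \sup_{u\leqslant T_N}|Z_u^i| + \sqrt 2\, \omega_{\delta_N}(B^i)\,,
\]
where $\omega_\delta(B^i) := \sup\{|B_t^i-B_s^i|:s,t\in[0,T_N],|s-t|\leqslant\delta\}$ is Lévy's modulus of continuity of $B^i$. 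Classical Gaussian estimates (extreme value theory for the stationary OU process and Lévy's bound for Brownian motion) give $\mathbb E[\sup_{u\leqslant T_N}|Z_u^i|^2]=O(\ln T_N)$ and $\mathbb E[\omega_{\delta_N}(B^i)^2] = O(\delta_N\ln(T_N/\delta_N))$, both with sub-gaussian tails, so $\|M_1^N\|_{\psi_1} = O(\delta_N \ln(T_N/\delta_N))$.

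Choosing $\delta_N = (\ln N)^{-2}$ makes $K_N = O((\ln N)^3)$ polynomial (in fact polylogarithmic) in $N$, and $\mathbb E[M_1^N] \to 0$. Bernstein's inequality for sums of i.i.d.\ sub-exponential variables then gives $\mathbb P(B_N^2 > \varepsilon^2)$ decaying exponentially in $N$, which is summable. Combining the two bounds and applying Borel--Cantelli yields the almost-sure convergence. The main technical tension is between the growing horizon $T_N = C\ln N$---which inflates both the cost of the union bound and the OU oscillation---and the uniformity required over this horizon; the logarithmic growth of $T_N$ is precisely what allows both constraints to be met simultaneously with an essentially arbitrary polylogarithmic choice of $\delta_N$, and all required tail estimates reduce to standard facts about stationary Gaussian processes.
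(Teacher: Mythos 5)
Your proof is correct, and it uses the same global architecture as the paper (grid of step $\delta_N$, empirical-Wasserstein concentration of Fournier--Guillin type at grid points, control of increments in between, Borel--Cantelli), but the treatment of the increment term is genuinely different. The paper never estimates $\sup_{u\leqslant T_N}|Z_u^i|$: it works \emph{conditionally} on the good event $A_{0,N}^c = \{\mathcal W_2(\pi_{\mathbf Z_0},\gamma_0)<\varepsilon_N\}$, which gives a deterministic bound on $\frac1N|\mathbf Z_0|^2$, and then runs Gr\"onwall on the block $[t_k,t_k+\delta_N]$; the Brownian contribution on each block is handled by Brownian scaling, so that the relevant modulus $R_i=\sup_{s\leqslant\delta_N}|B_s^i|^2/\delta_N$ is an $N$-independent, $i$-i.i.d.\ variable to which a plain Chernoff bound applies. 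By contrast, you bound the modulus of continuity of each trajectory uniformly over the whole growing horizon $[0,T_N]$, which forces you to invoke the $O(\ln T_N)$ growth of $\sup_{u\leqslant T_N}|Z_u^i|$ (extreme-value / Borell--TIS for stationary OU) and the chaining estimate $\mathbb E[\omega_{\delta_N}(B^i;[0,T_N])^2]=O(\delta_N\ln(T_N/\delta_N))$, both with sub-Gaussian concentration, before applying Bernstein to $\frac1N\sum_i M_i^N$. Both routes are valid; the paper's is more elementary and self-contained (no Gaussian-process extremes, no Dudley-type bound, only scaling and Chernoff), while yours is structurally more direct but relies on heavier---though standard---off-the-shelf tail estimates for Gaussian suprema. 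A minor further difference is that you fix $\varepsilon$ and sum over $N$ (then diagonalize over $\varepsilon$), while the paper takes $\varepsilon_N\to 0$ and sums directly; both implement Borel--Cantelli correctly, and your choice $\delta_N=(\ln N)^{-2}$ indeed keeps $K_N$ polylogarithmic while making $\mathbb E[M_1^N]\to 0$, so the argument closes.
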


\begin{proof}
Consider $\varepsilon_N,\delta_N\in(0,1]$, to be chosen later on, and write $t_k = k \delta_N$ for $k\in\N$. By triangular inequality,
\[\sup_{t\leqslant C \ln N} \mathcal W_2^2(\pi_{\mathbf{Z_t}},\gamma_0) \leqslant \sup_{k\leqslant C \ln N/\delta_N} \co  \mathcal W_2(\pi_{\mathbf{Z_{t_k}}},\gamma_0) + \sup_{s\in[0,\delta_n]} \mathcal W_2(\pi_{\mathbf{Z}_{t_k}},\pi_{\mathbf{Z}_{t_k+s}}) \cf \,.\]
Consider the events
\[
A_{k,N} = \left\{  \mathcal W_2(\pi_{\mathbf{Z_{t_k}}},\gamma_0)  \geqslant \varepsilon_N \right\} \,,\qquad
B_{k,N} = \left\{  \sup_{s\in[0,\delta_n]} \mathcal W_2(\pi_{\mathbf{Z}_{t_k}},\pi_{\mathbf{Z}_{t_k+s}})  \geqslant \varepsilon_N \right\} \,.
\]
Then 
\begin{align}
\mathbb P \po \sup_{t\leqslant C \ln N} \mathcal W_2^2(\pi_{\mathbf{Z_t}},\gamma_0)  \geqslant 2\varepsilon_N \pf & \leqslant \sum_{k\leqslant C \ln N /\delta_N} \co \mathbb P(A_{k,N}) + \mathbb P\po A_{k,N}^c \cap B_{k,N} \pf  \cf  \nonumber \\
& \leqslant \frac{C\ln N}{\delta_N} \co \mathbb P(A_{0,N}) + \mathbb P(A_{0,N}^c \cap B_{0,N}) \cf\,,\label{loc:clnN} 
\end{align}
using that the process is stationary. First, since $\gamma_0$ has Gaussian moments, \cite[Theorem 2]{FournierGuillin} gives
\begin{equation}
\label{locA0N}
\mathbb P(A_{0,N}) \leqslant  C_1 e^{-c_1 N^\alpha \varepsilon_N^\beta }
\end{equation}
for come constants $C_1,c_1,\alpha,\beta>0$ (depending on $d$ and $\kappa$). Second, for $s\geqslant 0$, we bound
\begin{align*}
\mathcal W_2 (\pi_{\mathbf{Z}_{0}},\pi_{\mathbf{Z}_{s}}) & \leqslant \frac1{\sqrt{N}} |\mathbf{Z}_0 - \mathbf{Z}_{s}| \\
& \leqslant \frac{\kappa}{\sqrt{N}}\po s |\mathbf{Z}_0| + \int_0^{s} |\mathbf{Z}_0 - \mathbf{Z}_{u}|\dd u \pf + \frac{1}{\sqrt{N}}|\mathbf{B}_{s}|  \,.
\end{align*}
Under the event $A_{0,N}^c$,
\[\frac1N |\mathbf{Z}_0|^2  = \mathcal W_2^2 (\delta_0,\pi_{\mathbf{Z}_{s}}) \leqslant 2 \mathcal W_2^2 (\delta_0,\gamma_0) + 2 \varepsilon_N \leqslant \frac{2d}{\kappa}+2 \,. \]
Besides, writing $R_i = \sup_{s\leqslant \delta_N} |B_s^i|^2 / \delta_N$, we have for all $s\leqslant \delta_N$
\[\frac{1}{N}|\mathbf{B}_{s}|^2 \leqslant \frac{\delta_N}N \sum_{i=1}^N R_i\,.\]
By Grönwall lemma, we obtain that, under $A_{0,N}^c$, 
\[\sup_{s\leqslant\delta_N} \mathcal W_2 (\pi_{\mathbf{Z}_{0}},\pi_{\mathbf{Z}_{s}}) \leqslant  C' \po \delta_N + \sqrt{\frac{\delta_N}N \sum_{i=1}^N R_i}\pf \,,\]
for some constant $C'>0$ independent from $N$. Assuming that $C' \delta_N \leqslant \varepsilon_N/2$, we get that 
\[\mathbb P(A_{0,N}^c \cap B_{0,N})  \leqslant \mathbb P \po \sqrt{\frac{\delta_N}N \sum_{i=1}^N R_i}\ \geqslant \frac{\varepsilon_N}{2C'}\pf \,. \]
Now, thanks to the scaling properties of the Brownian motions, the variables $R_i$ are i.i.d. copies of $R=\sup_{s\leqslant 1} |B_s|$, whose law is independent from $N$ and  has exponential moments of all orders. Chernoff inequalities gives
\[\mathbb P(A_{0,N}^c \cap B_{0,N})   \leqslant \exp\po \frac{- N\varepsilon_N^2}{4\delta_N(C')^2})\pf \po \mathbb E  \po e^R\pf\pf ^N  \,. \]
Assuming that  $\delta_N = o(\varepsilon_N^2)$ as $N\rightarrow \infty$, this gives
\[\mathbb P(A_{0,N}^c \cap B_{0,N})   \leqslant C_2 e^{-c_2 N\varepsilon_N^2/\delta_N} \]
for some $C_2,c_2>0$ independent from $N$. Plugging this and~\eqref{locA0N} in~\eqref{loc:clnN} gives
\[\mathbb P \po \sup_{t\leqslant C \ln N} \mathcal W_2^2(\pi_{\mathbf{Z_t}},\gamma_0)  \geqslant 2\varepsilon_N \pf  \leqslant \frac{C\ln N}{\delta_N} \co  C_1 e^{-c_1 N^\alpha \varepsilon_N^\beta } + C_2 e^{-c_2 N\varepsilon_N^2/\delta_N} \cf \,. \]
Conclusion follows from the Borel-Cantelli lemma,  choosing $\varepsilon_N = N^{-\alpha/(2\beta)}$ and $\delta_N = \varepsilon_N^3$  for $N$ large enough.

\end{proof}

\section{Practical local estimate for particle systems}\label{sec:practical_conclusion}
 
 This short section aims at explaining how local functional inequalities as in Section~\ref{subsec:local_ineq} can be combined with controls on metastable exit times as in Section~\ref{sec:exittimes} to get a quantitative control of the fast local approximate convergence of particle systems to minimizers of the free energy.
 
 \begin{prop}\label{prop:localBoundParticles}
 Let $\rho_*  = \gamma_{m_*}\in \mathcal P_2(\R^d)$ be an isolated local minimizer of $\mathcal F$. Assume that the equivalent conditions in Proposition~\ref{prop:equivLocale} are satisfied. Let $C_0>0$. Then, there exist $\lambda,\delta,a>0$ (independent from $C_0$) and $C>0$ such that for any  $N\geqslant 1$, any interchangeable initial distribution $\rho_0^N \in \mathcal P_2(\R^{dN})$ with finite fourth moment and such that $\mathbb P_{\rho_0^N}(\bar{X}_0 \notin \mathcal{B}(m_*,\delta)) \leqslant C_0 N^{-2}$, denoting by $\rho_t^N$ the law of the particle system $\bX_t$ solving~\eqref{eq:particles} with initial distribution $\bX_0 \sim \rho_0^{ N}$, and  any $t\in[1,e^{aN}]$,
 \begin{equation}
 \label{loc:eqH}
\mathcal H \po \rho_t^N | \rho_*^{\otimes N}\pf \leqslant C \po  e^{-\lambda t} \mathcal W_2^2 \po  \rho_{0}^N ,\rho_*^{\otimes N}\pf + 1  + \sqrt{\mathbb E \po |X_0^1|^4 \pf} \pf\,.
 \end{equation}
 \end{prop}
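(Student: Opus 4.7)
The plan is to lift the uniform log-Sobolev inequality for the modified Gibbs measure $\tilde \mu_\infty^N$ provided by Proposition~\ref{prop:equivLocale} into an entropy bound for the true particle system, via a coupling argument controlled by the metastable exit-time estimates of Section~\ref{sec:exittimes}. First I apply Proposition~\ref{prop:equivLocale} together with Remark~\ref{rem:tE} to choose the modified potential $\tilde V$ such that $\tilde V_\kappa$ is $\kappa_1$-strongly convex with $\kappa_1 \geq \kappa$ (making $\tilde U_N$ $\kappa$-strongly convex in $\R^{dN}$) and $\tilde V = V$ on $\mathcal B(m_*,\delta)$. Let $\tilde \bX_t$ denote the modified particle system, coupled to $\bX_t$ via the same initial law $\rho_0^N$ and the same Brownian motion; the two processes coincide up to the barycentre exit time $\tau = \inf\{s\geq 0 : \bar X_s \notin \mathcal B(m_*,\delta)\}$. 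For the modified system, Wasserstein contraction gives $\mathcal W_2(\tilde\rho_t^N,\tilde\mu_\infty^N) \leq e^{-\kappa t}\mathcal W_2(\rho_0^N,\tilde\mu_\infty^N)$; a short-time regularization estimate of Otto--Villani / Bolley--Gentil--Guillin type (valid under strong log-concavity) gives $\mathcal H(\tilde\rho_1^N|\tilde\mu_\infty^N) \leq C_1 \mathcal W_2^2(\rho_0^N,\tilde\mu_\infty^N)$; and the uniform LSI propagates this exponentially, so that for $t \geq 1$,
\[
\mathcal H(\tilde \rho_t^N | \tilde \mu_\infty^N) \leq C_1 e^{-\lambda(t-1)} \mathcal W_2^2(\rho_0^N,\tilde\mu_\infty^N).
\]
Using the orthogonal decomposition of Section~\ref{subsec:preliminary} and Laplace's method on $\tilde\nu_\infty^N$, one checks that $\mathcal W_2^2(\rho_*^{\otimes N},\tilde\mu_\infty^N)$ is bounded uniformly in $N$.

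Next I would change the reference measure from $\tilde\mu_\infty^N$ to $\rho_*^{\otimes N}$. A direct computation gives
\[
\ln \frac{\tilde\mu_\infty^N(\bx)}{\rho_*^{\otimes N}(\bx)} = - N \bigl[ \tilde V_\kappa(\bar x) - \tilde V_\kappa(m_*) \bigr] + \frac{N \kappa}{2}|\bar x - m_*|^2 + C_N,
\]
with $C_N = \mathcal O(1)$ by Laplace's method applied to the partition function $\int e^{-N\tilde V_\kappa}$. The strong convexity inequality $\tilde V_\kappa(\bar x) - \tilde V_\kappa(m_*)\geq \tfrac{\kappa_1}{2}|\bar x-m_*|^2$ and the choice $\kappa_1\geq \kappa$ make this upper bounded pointwise by $C_N$, hence $\mathcal H(\mu|\rho_*^{\otimes N}) \leq \mathcal H(\mu|\tilde\mu_\infty^N) + \mathcal O(1)$ for any $\mu$. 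Combining this with the previous step and the triangle-type bound $\mathcal W_2^2(\rho_0^N,\tilde\mu_\infty^N)\leq 2\mathcal W_2^2(\rho_0^N,\rho_*^{\otimes N})+\mathcal O(1)$ yields the desired entropy estimate \emph{for the modified dynamics} $\tilde\rho_t^N$.

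The main obstacle is transferring this bound from $\tilde\rho_t^N$ to $\rho_t^N$; a straight Girsanov bound on path space contains a term of order $e^{aN}$ and is therefore too crude. Instead, I would split $\rho_0^N = (1-\varepsilon)\rho_0^{in} + \varepsilon\rho_0^{out}$ with $\varepsilon = \mathbb P_{\rho_0^N}(\bar X_0 \notin \mathcal B(m_*,\delta)) \leq C_0/N^2$ and $\rho_0^{in}$ supported in $\{\bar x \in \mathcal B(m_*,\delta)\}$. By convexity of relative entropy,
\[
\mathcal H(\rho_t^N|\rho_*^{\otimes N}) \leq (1-\varepsilon) \mathcal H(\rho_t^{in}|\rho_*^{\otimes N}) + \varepsilon \mathcal H(\rho_t^{out}|\rho_*^{\otimes N}).
\]
For the ``in'' part, Theorem~\ref{thm:Arrhenius} (or~\ref{thm:EK}) applied to the barycentre diffusion $\bar X_t$ gives $\mathbb P(\tau \leq e^{aN}) \leq e^{-c'N}$ for some small $a,c'>0$; on $\{\tau > t\}$ the coupling forces $\rho_t^{in} = \tilde\rho_t^{in}$ and the previous bound applies, while on $\{\tau \leq t\}$ the contribution is at worst $\mathcal O(N)\cdot e^{-c'N} = o(1)$. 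For the ``out'' part, $\varepsilon \leq C_0/N^2$ combined with a heat-kernel upper bound on $\rho_t^{out}$ for $t\geq 1$ and the moment assumption yields $\mathcal H(\rho_t^{out}|\rho_*^{\otimes N}) \leq C N(1+\sqrt{\mathbb E|X_0^1|^4})$, so the whole term is $\mathcal O(N^{-1}(1+\sqrt{\mathbb E|X_0^1|^4}))$. The delicate point, which I expect to be the hardest step, is making the entropy comparison on $\{\tau \leq t\}$ rigorous: it requires a careful decomposition of $\rho_t^{in}$ into conditional sub-probabilities and a quantitative Donsker--Varadhan-type comparison with $\tilde\rho_t^{in}$, rather than any naive relaxation going through total variation.
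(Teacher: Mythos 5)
Your overall strategy --- modify the potential as in Remark~\ref{rem:tE}, couple the true system $\bX$ to the modified one $\bY$ synchronously, exploit the uniform LSI and metastable exit-time estimates --- is exactly the one used in the paper's proof. But you run into the transfer problem because you are trying to carry the bound from $\tilde\rho_t^N$ over to $\rho_t^N$ \emph{at the level of relative entropy}. You correctly flag this as the hardest step, and it is indeed a genuine gap: under the synchronous coupling one only knows that $\bX_t = \bY_t$ on the event $\{\tau > t\}$, which says nothing directly about $\mathcal H(\rho_t^N|\cdot)$ versus $\mathcal H(\tilde\rho_t^N|\cdot)$, and no amount of decomposing into sub-probabilities on $\{\tau \leqslant t\}$ yields a clean subadditivity for relative entropy.

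The paper's proof avoids this obstacle by never comparing the two laws in entropy. Instead it first applies a short-time Wasserstein-to-entropy regularization estimate \emph{to the true (non-convex) dynamics}, namely \cite[Proposition 23]{MonmarcheReygner}, giving $\mathcal H(\rho_{t+1}^N|\rho_*^{\otimes N}) \lesssim \mathcal W_2^2(\rho_t^N,\rho_*^{\otimes N}) + 1$. It then only needs to compare $\rho_t^N$ and $\tilde\rho_t^N$ in $\mathcal W_2$, which the synchronous coupling handles transparently: by interchangeability and Cauchy--Schwarz,
\begin{equation*}
\mathcal W_2^2\po\rho_t^N,\tilde\rho_t^N\pf \leqslant \mathbb E\po|\bX_t-\bY_t|^2\pf = N\,\mathbb E\po |X_t^1-Y_t^1|^2\,\1_{\bX_t\neq\bY_t}\pf \leqslant N\sqrt{\mathbb P\po\bX_t\neq\bY_t\pf\,\mathbb E\po|X_t^1|^4+|Y_t^1|^4\pf}\,,
\end{equation*}
and a Lyapunov bound on fourth moments plus the exit-time estimate $\mathbb P(\tau\leqslant t)\lesssim N^{-2}$ (which is precisely why the hypothesis is calibrated to $N^{-2}$) make the right-hand side $\lesssim 1+\sqrt{\mathbb E(|X_0^1|^4)}$, with no $e^{aN}$ anywhere. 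The remaining piece $\mathcal W_2^2(\tilde\rho_t^N,\rho_*^{\otimes N})$ is handled by the contraction estimate \cite[Theorem 10]{MonmarcheMetastable} and the Talagrand inequality, which is the analogue of the steps you carry out for $\tilde\rho_t^N$. So the missing idea is: do the regularization on the true process at the very end, and keep the coupling comparison in $\mathcal W_2$ where Cauchy--Schwarz plus moment bounds absorb the bad event cleanly. Incidentally this also explains why $\sqrt{\mathbb E(|X_0^1|^4)}$ appears in the final estimate, which your route would not produce in that form.

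A smaller issue: your ``out'' part bound implicitly uses moments of $\rho_0^{out}$, but those are only controlled after dividing by $\varepsilon$, which can ruin the $\varepsilon \leqslant C_0 N^{-2}$ gain; the paper's route sidesteps this entirely since it never conditions on the initial event but only uses $\mathbb P_{\rho_0^N}(\bar X_0\notin\mathcal B(m_*,\delta/2))\lesssim N^{-2}$ inside $\mathbb P(\bX_t\neq\bY_t)$.
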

 
 The practical interest of a bound of the form~\eqref{loc:eqH} in terms of sampling $\rho_*$ is discussed in the remarks after \cite[Theorem 3]{MonmarcheMetastable}, to which we refer the interested reader. We emphasize that such a bound cannot hold uniformly in time when $\mathcal F$ has several isolated critical points,  and that a key point of Proposition~\ref{prop:localBoundParticles} is that it is still possible to get it over an exponentially long timescale, way beyond  the simulation time in most applications.
 
 \begin{proof}
 We follow the proof of \cite[Theorem 2]{MonmarcheMetastable}, on which we rely for intermediary results and to which we refer for details. In particular, Assumption~\ref{assu:general} implies~\cite[Assumption 2]{MonmarcheMetastable}. Let $\widetilde{\mathcal E}$ be as in Proposition~\ref{prop:equivLocale} (and Remark~\ref{rem:tE}), associated to a strongly convex function $\tilde V\in \mathcal C^2(\R^d)$ coinciding with $V$ over $\mathcal B(m_*,\delta_0)$ for some $\delta_0>0$ and such that the associated Gibbs measure $\tilde\mu_\infty^N$ (given by~\eqref{eq:GIbbsdef} with potential $\tilde U_N(\bx) = N\widetilde{\mathcal E}(\pi_{\bx})$) satisfies a uniform LSI. We can assume that $\delta_0$ is small enough so that $m_*$ is the unique critical point of $V_\kappa$ over $\mathcal B(m_*,\delta_0)$. Since $\tilde V$ and $V$ coincide on $\mathcal B(m_*,\delta_0)$, $m_*$ is the global minimizer of $\tilde V_\kappa:x\mapsto \tilde V(x) + \frac{\kappa}{2}|x|^2$.
 
  Let $\bY$ be the particle system solving~\eqref{eq:particulesEDS2} with the modified potential $\tilde U_N$, driven by the same Brownian motion $\mathbf{B}$ as $\bX$, initialized with $\bY_0=\bX_0$. By design, $\bX_t=\bY_t$ for all $t \leqslant \tau:= \inf\{s\geqslant 0,\ \bar Y_t \notin \mathcal B(m_*,\delta_0)\}$. Denote by   $\tilde \rho_t^N$ the law of $\bY_t$.
  
  In the following we write $a(N,t)  \lesssim b(N,t)$ if there exists $C>0$, uniform over the initial distributions $\rho_0^N$ considered in the proposition, such that for all $N\geqslant 1$ and $t\geqslant 0$, $a(N,t) \leqslant C b(N,t)$.
 
  From \cite[Proposition 23]{MonmarcheReygner}
 \begin{align}
\mathcal H \po \rho_{t+1}^N |\rho_*^{\otimes N} \pf &\lesssim  \mathcal W_2^2 \po \rho_{t}^N ,\rho_*^{\otimes N} \pf +1 \nonumber\\
&  \lesssim   \mathcal W_2^2 \po \rho_{t}^N ,\tilde \rho_{t}^{ N} \pf + \mathcal W_2^2 \po \tilde \rho_{t}^N ,\rho_*^{\otimes N} \pf  +1\,.\label{loc:H12}
\end{align}
Using the Talagrand inequality satisfied by $\rho_*^{\otimes N}$ and \cite[Theorem 10]{MonmarcheMetastable}, there exists  $\lambda>0$ such that 
\begin{equation}
\label{loc:H13}
\mathcal W_2^2 \po \tilde \rho_{t}^N ,\rho_*^{\otimes N}\pf   \lesssim  e^{-\lambda t} \mathcal W_2^2 \po  \rho_{0}^N ,\rho_*^{\otimes N}\pf +1 \,.
\end{equation}
Besides, by interchangeability of the particles,
\begin{multline}
\label{loc:multi}
\mathcal W_2^2 \po \rho_{t}^N ,\tilde \rho_{t}^{ N} \pf \leqslant \mathbb E \po |\bX_t - \bY_t|^2 \pf = N \mathbb E \po |X_t^1-Y_t^1|^2 \1_{\bX_t\neq \bY_t} \pf  \\
\leqslant N \sqrt{\mathbb P \po \bX_t\neq \bY_t \pf \mathbb E \po |X_t^1|^4 + |Y_t^1|^4 \pf   } \,. 
\end{multline}
By standard Lyapunov arguments (as in \cite[Appendix B]{MonmarcheMetastable}), it is classical under Assumption~\ref{assu:general} to get that
\[ \mathbb E \po |X_t^1|^4 + |Y_t^1|^4 \pf \lesssim  1+ \mathbb E \po |X_0^1|^4 \pf\,.\]
Let $t_N = \mathbb E_{m_*}(\tau)$ (the $m_*$ in subscript meaning that we consider $\bar Y_0=m_*$). Since $\tilde V_\kappa$ is strongly convex with global minimizer at $m_*$, by classical results on exit times (as in Section~\ref{sec:exittimes}, namely \cite[Theorem 6.2]{freidlin1998random}, \cite[Theorem 3.2]{bovier2004metastability},  \cite[Theorem 1]{brassesco1998couplings}  and   \cite[Theorem 5.3]{Locherbach}), there exists $a>0$ such that $e^{aN} \lesssim t_N$ and, for $t \leqslant t_N / \ln N^2$,
\[\sup_{y\in \mathcal B(m_*,\delta_0/2)} \mathbb P_y (\tau \leqslant t) \leqslant 1 - e^{-t/t_N} + \sup_{y\in \mathcal B(m_*,\delta_0/2)} \sup_{s\geqslant 0} |\mathbb  P_y (\tau/t_N > s) - e^{-s}| \lesssim N^{-2}\,, \]
uniformly over initial conditions $\bar Y_0 \in \mathcal B(m_*,\delta_0/2)$. Hence, 
\[\mathbb P \po \bX_t\neq \bY_t \pf  \leqslant \mathbb P_{\rho_0^N} \po \bar X_0 \notin \mathcal B(m_*,\delta_0/2)\pf  + \sup_{y\in \mathcal B(m_*,\delta_0/2)} \mathbb P_y (\tau \leqslant t) \lesssim N^{-2}\,.  \]  
Using this in~\eqref{loc:multi} and combining the latter with~\eqref{loc:H13} in~\eqref{loc:H12} concludes the proof.
 
 \end{proof}
 
 \section{Application to a spin model at critical temperature}\label{sec:CurieWeiss}
 
By contrast to most of the rest of this work, this section is not devoted to the toy model~\eqref{eq:F}, but to a more classical mean-field system, namely the double-well Curie-Weiss model on $\R$. This model is well known to undergo a phase transition at some critical temperature. Our goal is to show that the ideas introduced in Sections~\ref{sec:degenPL} and \ref{sec:degenCoer} are useful to study this model at the critical temperature.

 Let us recall the free energy~\eqref{eq:F_CurieWeiss} for the Curie-Weiss model:
 \begin{align*}
\mathcal F_0(\mu)& = \frac{\kappa_0}4 \int_{\R^{2d}} |x-y|^2 \mu(x)\mu(y)\dd x\dd y  +   \int_{\R^d} V_0(x) \mu(x)\dd x + \sigma^2 \int_{\R^d} \mu(x)\ln(\mu(x))\dd x\\
&= -\frac{\kappa_0}2 |m_\mu|^2 +   \int_{\R^d} \co  V_0(x)+ \frac{\kappa_0}{2}|x|^2 \cf  \mu(x)\dd x + \sigma^2 \int_{\R^d} \mu(x)\ln(\mu(x))\dd x\,.
 \end{align*}
In this section we specify $d=1$ and $V_0(x)=\frac{x^4}{4}-\frac{x^2}{2}$ (symmetric double-well potential). Moreover, for consistency with the rest of this work, we rather work with $\mathcal F=\sigma^{-2} \mathcal F_0$ and set $\kappa=\sigma^{-2}\kappa_0$ and $V(x)=\sigma^{-2}\co  V_0(x) + \frac{\kappa_0}{2}|x|^2\cf$. From  \cite{Tugautdoublewell}, there exists a critical temperature $\sigma_c^2>0$ (with our parameters, for $\kappa_0=1$, $\sigma_c^2 \simeq 0.46$) such that, if $\sigma^2 \geqslant \sigma^2_c$, the mean-field PDE~\eqref{eq:EDP} has a unique stationary solution (i.e. $\mathcal F$ has a unique critical point), which is symmetric and is the global minimizer of $\mathcal F$, while if $\sigma^2 <\sigma_c^2$ there are exactly three stationary solutions, two with non-zero mean, one being the symmetric of the other, which are the global minimizers of $\mathcal F$, and a symmetric one, which is not. We use mutatis mutandis the same notations as in the toy model~\eqref{eq:F}, namely
\[\mathcal E(\mu) = -\frac{\kappa}{2} |m_\mu|^2 +    \int_{\R^d} V(x) \mu(x)\dd x\,,\qquad U_N(\bx) = N \mathcal E(\mu)\,,\qquad \mu_\infty^N \propto \exp(-U_N),\]
\[\Gamma(\mu) \propto \exp\po - V(x) + \kappa m_\mu x  \pf\,,\qquad \gamma_m \propto \exp\po - V(x) + \kappa m x  \pf\,. \]
 The critical points of the free energy are characterized by their means. Specifically, they are exactly the measures $\gamma_m$ with $m$ the fixed  points of the function $f$ given by 
 \begin{equation}\label{eq:fCW}
 f(m) = \int_{\R} x \gamma_m(x)\,,
 \end{equation}
 see Figure~\ref{fig:f}.  When $\sigma^2=\sigma_c^2$, $0$ is the unique fixed point of $f$.  We write $\rho_*$ the unique stationary solution at temperature $\sigma_c^2$, which is simply $\rho_* \propto \exp\po - V\pf$.

\begin{figure}
\begin{center}
\includegraphics[scale=0.5]{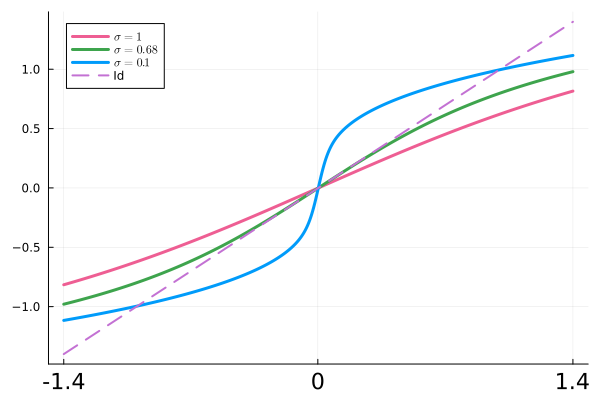}
\caption{Graph of $f$ given in~\eqref{eq:fCW} at three temperatures $\sigma=0.1$ (blue) $\sigma=0.68 \simeq \sigma_c$ (green) and $\sigma=1$ (magenta). At $\sigma=\sigma_c$ (resp. $\sigma<\sigma_c$, resp. $\sigma>\sigma_c$), $f'(0)=1$ (resp. $f'(0)>1$, resp. $f'(0)<1$).  }\label{fig:f}
\end{center}
\end{figure}

The main results of this section are gathered here.

\begin{thm}\label{thm:CurieWeiss}
For the symmetric double-well Curie–Weiss model described here, at critical temperature $\sigma^2=\sigma_c^2$, there exists $C>0$ such that, denoting $\Theta(r) = C \max(r,r^{2/3})$ and $\Phi(r)= \min(r,r^2)/C$, the following holds.
\begin{enumerate}
\item The free energy $\mathcal F$ satisfies the Łojasiewicz inequality
\begin{equation}
\label{loc:CW1}
\forall \mu \in \mathcal P_2(\R^d)\,,\qquad \mathcal F(\mu) - \mathcal F(\rho_*) \leqslant \Theta\po \mathcal I\po \mu|\Gamma(\mu)\pf\pf \,,
\end{equation}
 and the non-linear Talagrand inequality
\begin{equation}
\label{loc:CW2}
\forall \mu \in \mathcal P_2(\R^d)\,,\qquad \Phi\po \mathcal W_2^2(\mu,\rho*)\pf  \leqslant \mathcal F(\mu) - \mathcal F(\rho_*)  \,.
\end{equation}
\item For all $N\geqslant 1$, writing $C_{LS}(\mu_\infty^N)$ the optimal log-Sobolev constant $\mu_\infty^N$,
\begin{equation}
\label{loc:CW3}
 \frac{\sqrt{N}}{C} \leqslant C_{LS}(\mu_\infty^N) \leqslant C \sqrt{N}\,.
\end{equation}
\item  For all $N\geqslant 1$,
\begin{equation}
\label{loc:CW4}
\forall \mu^N \ll \mu_\infty^N\,,\qquad \frac1N \mathcal H\po \mu^N |\mu_\infty^N \pf \leqslant \Theta\po \frac1N \mathcal I\po \mu^N |\mu_\infty^N \pf\pf\,.
\end{equation}
and 
\begin{equation}
\label{loc:CW5}
\forall \mu^N \ll \mu_\infty^N\,,\qquad \Phi\po \frac1{ N} \mathcal W_2^2\po \mu^N ,\mu_\infty^N \pf\pf  \leqslant \frac1N \mathcal H\po \mu^N |\mu_\infty^N \pf\,.
\end{equation}

\end{enumerate}
\end{thm}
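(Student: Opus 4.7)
The plan is to reduce everything to two effective one-dimensional potentials for the barycentre, at which point the critical temperature $\sigma = \sigma_c$ manifests as a quartic ($\beta = 4$) degeneracy that falls under the machinery of Sections~\ref{sec:degenPL}--\ref{sec:degenCoer}. Let $Z(m) = \int_\R e^{-V(x)+\kappa m x}\dd x$ and $f(m) = \int x \gamma_m(x)\dd x = (\log Z)'(m)/\kappa$. A direct computation yields the exact identity
\[\mathcal{F}(\mu) = \tilde{W}(m_\mu) + \mathcal{H}(\mu|\gamma_{m_\mu}), \qquad \tilde{W}(m) := \kappa m^2/2 - \log Z(m),\]
with $\tilde{W}'(m) = \kappa(m-f(m))$ and $\tilde{W}''(m) = \kappa(1-\kappa\mathrm{Var}_{\gamma_m}(X))$. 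By the very definition of the critical temperature, $f'(0) = \kappa\mathrm{Var}_{\gamma_0}(X) = 1$, so $\tilde{W}''(0) = 0$, and the evenness of $V$ together with the pitchfork bifurcation structure below $\sigma_c$ forces the Taylor expansion $\tilde{W}(m) = c_4 m^4 + O(m^6)$ with $c_4 > 0$; at infinity $V(x)\sim x^4/4$ yields $\tilde{W}(m) \sim \kappa m^2/2$. Since $0$ is the unique critical point of $\tilde{W}$ (by uniqueness of the stationary solution at $\sigma_c$), the explicit quartic/quadratic behaviour gives the $\Theta$-Łojasiewicz inequality $\tilde{W}(m) - \tilde{W}(0) \leq C\max(|\tilde{W}'(m)|^2, |\tilde{W}'(m)|^{4/3})$, i.e.\ with $\Theta(r) = C\max(r,r^{2/3})$, the exponent $\beta/(2\beta-2) = 2/3$ at $\beta = 4$ being sharp. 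The analogous Cramér-type effective potential $W(m) := I(m) - \kappa m^2/2$ (with $I$ the Legendre transform of $t \mapsto \log Z(t/\kappa)$), which governs the marginal $\nu_\infty^N$ of the barycentre under $\mu_\infty^N$, enjoys the same quartic behaviour at $0$ and \emph{additionally} quartic growth at infinity ($I(m) \sim m^4/4$), so it satisfies the hypotheses of Proposition~\ref{prop:LSIdegen} and Proposition~\ref{prop:lowerboundCLSIdegenerate} with $\beta = 4$ and $\gamma = 6$.

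For the mean-field inequality~(\ref{loc:CW1}), I follow the template of Proposition~\ref{prop:PLdegenere}. Apply the $\Theta$-Łojasiewicz for $\tilde{W}$ to the decomposition $\overline{\mathcal{F}}(\mu) = (\tilde{W}(m_\mu) - \tilde{W}(0)) + \mathcal{H}(\mu|\gamma_{m_\mu})$, then control
\[|\tilde{W}'(m_\mu)|^2 = \kappa^2 |m_\mu - f(m_\mu)|^2 = \kappa^2 \left|\int x \dd(\mu - \gamma_{m_\mu})\right|^2 \leq \kappa^2 \mathcal{W}_2^2(\mu,\gamma_{m_\mu})\]
by Kantorovich--Rubinstein, and bound both $\mathcal{W}_2^2(\mu,\gamma_{m_\mu})$ (by Talagrand's T2 inequality) and $\mathcal{H}(\mu|\gamma_{m_\mu})$ (directly) by $\mathcal{I}(\mu|\Gamma(\mu))$ using the LSI for $\gamma_m$, which holds with a constant uniform in $m$ (Bakry--Émery outside a compact where $V'' \to \infty$, plus a Holley--Stroock perturbation on the bounded non-convex part). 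The non-linear Talagrand inequality~(\ref{loc:CW2}) then follows from Proposition~\ref{prop:degenPL->coer} applied to this tight $\Theta$-Łojasiewicz, with Remark~\ref{rem:orderCoer} at $\beta = 4$ producing $\Phi(r) \asymp r^2$ for small $r$ and $\Phi(r) \asymp r$ for large $r$, which matches $\Phi(r) = \min(r,r^2)/C$ up to constants.

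For the particle-level estimates, I use the two-scale decomposition $\mu_\infty^N(\bx) = \nu_\infty^N(\bar{x})\,\mu_N^{|\bar{x}}(\bx)$, where $\mu_N^{|m}$ is the law of $N$ i.i.d.\ samples of $e^{-V}/Z(0)$ conditioned on $\bar{x} = m$, in the spirit of the two-scale arguments of Grunewald--Otto--Villani--Westdickenberg and of \cite{Dagallier,MonmarcheMetastable}. The conditional $\mu_N^{|m}$ satisfies an LSI with constant uniform in $m$ and $N$ (Bakry--Émery/Holley--Stroock on the hyperplane, since $V$ is strongly convex outside a compact), while the marginal $\nu_\infty^N$ has density $\propto e^{-NW(m)}$ up to a sub-exponential prefactor provided by Laplace's method, and hence inherits from Propositions~\ref{prop:LSIdegen}--\ref{prop:lowerboundCLSIdegenerate} applied to $NW$ an LSI constant of sharp order $N^{-1/2}$. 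Combining these two ingredients through the classical two-scale formula, together with the scaling $C_{LS}(\mu_\infty^N) \asymp N \cdot C_{LS}(\nu_\infty^N)$ familiar from Lemma~\ref{lem:LSI}, yields $C_{LS}(\mu_\infty^N) \asymp \sqrt{N}$, giving~(\ref{loc:CW3}). The degenerate particle LSI~(\ref{loc:CW4}) follows the same blueprint with Proposition~\ref{prop:LSIdegenNnu} in place of Proposition~\ref{prop:LSIdegen} (producing the nonlinear $\Theta(r) = C\max(r,r^{2/3})$ through $\beta/(2\beta-2) = 2/3$), tensorised with the uniform conditional LSI via Lemma~\ref{lem:tensor}; finally~(\ref{loc:CW5}) follows from Corollary~\ref{cor:PL->degenT2} with $\beta = 4$.

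The main obstacle is the quantitative Laplace comparison $\nu_\infty^N \approx e^{-NW}$ at the critical temperature: at a non-degenerate minimum the local CLT produces a Gaussian prefactor of width $N^{-1/2}$, whereas the quartic degeneracy here produces a Weibull-type prefactor of width $N^{-1/4}$, and transferring the sharp $N^{-1/2}$ LSI constant through this non-standard prefactor without polynomial loss requires either a robust perturbation argument (viewing $\nu_\infty^N$ as a bounded perturbation of $e^{-NW}$ on compacts and patching with the quartic growth at infinity) or a direct verification of the criteria from Propositions~\ref{prop:critereDefectLSI}--\ref{prop:LSIdegen} on $\nu_\infty^N$ itself. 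The remaining ingredients---the uniform-in-$(m,N)$ LSI for $\mu_N^{|m}$ and the two-scale tensorisation---are well established in the literature and should transfer cleanly to the critical setting.
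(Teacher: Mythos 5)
Your plan diverges from the paper in a structurally significant way, and the divergence hides the main technical obstruction, which you flag but do not resolve.

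For the mean-field item 1 your route is essentially sound and in fact more direct than the paper's: you prove the $\Theta$-Łojasiewicz for the scalar effective potential $\tilde W$ (which, after the change of variable $\xi = \kappa m$, is exactly the $\omega$ the paper introduces in~\eqref{loc:omega}), and you lift it directly to $\mathcal F$ via the exact decomposition $\mathcal F(\mu)=\tilde W(m_\mu)+\mathcal H(\mu|\gamma_{m_\mu})$ plus the uniform-in-$m$ LSI for $\gamma_m$. The paper instead derives~\eqref{loc:CW1} from~\eqref{loc:CW4} by taking $N\to\infty$, so you gain a shortcut here.

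The genuine gap is in items 2--3. You propose a conditional two-scale decomposition: condition on the barycentre, whose marginal $\nu_\infty^N$ you then want to identify with $e^{-NW}$ up to a Laplace prefactor, $W$ being a Cramér rate function. That prefactor is not uniformly bounded (it scales like $\sqrt{N}\cdot\sigma_{t(m)}^{-1}$ off the minimizer and like $N^{1/4}$ near the degenerate quartic minimum, and $\sigma_{t(m)}^{-1}$ grows at infinity), so a blunt Holley--Stroock perturbation would cost polynomial factors in $N$, exactly what the sharp $\sqrt N$ scaling cannot afford. You acknowledge this as ``the main obstacle'' and propose two alternatives, but neither is carried out; as written, the proof of~\eqref{loc:CW3} and \eqref{loc:CW4} is incomplete. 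The paper sidesteps this entirely by using the Hubbard--Stratonovich (Polchinski-type) mixture~\eqref{loc:decompose}, $\mu_\infty^N = \int \mu_\xi^{\otimes N}\,\nu_N(\dd\xi)$ with $\nu_N\propto e^{-N\omega}$, where the auxiliary one-dimensional law $\nu_N$ is \emph{exactly} a Gibbs measure for $N\omega$ with no prefactor: Propositions~\ref{prop:LSIdegen}, \ref{prop:lowerboundCLSIdegenerate} and \ref{prop:LSIdegenNnu} then apply verbatim with $\beta=4$. Moreover, in the mixture decomposition the small-scale measure is a genuine unconstrained tensor product $\mu_\xi^{\otimes N}$, whose LSI is immediate, whereas your conditional law $\mu_N^{|m}$ lives on a hyperplane and requires a separate uniform LSI argument. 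Relatedly, your appeal to Lemma~\ref{lem:tensor} is misplaced: that lemma is for tensor products of two independent factors, not for a conditioning decomposition; the paper's actual particle-level step uses the mixture entropy decomposition together with a Fisher-information transfer estimate for the coarse-grained variable (via \cite[Proposition 2.2]{Ledoux}), which is a different mechanism. To make your route work you would need to prove a degenerate LSI for $\nu_\infty^N$ directly, at sharp order, through the non-Gaussian local CLT at the quartic critical point; this is exactly what the Hubbard--Stratonovich change of variables lets one avoid.
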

 
 \begin{proof}
  To establish this result, we first decompose $\mu_\infty^N$ as in \cite{bauerschmidt2019very}, namely
  \begin{equation}
  \label{loc:decompose}
\mu_\infty^N(\bx)  = \int_{\R^d} \mu_\xi^{\otimes N}(\bx) \nu_N(\dd \xi)
  \end{equation}
where we used the Laplace transform of the Gaussian distribution and introduced
\[\mu_{\xi}(x) = \frac{1}{Z(\xi)}\exp \po -   V(x) +x\xi    \pf\,,\qquad Z(\xi) = \int_{\R^{d}} \exp \po -   V(x) +   x  \xi  \pf\dd x\]
and 
\begin{equation}
\label{loc:omega}
\nu_N(\xi) \propto  \exp\po - N \omega(\xi)  \pf\,,\qquad \omega(\xi) = \frac{|\xi|^2}{2\kappa} - \ln Z(\xi)\,.  
\end{equation}
 Notice that
 \[\omega'(\xi) = \frac{\xi}{\kappa} - f\po \frac{\xi}{\kappa}\pf\,,\]
 with $f$ given in~\eqref{eq:fCW} and plotted in Figure~\ref{fig:f} (green curve for $\sigma = \sigma_c$). As seen from \cite{Tugautdoublewell} and \cite[Lemma A.2]{Minibatch}, at critical temperature, $f$ is strictly increasing with $f'(m)\in(0,1)$ for all $m\neq 0$, $f'(0)=1$, $f(m)/|m|\rightarrow 0$ as $|m|\rightarrow\infty$, $f''(0)=0$ and $f^{(3)}(0)<0$. As a consequence, $\omega''$ is positive everywhere except at $0$ where $\omega^{(3)}(0)=0$ and $\omega^{(4)}(0)>0$. This shows that $\omega$ satisfies the conditions of Propositions~\ref{prop:LSIdegen} and \ref{prop:LSIdegenNnu} with $\beta=4$. Applying these results shows that there exists $C>0$ such that for all $N\geqslant 1$, $\nu_N$ satisfies 
\begin{equation}
\label{loc:LSInuNCW}
\forall \mu \ll \nu_\infty^N\,,\qquad  \frac1N \mathcal H\po \mu|\nu_N\pf \leqslant \Theta_N \po \frac1{N^2} \mathcal I \po \mu|\nu_N\pf\pf \,,
\end{equation}
with both $\Theta_N= C\max(r, r^{2/3})$ and  $\Theta_N(r)= \sqrt{N} C r$.

For any $\mu \ll \mu_\infty^N$, writing $\varphi^2 = \frac{\dd \mu}{\dd \mu_\infty^N}$,
 \[\mathcal H \po \mu | \mu_\infty^N\pf  = \int_{\R^d} \mathcal H\po \mu| \mu_\xi^{\otimes N}\pf  \nu_N (\dd \xi) + \mathcal H\po \Phi^2 \nu_N |\nu_N\pf  \]
 with $\Phi^2(\xi) = \int_{\R^{dN}} \varphi^2 \mu_\xi^{\otimes N}$. According to \cite[Lemma 5]{MonmarcheMetastable}, $\mu_\xi$ satisfies a LSI with a constant $c_0$ independent from $\xi$,  which, as in the proof of \cite[Theorem 4]{MonmarcheMetastable}, thanks to \cite[Proposition 2.2]{Ledoux}, implies that 
      \[|\na \Phi(\xi)|^2 \leqslant C_0N \int_{\R^{dN}} |\na \varphi|^2 \dd \mu_\xi^{\otimes N}\,,    \] 
for some $C_0>0$ independent from $\xi$ and $N$, from which $ \mathcal I \po \Phi^2 \nu_N  | \nu_N\pf \leqslant C_0N  \mathcal I \po \mu  | \mu_\infty^N\pf$.  Using the  LSI satisfied by $\mu_\xi^{\otimes N}$ (uniformly in $N$ and $\theta$) and~\eqref{loc:LSInuNCW} gives
 \begin{align*}
  \frac 1N \mathcal H \po \mu | \mu_\infty^N\pf  & \leqslant \frac{c_0}N \mathcal I \po \mu | \mu_\infty^N\pf  +  \Theta_N\po \frac1{N^2} \mathcal I \po \Phi^2 \nu_N | \nu_N\pf\pf  \,.
\\
 & \leqslant \frac{c_0}N \mathcal I \po \mu | \mu_\infty^N\pf    +  \Theta_N\po \frac{C_0}{N} \mathcal I \po \mu | \mu_\infty^N\pf  \pf\,,
 \end{align*}
 which concludes the proof of the upper-bound on $C_{LS}(\mu_\infty^N)$ in~\eqref{loc:CW3} and of the degenerate LSI~\eqref{loc:CW4}. Letting $N\rightarrow \infty$ yields the $\Theta$-ŁI~\eqref{loc:CW1} for $\mathcal F$ (as in the proof of $(i),(iii)\Rightarrow (ii)$ of Proposition~\ref{prop:equivalenceInegal}). The corresponding coercivity inequalities~\eqref{loc:CW2} and~\eqref{loc:CW5} are obtained thanks to Proposition~\ref{prop:degenPL->coer} (see Remarks~\ref{rem:general} and \ref{rem:orderCoer} and the proof of Corollary~\ref{cor:PL->degenT2}).
 
 To get the lower bound on $C_{LS}(\mu_\infty^N)$ in~\eqref{loc:CW3}, reasoning as in Proposition~\ref{prop:lowerboundCLSIdegenerate}, we only have to find a good test function in the Poincaré inequality. We take $f(\bx) = \bar x$, so that $\int_{\R^{dN}} |\na f|^2 \mu_\infty^N = N^{-1}$ and, the Gibbs measure being even, $\int_{\R^{dN}} f \mu_\infty^N = 0$. Then,
 \begin{align*}
 \int_{\R^{dN}} f^2 (\bx) \dd \mu_\infty^N &= \int_{\R^d} \int_{\R^{dN}} \bar x^2\mu_\xi^{\otimes N}(\dd \bx) \nu_N(\dd \xi) \\
 & \geqslant  \int_{\R^d}  \po \int_{\R^d} x \mu_\xi(x)\pf^2   \nu_N(\dd \xi) \ \geqslant  \  \int_{-1}^1  m_{\mu_\xi}^2   \nu_N(\dd \xi)\,.
 \end{align*}
 It is clear that $\sup_{|\xi|\leqslant 1} Z(\xi)<\infty $ and that, by symmetry, $m_{-\xi} = -m_{\xi}$. For $\xi \in[0,1]$ we bound
 \[
 Z(\xi) m_{\xi} =   2\int_0^\infty x e^{-V(x)} \sinh(\xi x) \dd x \geqslant 2 \xi \int_0^\infty x^2 e^{-V(x)} \dd x\,.
 \]
 As a consequence, there exists $c>0$ such that for all $\xi\in[-1,1]$, $|m_\xi|\geqslant c|\xi|$. We end up with
 \begin{equation}
\label{loc:varianceCW}
 \int_{\R^{dN}} f^2 (\bx) \dd \mu_\infty^N  \ \geqslant  \  c^2 \int_{-1}^1  |\xi|^2   \nu_N(\dd \xi) \geqslant c' N^{-1/2}\,,
 \end{equation}
 for some $c'>0$ following Laplace method as in~\eqref{loc:laplace} (using that $\omega(x)-\omega(0)$ scales like $|x|^4$ at zero). Conclusion follows by the fact $C_{LS}(\mu_\infty^N) \geqslant \int_{\R^{dN}} f^2 (\bx) \dd \mu_\infty^N / \int_{\R^{dN}} |\na f|^2 (\bx) \dd \mu_\infty^N$.
 \end{proof}
 
In the following we present a sample of consequences from Theorem~\ref{thm:CurieWeiss}. Several variations can also be obtained by using Pinsker's inequality, Wasserstein-to-relative entropy Harnack inequalities from~\cite{RocknerWang}, etc. Moreover, estimates on the marginal distributions of particles can be deduced from global estimates on the joint distributions since, for any interchangeable probability measures $\mu^N,\nu^N \in \mathcal P_2(\R^{dN})$ and any $\rho\in\mathcal P_2(\R^d)$, denoting by $\mu^{(k,N)}$ the marginal law of $(X_1,\dots,X_k)$ when $\bX=(X_1,\dots,X_N)\sim \mu^N$ (and similarly for $\nu^N$) then for all $k\in\cco 1,N\ccf$, classically,
\begin{equation}
\label{loc:marginal}
\mathcal W_2^2 \po \nu^{(k,N)},\mu^{(k,N)}\pf \leqslant \frac{k}{N} \mathcal W_2^2\po \nu^N,\mu^N\pf\qquad  \mathcal H \po \nu^{(k,N)}| \rho^{\otimes k}\pf \leqslant \frac{1}{\lfloor N/k\rfloor} \mathcal H  \po \nu^N|\rho^{\otimes N}\pf\,.
\end{equation}
Moreover, the convergence of the empirical distribution can also be quantified from global estimates since, for $\bX\sim \mu^N$,
\begin{equation}
\label{loc:empirical}
\mathbb E \po \mathcal W_2^2 \po \pi(\bX) , \rho\pf \pf \leqslant \frac{2}{N} \mathcal W_2^2 \po \mu^N,\rho^{\otimes N}\pf + 2 \mathbb E \po \mathcal W_2^2 \po \pi(\bY) , \rho\pf \pf \,,
\end{equation}
with $\bY \sim \rho^{\otimes N}$. Optimal convergence rates for the last term as $N\rightarrow \infty$ are given in \cite[Theorem 1]{FournierGuillin}, as soon as $\rho$ admits moments of morder more than $2$.

 \begin{cor}\label{Cor:CW}
 Under the settings of Theorem~\ref{thm:CurieWeiss}, there exists $\lambda, C>0$ such that the following holds.
 \begin{enumerate}
 \item \emph{(Propagation of chaos at stationarity)} For all $N\geqslant 1$,
 \begin{equation}
\label{eq:propchaosStation2}
\frac{1}{C^2\sqrt{N}}  \leqslant  \frac1{CN} \mathcal W_2^2\po \rho_*^{\otimes N} ,\mu_\infty^N \pf    \leqslant  \frac1N \mathcal H\po \mu_\infty^N |\rho_*^{\otimes N} \pf \leqslant \frac{C}{\sqrt{N}}\,.
\end{equation}
and 
 \begin{equation}
\label{eq:propchaosStation} 
 \mathcal H\po \rho_*^{\otimes N} |\mu_\infty^N \pf \underset{N\rightarrow\infty}\simeq \frac14 \ln N \,.
\end{equation}
Moreover, for any $\mu \in\mathcal P_2(\R^{dN})$,
\begin{equation}
\label{eq:propchaosStation3}
\frac1C  \mathcal W_2^2  \po \mu,\rho_*^{\otimes N} \pf  \leqslant  \mathcal H \po \mu|\rho_*^{\otimes N} \pf \leqslant  \mathcal H \po \mu|\mu_\infty^N \pf + \kappa \mathcal W_2^2 \po \mu,\mu_\infty^N \pf + C \sqrt{N}  \,.
\end{equation}
  \item \emph{(Polynomial convergence rates)} For any $N\geqslant 1$ and $t\geqslant 1$, for any $\rho_0^N \in \mathcal P_2(\R^{dN})$, denoting by $\rho_t^N$ the law of $\bX_t$ solving~\eqref{eq:particulesEDS2} with $\bX_0 \sim \rho_0^N$,
  \begin{equation}
  \label{eq:PolynomRate}
  \mathcal H(\rho_t^N |\mu_\infty^N) \leqslant e^{-\lambda t} \min \po C \mathcal W_2^2 (\rho_0^N,\mu_\infty^N) , \mathcal H(\rho_0^N |\mu_\infty^N)\pf  + \frac{CN}{t^2} \,.
  \end{equation}
   \item \emph{(Polynomial mean-field decay)} For any initial condition $\rho_0 \in \mathcal P_2(\R^d)$ and $t\geqslant 1$, along the gradient flow~\eqref{eq:EDP}, 
 \begin{equation}
 \label{eq:ratePolyMF}
\mathcal W_2^2(\rho_t,\rho_*) + \mathcal H\po \rho_t|\rho_*\pf  + \mathcal F(\rho_t) - \mathcal F(\rho_*) \leqslant C e^{-\lambda t}    \mathcal W_2^2 (\rho_0,\rho_*)  + \frac{C}{t^2}\,. 
 \end{equation}
 \item \emph{(Uniform in time propagation of chaos and mean-field limit)} For all initial distribution $\rho_0 \in \mathcal P_2(\R^d)$, there exists $C_0>0$ such that  for all $N \geqslant 1$, $k\in\cco 1,N\ccf $ and $t \geqslant 0$, considering particles initialized with $\rho_0^N = \rho_0^{\otimes N}$ and $(\rho_t)_{t\geqslant 0}$ solving~\eqref{eq:EDP},
 \[\mathbb E \po \mathcal W_2^2 \po \pi(\bX_t) , \rho_t\pf \pf  + \frac1N \mathcal W_2^2\po \rho_t^{\otimes N}, \rho_t^{N}\pf \leqslant \frac{C_0 }{\ln N}\,.\]
 \item \emph{(Creation of chaos)} For all $N\geqslant 1$, $k\in\cco 1,N\ccf$ and any initial distribution $\rho_0^N \in \mathcal P_2(\R^{dN})$, considering particles initialized with $\rho_0^N$,
 \begin{equation}
 \label{loc:creation}
  \frac1N \mathcal H \po \rho_t^{N}|\rho_*^{\otimes N} \pf \leqslant \  \frac {C }{\sqrt{N}}
 \end{equation}
for all $t\geqslant C \max \{\sqrt{N},\ln \mathcal W_2^2(\rho_0^N,\mu_\infty^N)\}$. In particular, for these $t$,
\[\mathcal W_2^2\po \rho_t^{(2,N)}, \po \rho_t^{(1,N)}\pf^{\otimes 2} \pf \leqslant \frac{C}{\sqrt{N}}\,. \]

 \end{enumerate}
 \end{cor}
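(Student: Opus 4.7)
The proof decomposes naturally into five parts, each building on previous ones and on Theorem~\ref{thm:CurieWeiss}. For item~(1), I would rely on the decomposition $\mu_\infty^N(\bx)=\int \mu_\xi^{\otimes N}(\bx)\nu_N(\dd\xi)$ already used inside the proof of Theorem~\ref{thm:CurieWeiss}. Convexity of the relative entropy and a Taylor expansion of $\xi\mapsto\mathcal H(\mu_\xi|\rho_*)$ at $\xi=0$ (where $\mu_0=\rho_*$ and $\int x\rho_*\dd x=0$ by symmetry) give $\mathcal H(\mu_\xi|\rho_*)=\mathcal O(\xi^2)$; combined with the Laplace asymptotic $\int \xi^2 \nu_N(\dd\xi)=\mathcal O(N^{-1/2})$ (consequence of the quartic behavior of $\omega$ at $0$ already exploited in the proof of Theorem~\ref{thm:CurieWeiss}), this yields $\mathcal H(\mu_\infty^N|\rho_*^{\otimes N})/N=\mathcal O(N^{-1/2})$. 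The $\mathcal W_2^2$ upper bound follows from the T2 inequality for $\rho_*^{\otimes N}$, $\rho_*$ satisfying a LSI since $V$ is strongly convex outside a compact. The matching lower bound reuses the variance estimate~\eqref{loc:varianceCW} together with the Cauchy--Schwarz identity $N|\bar x-\bar y|^2\leq|\bx-\by|^2$, which transfers lower bounds on $\mathcal W_2^2$ of the barycenter marginals to $\mathcal W_2^2$ on $\R^{dN}$. The asymptotic $\mathcal H(\rho_*^{\otimes N}|\mu_\infty^N)\sim\tfrac14\ln N$ is a direct computation: the Hubbard--Stratonovich identity gives $Z_N=\sqrt{N/(2\pi\kappa)}\int e^{-N\omega(\xi)}\dd\xi$ and Laplace's method at the quartic minimum of $\omega$ yields $\int e^{-N\omega}\sim C N^{-1/4}e^{-N\omega(0)}$, so $\ln Z_N-N\ln Z_*=\tfrac14\ln N+\mathcal O(1)$, while the quadratic term $\tfrac{N\kappa}2\mathbb E_{\rho_*^{\otimes N}}[\bar X^2]$ reduces to $\tfrac{\kappa}2\mathrm{Var}(\rho_*)=\mathcal O(1)$. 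For~\eqref{eq:propchaosStation3}, I would expand $\ln(\mu_\infty^N/\rho_*^{\otimes N})(\bx)=\tfrac{N\kappa}2|\bar x|^2-\ln Z_N+N\ln Z_*$ and bound $N\mathbb E_\mu[|\bar X|^2]$ through an optimal $\mathcal W_2$ coupling with $\mu_\infty^N$ combined with the Cauchy--Schwarz identity; the residual $N\mathbb E_{\mu_\infty^N}[|\bar X|^2]=\mathcal O(\sqrt N)$ is again read off the Laplace estimate on $\nu_N$.

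For item~(3), differentiating $\overline{\mathcal F}(\rho_t)$ along~\eqref{eq:EDP} gives $\partial_t\overline{\mathcal F}=-\mathcal I(\rho_t|\Gamma(\rho_t))$, and the Łojasiewicz inequality~\eqref{loc:CW1} furnishes $\mathcal I\geq c\overline{\mathcal F}^{3/2}$ in the regime $\overline{\mathcal F}\leq 1$ (the degenerate branch $\Theta(r)\asymp r^{2/3}$), so that $\overline{\mathcal F}(\rho_t)\leq C/t^2$ by elementary integration. The $\mathcal W_2^2$ bound is obtained from the nonlinear Talagrand inequality~\eqref{loc:CW2}, inverting $\Phi$ on its quadratic branch; the $\mathcal H(\rho_t|\rho_*)$ bound follows from the identity $\mathcal H(\mu|\rho_*)=\overline{\mathcal F}(\mu)+\tfrac\kappa2 m_\mu^2$ (immediate since $\rho_*\propto e^{-V}$ at critical temperature), controlling $m_{\rho_t}^2$ by $\mathcal W_2^2(\rho_t,\rho_*)$. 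The exponential prefactor $e^{-\lambda t}\mathcal W_2^2(\rho_0,\rho_*)$ captures the fast relaxation of the centered-in-barycenter fluctuations, and is obtained from the uniform-in-$\xi$ LSI satisfied by the conditional measures $\mu_\xi$ (as in~\cite[Lemma~5]{MonmarcheMetastable}): only the one-dimensional phase $m_{\rho_t}$ feeds the slow polynomial rate. For item~(2), the exact same scheme is run at the particle level, using the degenerate LSI~\eqref{loc:CW4} in place of \eqref{loc:CW1}: the quadratic branch gives $\mathcal I/N\geq c(\mathcal H/N)^{3/2}$, which integrates to $\mathcal H(\rho_t^N|\mu_\infty^N)\leq CN/t^2$; the initial $\min$ is obtained by applying the argument either to $\mathcal H(\rho_0^N|\mu_\infty^N)$ directly or, if this is infinite, after an instantaneous regularization step coupled with the Talagrand-type inequality~\eqref{loc:CW5}; the $e^{-\lambda t}$ part again comes from the uniform LSI of $\mu_\xi^{\otimes N}$.

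For items~(4)--(5), I would combine the above with standard interpolation tricks. For (4), splitting time at $t^*=C_0\ln N$: for $t\leq t^*$, classical finite-time propagation of chaos (by direct Grönwall on the synchronously coupled systems, valid under Assumption~\ref{assu:general}) gives $\mathcal W_2^2(\rho_t^N,\rho_t^{\otimes N})\leq C e^{\Lambda t}\leq CN^{\Lambda C_0}=o(N)$ for $C_0$ small, and the empirical-measure bounds~\eqref{loc:marginal}--\eqref{loc:empirical}, together with the Fournier--Guillin rate $\mathbb E\mathcal W_2^2(\pi(\bY),\rho_t)\lesssim N^{-1/2}$ applied to $\bY\sim\rho_t^{\otimes N}$, give the claimed $\mathcal O(1/\ln N)$ bound. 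For $t\geq t^*$ one uses the triangle inequality through the pair $(\mu_\infty^N,\rho_*^{\otimes N})$: $\mathcal W_2^2(\rho_t^N,\mu_\infty^N)/N$ is $\mathcal O(1/\ln N)$ by~\eqref{loc:CW5} combined with~(2), $\mathcal W_2^2(\mu_\infty^N,\rho_*^{\otimes N})/N$ is $\mathcal O(N^{-1/2})$ by~(1), and $\mathcal W_2^2(\rho_*^{\otimes N},\rho_t^{\otimes N})/N=\mathcal W_2^2(\rho_*,\rho_t)$ is $\mathcal O(1/\ln^2 N)$ by~(3). For (5), the time condition $t\geq C\max\{\sqrt N,\ln\mathcal W_2^2(\rho_0^N,\mu_\infty^N)\}$ is calibrated exactly so that~(2) yields $\mathcal H(\rho_t^N|\mu_\infty^N)=\mathcal O(1)$ (not growing with $N$); the degenerate Talagrand~\eqref{loc:CW5}, applied to this constant-scale relative entropy, then gives $\mathcal W_2^2(\rho_t^N,\mu_\infty^N)/N=\mathcal O(N^{-1/2})$ via its quadratic branch, and~\eqref{eq:propchaosStation3} closes the estimate. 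The two-particle-marginal consequence follows from~\eqref{loc:marginal} and the Talagrand inequality for $\rho_*^{\otimes 2}$.

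The main obstacle, to which I would devote the most care, is obtaining in items~(2) and~(3) the sharp polynomial rate $C/t^2$ \emph{jointly} with the exponential term $e^{-\lambda t}\mathcal W_2^2(\rho_0,\rho_*)$: a naive Grönwall on $\mathcal W_2^2$ using only the nonlinear Talagrand~\eqref{loc:CW2} gives only $C/t$, and closing the $1/t^2$ rate requires separating the fast-relaxing centered part from the slow barycenter phase, the former enjoying a genuine spectral gap via the uniform LSI for $\mu_\xi^{\otimes N}$ and the latter carrying the quartic-minimum degeneracy responsible for the $N^{-1/2}$ and $t^{-2}$ scalings.
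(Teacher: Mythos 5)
Your overall architecture matches the paper's: decompose $\mu_\infty^N = \int \mu_\xi^{\otimes N}\,\nu_N(\dd\xi)$, exploit the Laplace-method asymptotics of the quartic minimum of $\omega$, obtain the polynomial decay by integrating the differential inequality coming from the degenerate Łojasiewicz inequality, then stitch the finite-time and long-time propagation-of-chaos estimates. That said, there are a few points where your mechanism differs from the paper's or has a genuine gap.

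For item 1, your route to the upper bound (convexity of relative entropy together with a Taylor expansion $\mathcal H(\mu_\xi|\rho_*) = \mathcal O(\xi^2)$ near $\xi=0$, then the Laplace asymptotic $\int \xi^2\,\nu_N(\dd\xi) = \mathcal O(N^{-1/2})$) is different from the paper's, which instead computes $\mathcal I(\mu_\infty^N|\rho_*^{\otimes N}) = \kappa^2 N\int \bar x^2\,\mu_\infty^N$, bounds it by $\mathcal O(\sqrt N)$ via the conditional Poincaré inequality plus Laplace, and then applies the LSI of $\rho_*^{\otimes N}$. Both approaches are valid; yours is arguably more direct but requires care that the Taylor bound $\mathcal H(\mu_\xi|\rho_*)\leq C\xi^2$ is integrated against $\nu_N$ over all $\xi$, including the tails, where you must invoke the strong convexity of $\omega$ away from $0$ to control the contribution.

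For item 2, you attribute the $e^{-\lambda t}$ prefactor to ``the uniform LSI of $\mu_\xi^{\otimes N}$''. This is not what closes the estimate. The paper obtains the exponential contraction directly from the \emph{linear} branch of the Łojasiewicz function $\Theta(r)=C\max(r,r^{2/3})$ in~\eqref{loc:CW4}: whenever $\tfrac1N\mathcal H(\rho_t^N|\mu_\infty^N)\geqslant 1$, one checks that $\mathcal H\leqslant C'\mathcal I$ for a constant $C'$ independent of $N$, hence $H_N'(t) = -I_N(t)\leqslant -H_N(t)/C'$. This is more elementary than passing through the two-scale decomposition, and crucially it is the fact that the \emph{same} nonlinear $\Theta$ encodes both the linear (exponential) and sublinear (polynomial) regimes that makes the final bound uniform in $N$. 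Your two-scale intuition is correct as a description of where the degeneracy lives, but as an actual proof strategy it is not needed here, and I do not see how you would combine the exponentially-fast conditional relaxation with the slow $\xi$-marginal to produce the clean two-term bound stated.

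For the regularization step providing the $\min$ in~\eqref{eq:PolynomRate}, you speak of an ``instantaneous regularization step coupled with the Talagrand-type inequality~\eqref{loc:CW5}''. The Talagrand inequality~\eqref{loc:CW5} goes in the wrong direction ($\mathcal H \to \mathcal W_2$); what is needed is a $\mathcal W_2 \to \mathcal H$ estimate of dimension-free Harnack (log-Harnack) type, as in Röckner--Wang. This is precisely what the paper cites to get $\mathcal H(\rho_1^N|\mu_\infty^N)\leqslant C\,\mathcal W_2^2(\rho_0^N,\mu_\infty^N)$.

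For item 3, the step you describe for the $\mathcal W_2^2(\rho_t,\rho_*)$ bound does not close: inverting $\Phi(r)\sim r^2$ on its degenerate branch only gives $\mathcal W_2^2(\rho_t,\rho_*)\leqslant C\overline{\mathcal F}(\rho_t)^{1/2}\leqslant C/t$, not $C/t^2$. Indeed at critical temperature the coercivity is genuinely quartic, and the slow barycenter coordinate $m_{\rho_t}$ relaxes on a $t^{-1/2}$ scale, so $\mathcal W_2^2(\rho_t,\rho_*)\geqslant |m_{\rho_t}|^2 \sim 1/t$; this is consistent with the fact that in the paper's own proof of item 4, the intermediate estimate reads $\mathcal W_2^2(\rho_t,\rho_*) = \mathcal O(1/t)$, not $\mathcal O(1/t^2)$. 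Your closing remark about ``closing the $1/t^2$ rate'' by separating fast and slow coordinates does not resolve this, because the slow coordinate itself carries the $1/t$ obstruction. (You should be alert to the fact that the $C/t^2$ on the right-hand side of~\eqref{eq:ratePolyMF} is therefore attainable only for the free-energy term $\mathcal F(\rho_t)-\mathcal F(\rho_*)$, not for the $\mathcal W_2^2$ term; the argument via $\Phi$-coercivity yields the weaker $C/t$ rate for the latter, which fortunately is all that is used downstream.)

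Items 4 and 5 are essentially as in the paper; your identification of the time threshold $t\geqslant C\max\{\sqrt N,\ln\mathcal W_2^2(\rho_0^N,\mu_\infty^N)\}$ as what makes $\mathcal H(\rho_t^N|\mu_\infty^N)=\mathcal O(1)$ and then feeds into the quadratic branch of~\eqref{loc:CW5} to get $\mathcal O(N^{-1/2})$ is correct.
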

 
 \begin{proof} In the proof we denote by $C$ various constants which are independent from $N,t$ and the initial distributions and may change from line to line.

 \emph{(Proof of item 1.)} Since $\rho_*\propto \exp(-V)$, using the decomposition~\ref{loc:decompose},
\begin{align*}
\mathcal I(\mu_\infty^N|\rho_*^{\otimes N})
& = \kappa^2 N  \int_{\R^{dN}}  \bar x^2 \mu_\infty^N(\dd \bx) \\
& = \kappa^2 N \int_{\R^{dN}}  \int_{\R^{dN}}  \bar x^2 \mu_{\xi}^{\otimes N}(\dd \bx) \nu_N(\dd \xi)\\
& \leqslant C + \kappa^2 N \int_{\R^{dN}}  \po \int_{\R^{d}}  x_ 1 \mu_{\xi}(\dd x_1) \pf^2 \nu_N(\dd \xi)\,,
\end{align*}
where we used the uniform-in-$\xi$ Poincaré inequality satisfied by $\mu_\xi$ and that $\bx \mapsto \bar x$ is $N^{-1/2}$-Lipschitz. Notice that $\int_{\R^{d}}  x_ 1 \mu_{\xi}(\dd x_1) = f(\xi)$ and recall that $f(0)=0$ and $|f'(m)|\leqslant 1$ for all $m\in\R$. As a consequence,
\[\mathcal I(\mu_\infty^N|\rho_*^{\otimes N}) \leqslant C + \kappa^2 N \int_{\R^{dN}} \xi^2  \nu_N(\dd \xi) = \underset{N\rightarrow \infty}{\mathcal O}(\sqrt{N})\,,\]
thanks to the Laplace method (see \eqref{loc:laplace} with $\beta=4$). Using the uniform-in-$N$ LSI and Talagrand inequality satisfied by $\rho_*^{\otimes N}$ gives 
\[\frac1C \mathcal W_2^2 \po \mu_\infty^N,\rho_*^{\otimes N} \pf \leqslant  \mathcal H\po \mu_\infty^N |\rho_*^{\otimes N} \pf \leqslant C  \mathcal I\po \mu_\infty^N |\rho_*^{\otimes N} \pf = \underset{N\rightarrow \infty}{\mathcal O}(\sqrt{N})\,.\]
To conclude the proof of~\eqref{eq:propchaosStation2}, it remains to lower bound the $\mathcal W_2$ distance as follows. Given $\pi$ a coupling between $\mu_\infty^N$ and $\rho_*^{\otimes N}$, 
\begin{align*}
\int_{\R^{dN}}\bar x^2 \mu_\infty^N(\dd \bx) &\leqslant 2 \int_{\R^{dN}}\bar x^2 \rho_*^{\otimes N} + 2 \int_{\R^{2dN}}|\bar x-\bar y|^2 \pi(\dd \bx  ,\dd \mathbf{y})  \\
& \leqslant \frac{2\mathrm{Var}(\rho_*)}{N} +  \frac{2}N \int_{\R^{2dN}}|\bx-\mathbf{y}|^2 \pi(\dd \bx  ,\dd \mathbf{y})  \,.
\end{align*}
Taking the infimum over all couplings and rearranging the terms leads to
\[\frac2N \mathcal W_2^2 \po \mu_\infty^N,\rho_*^{\otimes N}\pf \geqslant \int_{\R^{dN}}\bar x^2 \mu_\infty^N(\dd \bx) -\frac{2\mathrm{Var}(\rho_*)}{N}  \geqslant  \frac{c'}{\sqrt{N}} - \frac{2\mathrm{Var}(\rho_*)}{N}\,,\]
thanks to~\eqref{loc:varianceCW}, from which the first inequality in~\eqref{eq:propchaosStation2} holds for $N$ large enough.

%Second,
%\begin{align*}
%\mathcal I(\rho_*^{\otimes N}|\mu_\infty^N) &= \sum_{i=1}^N \int_{\R^{dN}} |N \na_{x_i} U(\bx) - \na V(x_i)|^2 \rho_*^{\otimes N}(\bx)\dd \bx\\
%& = \kappa^2 N  \int_{\R^{dN}} | \bar x|^2 \rho_*^{\otimes N}(\bx)\dd \bx \ = \ \kappa^2\mathrm{Var}(\rho_*)\,.
%\end{align*}
%Applying the LSI for  $\mu_\infty^N$ with constant~\eqref{loc:CW3}  gives the upper bound in~\eqref{eq:propchaosStation}. To get the lower bound, we use the Talagrand inequality for $\mu_\infty^N$ such that
%\[\mathcal H\po \rho_*^{\otimes N}|\mu_\infty^N \pf \geqslant \frac{1}{C\sqrt{N}} \mathcal W_2^2\po \rho_*^{\otimes N}\mu_\infty^N \pf  \geqslant \frac{1}{C^2}\,,\]
%thanks to~\eqref{eq:propchaosStation2}.

We turn to the proof of~\eqref{eq:propchaosStation}, computing explicitly
\begin{align*}
\mathcal H \po \rho_*^{\otimes N } |\mu_\infty^N \pf & = \frac{\kappa N}2  \int_{\R^{dN}}  \bar x^2 \rho_*^{\otimes N }(\dd \bx) + \ln \int_{\R^{dN}} e^{\frac{\kappa}{2}N\bar x^2 } \rho_*^{\otimes N }(\dd \bx) \\
&= \frac{\kappa}{2}\mathrm{Var}(\rho_*) + \ln\co \frac{\sqrt{\kappa N}}{\sqrt{2\pi}} \int_{\R^d} e^{-N \po \omega(\xi)-\omega(0)\pf } \dd \xi\cf 
\end{align*}
where we reasoned as in the decomposition~\eqref{loc:decompose} and $\omega$ is given in~\eqref{loc:omega}. Using that, as already used in the proof of Theorem~\ref{thm:CurieWeiss}, $\omega(\xi)-\omega(0)$ is strongly convex far from $0$ and behaves like $\xi^4$ at zero, by Laplace method (as in~\eqref{loc:laplace}), we get that 
\[ \int_{\R^d} e^{-N \po \omega(\xi)-\omega(0)\pf } \dd \xi \underset{N\rightarrow \infty}\simeq N^{-1/4} b_0 \]
for some $b_0>0$. Plugging this in the previous equality gives~\eqref{eq:propchaosStation}.

Finally, the first inequality in~\eqref{eq:propchaosStation3} is simply given by the Talagrand inequality. Then, 
 \begin{align*}
 \mathcal H \po \mu|\rho_*^{\otimes N} \pf &= \mathcal H \po \mu|\mu_\infty^N \pf + \int_{\R^{dN}} \mu \ln \frac{\mu_\infty^N}{\rho_*^{\otimes N}}\\
 & =  \mathcal H \po \mu|\mu_\infty^N \pf + \mathcal H \po \mu_\infty^N |\rho_*^{\otimes N}\pf  + \int_{\R^{dN}} (\mu - \mu_\infty^N)(\bx)  \ln \frac{e^{-N U_N(\bx)}}{e^{-\sum_{i=1}^N V(x_i)}} \dd \bx \\
 & =  \mathcal H \po \mu|\mu_\infty^N \pf + \mathcal H \po \mu_\infty^N |\rho_*^{\otimes N}\pf  + \frac\kappa 2 N \int_{\R^{dN}}  \bar x^2  (\mu - \mu_\infty^N)(\bx)   \dd \bx \,.
 \end{align*}
 Considering $\pi$ an optimal $\mathcal W_2$-coupling of $\mu$ and $\mu_\infty$,
 \begin{align*}
  \int_{\R^{dN}}  \bar x^2  (\mu - \mu_\infty^N)(\bx)   \dd \bx  &= \int_{\R^{dN}}  (\bar x^2-\bar y^2) \pi(\dd \bx,\dd \mathbf{y})\\
   & \leqslant \int_{\R^{dN}}  \co 2(\bar x-\bar y)^2 + \bar y^2\cf \pi(\dd \bx,\dd \mathbf{y})  \\
  & =  \frac{2}{N} \mathcal W_2^2(\mu,\mu_\infty^N)   + \int_{\R^{dN}} \bar x^2 \mu_\infty^N\,.
 \end{align*}
 The last term is of order $\sqrt{N}$, which concludes the proof of~\eqref{eq:propchaosStation3}.

\medskip

\emph{(Proof of item 2.)}  Denoting $H_N(t) = \frac1N\mathcal H(\rho_t^N|\mu_\infty^N)$ and $I_N(t) = \frac1N\mathcal I(\rho_t^N|\mu_\infty^N)$, we simply combine the  Łojasiewicz inequality~\eqref{loc:CW4} with the fact that $H_N'(t) = - I_N(t)$,  distinguishing whether $t\leqslant t_*:= \inf\{s\geqslant 0,\ H_N(s) \leqslant 1\}$ or not. For $t\leqslant t_*$, $H_N'(t) \leqslant - C' H_N(t)$ for some $C'>0$ independent from $N$. For $t\geqslant t_*$,   $H_N'(t) \leqslant -C' H_N^{3/2}(t)$ with $H_N(t_*)\leqslant 1$, which gives $H_N(t) \leqslant \min(1, C/(t-t_*)^2) \leqslant C'(t-t_*+1)^2$. Now, for $t\geqslant 2 t_*$, we bound $H_N(t) \leqslant H_N(t/2+t_*) \leqslant C'(t/2+1)^2$, and for $t\leqslant 2t_*$, we bound $H_N(t) \leqslant H_N(t/2) \leqslant e^{-t/(2C)}H_N(0)$. This shows that
\[  \mathcal H(\rho_t^N |\mu_\infty^N) \leqslant e^{-\frac{t}{C}}\mathcal H(\rho_t^N |\mu_\infty^N)  + \frac{CN}{(1+t)^2} \,.\]
To conclude, the proof of~\eqref{eq:PolynomRate}, it remains to use \cite[Corollary 1.2]{RocknerWang} to bound
\[\mathcal H(\rho_{1}^N |\mu_\infty^N) \leqslant C \mathcal W_2^2\po \rho_0^N|\mu_\infty^N\pf\]
for all $t\geqslant 0$. Combining this with the previous inequality gives, for $t\geqslant 1$,
\[  \mathcal H(\rho_t^N |\mu_\infty^N) \leqslant C e^{-\frac{t-1}{C}}\mathcal H(\rho_1^N |\mu_\infty^N)  + \frac{CN}{t^2}\leqslant  e^{-\frac{t-1}{C}}\mathcal W_2^2\po \rho_0^N|\mu_\infty^N\pf + \frac{CN}{t^2} \,,\]
establishing~\eqref{eq:PolynomRate}.

\medskip

 \emph{(Proof of item 3.)} This can be obtained by dividing~\eqref{eq:PolynomRate} by $N$ and letting $N\rightarrow \infty$. The fact that we can also add $\mathcal H(\rho_t|\rho_*)$ in the left-hand side of~\eqref{eq:ratePolyMF}  is obtained as in the proof of the second par of \cite[Proposition 1]{MonmarcheReygner}, using \cite[Lemma 3]{MonmarcheReygner}.
 
 \medskip

\emph{(Proof of item 4.)} By standard finite-time propagation of chaos results, for all $t\geqslant 0$,
\[\mathcal W_2^2\po \rho_t^{\otimes N}, \rho_t^{N}\pf \leqslant  e^{Ct} C_0\,,\]
for some $C_0$ depending on $\rho_0$ but not $N$. On the other hand, thanks to~\eqref{eq:propchaosStation3} and then~\eqref{eq:ratePolyMF} and~\eqref{eq:PolynomRate} with~\eqref{loc:CW5},
\begin{align*}
\mathcal W_2^2\po \rho_t^{\otimes N}, \rho_t^{N}\pf &\leqslant 2 \mathcal W_2^2\po \rho_t^{\otimes N}, \rho_*^{\otimes N}\pf + 2 \mathcal W_2^2\po \rho_*^{\otimes N}, \rho_t^{N}\pf \\
&\leqslant  2 N \mathcal W_2^2\po \rho_t, \rho_*\pf  + C  \co  \mathcal H \po \rho_t^N |\mu_\infty^N \pf +  \mathcal W_2^2 \po \rho_t^N,\mu_\infty^N \pf +  \sqrt{N} \cf  \\
&\leqslant    C_0' N  \co  \frac{1}{t} +  \frac1{\sqrt{N}} \cf  \,,
\end{align*}
for some $C_0'$. Combining these two bounds depending whether $t \gtrless \ln N/(2C)$ gives the result for $\frac1N \mathcal W_2^2(\rho_t^N,\rho_t^{\otimes N})$. Conclusion follows by using~\eqref{loc:empirical} with \cite[Theorem 1]{FournierGuillin}.

\medskip

\emph{(Proof of item 5.)} This is similar except that we directly use~\eqref{eq:propchaosStation3} (and then~\eqref{loc:CW5}) to bound
\begin{align*}
\frac1N \mathcal H \po \rho_t^N|\rho_*^{\otimes N} \pf &\leqslant  \frac1N \mathcal H \po \rho_t^N|\mu_\infty^N \pf + \frac\kappa N \mathcal W_2^2 \po \rho_t^N,\mu_\infty^N \pf + \frac C{\sqrt{N}}  \\
 & \leqslant C\co \frac1N \mathcal H \po \rho_t^N|\mu_\infty^N \pf + \sqrt{ \frac1N \mathcal H \po \rho_t^N|\mu_\infty^N \pf} + \frac1{\sqrt{N}}\cf \ \leqslant \  \frac C{\sqrt{N}}
\end{align*}
for all $t\geqslant C \max \{\sqrt{N},\ln \mathcal W_2^2(\rho_0^N,\mu_\infty^N)\}$ thanks to~\eqref{eq:PolynomRate}. This gives~\eqref{loc:creation} by the scaling property of the entropy. The last statement is then a direct consequence since, using the Talagrand inequality for $\rho_*^{\otimes 2}$, by triangular inequality,
\[\mathcal W_2^2\po \rho_t^{(2,N)}, \po \rho_t^{(1,N)}\pf^{\otimes 2} \pf \leqslant C  \mathcal H\po \rho_t^{(2,N)},\rho_*^{\otimes 2}\pf +   C  \mathcal H\po \rho_t^{(1,N)},\rho_*\pf \leqslant  \frac C{\sqrt{N}} \]
for $t$ as before.
 \end{proof}

\begin{rem}
For $\varepsilon>0$ and a given  initial condition $\rho_0^N \in \mathcal P_2(\R^{dN})$ (with $\mathcal H(\rho_t^N|\mu_\infty^N) \leqslant C_0 N$ for some constant $C_0>0$, which is for instance the case when $\rho_0^N = \rho_0^{\otimes N}$ for some smooth $\rho_0\in\mathcal P_2(\R^d)$), let $t_N(\varepsilon) = \inf\{s\geqslant 0,\ \mathcal H(\rho_t^N|\mu_\infty^N) \leqslant \varepsilon\}$. If we use the standard LSI  with~\eqref{loc:CW3}, from 
\[\mathcal H(\rho_t^N |\mu_\infty^N) \leqslant e^{-\frac{t}{C\sqrt{N}}}\mathcal H(\rho_t^N |\mu_\infty^N) \,,\]
we get that $t_N(\varepsilon) \leqslant C\sqrt{N} (\ln(C_0 N/\varepsilon))_+ $. Alternatively, if we use the degenerate Łojasiewicz inequality~\eqref{loc:CW4}, we obtain~\eqref{eq:PolynomRate} for some constant $C>0$, and then $t_N(\varepsilon) \leqslant \sqrt{2CN/\varepsilon}  +  C(\ln(2C_0 N/\varepsilon))_+  $. Hence, for a fixed $\varepsilon>0$, we do not gain much with the degenerate inequality, going from a bound of order $\sqrt{N}\ln N$ to $\sqrt{N}$ (and the dependency in $\varepsilon$ is much worse, going from $\ln\varepsilon^{-1}$ to $\varepsilon^{-1/2}$). 

However, if we are rather interested in the marginal distribution of a fixed number $k$ of particles or on the empirical distribution of the system, due to~\eqref{loc:marginal} and \eqref{loc:empirical}, we should rather consider $t_N( N\varepsilon)$ with  fixed $\varepsilon>0$.  Now, the bound on this obtained by the standard LSI is of order $\sqrt{N} \ln (\varepsilon^{-1})$, while the bound obtained with the degenerate inequality is of order $\varepsilon^{-1/2}$ and independent from $N$. We see that, although the degenerate inequality only asymptotically yields polynomial convergence rates instead of exponential ones,  in this regime it is much more informative.

\end{rem}

\subsection*{Acknowledgments}

The research of PM is supported by the project CONVIVIALITY (ANR-23-CE40-0003) of the French National Research Agency. The author would like to thank Songbo Wang for fruitful discussions on his work~\cite{SongboToAppear} and Julien Reygner for pointing out the references~\cite{barret2015sharp,BerglundGentz,Berglund2} on Eyring-Kramers formula in infinite dimension. Some computations on the running example were done by Virgile Revon during a research internship. The total  amount of generative artificial intelligence tools involved in this work  is exactly zero.

\bibliographystyle{plain}
\bibliography{biblio}

\begin{thebibliography}{10}

\bibitem{ambrosio2005gradient}
Luigi Ambrosio, Nicola Gigli, and Giuseppe Savar{\'e}.
\newblock {\em Gradient flows: in metric spaces and in the space of probability
  measures}.
\newblock Springer Science \& Business Media, 2005.

\bibitem{avelin2023geometric}
Benny Avelin, Vesa Julin, and Lauri Viitasaari.
\newblock Geometric characterization of the eyring--kramers formula.
\newblock {\em Communications in Mathematical Physics}, 404(1):401--437, 2023.

\bibitem{bakhtin2008}
Yuri Bakhtin.
\newblock Exit asymptotics for small diffusion about an unstable equilibrium.
\newblock {\em Stochastic Processes and their Applications}, 118(5):839--851,
  2008.

\bibitem{bakhtin2011noisy}
Yuri Bakhtin.
\newblock Noisy heteroclinic networks.
\newblock {\em Probability theory and related fields}, 150(1):1--42, 2011.

\bibitem{bakhtin2021atypical}
Yuri Bakhtin and Hong-Bin Chen.
\newblock Atypical exit events near a repelling equilibrium.
\newblock {\em The Annals of Probability}, 49(3):1257--1285, 2021.

\bibitem{10.1214/18-AAP1387}
Yuri Bakhtin and Zsolt Pajor-Gyulai.
\newblock {Malliavin calculus approach to long exit times from an unstable
  equilibrium}.
\newblock {\em The Annals of Applied Probability}, 29(2):827 -- 850, 2019.

\bibitem{BakryGentilLedoux}
Dominique Bakry, Ivan Gentil, and Michel Ledoux.
\newblock {\em Analysis and geometry of {M}arkov diffusion operators}, volume
  348 of {\em Grundlehren der Mathematischen Wissenschaften [Fundamental
  Principles of Mathematical Sciences]}.
\newblock Springer, Cham, 2014.

\bibitem{barret2015sharp}
Florent Barret.
\newblock Sharp asymptotics of metastable transition times for one dimensional
  spdes.
\newblock In {\em Annales de l'IHP Probabilit{\'e}s et statistiques},
  volume~51, pages 129--166, 2015.

\bibitem{Dagallier}
R.~{Bauerschmidt}, T.~{Bodineau}, and B.~{Dagallier}.
\newblock {A criterion on the free energy for log-Sobolev inequalities in
  mean-field particle systems}.
\newblock {\em arXiv e-prints}, page arXiv:2503.24372, March 2025.

\bibitem{bauerschmidt2019very}
Roland Bauerschmidt and Thierry Bodineau.
\newblock A very simple proof of the lsi for high temperature spin systems.
\newblock {\em Journal of Functional Analysis}, 276(8):2582--2588, 2019.

\bibitem{BenamouBrenier}
Jean-David Benamou and Yann Brenier.
\newblock A computational fluid mechanics solution to the monge-kantorovich
  mass transfer problem.
\newblock {\em Numerische Mathematik}, 84(3):375--393, 2000.

\bibitem{BerglundGentz}
Nils Berglund and Barbara Gentz.
\newblock {Sharp estimates for metastable lifetimes in parabolic SPDEs:
  Kramers' law and beyond}.
\newblock {\em Electronic Journal of Probability}, 18(none):1 -- 58, 2013.

\bibitem{Berglund2}
Nils Berglund, Giacomo~Di Ges{\`u}, and Hendrik Weber.
\newblock {An Eyring–Kramers law for the stochastic Allen–Cahn equation in
  dimension two}.
\newblock {\em Electronic Journal of Probability}, 22(none):1 -- 27, 2017.

\bibitem{BLANCHET20181650}
Adrien Blanchet and Jérôme Bolte.
\newblock A family of functional inequalities: Łojasiewicz inequalities and
  displacement convex functions.
\newblock {\em Journal of Functional Analysis}, 275(7):1650--1673, 2018.

\bibitem{bobkov2000brunn}
Sergey~G Bobkov and Michel Ledoux.
\newblock From brunn-minkowski to brascamp-lieb and to logarithmic sobolev
  inequalities.
\newblock {\em Geometric and Functional Analysis}, 10(5):1028--1052, 2000.

\bibitem{bovier2016metastability}
Anton Bovier and Frank Den~Hollander.
\newblock {\em Metastability: a potential-theoretic approach}, volume 351.
\newblock Springer, 2016.

\bibitem{bovier2004metastability}
Anton Bovier, Michael Eckhoff, V{\'e}ronique Gayrard, and Markus Klein.
\newblock Metastability in reversible diffusion processes. i. sharp asymptotics
  for capacities and exit times.
\newblock {\em J. Eur. Math. Soc.(JEMS)}, 6(4):399--424, 2004.

\bibitem{brassesco1998couplings}
S~Brassesco, E~Olivieri, and ME~Vares.
\newblock Couplings and asymptotic exponentiality of exit times.
\newblock {\em Journal of statistical physics}, 93(1):393--404, 1998.

\bibitem{carmona2018probabilistic}
Ren{\'e} Carmona, Fran{\c{c}}ois Delarue, et~al.
\newblock {\em Probabilistic theory of mean field games with applications
  I-II}.
\newblock Springer, 2018.

\bibitem{CMCV}
Jos{\'e}~A. Carrillo, Robert~J. McCann, and C{\'e}dric Villani.
\newblock {Kinetic equilibration rates for granular media and related
  equations: entropy dissipation and mass transportation estimates}.
\newblock {\em Revista Matemática Iberoamericana}, 19(3):971 -- 1018, 2003.

\bibitem{cattiaux2007weak}
Patrick Cattiaux, Ivan Gentil, and Arnaud Guillin.
\newblock Weak logarithmic sobolev inequalities and entropic convergence.
\newblock {\em Probability theory and related fields}, 139:563--603, 2007.

\bibitem{Chaintron2}
Louis-Pierre Chaintron and Antoine Diez.
\newblock Propagation of chaos: A review of models, methods and applications.
  i. models and methods.
\newblock {\em Kinetic and Related Models}, 15(6):895--1015, 2022.

\bibitem{Chaintron1}
Louis-Pierre Chaintron and Antoine Diez.
\newblock Propagation of chaos: A review of models, methods and applications.
  ii. applications.
\newblock {\em Kinetic and Related Models}, 15(6):1017--1173, 2022.

\bibitem{Songbo}
Fan Chen, Zhenjie Ren, and Songbo Wang.
\newblock Uniform-in-time propagation of chaos for mean field langevin
  dynamics.
\newblock {\em arXiv preprint arXiv:2212.03050}, 2022.

\bibitem{CheNitZha24LSI}
Sinho Chewi, Atsushi Nitanda, and Matthew~S. Zhang.
\newblock Uniform-in-{$N$} log-{S}obolev inequality for the mean-field
  {L}angevin dynamics with convex energy.
\newblock {\em arXiv preprint 2409.10440}, 2024.

\bibitem{ChewiStromme}
Sinho {Chewi} and Austin~J. {Stromme}.
\newblock {The ballistic limit of the log-Sobolev constant equals the
  Polyak-{\L}ojasiewicz constant}.
\newblock {\em arXiv e-prints}, page arXiv:2411.11415, November 2024.

\bibitem{chizat}
L{\'e}na{\"i}c Chizat.
\newblock Mean-field langevin dynamics : Exponential convergence and annealing.
\newblock {\em Transactions on Machine Learning Research}, 2022.

\bibitem{chizat2018global}
Lenaic Chizat and Francis Bach.
\newblock On the global convergence of gradient descent for over-parameterized
  models using optimal transport.
\newblock {\em Advances in neural information processing systems}, 31, 2018.

\bibitem{dawson1986large}
Donald~A Dawson and J~G{\"a}rtner.
\newblock Large deviations and tunnelling for particle systems with mean field
  interaction.
\newblock {\em CR Math. Rep. Acad. Sci. Canada}, 8(6):387--392, 1986.

\bibitem{Pavliotis}
Mat{\'\i}as~G Delgadino, Rishabh~S Gvalani, Grigorios~A Pavliotis, and Scott~A
  Smith.
\newblock Phase transitions, logarithmic sobolev inequalities, and
  uniform-in-time propagation of chaos for weakly interacting diffusions.
\newblock {\em Communications in Mathematical Physics}, pages 1--49, 2023.

\bibitem{erdogdu2021convergence}
Murat~A Erdogdu and Rasa Hosseinzadeh.
\newblock On the convergence of langevin monte carlo: The interplay between
  tail growth and smoothness.
\newblock In {\em Conference on Learning Theory}, pages 1776--1822. PMLR, 2021.

\bibitem{FERNANDEZ199733}
Begoña Fernandez and Sylvie Méléard.
\newblock A hilbertian approach for fluctuations on the mckean-vlasov model.
\newblock {\em Stochastic Processes and their Applications}, 71(1):33--53,
  1997.

\bibitem{forman1987functional}
Robin Forman.
\newblock Functional determinants and geometry.
\newblock {\em Inventiones mathematicae}, 88(3):447--493, 1987.

\bibitem{FournierGuillin}
Nicolas Fournier and Arnaud Guillin.
\newblock On the rate of convergence in wasserstein distance of the empirical
  measure.
\newblock {\em Probability theory and related fields}, 162(3):707--738, 2015.

\bibitem{freidlin1998random}
Mark~I Freidlin and Alexander~D Wentzell.
\newblock Random perturbations.
\newblock In {\em Random perturbations of dynamical systems}, pages 15--43.
  Springer, 1998.

\bibitem{gel1960integration}
Israel~M Gel'fand and Akiva~M Yaglom.
\newblock Integration in functional spaces and its applications in quantum
  physics.
\newblock {\em Journal of Mathematical Physics}, 1(1):48--69, 1960.

\bibitem{geshkovski2025mathematical}
Borjan Geshkovski, Cyril Letrouit, Yury Polyanskiy, and Philippe Rigollet.
\newblock A mathematical perspective on transformers.
\newblock {\em Bulletin of the American Mathematical Society}, 62(3):427--479,
  2025.

\bibitem{Minibatch}
Arnaud {Guillin}, Pierre {Le Bris}, and Pierre {Monmarch{\'e}}.
\newblock {Some remarks on the effect of the Random Batch Method on phase
  transition}.
\newblock {\em arXiv e-prints}, page arXiv:2308.01624, August 2023.

\bibitem{GuillinWuZhang}
Arnaud Guillin, Wei Liu, Liming Wu, and Chaoen Zhang.
\newblock {Uniform Poincaré and logarithmic Sobolev inequalities for mean
  field particle systems}.
\newblock {\em The Annals of Applied Probability}, 32(3):1590 -- 1614, 2022.

\bibitem{helffer2004quantitative}
Bernard Helffer and Francis Nier.
\newblock Quantitative analysis of metastability in reversible diffusion
  processes via a witten complex approach: the case with boundary.
\newblock {\em Matem{\'a}tica contempor{\^a}nea}, 26:41--85, 2004.

\bibitem{Szpruch}
Kaitong Hu, Zhenjie Ren, David Šiška, and Łukasz Szpruch.
\newblock {Mean-field Langevin dynamics and energy landscape of neural
  networks}.
\newblock {\em Annales de l'Institut Henri Poincaré, Probabilités et
  Statistiques}, 57(4):2043 -- 2065, 2021.

\bibitem{illien2025dean}
Pierre Illien.
\newblock The dean--kawasaki equation and stochastic density functional theory.
\newblock {\em Reports on Progress in Physics}, 88(8):086601, 2025.

\bibitem{JournelLeBris}
Lucas {Journel} and Pierre {Le Bris}.
\newblock {On uniform in time propagation of chaos in metastable cases: the
  Curie-Weiss model}.
\newblock {\em arXiv e-prints}, page arXiv:2502.06429, February 2025.

\bibitem{Kifer}
Yuri Kifer.
\newblock The exit problem for small random perturbations of dynamical systems
  with a hyperbolic fixed point.
\newblock {\em Israel Journal of Mathematics}, 40, 1981.

\bibitem{lambert2022variational}
Marc Lambert, Sinho Chewi, Francis Bach, Silv{\`e}re Bonnabel, and Philippe
  Rigollet.
\newblock Variational inference via wasserstein gradient flows.
\newblock {\em Advances in Neural Information Processing Systems},
  35:14434--14447, 2022.

\bibitem{Ledoux}
Michel Ledoux.
\newblock Logarithmic sobolev inequalities for unbounded spin systems
  revisited.
\newblock {\em S{\'e}minaire de Probabilit{\'e}s XXXV}, pages 167--194, 2001.

\bibitem{LelievreLinMonmarche}
Tony {Leli{\`e}vre}, Xuyang {Lin}, and Pierre {Monmarch{\'e}}.
\newblock {Convergence rates for an Adaptive Biasing Potential scheme from a
  Wasserstein optimization perspective}.
\newblock {\em arXiv e-prints}, page arXiv:2501.17979, January 2025.

\bibitem{lelievre2022eyring}
Tony Leli{\`e}vre, Dorian~Le Peutrec, and Boris Nectoux.
\newblock Eyring-kramers exit rates for the overdamped langevin dynamics: the
  case with saddle points on the boundary.
\newblock {\em arXiv preprint arXiv:2207.09284}, 2022.

\bibitem{Locherbach}
Eva {L{\"o}cherbach} and Pierre {Monmarch{\'e}}.
\newblock {Metastability for systems of interacting neurons}.
\newblock {\em Annales de l'Institut Henri Poincaré, Probabilités et
  Statistiques}, 58(1):343 -- 378, 2022.

\bibitem{law1965ensembles}
Stanislaw Lojasiewicz.
\newblock Ensembles semi-analytiques.
\newblock {\em IHES notes}, page 220, 1965.

\bibitem{mckane1995regularization}
Alan~J McKane and Martin~B Tarlie.
\newblock Regularization of functional determinants using boundary
  perturbations.
\newblock {\em Journal of Physics A: Mathematical and General}, 28(23):6931,
  1995.

\bibitem{McKean}
H.~P. McKean.
\newblock A class of markov processes associated with nonlinear parabolic
  equations.
\newblock {\em Proceedings of the National Academy of Sciences},
  56(6):1907--1911, 1966.

\bibitem{MonmarcheBruit}
Pierre Monmarch\'e.
\newblock Wasserstein contraction and {Poincar\'e} inequalities for elliptic
  diffusions with high diffusivity.
\newblock {\em Annales Henri Lebesgue}, 6:941--973, 2023.

\bibitem{MonmarcheMetastable}
Pierre {Monmarch{\'e}}.
\newblock {Long-time propagation of chaos and exit times for metastable
  mean-field particle systems}.
\newblock {\em arXiv e-prints}, page arXiv:2503.00157, February 2025.

\bibitem{MonmarcheLSI}
Pierre Monmarch{\'e}.
\newblock Uniform log-sobolev inequalities for mean field particles beyond
  flat-convexity.
\newblock {\em Stochastic Processes and their Applications}, 2025.

\bibitem{monmarche2021simulated}
Pierre Monmarch{\'e}, Nicolas Fournier, and Camille Tardif.
\newblock Simulated annealing in rd with slowly growing potentials.
\newblock {\em Stochastic Processes and their Applications}, 131:276--291,
  2021.

\bibitem{MonmarcheReygner}
Pierre Monmarch{\'e} and Julien Reygner.
\newblock {Local convergence rates for Wasserstein gradient flows and
  McKean-Vlasov equations with multiple stationary solutions}.
\newblock {\em Probability Theory and Related Fields}, 2025.

\bibitem{Katharina}
Pierre Monmarché and Katharina Schuh.
\newblock Non-asymptotic entropic bounds for non-linear kinetic langevin
  sampler with second-order splitting scheme.
\newblock {\em arXiv e-print}, 2024.

\bibitem{MonmarcheWang}
Pierre Monmarché and Songbo Wang.
\newblock Logarithmic sobolev inequalities for non-equilibrium steady states.
\newblock {\em Potential Analysis}, 2025.

\bibitem{monter2011normal}
Sergio Angel~Almada Monter and Yuri Bakhtin.
\newblock Normal forms approach to diffusion near hyperbolic equilibria.
\newblock {\em Nonlinearity}, 24(6):1883, 2011.

\bibitem{otto2001geometry}
Felix Otto.
\newblock The geometry of dissipative evolution equations: the porous medium
  equation.
\newblock 2001.

\bibitem{OttoVillani}
Felix Otto and Cédric Villani.
\newblock Generalization of an inequality by talagrand and links with the
  logarithmic sobolev inequality.
\newblock {\em Journal of Functional Analysis}, 173(2):361--400, 2000.

\bibitem{polyak1964gradient}
Boris~T Polyak.
\newblock Gradient methods for solving equations and inequalities.
\newblock {\em USSR Computational Mathematics and Mathematical Physics},
  4(6):17--32, 1964.

\bibitem{RocknerWang}
Michael R{\"o}ckner and Feng-Yu Wang.
\newblock Log-harnack inequality for stochastic differential equations in
  hilbert spaces and its consequences.
\newblock {\em Infinite Dimensional Analysis, Quantum Probability and Related
  Topics}, 13(01):27--37, 2010.

\bibitem{tanaka1981central}
Hiroshi Tanaka and Masuyuki Hitsuda.
\newblock Central limit theorem for a simple diffusion model of interacting
  particles.
\newblock {\em Hiroshima Mathematical Journal}, 11(2):415--423, 1981.

\bibitem{toscani2000trend}
Guiseppe Toscani and C{\'e}dric Villani.
\newblock On the trend to equilibrium for some dissipative systems with slowly
  increasing a priori bounds.
\newblock {\em Journal of Statistical Physics}, 98(5):1279--1309, 2000.

\bibitem{Tugautdoublewell}
Julian Tugaut.
\newblock Phase transitions of mckean--vlasov processes in double-wells
  landscape.
\newblock {\em Stochastics An International Journal of Probability and
  Stochastic Processes}, 86(2):257--284, 2014.

\bibitem{SongboLSI}
Songbo {Wang}.
\newblock {Uniform log-Sobolev inequalities for mean field particles with
  flat-convex energy}.
\newblock {\em arXiv e-prints}, page arXiv:2408.03283, August 2024.

\bibitem{SongboToAppear}
Songbo {Wang}.
\newblock {Large-scale concentration and relaxation for mean-field Langevin
  particle systems}.
\newblock {\em arXiv e-prints}, page arXiv:2508.16428, August 2025.

\end{thebibliography}

\end{document}